\theoremstyle{plain}
      \newtheorem{theorem}{Theorem}
      \newtheorem{lemma}[theorem]{Lemma}
      \newtheorem{corollary}[theorem]{Corollary}
      \newtheorem{proposition}[theorem]{Proposition}
      \theoremstyle{definition}
      \newtheorem{definition}[theorem]{Definition}
      \theoremstyle{remark}
      \newtheorem{remark}[theorem]{Remark}
\newcounter{Step}
\newenvironment{step}[0]{\bigskip\addtocounter{Step}{1}\noindent\textbf{Step \theStep~:} }{\begin{flushright}\tiny \end{flushright}}
\newenvironment{claim}[0]{\bigskip\noindent\textbf{Claim~:} }{\
  \begin{flushright} \end{flushright}}
\def\o{\otimes}
\def\N{\mbox{I\hspace{-.15em}N} }
\def\R{\mbox{I\hspace{-.15em}R} }
\def\C{\hspace{.17em}\mbox{l\hspace{-.47em}C} }
\def\Fi{\mbox{I\hspace{-.15em}F} }
\def\H{\mathscr{H}}
\def\F{\mathscr{F}}
\def\cC{\mathscr{C}}
\def\I{\mathscr{J}}
\def\M{\mathscr{M}}
\def\B{\mathscr{B}}
\def\V{\mathscr{V}}
\def\CV{\mathscr{CL}}
\def\hM{\hat{M}}
\def\hMe{\hat{M}^{(\epsilon)}}
\def\Me{\hat{M}^{(\epsilon)}_0}
\def\oM{\hat{\overline{M}}_0}
\def\ohH{\hat{\overline{\H}}}
\def\oH{\overline{\H}}
\def\He{\H^{(\epsilon)}}
\def\hH{\hat{\H}^{(\epsilon)}}
\def\U{\mathscr{U}}
\def\E{\mathcal{E}}
\def\A{\mathcal{A}}
\def\H{\mathcal{H}}
\def\K{\mathcal{K}}
\def\TC{\mathcal{TC}}
\def\P{\mathcal{P}}
\newcommand{\Ad}{\operatorname{Ad}}
\newcommand{\op}{\operatorname{op}}
\newcommand{\Lim}{\operatorname{Lim}}
\begin{document}
\title[A non-commutative Path Space approach to stationary free SDE]{A non-commutative Path Space approach to stationary free Stochastic Differential Equations}
\begin{abstract} By defining tracial states on a non-commutative analogue of a path space, we construct Markov dilations for a class of conservative
completely Markov semigroups on finite von Neumann algebras. This class includes all symmetric semigroups. For well
chosen semigroups (for instance with generator any divergence form operator associated to a derivation valued in the
coarse correspondence) those dilations give rise to stationary solutions of certain free SDEs previously considered by D.
Shlyakhtenko. 
Among applications, we prove a non-commutative Talagrand
inequality for non-microstates free entropy (relative to a subalgebra $B$ and a completely positive map $\eta:B\to B$). We also use those new deformations in conjunction
with Popa’s deformation/rigidity techniques. For instance, combining our results with techniques
of Popa-Ozawa and Peterson, we prove that the von Neumann algebra of a countable discrete group with CMAP and positive first $L^2$ Betti number has no Cartan subalgebras.
\end{abstract}

\author[Y. Dabrowski]{Yoann Dabrowski}\address{ Department of Mathematics, University of California at Los Angeles\\ Los Angeles, California 90095\\ and \\  Laboratoire d'Informatique Institut Gaspard Monge Universit\'{e}
  Paris-Est,\\ 5 bd Descartes, Champs-sur-Marne F-77454 Marne-la-Valle cedex 2 FRANCE}
\email{yoann@math.ucla.edu}
\thanks{Research partially supported by NSF grants DMS-0555680 and DMS-0900776.}

\subjclass[2000]{46L54; 46L10; 46L57}
\keywords{Free Stochastic Differential Equations; Path Space; Transportation Cost Inequality; Deformation/rigidity; Free probability}
\date{}
\maketitle

\begin{center}\section*{\textsc{Introduction}}
\end{center}

The purpose of this paper is to introduce a Path space approach to solve  stochastic differential equations (SDEs) driven by free Brownian notion, a large $N$ limit of Brownian motions on $N\times N$ hermitian matrices. Beyond applications to random matrices (\cite{BGC},\cite{SG}), free SDEs already appeared useful in getting lower bounds on microstates free entropy dimension $\delta_0(X_{1},...X_{n})$ (\cite{S07}), especially in the goal of comparing it with $L^{2}$-Betti numbers of groups(cf. e.g. \cite{L2}), specifically $\beta^{(2)}_1(\Gamma)-\beta^{(2)}_0(\Gamma)+1$. 
(see also \cite{MS05} for comparison with another entropy dimension). This study hugely motivates the search for stationary solutions of the following kind of free SDEs~:
\begin{equation}\label{coarseSDE}
X_t=X_0-\frac{1}{2}\int_0^t (\Delta(X))_s ds +\int_0^t(\delta(X))_s\# dS_s.
\end{equation}

Here, $\delta:L^2(M,\tau)\to (L^2(M,\tau)\o L^2(M,\tau))^N$ is a densely defined real closed derivation valued in a direct sum of coarse correspondences of a finite von Neumann algebra $(M,\tau)$, $\Delta=\delta^*\delta$ the associated divergence form operator, $S_t=(S_t^1,...,S_t^N)$ a free Brownian motion $\left(\sum_ia_i\o b_i\right)\#S=\sum_ia_iS^ib_i$ the usual isometry from the coarse correspondence enabling the definition of a stochastic integral.

Let us note that (1) is chosen to make possible stationary solutions, i.e. solutions such that the von Neumann algebra generated by  translates at time $t$, $W^*(X_t)$, does not depend on $t$ so that $\alpha_t(X_0)=X_t$ extends to a von Neumann algebra endomorphism from $M=W^*(X_0)$ to the $II_1$ $W^*$-algebra $\widetilde{M}=W^*(X_t,t\geq 0)$ where the equation is solved. Solving this equation thus gives a deformation (in the sense of Popa) of $M$ or a (trace preserving) dilation of the completely Markov semigroup $\phi_t=\exp(-t\Delta/2)$ since $\alpha_t$ also satisfies $E_M(\alpha_t(X))=\phi_t(X)$. As explained later, we do apply in this paper those dilations in conjunction with Popa's deformation/rigidity techniques, and actually dilate much more general completely Markov semigroups, solving a problem of \cite{AD04} also recently motivated by applications to non-commutative harmonic analysis (\cite{JLMX},\cite{JM}).

Up to now those equations have only been solved under rather restrictive conditions. 
 Let us give an overview in the easiest case.
In \cite{S07}, Shlyakhtenko obtained lower bounds on microstates free entropy dimension in studying the following free stochastic differential equation~:
\begin{equation}\label{conjSDE}X_{t}^{(i)}=X_{0}^{(i)}-\frac{1}{2}\int_{0}^{t}\xi_{s}^{(i)}ds + S_{t}^{(i)}\end{equation} where $(\xi_{s}^{1},...,\xi_s^n)$ are conjugate variables of $(X_{s}^{(1)},...,X_{s}^{(n)})$'s in the sense of \cite{Vo5}, $S_{t}^{(i)}$ a 
free Brownian motion free with respect to $X_{0}^{(i)}$. 
This is the previous equation with $\delta$ the free difference quotient of \cite{Vo5}.

In \cite{S07}, (\ref{conjSDE}) was solved in order to get stationary solutions under analyticity assumptions on $\xi$. 
Later, in \cite{Dab09}, the author solved a (dual stochastic partial differential equation) variant of (\ref{conjSDE}), extending its resolution to a Lipschitz condition on conjugate variables.  Again, in \cite{Dab09}, this construction enabled us to obtain lower bounds on microstates free entropy dimension $\delta_0(X_{1},...X_{n})$ (using the result of \cite{S07}, of course under an overall $R^{\omega}$ embeddability assumption). In
that respect, the crucial step needed to apply the approach of \cite{S07} is to find a stationary solution $X_t$ to (\ref{conjSDE}) or more generally (\ref{coarseSDE}) with $X_t\in W^{*}(M,\{S_t; t\geq 0\})\simeq M*L(\Fi_{\infty})$ the von Neumann algebra generated by initial conditions and free Brownian motion $\{S_t; t\geq 0\}$. By considering another example where the free difference quotient is replaced by derivations coming from $\ell^2$ group cocycles (see \cite{Dab09}), we realized that obtaining a stationary solution lying in this free product 
is equivalent to a conservativity condition of a (classical) Markov Chain, well known to be restrictive.

Thus we are forced with the question of solving those kinds of free SDEs beyond those conditions which is the main motivation of this paper. 
Before discussing more our approach, let us state a result to be compared with previous ones. In what follows, we will solve (\ref{conjSDE}) for $\xi\in L^1(M,\tau)$, and (\ref{coarseSDE}) for any densely defined real closed derivation as soon as $M$ has separable predual.
Since we have to build a solution without knowing a priori that it lives in the above free product, we will carry out a (kind of) Kolmogorov-Daniell construction to get dilations of a completely Markov semigroup on $M$. This entails solving various equations deduced from a formal application of Ito formula (and thus necessary for any dilation to solve (\ref{coarseSDE})). In contrast to Sauvageot's dilation in \cite{Sau86}, let us emphasize that our dilation satisfies rather canonical equations, since we want it to solve a free SDE in certain cases. From a non-commutative probability point of view, our approach, based on the construction of a state of a non-commutative analogue of a path space, is quite natural. It is in the spirit of the general philosophy that in the non-commutative context, probability can mainly be thought of ``in law". Technically, the usefulness of a path space was already apparent in the work of Biane, Capitaine and Guionnet \cite{BGC} in the context of large deviations for matricial Brownian motion. In our context of solving stochastic differential equations, its power of course comes from the ability it gives us to build a process without imposing any a priori structure on the von Neumann algebra that would contain it, thanks to the duality involved in the trace space of the non-commutative Path space (see $\S1$ for a definition of this universal $C^*$-algebra). Consequently, at the end, the von Neumann algebra generated by our process is not a priori explicit, being obtained by a GNS construction for the state we build on the Path space. 

In course of solving our SDEs, we will end up constructing, in Theorem \ref{main}, dilations for a general class of completely Markov semigroups $\phi_t$ on finite von Neumann algebras. As in \cite{CiS}, these are ultraweakly pointwise continuous semigroups of completely positive contractions.  The class we are able to dilate includes all conservative (i.e. $\tau\phi_t=\tau$) completely Markov symmetric (i.e. $\tau$-symmetric in the sense of \cite{CiS} $\tau(\phi_t(a)b)=\tau(a\phi_t(b))$) semigroups. Stated otherwise, getting a trace preserving dilation proves all those maps $\phi_t$ are factorizable in the sense of \cite{AD04}. This is to be contrasted with non-factorizable maps (and non-symmetric semigroups of such) found in \cite{HM} in finite dimension. 
However, our construction also applies to various non-symmetric conservative completely Markov semigroups, generated by generators of non-symmetric Dirichlet forms, for which the antisymmetric part is a derivation, assuming certain domain assumptions (cf. part 2.1). In \cite{JRS}, the authors found independently a dilation in the symmetric case, which seems to be the same as ours
, but they actually started with a natural description of our $\alpha$-approximation (in terms of another stochastic differential equation, or rather they use a semigroup like approximation $(id -\phi_t)/t$ instead of a resolvent like approximation $(\alpha(id-\eta_{\alpha}))$ as approximation of the generator $\Delta$ of the semigroup) and didn't study carefully the limit, since this is not crucial to reach their goal of getting a dilation. Starting with the goal of building a solution of a SDE, necessarily dilating special semigroups (we realized lately this may enable us to dilate really general semigroups), we first started with natural equations for the limit deducing the best approximation, which turned out to be the same as their approximation as we discovered just before publishing this paper.  To sum up, our careful understanding of the limit enables us to get dilations of a few non-symmetric semigroups, since in that case, the approximation is not a dilation in tracial $W^*$-probability spaces, and this requires a careful understanding of the limit to prove traciality only appearing at the limit level. Moreover, our understanding of the limit is crucial in the applications we were motivated by, as described bellow. However, the reader will maybe (like us) prefer their proof of positivity of the approximation, much more natural in \cite{JRS}, since coming from a canonical description of this approximation by a SDE, rather than tedious explicit computations (as in this paper). We hope the conjunction of both approaches will give rise to better applications coming from the two original motivations.

As a consequence of our dilation construction for a huge class of non-symmetric conservative completely Markov semigroups, we prove that our dilations often satisfy a Stochastic Differential Equation as above (see Theorem \ref{main2}). More precisely, our dilation satisfies a SDE driven by a B-free Brownian motion of covariance $\eta$, as soon as the generator of the semigroup is a divergence form operator $\delta^*\delta$ with $\delta$ a derivation (as it has to be because of \cite{CiS} but) valued in the canonical bimodule $H(M,\eta\circ E_B)$ for $\eta$ a completely positive map on $B$, where $B$ is a \textit{non-evolving} subalgebra (i.e. $\phi_t(b)=b\ \forall b\in B$, equivalently $\delta(B)=0$), and modulo certain technical assumptions (but including any derivation in the coarse correspondence case ($B=\C$), when $M$ has separable predual, as explained earlier).

Since applications to lower bounds on microstates free entropy dimension are out of reach via this method (except under conditions similar to those already obtained earlier by the author), we give two other applications. First, we use our dilations to remove a technical assumption in a result of Popa and Ozawa (\cite{ozawapopa},\cite{ozawapopaII}) giving a first application of this new ``stochastic" deformations (dilating deformations first considered in Deformation/rigidity context by Jesse Peterson (\cite{Pe04},\cite{Pe06}, \cite{P10}), and used also in 
\cite{ozawapopaII}). Moreover, we also prove in Theorem \ref{Tal} a free Talagrand transportation cost inequality for non-microstates free entropy $\chi^*(X_1,...,X_n:B,\eta)$ (relative to a subalgebra $B$ and a completely positive map $\eta:B\to B$, as defined in \cite{ShlyFreeAmalg00}, see also \cite{Vo5} in the case $B=\C$), extending the one variable result  (in the case $B=\C$) of \cite{BV01} (see also \cite{Ledoux}, \cite{LedouxPopescu} for new proofs) and (a priori) improving the multivariable result of Hiai and Ueda for microstates free entropy $\chi(X_1,...,X_n)$ (\cite{HiaiUeda}, the improvement is due to the inequality between $\chi$ and $\chi^*$ proven in \cite{BGC}. Note also that we have an extension only for estimates on Wasserstein distance to semicircular elements and not for the convex polynomial potential variant they also consider, but our inequality goes beyond the case $B=\C$). This second result thus belongs to a chapter of the general goal of proving for non-microstates free entropy more results known for the microstates variant. Beyond the usefulness of this study from a free probability viewpoint, this general goal generally provides merely functional analytic proofs of results relying heavily on volume estimates when proven with microstates entropy, enabling to enlarge our understanding of non-commutative probability rather than merely using our knowledge of classical probability to give applications to free probability or von Neumann algebras. For more details about free entropies, we refer the reader to the survey \cite{VoS} for a list of properties as well as applications of free entropies in the theory of von Neumann algebras.

The paper is organized as follows. In  Section 1, we explain in what sense we use a noncommutative Path space. In Section 2, we construct our dilations. This requires to prove, in a first subsection, a few preliminaries on``Carr\'{e}-du-champs" of (non-symmetric non-commutative) Dirichlet forms. Then we build our dilation modulo a positivity assumption on an approximation. A second subsection thus explains the limiting procedure enabling this. We then prove the lacking positivity of the state we want to build on our Path space. We finally prove a symmetry condition and deduce traciality from this.

In Section 3, we explain the application to Talagrand's inequality, proving in so doing that our dilation actually solves the above stochastic differential equation (when the derivation is the free difference quotient, under a finite free Fisher information assumption). Actually, we prove a SDE for subsystems with a derivation looking like free difference quotient, since this will be the key tool to solve in the next section a more general SDE.
We moreover prove infinitesimal estimates along solutions of more general polynomial drift SDEs, even though Talagrand's inequality will be based on the really special case of the Orstein-Uhlenbeck process. In so doing, we especially prove that such solutions of SDEs have bounded conjugate variable, a result of independent interest.

 In Section 4, we prove our dilation actually solves a SDE, in the case explained above. To do so, we have to produce the Brownian motion (not necessarily in the von Neumann algebra of the process) by coupling our equation to an Orstein-Uhlenbeck equation (modulo a drift change disappearing if we take the right conditional expectation, this equation is a special case of the previous section for semicircular variables). This is where we use dilations of semigroups coming from non-symmetric Dirichlet forms, since the coupled equation dilate such a non-symmetric semigroup.

Finally, in section 5, we give our small application in conjunction with Popa's deformation/Rigidity techniques, without claiming any originality in this respect 
beyond the introduction of a new deformation and the study of several of its properties. We should emphasize, however, that this way of building a deformation in trying to solve 
  an abstract equation having sufficiently good properties to obtain desirable properties of the deformation, is really new in Deformation/Rigidity Theory, where deformations are usually produced
from relatively concrete examples. We hope this will be the first step of a systematic use of non-commutative probabilistic ideas with the goal of building deformations with specific behaviors.

\bigskip\textbf{Acknowledgments}
The author would like to thank Professor Marius Junge for communicating him when this work was almost written a preliminary version of \cite{JRS}  and Professor Jesse Peterson for suggesting that a corollary like \ref{GeneDefRig} should be true at a time when the author only knew how to use an earlier version of section 4 (reduced to derivations valued in the coarse coming from cocycles of groups) to get Corollary \ref{SpecDefRig}, this motivated an attempt to understand how to use deformation/rigidity techniques with less knowledge on orthogonal complement of the $L^2$ space of the initial time algebra in the $L^2$ space produced by our deformation (even though our new section 4 hugely extends our understanding in that respect). The author would also
like to thank D. Shlyakhtenko, J. Peterson and P. Biane for plenty of useful
discussions.

\tableofcontents

\section{Non-commutative path space}
In classical probability, the path space $\prod_{t\in \R_{+}}X$ is a useful tool to define processes indexed by $\R_{+}$ with value in a (locally) compact space $X$ through the definition of a measure on it, nothing but a state on $\bigotimes_{t\in \R_{+}}C^{0}(X)$.
Analogously, we want to define a non-commutative path space in the following definition. We consider a $C^{*}$-algebra $C$ or a family $C_{t}$ of $C^{*}$-algebras (in the first case we consider $C_{t}=C$).

\begin{definition}
The algebraic path space indexed by a set $I$ over $C$ or $(C_{t})$, denoted $\mathcal{P}_{I,alg}(C)$ resp. $\mathcal{P}_{I,alg}(C_{t})$ (or $\mathcal{P}_{alg}(C)$ if $I$ is fixed, e.g. $\R_{+}$ (or sometimes $\R$) in this paper), is the algebraic free product $\bigstar_{t\in I}C_{t}$.
For distinct $t_{i}\in I,i=1,...,n$, we denote $C_{max,t_{1},...,t_{n}}=C_{t_{1}}\bigstar...\bigstar C_{t_{n}}$ the maximal free product (the one with universal property in the category of $C^{*}$-algebras) and then consider the $C^{*}$-algebraic path space $\mathcal{P}_{I,max}(C)$ ($\mathcal{P}_{I,max}(C_{t})$ etc.) the natural inductive limit for all finite set families in $I$.
\end{definition}

In this paper we will be mainly concerned with a $W^{*}$-probability space $(M,\tau)$ (i.e. $\tau$ a faithful tracial normal state)
and in building states on $\mathcal{P}_{alg}(M)$ or $\mathcal{P}_{max}(M)$ so that the state restricted to each M is $\tau$, so that we will get a stationary process, by definition.
Since $M$ is a $C^*$ algebra, it is readily seen that the algebraic Path space is the span of unitary elements in that case. A standard result (see e.g. Proposition 7.2 in the book of Nica and Speicher \cite{NS}) enables to carry out a GNS construction on any state over $\mathcal{P}_{alg}(M)$, thus especially extending automatically to a state over $\mathcal{P}_{max}(M)$ by a universal property. We will thus (almost) always work in this paper on algebraic Path space.

\section{Construction of a stationary process}
We refer to \cite{S07} (cf. also \cite{Dab09}) and the applications bellow for motivating the search of a (stationary) process satisfying $$X_{t}^{(i)}=X_{0}^{(i)}-\frac{1}{2}\int_{0}^{t}\xi_{s}^{(i)}ds + S_{t}^{(i)}$$ where $\xi_{s}^{i}$ is a i-th conjugate variable of $X_{s}^{(i)}$'s in the sense of \cite{Vo5}, $S_{t}^{(i)}$ a free Brownian motion free with respect to $X_{0}^{(i)}$.

Using (formally) Ito's formula (proven under some assumptions in \cite{BS98}), one expects for instance the following equation (say for non-commutative polynomials $P_{1},Q_{1},P_{2}$, $X_{t}=(X_{t}^{(1)},...,X_{t}^{(n)})$):

\begin{align*}\tau(P_{1}(X_{t})Q(X_{0})P_{2}(X_{t}))&=\tau(P_{1}(X_{0})Q(X_{0})P_{2}(X_{0}))\\ & -\frac{1}{2}\int_{0}^{t}ds\ 
\tau(P_{1}(X_{s})Q(X_{0})\Delta (P_{2}(X_{s})))+\tau(\Delta (P_{1}(X_{s}))Q(X_{0})P_{2}(X_{s}))  \\ &+ \int_{0}^{t}ds\ \tau\circ m \circ 1\otimes \tau\circ m \otimes 1 (\delta(P_{1})(X_{s}))\otimes Q(X_{0})\delta (P_{2})(X_{s}))\end{align*}

We wrote as usual $\delta=(\delta_{1},...,\delta_{n})$ the free difference quotient on $M=W^{*}(X_{1},...,X_{n})$ (same action on the $s$-time variables
)  $\Delta=\delta^{*}\delta$ the corresponding generator of a Dirichlet form and if we write $\phi_{t}$ the semigroup generated by $-\frac{1}{2}\Delta$, one expects a rewriting after ``variation of constants"~:
\begin{align*}\tau(P_{1}(X_{t})Q(X_{0})&P_{2}(X_{t}))=\tau(\phi_{t}(P_{1}(X_{0}))Q(X_{0})\phi_{t}(P_{2}(X_{0}))\\ &+ \int_{0}^{t}ds\ \tau\circ m \circ 1\otimes \tau\circ m \otimes 1 (\delta\circ\phi_{t-s}(P_{1})(X_{s}))\otimes Q(X_{0})\delta \circ\phi_{t-s}(P_{2})(X_{s}))\end{align*}

We could also have said we want to study a mild solution of the above equation in the spirit of PDE theory (see also \cite{Dab09} in the free Stochastic PDE case).

This is already less singular and make sense for $Q\in M$, $P_{i}\in M\cap D(\Delta^{\epsilon}),\epsilon>0$ (even $\epsilon =0$ as we will see later, but using the defining differential equation after Cauchy-Schwarz instead of an a priori bound for analytic semigroups). From now on, one can see $X_{s}$ as a formal variable meaning a variable at time $s$ in $\mathcal{P}_{alg}(M)$. Note that $\tau$ above make sense as a state on a two times free product (after suitable generalization to more alternating patterns of 0 and t times).

 The actual definition will use an $\alpha$-approximation natural from a Dirichlet form viewpoint. This will require Dirichlet form preliminaries to get the right convergences of these approximations. We will then define simultaneously $\alpha$-approximations and their limits and prove the right kind of limits enabling us to continue the construction by induction. In a first time all those limits will be proven under the assumption the maps we build have nice uniform (in the approximation $\alpha$) boundedness in $M$. We will then prove in a third part those assumption are indeed satisfied in proving a positivity property giving those boundedness automatically by a standard $C^*$-algebra argument. Finally, we will have to prove our formulas produce traces. This will require 
a symmetry property. Alternatively, we could say we have written our defining formula in using this symmetry in the right way to get an almost explicitly positive definition, and we have to use it again to get a more rotation invariant variant.

\subsection{Approximations of ``Carr\'{e} du Champs" of Dirichlet forms}

Let us fix some notations (close to those of \cite{Pe06}).
We consider $M$ a finite von Neumann algebra 
 with normal faithful tracial
state $\tau$, and $\H$ an $M-M$-bimodule. $D(\delta)$ a weakly dense
*-subalgebra of $M$. 
We suppose here that $\delta:D(\delta)\rightarrow\H$ is a real closable
derivation (real means $\langle \delta(x),y\delta(z) \rangle = \langle \delta(z^{*})y^{*},\delta(x^{*})\rangle$).
$\Delta=\delta^{*}\bar{\delta}$ the corresponding generator of a conservative completely
Dirichlet form, as proven in \cite{S3} (see this paper for the non-commutative
definition of a Dirichlet form, here the Dirichlet form is
$\mathcal{\tilde{E}}(x)=\langle\delta(x),\delta(x)\rangle,
D(\mathcal{\tilde{E}})=D(\Delta^{1/2})$, completely means that $\Delta\otimes I_{n}$
is also the generator of a Dirichlet form on $\mathrm{M}_{n}(M)$).
 Let us introduce a deformation of
resolvent maps (a multiple of a so-called strongly continuous contraction
resolvent, cf e.g. \cite{MaR} for the terminology) $\eta_{\alpha}=\alpha(\alpha + \Delta)^{-1}$, which are
 unital, tracial ($\tau\circ\eta_{\alpha}=\tau$), completely positive maps, and moreover
contractions on $L^{2}(M,\tau)$ and normal contractions on $M$, such that
$||x-\eta_{\alpha}(x)||\leq 2||x||$  and
$||x-\eta_{\alpha}(x)||_{2}\rightarrow_{\alpha\rightarrow \infty}
0$ (as recalled e.g.
in Prop 2.5 of \cite{CiS}). We will also consider $\varphi_{t}=e^{-t\Delta/2}$ the
semigroup of generator $-\Delta/2$.

We will also consider $\E$ a non-symmetric completely Dirichlet form as in \cite{NSNC} (we assume it conservative as above i.e. $\E(1,.)=\E(.,1)=0$). Especially $\E$ is a coercive closed form on $L^{2}(M,\tau)$ and we will assume its symmetric part is $\mathcal{\tilde{E}}$ above with $D(\mathcal{\tilde{E}})=D(\E)$ a domain making it closed as usual.
We have thus also given adjoint $G_{\alpha}=\alpha(\alpha+A)^{-1}, \hat{G}_{\alpha}$ families of resolvent maps \footnote{for the generator $A$ of $\E$ and its adjoint $A^{*}$, to fix ideas, we use notations of \cite{NSNC} $\E(x,y)=\langle x,Ay\rangle$ (except for resolvents), $\E^{\beta}(x,y)=\beta \langle x, y-G_{\beta}(y)\rangle=\langle x, AG_{\beta}(y)\rangle$
}  on $L^{2}(M,\tau)$ letting $M$ stable, completely positive, unital, tracial normal as seen on $M$. We also consider corresponding  semigroups $\phi_{t}=e^{-tA/2},\phi_{t}^{*}$ (strongly continuous on $L^{2}$, ultraweakly continuous normal on $M$, contractive on both and $L^{1}(M)$). Let us also consider $\phi_{t,\alpha}=e^{-tAG_{\alpha}/2}$ the usual Yosida approximation (also contractive on the same spaces, etc since $AG_{\alpha}$ also generates a completely Dirichlet form by the proof of the standard equivalence theorem with Markovianity of $G_{\alpha}$). We will also assume the non symmetric part of the generator is a derivation, we will express this later in terms of carr\'{e}-du-champs. For the reader's convenience 
let us quote the following result corresponding to proposition 1.5 in
\cite{NSNC} (or lemma 2.11 and Theorem 2.13 in \cite{MaR}). Recall that we have a constant $K$ of coercivity expressing $|\E_1(x,y)|\leq K\E_1(x,x)^{1/2}\E_1(y,y)^{1/2},x,y\in D(\E)_{sa}$, with $\E_1=\E+\langle .,.\rangle$.

\begin{lemma}\label{basic}
Let $\{\E, D(\E)\}$ be a coercive closed form on a Hilbert space $H$, and $\{G_\alpha \}_{\alpha>0}$ , the
 associated resolvent. Then, setting $\E^{(\beta)} (x, y)~:= \beta(x, y - G_\beta y)$, $x, y \in H$, we get  \begin{enumerate}\item[(i)] $|\E_1^{(\beta)} (x, y)| \leq 4(K + 1)\tilde{\E}_1 (x, x)^{1/2}\tilde{\E}_1^{(\beta)} (y, y)^{1/2}, x \in D(\E), y \in H$
\item[(i)']$\E (G_\beta(x), G_\beta(x))\leq \E^{(\beta)} (x, x)$, for $x$ self-adjoint.
 \item[(ii)] Let $x \in H$. Then $x \in D(\E) \Longleftrightarrow \sup_{\beta>0} \tilde{\E}^{(\beta)} (x, x) < \infty$
\item[(ii)'] Let $x \in H$. Then $x \in D(\E) \Longleftrightarrow \liminf_{\beta>0} \tilde{\E}_1 (G_{\beta}(x), G_{\beta}(x)) < \infty$
\item[(iii)] $\forall x, y \in D(\E)$, $\lim_{\beta\rightarrow\infty} \E^{(\beta)} (x, y) = \E(x, y)$ and  $\lim_{\beta\rightarrow\infty} \E_1 (G_{\beta}(x)-x, G_{\beta}(x)- x) = 0$.
\end{enumerate}\end{lemma}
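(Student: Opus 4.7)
The entire lemma rests on one algebraic identity: for $x \in D(\E)$ and $y \in H$, the inclusion $G_\beta H \subseteq D(A) \subseteq D(\E)$ together with the resolvent relation $AG_\beta = \beta(I - G_\beta)$ yields
$$\E^{(\beta)}(x,y) \;=\; \beta\langle x, y - G_\beta y\rangle \;=\; \langle x, AG_\beta y\rangle \;=\; \E(x, G_\beta y).$$
Every assertion will be extracted by combining this identity with the coercivity bound and with the elementary positivity $\beta\|y - G_\beta y\|^2 \geq 0$. I would dispatch (i)' first, since it is immediate: the same resolvent relation gives $\E(G_\beta x, G_\beta x) = \beta\langle G_\beta x, x - G_\beta x\rangle$, so
$$\E^{(\beta)}(x,x) - \E(G_\beta x, G_\beta x) \;=\; \beta\langle x - G_\beta x, x - G_\beta x\rangle \;\geq\; 0,$$
the self-adjointness hypothesis ensuring reality of all pairings. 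A parallel computation on the symmetric part, together with the trivial $\|G_\beta x\|^2 \leq \|x\|^2$, produces the companion bound $\tilde{\E}_1(G_\beta x, G_\beta x) \leq \tilde{\E}_1^{(\beta)}(x,x)$, which will drive the rest of the proof.

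For (i), I would chain the identity, coercivity, and this $\tilde{\E}_1$-analogue:
$$|\E_1^{(\beta)}(x,y)| \;=\; |\E_1(x, G_\beta y) + \langle x, y - G_\beta y\rangle| \;\leq\; K\,\tilde{\E}_1(x,x)^{1/2}\tilde{\E}_1(G_\beta y, G_\beta y)^{1/2} + |\langle x, y - G_\beta y\rangle|,$$
then absorb the residual via a bound of the form $\beta\|y - G_\beta y\|^2 \leq C\,\tilde{\E}^{(\beta)}(y,y)$, obtained by expanding the square and using contractivity of $G_\beta$ plus coercivity to re-express the cross terms. The main technical nuisance I anticipate is constant tracking: $G_\beta$ is attached to the non-symmetric form $\E$ rather than to $\tilde{\E}$, so each switch between the two costs a factor involving $K$, and the stated prefactor $4(K+1)$ emerges only after careful bookkeeping; the qualitative inequality itself is routine.

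Finally, (iii) reduces to the standard resolvent convergence $\tilde{\E}_1(G_\beta y - y, G_\beta y - y) \to 0$ for $y \in D(\E)$, which I would prove by noting that $G_\beta y$ is bounded in $D(\E)$-norm (via the $\tilde{\E}_1$-analogue of (i)' applied to $x=y$), extracting a weakly convergent subsequence, and identifying the weak limit with $y$ using the easy $H$-convergence $G_\beta y \to y$; then $\E^{(\beta)}(x, y) = \E(x, G_\beta y) \to \E(x, y)$ follows from the identity and continuity of $\E$ in the second argument. Both (ii) and (ii)' are the quantitative packaging of this same convergence: the forward implications combine (i) with the above limit, while the backward implications use the assumed finiteness to extract a weakly convergent subsequence of $G_\beta x$ in the Hilbert space $D(\E)$, whose $H$-limit is forced to be $x$, placing $x$ in $D(\E)$.
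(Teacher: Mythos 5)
Your proposal is essentially a self-contained re-derivation of what the paper handles by citation: the paper's own (commented-out) proof simply points to Lemma 2.11 and Theorem 2.13 of Ma--R\"ockner, plus one additional observation. You correctly isolate the master identity $\E^{(\beta)}(x,y)=\E(x,G_\beta y)$ (valid once $x\in D(\E)$, since $G_\beta y\in D(A)$), and your derivation of (i)' from $\E^{(\beta)}(x,x)-\E(G_\beta x,G_\beta x)=\beta\|x-G_\beta x\|^2$ is exactly the identity the paper quotes from [MaR]. Your treatment of (ii), (ii)', (iii) by weak compactness of bounded sets in the Hilbert space $(D(\E),\tilde\E_1)$ and identification of limits is the standard argument and matches what the paper alludes to ("closability of the form and Banach--Alaoglu and Banach--Saks").

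The one place where you genuinely fall short of the paper's content is (i), and it is not a mere "constant-tracking nuisance" as you describe it. The [MaR] results are stated for real Hilbert spaces; the form here lives on the complex space $L^2(M,\tau)$, and the factor $4$ in $4(K+1)$ is not an artifact of sloppy estimates but the price of the reduction to the real subspace of self-adjoint elements. The paper's observation (which is the only original mathematics in its proof beyond citation) is that writing $x=\tfrac{x+x^*}{2}+i\tfrac{x-x^*}{2i}$, using that $\tilde\E_1$ is Hermitian and real so the cross terms cancel ($\tilde\E_1(x+x^*,x-x^*)=-\tilde\E_1(x-x^*,x+x^*)$), one gets $\tilde\E_1(x,x)=\tilde\E_1(\tfrac{x+x^*}{2})+\tilde\E_1(\tfrac{x-x^*}{2i})$ and similarly for $\E^{(\beta)}$, and that on the self-adjoint part the constant is just $K+1$; summing the four self-adjoint $\times$ self-adjoint blocks gives the $4$. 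Your proposed chain
$$|\E_1^{(\beta)}(x,y)|\leq K\,\tilde\E_1(x)^{1/2}\tilde\E_1(G_\beta y)^{1/2}+|\langle x,y-G_\beta y\rangle|$$
followed by absorbing the residual via $\beta\|y-G_\beta y\|^2\leq\E^{(\beta)}(y,y)$ (which, note, is itself only valid after reducing to the self-adjoint case where $\E^{(\beta)}(y,y)$ is real and $\geq 0$) will work, but it does not by itself explain why the stated constant has the form $4(K+1)$, and more importantly it silently uses positivity/reality of quantities that are only real after the self-adjoint decomposition. So fill in the decomposition step explicitly; otherwise the estimates for complex $x,y$ are not well-posed as inequalities.
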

Using the result of \cite{DL} that $M\cap D(\Delta^{1/2})$ is a *-subalgebra of $M$, dense in $L^{2}(M,\tau)$ and a core for $\Delta^{1/2}$, one may consider $B$ the $C^{*}-algebra$, norm closure of it in $M$, so that $\B=B\cap D(\Delta^{1/2})$ is a form core and dense in $B$ and thus $\delta$ seen as a derivation on $B$ is a $C^{*}$-Dirichlet form in the terminology of \cite{CiS}, $\B$ the corresponding Dirichlet algebra.

We assume given a $L^{1}(M,\tau)$-valued product on $\H$, compatible with the Hilbert bimodule structure, i.e defined so that $\tau(\langle \xi,\xi'\rangle_{L^{1}(M,\tau)}a)=\langle \xi,\xi'a\rangle_{\H}$ for any $a\in M$, well-defined using $\sigma$-weak continuity of the action. Thus we consider for $a,b\in\B$, $\Gamma(a,c,b)=\langle c^{*}\delta(a^{*}),\delta(b))\rangle_{L^{1}(M,\tau)}=\langle \delta(a^{*}),c\delta(b))\rangle_{L^{1}(M,\tau)}.$
By Th 9.3 and Lemma 9.1 in \cite{CiS} (using also Th 8.3) we get for $a,c\in\B, b,d\in B$: $$\tau(d^{*}\Gamma(c^{*},1,a)b)= \lim_{\alpha\rightarrow\infty}\frac{1}{2}\tau\left[d^{*}c^{*}\Delta\eta_{\alpha}(a)b+d^{*}\Delta\eta_{\alpha}(c^{*})ab- d^{*}\Delta\eta_{\alpha}(c^{*}a)b\right].$$

Since for $a,c,b\in\B$, $\Gamma(a,c,b)=\Gamma(ac,1,b)-a\Gamma(c,b)$, it is natural to write~:
$$\tilde{\Gamma}_{\alpha}(a,c,b)=\frac{1}{2}\left[\Delta\eta_{\alpha}(ac)b+a\Delta\eta_{\alpha}(cb)-\Delta\eta_{\alpha}(acb)-a\Delta\eta_{\alpha}(c)b\right].$$

so that $\Gamma_{\alpha}(a,c,b)$ converges weakly in $B^{*}$
 to $\Gamma(a,c,b)$ (We will see later norm convergence in $L^1$ in the case we are most interested in). Note moreover that $\Gamma_{\alpha}(a,c,b)\in M$ for $a,c,b\in M$.

Considering now the non-symmetric context, we get for any $a,b,c,d\in\B$ like in lemma 3.1 of \cite{CiS}, but using the standard proposition 1.5(iii) of \cite{NSNC} (lemma \ref{basic} (iii) above)~:
$$\E(cdb^{*},a)+\E(db^{*}a^{*},c^{*})-\E(db^{*},c^{*}a)= \lim_{\alpha\rightarrow\infty}\tau\left[d^{*}c^{*}AG_{\alpha}(a)b+d^{*}AG_{\alpha}(c^{*})ab- d^{*}AG_{\alpha}(c^{*}a)b\right].$$
The assumption of the non symmetric part being a derivation will be assumed stating for any $a,b,c,d\in\B$~:
\begin{equation}\label{derForm}\E(cdb^{*},a)+\E(db^{*}a^{*},c^{*})-\E(db^{*},c^{*}a)=\mathcal{\tilde{E}}(cdb^{*},a)+\mathcal{\tilde{E}}(db^{*}a^{*},c^{*})-\mathcal{\tilde{E}}(db^{*},c^{*}a),\end{equation} so that one gets (more generally for $b,d\in B$):
$$\tau(d^{*}\Gamma(c^{*},1,a)b)= \lim_{\alpha\rightarrow\infty}\frac{1}{2}\tau\left[d^{*}c^{*}AG_{\alpha}(a)b+d^{*}AG_{\alpha}(c^{*})ab- d^{*}AG_{\alpha}(c^{*}a)b\right], $$

and the dual 
$$\tau(d^{*}\Gamma(c^{*},1,a)b)= \lim_{\alpha\rightarrow\infty}\frac{1}{2}\tau\left[d^{*}c^{*}A^{*}\hat{G}_{\alpha}(a)b+d^{*}A^{*}\hat{G}_{\alpha}(c^{*})ab- d^{*}A^{*}\hat{G}_{\alpha}(c^{*}a)b\right].$$

It is natural to write~:
$$\Gamma_{\alpha}(a,c,b)=\frac{1}{2}\left[AG_{\alpha}(ac)b+a AG_{\alpha}(cb)-AG_{\alpha}(acb)-aAG_{\alpha}(c)b\right],$$

so that $\Gamma_{\alpha}(a,c,b)$ converges weakly in $B^{*}$
 to $\Gamma(a,c,b)$. Likewise we define  $\hat{\Gamma}_{\alpha}(a,c,b)$ for $\hat{G}_{\alpha}$. Let us note at this point that a general argument using Stinespring's theorem as in lemmas 3.1 and 3.5 in \cite{CiS} shows $(a_{k}\Gamma_{\alpha}(b_{i},c_{j}c_{j'}^{*},b_{i'}^{*})a_{k'}^{*})_{((i,j,k),(i',j',k'))}$ is a positive matrix and thus (using a duality argument to bound $L^{1}$-norm) one gets via Cauchy-Schwarz and a usual property of states of $C^{*}$ algebras:
$$||\Gamma_{\alpha}(a,cc^{*},b^{*})||_{1}\leq ||\Gamma_{\alpha}(a,cc^{*},a^{*})||_{1}^{1/2}||\Gamma_{\alpha}(b,cc^{*},b^{*})||_{1}^{1/2},$$
$$||\Gamma_{\alpha}(a,cc^{*},a^{*})||_{1}\leq ||c||^{2}||\Gamma_{\alpha}(a,1,a^{*})||_{1}=||c||^{2}\tilde{\E^{\alpha}}(a).$$

\begin{lemma}\label{l3}
Let $a,b\in M$, we have $$\phi_{t}(ab)=\phi_{t}(a)\phi_{t}(b)+\int_{0}^{t}ds\ \phi_{s}(\Gamma(\phi_{t-s}(a),1,\phi_{t-s}(b))$$ where the integral is understood as Bochner integral of a function in $L^{1}([0,t],L^{1}(M,\tau))$. We will call $f_{u,t}^{(2)}(a,1,b)=f_{0,t-u}^{(2)}(a,1,b)=\int_{u}^{t}ds\phi_{s-u}(\Gamma(\phi_{t-s}(a),1,\phi_{t-s}(b))$. We have the analogue for $\phi^{*}$, then we will write $h_{u,t}^{(2)}(a,1,b)=\int_{u}^{t}ds\phi^{*}_{s}(\Gamma(\phi^{*}_{t-s}(a),1,\phi^{*}_{t-s}(b))$.
\end{lemma}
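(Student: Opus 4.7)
The plan is to interpret the identity as a Duhamel (variation-of-constants) formula measuring the obstruction of $\phi_t$ to being a homomorphism. Set $g(s):=\phi_s\bigl(\phi_{t-s}(a)\phi_{t-s}(b)\bigr)$ for $s\in[0,t]$, so that $g(0)=\phi_t(a)\phi_t(b)$ and $g(t)=\phi_t(ab)$. Using $\partial_s\phi_s=-\tfrac{A}{2}\phi_s$, $\partial_s\phi_{t-s}=\tfrac{A}{2}\phi_{t-s}$, together with the formal Leibniz-type identity
$$2\Gamma(u,1,v)=A(u)v+uA(v)-A(uv),$$
which is the $\alpha\to\infty$ limit of the definition $\Gamma_\alpha(u,1,v)=\tfrac{1}{2}[AG_\alpha(u)v+uAG_\alpha(v)-AG_\alpha(uv)]$ recalled above, the derivative computes to $g'(s)=\phi_s\bigl(\Gamma(\phi_{t-s}(a),1,\phi_{t-s}(b))\bigr)$, and the fundamental theorem of calculus delivers the claim.

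I would make this rigorous by first carrying out the calculation at the Yosida level, where the generator $A_\alpha:=AG_\alpha$ is bounded on $M$. With $\phi_{s,\alpha}:=e^{-sA_\alpha/2}$ and the bounded carr\'{e}-du-champ $\Gamma_\alpha$, the above derivation is valid literally inside $M$, with a norm-continuous $M$-valued integrand, yielding
$$\phi_{t,\alpha}(ab)-\phi_{t,\alpha}(a)\phi_{t,\alpha}(b)=\int_0^t\phi_{s,\alpha}\bigl(\Gamma_\alpha(\phi_{t-s,\alpha}(a),1,\phi_{t-s,\alpha}(b))\bigr)\,ds.$$
One then passes to the $\alpha\to\infty$ limit. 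The left-hand side converges in $L^2(M,\tau)$ by the standard Trotter-Kato approximation for Yosida regularizations, combined with Lemma~\ref{basic}(iii). For the integral, uniform $L^1$-integrability in $\alpha$ is secured by the Stinespring/Cauchy-Schwarz inequality stated just before the lemma, $\|\Gamma_\alpha(u,1,v)\|_1\leq \tilde{\mathcal{E}}^{(\alpha)}(u)^{1/2}\tilde{\mathcal{E}}^{(\alpha)}(v^*)^{1/2}$, the $L^1$-contractivity of $\phi_{s,\alpha}$, the energy balance $\int_0^t\tilde{\mathcal{E}}^{(\alpha)}(\phi_{r,\alpha}(x))\,dr\leq\|x\|_2^2$ (obtained by integrating $-\partial_r\|\phi_{r,\alpha}x\|_2^2=2\tilde{\mathcal{E}}^{(\alpha)}(\phi_{r,\alpha}(x))$), and a Cauchy-Schwarz in $s$; this yields an $\alpha$-independent bound by $\|a\|_2\|b\|_2$ and, in the limit, the asserted Bochner integrability.

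Passing to the pointwise-in-$s$ limit inside the integral is the principal technical obstacle, since the natural bound on $\|\Gamma(\phi_{t-s}(a),1,\phi_{t-s}(b))\|_1$ can blow up like $1/(t-s)$ near $s=t$ even though only its integral over $s$ is finite. I would handle this by duality: for $c\in M$, writing $\tau\bigl(c\,\phi_{s,\alpha}(\Gamma_\alpha(\cdot,1,\cdot))\bigr)=\tau\bigl(\phi^*_{s,\alpha}(c)\,\Gamma_\alpha(\cdot,1,\cdot)\bigr)$ and using the polarized form identity together with Lemma~\ref{basic}(iii) reduces the pointwise limit to convergence of $\mathcal{E}^{(\alpha)}$-type expressions, which is standard; the uniform bound above then allows an application of dominated convergence to upgrade to convergence of the Bochner integrals. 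The analogue for $\phi^*_t$ follows by the identical argument applied to the adjoint resolvent $\hat{G}_\alpha$, noting that by (\ref{derForm}) the associated carr\'{e}-du-champ again coincides with $\Gamma$; this provides the definition of $h_{u,t}^{(2)}$.
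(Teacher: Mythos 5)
Your proposal is correct and follows essentially the same route as the paper: a Duhamel/variation-of-constants calculation for $s\mapsto\phi_s(\phi_{t-s}(a)\phi_{t-s}(b))$, with the carré-du-champ identified via the $\Gamma_\alpha\to\Gamma$ convergence established just before the lemma, and $L^1$ integrability secured by the Cauchy--Schwarz/positivity bound for $\Gamma$ together with the energy balance $\int_0^t\tilde{\mathcal E}(\phi_{t-s}(a))\,ds=\|a\|_2^2-\|\phi_t(a)\|_2^2$. The only cosmetic difference is that you first write out the $\alpha$-level integral identity and then pass to the limit, whereas the paper differentiates directly and uses duality with $\phi_s^*(c)\in\mathcal B$ plus the same $\alpha$-limit to identify the derivative; the ingredients and the duality step are identical.
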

\begin{proof}
Bochner measurability is easy
, by polarization, $b=a^{*}$ is enough, in which case $\Gamma(\phi_{t-s}(a),1,\phi_{t-s}(a^{*}))$ is positive (see Lemma 9.1 in \cite{CiS}) thus  the $L^{1}$ norm is $\tilde{\E}(\phi_{t-s}(a))$ known to be of integral $||a||_{2}^{2}-||\phi_{t}(a)||_{2}^{2}$.
 If we consider $\phi_{s}(\phi_{t-s}(a)\phi_{t-s}(b))$ we get a derivative~:
$$\frac{1}{2}\left(\phi_{s}(A\phi_{t-s}(a)\phi_{t-s}(b))+\phi_{s}(\phi_{t-s}(a)A\phi_{t-s}(b))-A\phi_{s}(\phi_{t-s}(a)\phi_{t-s}(b))\right).$$
We easily see using the limit result above, taking scalar product with $c\in M$ (using $\phi_{s}^{*}(c)\in \B$), this is $\phi_{s}(\Gamma(\phi_{t-s}(a),1,\phi_{t-s}(b)))$. Now the previous result and Lebesgue Theorem (applied after taking a scalar product
) gives the result.
\end{proof}

From this we deduce a useful expression for $\Gamma_{\alpha}$:

\begin{lemma}\label{l4}

We have $\Gamma_{\alpha}(.,1,.)=\Gamma_{\alpha}^{(0)}+\Gamma_{\alpha}^{(1)}$ with the following bounded operators (for instance from $M\hat{\o}M\rightarrow L^{1}(M)$ $\hat{\o}$ always denote projective tensor product):
$$\Gamma_{\alpha}^{(0)}(a,1,b):=\alpha^{2}\int_{0}^{\infty}dt \ e^{-\alpha t}(1-\phi_{2t})(a)(1-\phi_{2t})(b),\ \ \ \ \Gamma_{\alpha}^{(1)}:= G_{\alpha}\Gamma G_{\alpha}^{\o},$$
where $G_{\alpha}^{\o}$  is the ``resolvent" for $A^{\o}:=A\o 1+ 1\o A$, $G_{\alpha}$ understood as sending $M\hat{\o}M$ to $D(\delta)\hat{\o}D(\delta)$, $\Gamma$ extended from this space to $L^1(M)$.
\end{lemma}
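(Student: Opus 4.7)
The plan is to turn $\Gamma_\alpha(a,1,b)$ into a single Laplace-type integral, split its integrand into two natural algebraic pieces, and recognize the ``off-diagonal'' piece through Lemma \ref{l3}. Using conservativity (so $G_\alpha(1)=1$ and hence $AG_\alpha(1)=0$), the definition of $\Gamma_\alpha(\cdot,c,\cdot)$ at $c=1$ collapses to
$$\Gamma_\alpha(a,1,b)=\tfrac{1}{2}\bigl[AG_\alpha(a)b+aAG_\alpha(b)-AG_\alpha(ab)\bigr].$$
Combining the resolvent identity $AG_\alpha=\alpha(I-G_\alpha)$, the Laplace representation $G_\alpha=\alpha\int_0^\infty e^{-\alpha t}\phi_{2t}\,dt$ (valid because $\phi_t=e^{-tA/2}$, so $\phi_{2t}=e^{-tA}$), and the trivial $1=\alpha\int_0^\infty e^{-\alpha t}\,dt$ used to absorb the constant terms, I would rewrite $\Gamma_\alpha(a,1,b)$ as a single Bochner integral with integrand proportional to $ab-\phi_{2t}(a)b-a\phi_{2t}(b)+\phi_{2t}(ab)$.

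Next I would split this integrand algebraically as
$$(1-\phi_{2t})(a)(1-\phi_{2t})(b)+\bigl(\phi_{2t}(ab)-\phi_{2t}(a)\phi_{2t}(b)\bigr).$$
The first summand reproduces $\Gamma_\alpha^{(0)}(a,1,b)$ (up to the explicit constant). For the second, Lemma \ref{l3} gives $\phi_{2t}(ab)-\phi_{2t}(a)\phi_{2t}(b)=\int_0^{2t}\phi_s\,\Gamma(\phi_{2t-s}(a),1,\phi_{2t-s}(b))\,ds$; substituting $s=2u$ and applying Fubini with $v=t-u$ decouples the resulting double integral into a product $\bigl(\alpha\int_0^\infty e^{-\alpha u}\phi_{2u}\,du\bigr)=G_\alpha$ on the left and $\bigl(\alpha\int_0^\infty e^{-\alpha v}(\phi_{2v}\o\phi_{2v})\,dv\bigr)=G_\alpha^{\o}$ on the right, sandwiching $\Gamma$. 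This gives $G_\alpha\Gamma G_\alpha^{\o}(a\o b)=\Gamma_\alpha^{(1)}(a\o b)$, completing the decomposition.

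The main technical obstacle is verifying boundedness from $M\hat{\o}M$ to $L^1(M)$ and legitimizing the Bochner--Fubini step. For $\Gamma_\alpha^{(0)}$ this is essentially immediate, since its integrand is norm-bounded in $M$ by $4\|a\|\,\|b\|$, yielding an operator norm at most $4\alpha\|a\|\,\|b\|$. For $\Gamma_\alpha^{(1)}$ the chain of maps must be justified: $G_\alpha^{\o}$ is a contraction on the projective tensor product whose range lies in $D(\delta)\hat{\o}D(\delta)$ (because $G_\alpha(M)\subseteq D(A)\subseteq D(\Delta^{1/2})=D(\delta)$); $\Gamma$ extends boundedly from $D(\delta)\hat{\o}D(\delta)$ to $L^1(M)$ via the Cauchy--Schwarz bound $\|\Gamma_\alpha(a,cc^{*},b^{*})\|_{1}\leq\|c\|^{2}\tilde{\E}^{(\alpha)}(a)^{1/2}\tilde{\E}^{(\alpha)}(b)^{1/2}$ recalled just before the lemma, passing to the limit $\alpha\to\infty$; and finally $G_\alpha$ is a contraction on $L^1(M)$. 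With these three bounds, all integrals converge absolutely in the relevant operator-norm topologies, so Fubini applies without difficulty and the proof is complete.
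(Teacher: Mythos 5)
Your proof follows the paper's own route exactly: conservativity to collapse the $c=1$ case, the Laplace representation $G_\alpha=\alpha\int_0^\infty e^{-\alpha t}\phi_{2t}\,dt$, the algebraic split of the integrand into $(1-\phi_{2t})(a)(1-\phi_{2t})(b)$ plus $\phi_{2t}(ab)-\phi_{2t}(a)\phi_{2t}(b)$, Lemma~\ref{l3} for the second piece, and Fubini to decouple into $G_\alpha\Gamma G_\alpha^{\o}$. You are slightly more explicit about the boundedness of each factor in $\Gamma_\alpha^{(1)}$, where the paper simply justifies Fubini by testing against $\tau(c^*c\,\cdot)$ in the polarized case $a=b^*$, but this is the same argument.
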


\begin{proof}
Start the proof for $a,b\in M$
This is mainly the use of the Laplace transform of (e.g.) Prop 1.10 in \cite{MaR} with our notations~:
 $G_{\alpha}(a)=\alpha\int_{0}^{\infty}dt\ e^{-t\alpha}\phi_{2t}(a).$
(e,g. with $a\in L^{2}(M)$, or $M$), moreover~:
$\Gamma_{\alpha}(a,1,b)=\alpha(1-G_{\alpha})(a)b+a\alpha(1-G_{\alpha})(b)-\alpha(1-G_{\alpha})(ab),$
and we immediately deduce~:
$$\Gamma_{\alpha}(a,1,b)=\Gamma_{\alpha}^{(0)}(a,1,b)+\alpha^{2}\int_{0}^{\infty}dte^{-t\alpha}(\phi_{2t}(ab)-\phi_{2t}(a)\phi_{2t}(b)).$$

From the previous lemma, one deduces~:
\begin{align*}\int_{0}^{\infty}dt\ e^{-t\alpha}(\phi_{2t}(ab)-\phi_{2t}(a)\phi_{2t}(b))&=\int_{0}^{\infty}dt\ e^{-t\alpha}\int_{0}^{t}ds\phi_{2s}(\Gamma(\phi_{2(t-s)}(a),1,\phi_{2(t-s)}(b))\\ &=\int_{0}^{\infty}dse^{-\alpha s}\phi_{2s}(\Gamma(\int_{0}^{\infty}due^{-\alpha u}\phi_{2u}(a)\o 1\o \phi_{2u}(b)).\end{align*}
The use of Fubini Theorem is justified by the case $a=b^{*}$ under $\tau(c^{*}c.), c\in M$.
\end{proof}
\begin{lemma}\label{l5}For any $a,b\in D(A)\cap M$, $||(\Gamma_{\alpha}-\Gamma)(a\o b)||_{1}\rightarrow 0,$ when $\alpha\to\infty.$
\end{lemma}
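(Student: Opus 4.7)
The plan is to invoke the decomposition $\Gamma_\alpha = \Gamma_\alpha^{(0)} + \Gamma_\alpha^{(1)}$ from Lemma~\ref{l4} and treat the two summands separately. For $\Gamma_\alpha^{(0)}$, the hypothesis $a\in D(A)$ gives $(1-\phi_{2t})(a) = \int_0^{2t}(A/2)\phi_s(a)\,ds$, and $L^2$-contractivity of $\phi_s$ yields $\|(1-\phi_{2t})(a)\|_2 \leq t\|Aa\|_2$, similarly for $b$. Using $\|xy\|_1 \leq \|x\|_2\|y\|_2$,
\[ \|\Gamma_\alpha^{(0)}(a,1,b)\|_1 \leq \alpha^2\int_0^\infty e^{-\alpha t}t^2\,dt\cdot\|Aa\|_2\|Ab\|_2 = \frac{2\|Aa\|_2\|Ab\|_2}{\alpha} \to 0. \]

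For $\Gamma_\alpha^{(1)} = G_\alpha\Gamma G_\alpha^\otimes$, split
\[ \Gamma_\alpha^{(1)}(a,1,b) - \Gamma(a,1,b) = G_\alpha\bigl[\Gamma(G_\alpha^\otimes(a\otimes b)) - \Gamma(a,1,b)\bigr] + (G_\alpha-\mathrm{id})(\Gamma(a,1,b)). \]
The second piece vanishes in $L^1$ because $G_\alpha \to \mathrm{id}$ strongly on $L^1(M)$: $G_\alpha$ is $L^1$-contractive (being a trace-preserving positive normal map), $G_\alpha \to \mathrm{id}$ strongly on $L^2$ by Lemma~\ref{basic}(iii), hence on $M\subset L^2\subset L^1$ via the continuous embeddings, and $M$ is norm-dense in $L^1$. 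The first piece is bounded via $L^1$-contractivity of $G_\alpha$, reducing matters to proving $\|\Gamma(G_\alpha^\otimes(a\otimes b)) - \Gamma(a,1,b)\|_1 \to 0$.

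Expanding $G_\alpha^\otimes(a\otimes b) = \alpha\int_0^\infty e^{-\alpha t}\phi_{2t}(a)\otimes\phi_{2t}(b)\,dt$ and using $\alpha\int_0^\infty e^{-\alpha t}\,dt = 1$ and bilinearity of $\Gamma$, the difference equals $\alpha\int_0^\infty e^{-\alpha t}[\Gamma(\phi_{2t}(a),1,\phi_{2t}(b)) - \Gamma(a,1,b)]\,dt$. Telescoping one variable at a time, and invoking the $L^1$-Cauchy-Schwarz bound
\[ \|\Gamma(x,1,y)\|_1 \leq \tilde{\E}(x)^{1/2}\tilde{\E}(y)^{1/2}, \]
derived from $L^1$-duality $\|z\|_1 = \sup\{|\tau(\phi z)|:\|\phi\|_\infty\leq 1\}$ together with $\tau(\phi\Gamma(x,1,y)) = \langle \delta(x^*), \phi\delta(y)\rangle_{\H}$ and Cauchy-Schwarz in $\H$ (using $\tilde{\E}(x^*) = \tilde{\E}(x)$ from realness of $\delta$), bounds the integrand by a function $g(t)$ with $g(t)\to 0$ as $t\to 0^+$. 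Here we use $a,b\in D(A)\subset D(\E)=D(\tilde{\E})$, $\ast$-invariance of $D(\tilde{\E})$, and strong continuity of $\phi_{2t}$ on $D(\E)$ in its form norm (standard for coercive closed forms). A uniform bound on $g$ follows from coercivity and the boundedness of $t\mapsto \phi_{2t}(b)$ in $D(\E)$. Substituting $s=\alpha t$ and applying dominated convergence shows $\int_0^\infty e^{-s}g(s/\alpha)\,ds \to 0$, concluding the proof.

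The main obstacle lies in this last step: controlling $\Gamma(G_\alpha^\otimes(a\otimes b)) - \Gamma(a,1,b)$ in the possibly non-symmetric framework requires both an $L^1$-Cauchy-Schwarz estimate expressed in terms of the symmetric part $\tilde{\E}$ and strong form-continuity of the asymmetric semigroup $\phi_{2t}$ on $D(\E)$, both resting on the $\ast$-invariance $D(\E) = D(\tilde{\E})$ and realness of $\delta$.
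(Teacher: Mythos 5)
Your argument is correct and follows essentially the same strategy as the paper: decompose via Lemma~\ref{l4}, control $\Gamma_\alpha^{(0)}$ by the $D(A)$ estimate $\|(1-\phi_{2t})(a)\|_2\leq 2t\|Aa\|_2$ and the concentration of $\alpha^2t e^{-\alpha t}\,dt$ at $0$, then split $\Gamma_\alpha^{(1)}-\Gamma$ into $(G_\alpha-1)\Gamma$ and $G_\alpha(\Gamma G_\alpha^\otimes-\Gamma)$ and use the Laplace-transform representation with the $L^1$--Cauchy--Schwarz bound $\|\Gamma(x,1,y)\|_1\leq\tilde{\E}(x)^{1/2}\tilde{\E}(y)^{1/2}$ together with dominated convergence. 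The only stylistic departure is in the final step: you invoke strong form-continuity of $\phi_{2t}$ on $D(\E)$ abstractly, whereas the paper makes the bound explicit through the interpolation inequality $\tilde{\E}((\phi_{2t}-\mathrm{id})(a))\leq\|(\phi_{2t}-\mathrm{id})Aa\|_2\|(\phi_{2t}-\mathrm{id})a\|_2$ (for $a$ self-adjoint, after polarization) --- these are two routes to the same pointwise vanishing of the integrand, and both close by dominated convergence.
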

\begin{proof}
We can first use Cauchy-Schwarz~:
$$||\Gamma_{\alpha}^{(0)}(a\o b)||_{1}\leq \left(\int_{0}^{\infty}dt\ \alpha^{2} e^{-\alpha t} ||1-\phi_{2t}(a)||_{2}^{2}\right)^{1/2}\left(\int_{0}^{\infty}dt\ \alpha^{2} e^{-\alpha t} ||1-\phi_{2t}(b)||_{2}^{2}\right)^{1/2}.$$
For $a\in D(A)$, we have $||1-\phi_{t}(a)||_{2}\leq t ||A(a)||_{2}$ implying 
\begin{align*}\int_{0}^{\infty}&dt\ \alpha^{2} e^{-\alpha t} ||1-\phi_{2t}(a)||_{2}^{2}\leq \int_{0}^{\infty}dt\ \alpha^{2} e^{-\alpha t}4t^{2}||A(a)||_{2}^{2}\end{align*}
Since the measures $dt1_{|0,\infty]}\alpha^{2} e^{-\alpha t}t$ converges weakly to a Dirac in $0$ 
 the last term goes to $0$.

For $a,b\in D(A)$, self-adjoints (so that $||\Delta^{1/2}a||_{2}^{2}=\E(a)$).
\begin{align*}&||\Gamma_{\alpha}^{(1)}- \Gamma(a\o b)||_{1} \leq||\Gamma G_{\alpha}^{\o}-\Gamma(a\o b)||_{1}+ ||(G_{\alpha}-1)\Gamma(a\o b)||_{1}\\ &\leq ||(G_{\alpha}-1)\Gamma(a\o b)||_{1}+\\&\int_{0}^{\infty}dt\ \alpha e^{-\alpha t}(||\Delta^{1/2}(\phi_{2t}-id)(a)||_{2}||\Delta^{1/2}\phi_{2t}(b)||_{2}+||\Delta^{1/2}(\phi_{2t}-id)(b)||_{2}||\Delta^{1/2}(a)||_{2})\\ &\leq ||(G_{\alpha}-1)\Gamma(a\o b)||_{1}+\int_{0}^{\infty}dt\ \alpha e^{-\alpha t}(||(\phi_{2t}-id)A(a)||_{2}^{1/2}||A(b)||_{2}^{1/2}||b||_{2}^{1/2}||(\phi_{2t}-id)(a)||_{2}^{1/2}+\\ &||(\phi_{2t}-id)A(b)||_{2}^{1/2}||A(a)||_{2}^{1/2}||a||_{2}^{1/2}||(\phi_{2t}-id)(b)||_{2}^{1/2}).\end{align*}
and this converges to 0 since the measures $dt1_{|0,\infty]}\alpha e^{-\alpha t}$ converges weakly to a Dirac in $0$.
\end{proof}
We know need an improvement of the previous lemma. But before, let us knight a computation to become a lemma, in the spirit of the tricks we will use a lot in the next part.
\begin{lemma}\label{l6}For any $a,b\in M$
$$||f_{u,t}^{(2)}(a,1,b)||_{2}^{2}=\int_{u}^{t}2\Re \tau\left( (\phi_{s-u}f_{s,t}^{(2)}(a,1,b))^{*}\phi_{s-u}(\Gamma(\phi_{t-s}(a),\phi_{t-s}(b)))\right) $$
$$\int_{u}^{t}ds||\Delta^{1/2}(f_{s,t}^{(2)}(a,1,b))||_{2}^{2}=-||f_{u,t}((a,1,b)||_{2}^{2}+\int_{0}^{t}2\Re \tau\left( (f_{s,t}^{(2)}(a,1,b))^{*}(\Gamma(\phi_{t-s}(a), \phi_{t-s}(b)))\right).$$
\end{lemma}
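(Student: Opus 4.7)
My plan is to prove both identities by recognizing each integrand as $F(u) - F(t)$ (or similar) for an appropriate function $F$, and then computing the derivative pointwise in $s$.

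For the first identity, the key observation is that although $f_{s,t}^{(2)}(a,1,b)$ depends on $s$ in a complicated way (through both the lower limit of integration and the internal $\phi_{r-s}$), the composite $h(s) := \phi_{s-u}(f_{s,t}^{(2)}(a,1,b))$ simplifies dramatically: by the semigroup property
\[h(s)=\int_s^t dr\, \phi_{r-u}\bigl(\Gamma(\phi_{t-r}(a),1,\phi_{t-r}(b))\bigr),\]
so $s$ only enters through the lower limit. Consequently, on a core of nice elements (with $a,b \in D(A)\cap M$, so that the $L^1$-integrand $\Gamma(\phi_{t-r}(a),1,\phi_{t-r}(b))$ and the $L^2$ norm of $\phi_{r-u}$ applied to it are strongly measurable), $h$ is absolutely continuous in $L^2(M,\tau)$ with $h'(s)=-\phi_{s-u}(\Gamma(\phi_{t-s}(a),1,\phi_{t-s}(b)))$. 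Setting $F(s)=\|h(s)\|_2^2=\tau(h(s)^*h(s))$, one has $F(t)=0$ (the empty integral) while $F(u)=\|f_{u,t}^{(2)}(a,1,b)\|_2^2$; computing $F'(s)=2\Re\tau(h(s)^*h'(s))$ and integrating $F(u)-F(t)=-\int_u^t F'(s)\,ds$ gives the first identity. The general case $a,b\in M$ follows by density and the contraction estimates on $\Gamma_{\alpha}$ recalled just before the lemma.

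For the second identity, I would differentiate $f_{s,t}^{(2)}(a,1,b)$ itself with respect to $s$. Writing out $f_{s,t}^{(2)}(a,1,b)=\int_s^t dr\,\phi_{r-s}(\Gamma(\phi_{t-r}(a),1,\phi_{t-r}(b)))$ and using $\tfrac{d}{ds}\phi_{r-s}=\tfrac{A}{2}\phi_{r-s}$ together with the lower-limit contribution yields, for $a,b\in D(A)\cap M$,
\[\tfrac{d}{ds}f_{s,t}^{(2)}(a,1,b)=-\Gamma(\phi_{t-s}(a),1,\phi_{t-s}(b))+\tfrac{A}{2}f_{s,t}^{(2)}(a,1,b).\]
Taking the $L^2$-inner product with $f_{s,t}^{(2)}$ and using the identity $\Re\tau(x^*Ax)=\tau(x^*\Delta x)=\|\Delta^{1/2}x\|_2^2$ (which comes from writing $\E=\tilde\E+(\text{antisymmetric part})$, valid on $D(\E)$ and extending by closure), one obtains
\[\tfrac{d}{ds}\|f_{s,t}^{(2)}(a,1,b)\|_2^2=-2\Re\tau\bigl((f_{s,t}^{(2)})^*\Gamma(\phi_{t-s}(a),1,\phi_{t-s}(b))\bigr)+\|\Delta^{1/2}f_{s,t}^{(2)}\|_2^2.\]
Integrating from $u$ to $t$ using $f_{t,t}^{(2)}=0$ gives exactly the stated formula (the lower endpoint should be $u$, not $0$, in the statement, but this is clearly a typo given the $f_{u,t}$ term appearing on the right).

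The main obstacle I anticipate is not the calculus itself but justifying the derivative in $L^2$ when $a,b$ are merely in $M$: the map $s\mapsto f_{s,t}^{(2)}(a,1,b)$ need not be differentiable without regularity, and the second identity features the potentially unbounded $\|\Delta^{1/2}f_{s,t}^{(2)}\|_2^2$. The natural cure is to prove the identity first for $a,b\in D(A)\cap M$, where everything is strongly differentiable, and then extend by density, using that $\|f_{s,t}^{(2)}\|_2$ is controlled by $\int_s^t\|\Gamma(\phi_{t-r}(a),1,\phi_{t-r}(b))\|_1\,dr\leq \int_s^t\tilde\E^{(1/(t-r))}(a,a)^{1/2}\tilde\E^{(1/(t-r))}(b,b)^{1/2}\,dr$ type estimates available from Lemma \ref{basic} and the Cauchy--Schwarz bound on $\Gamma_\alpha$ stated above Lemma \ref{l3}.
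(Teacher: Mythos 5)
Your argument for the first identity is sound and actually cleaner than the paper's: rewriting $h(s)=\phi_{s-u}(f_{s,t}^{(2)})$ so that $s$ enters only through the lower limit of integration, then differentiating $\tau(h(s)^*h(s))$ via the $M$--$L^1$ duality (using $h$ uniformly bounded in $M$ and $L^1$-absolutely continuous), gives the first formula directly without any $\alpha$-approximation. (You should phrase the absolute continuity as $L^1$-valued rather than $L^2$-valued, since $\Gamma(\phi_{t-s}(a),1,\phi_{t-s}(b))$ is a priori only in $L^1$; but the argument goes through via the pairing.) You are also right that the $\int_0^t$ on the right-hand side of the second identity is a typo for $\int_u^t$.

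For the second identity there is a genuine gap, and the cure you propose does not repair it. Even for $a,b\in D(A)\cap M$, the element $f_{s,t}^{(2)}(a,1,b)=\phi_{t-s}(ab)-\phi_{t-s}(a)\phi_{t-s}(b)$ lies in $D(\Delta^{1/2})$ (since $\phi_{t-s}(ab)\in D(A)$ by analyticity of the coercive semigroup, and $\phi_{t-s}(a)\phi_{t-s}(b)$ is a product in the Dirichlet algebra $D(\Delta^{1/2})\cap M$), but \emph{not} in $D(A)$: unlike $D(\Delta^{1/2})\cap M$, the set $D(A)\cap M$ is not an algebra, so $\phi_{t-s}(a)\phi_{t-s}(b)$ has no reason to be in $D(A)$. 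Consequently the purported derivative $\tfrac{d}{ds}f_{s,t}^{(2)}=-\Gamma+\tfrac{A}{2}f_{s,t}^{(2)}$ cannot be read termwise in $L^2$; the quantity $\tfrac{A}{2}f_{s,t}^{(2)}$ is only defined as a difference of $L^1$ and $L^2$ objects, and the "differentiation under the integral sign" that would produce $\tfrac{A}{2}f_{s,t}^{(2)}$ from $\int_s^t\phi_{r-s}(\Gamma)\,dr$ does not converge ($A\phi_{r-s}$ blows up like $1/(r-s)$ near $r=s$). The identity $\Re\tau(x^*Ax)=\|\Delta^{1/2}x\|_2^2$ that you invoke is legitimate on $D(A)$ and extends to $D(\mathcal{E})$ via the form closure, but to apply it here you must first \emph{identify} the $L^1$-element $\tfrac{d}{ds}f_{s,t}^{(2)}+\Gamma$ with the form-theoretic $A$-value of $f_{s,t}^{(2)}$, i.e. show that $\tau\bigl((f_{s,t}^{(2)})^*(\tfrac{d}{ds}f_{s,t}^{(2)}+\Gamma)\bigr)=\lim_\beta\tau\bigl((f_{s,t}^{(2)})^*AG_\beta(f_{s,t}^{(2)})\bigr)=\mathcal{E}(f_{s,t}^{(2)},f_{s,t}^{(2)})$. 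This is precisely the resolvent-limit argument (Lemma~\ref{basic} (ii$'$), (iii), together with Fatou and dominated convergence) that the paper carries out after applying the bounded Yosida generator $AG_\alpha$ and a $G_\beta$-regularization of $f^{(2)}$ — the gesture "extending by closure" in your proposal names the needed step but does not supply it, and the restriction to $a,b\in D(A)\cap M$ does not make it disappear. Separately, the paper's proof also establishes the convergences $\int_u^t\tilde{\mathcal{E}}_1^\alpha(f_{s,t}^{(2,\alpha)}-f_{s,t}^{(2)})\,ds\to 0$ that are consumed in Lemma~\ref{GammaLim} and the induction; your argument, even repaired, would not yield those.
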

\begin{proof}
Without loss of generality, $a,b$ self-adjoints in $M$. 
Let us define the $\alpha$ variant~:
$$f_{u,t}^{(2,\alpha)}(a,1,b)=\int_{u}^{t}ds\phi_{s-u,\alpha}(\Gamma_{\alpha}(\phi_{t-s,\alpha}(a),1,\phi_{t-s,\alpha}(b))=\phi_{t-u,\alpha}(ab)-\phi_{t-u,\alpha}(a)\phi_{t-u,\alpha}(b)$$
 We prove first $\sigma$-weak convergence in $M$ of this quantity to $f_{u,t}^{(2)}(a,1,b)$. Of course by boundedness in $M$, it suffices to prove convergence in $L^{1}(M)$. Now the previous lemma proves $(\Gamma-\Gamma^{\alpha})(\phi_{t-s}(a),1,\phi_{t-s}(b))$ converges in $L^{1}([u,t],L^{1}(M))$ to $0$ since $\phi_{t-s}(a)\in D(A)$ gives pointwise convergence and a dominated convergence theorem (DCT) concludes via domination by $(1+16(K+1)^{2})\tilde{\E}_{1}(\phi_{t-s}(a))^{1/2}\tilde{\E}_{1}(\phi_{t-s}(b))^{1/2}$ (cf. lemma \ref{basic} (i)). Moreover $||\Gamma^{\alpha}(\phi_{t-s}(a)-\phi_{t-s,\alpha}(a),\phi_{t-s,\alpha}(b))||_{1}\leq \tilde{\E}_{1}^{\alpha}(\phi_{t-s}(a)-\phi_{t-s,\alpha}(a))^{1/2}\tilde{\E}_{1}^{\alpha}(\phi_{t-s,\alpha}(b))^{1/2}$
Let us show the integral of this indeed goes to $0$. For, note $$\int_{u}^{t}ds\ \tilde{\E}_{1}^{\alpha}(\phi_{t-s}(a))\rightarrow \int_{u}^{t}ds\ \tilde{\E}_{1}(\phi_{t-s}(a)),$$ by pointwise convergence and DCT.
 Likewise, $$\int_{u}^{t}ds\ \tilde{\E}_{1}^{\alpha}(\phi_{t-s,\alpha}(a))=||\phi_{u,\alpha}(a)||_{2}^{2}-||\phi_{t,\alpha}(a)||_{2}^{2}\rightarrow \int_{0}^{t}ds\ \tilde{\E}_{1}(\phi_{t-s}(a))=||\phi_{u}(a)||_{2}^{2}-||\phi_{t}(a)||_{2}^{2}.$$
Finally for any $\gamma$,
\begin{align*}\int_{u}^{t}ds\ |\E_{1}^{\alpha}(\hat{G_{\gamma}}\phi_{t-s}(a),\phi_{t-s,\alpha}(a)-\phi_{t-s}(a))|&\leq \int_{0}^{t}ds\ ||A^{*}\hat{G_{\gamma}}\phi_{t-s}(a)||_{2} ||\phi_{t-s,\alpha}(a)-\phi_{t-s}(a))||_{2}\\ &\leq 2\gamma ||a||_{2}\int_{0}^{t}||\phi_{t-s,\alpha}(a)-\phi_{t-s}(a))||_{2}ds ,\end{align*}
also goes to $0$, and (lemma \ref{basic} (i) again)
\begin{align*}\int_{u}^{t}ds\ |\E_{1}^{\alpha}&((\hat{G_{\gamma}}-id)\phi_{t-s}(a),\phi_{t-s,\alpha}(a)-\phi_{t-s}(a))|
\\ &\leq 4(K+1)\left(\int_{u}^{t}ds\ \tilde{\E}_{1}((\hat{G_{\gamma}}-id)\phi_{t-s}(a))\right)^{1/2}\left(\int_{u}^{t}ds\ \tilde{\E_{1}}^{\alpha}(\phi_{t-s,\alpha}(a)-\phi_{t-s}(a)))\right)^{1/2},\end{align*}
the second integral is bounded independently of $\alpha$ and the first goes to zero in $\gamma\rightarrow\infty$ by DCT. The symmetric case $\int_{u}^{t}ds |\E_{1}^{\alpha}(\phi_{t-s,\alpha}(a)-\phi_{t-s}(a),\phi_{t-s}(a))|\rightarrow 0$ is easier.

Putting everything together, and using Cauchy-Schwarz, one gets the claimed convergence.

We now come back to a formula for the $L^{2}$-norm~: $$||f_{u,t}^{(2,\alpha)}(a,1,b)||_{2}^{2}=\int_{u}^{t}2\Re \tau\left( (\phi_{s,\alpha}f_{s,t}^{(2,\alpha)}(a,1,b))^{*}\phi_{s,\alpha}(\Gamma_{\alpha}(\phi_{t-s,\alpha}(a),\phi_{t-s,\alpha}(b)))\right) $$

the Fubini Theorem used being justified since $\Gamma_{\alpha}$ valued in $L^{2}$ (even in $M$). Using boundedness in $M$ of $f_{s,t}^{(2,\alpha)}$, we get as above convergence of the second term to the corresponding term without $\alpha$ (using at the end the weak convergence first proven), and thus, this proves the first formula. Or Rather, Using a variant with $\alpha,\beta$ scalar product and previously proven weak convergence in $M$ one gets the limit is actually $||f_{s,t}^{(2)}||_{2}$ and as a consequence norm $\|.\|_2$ convergence.  Similarly, we get formulas (idem without $\tilde{\E}_{1}^{\alpha}$ and $AG_{\alpha}$, or without $G_{\beta}$)~: \begin{align*}\tilde{\E}_{1}^{\alpha}(G_{\beta}(f_{u,t}^{(2,\alpha)}))=\int_{s}^{t}du&\Re\tau((1+AG_{\alpha})G_{\beta}(\phi_{u-s}^{\alpha}(f_{u,t}^{(2,\alpha)*})) G_{\beta}\phi_{u-s}^{\alpha}\Gamma_{\alpha}(\phi_{t-s,\alpha}(a),\phi_{t-s,\alpha}(b)))\\&+\Re\tau(G_{\beta}(\phi_{u-s}^{\alpha}(f_{u,t}^{(2,\alpha)*})) (1+AG_{\alpha})G_{\beta}\phi_{u-s}^{\alpha}\Gamma_{\alpha}(\phi_{t-s,\alpha}(a),\phi_{t-s,\alpha}(b))).\end{align*}

Now note that the derivative in $u$ of $||G_{\beta}f_{u,t}^{(2,\alpha)}||_{2}^{2}$ is~:

$\tilde{\E}_{1}^{\alpha}(G_{\beta}(f_{u,t}^{(2,\alpha)}))-||G_{\beta}f_{u,t}^{(2,\alpha)}||_{2}^{2}-2\Re\tau(G_{\beta}(f_{u,t}^{(2,\alpha)*}) G_{\beta}\Gamma_{\alpha}(\phi_{t-u,\alpha}(a),\phi_{t-u,\alpha}(b)))$

so that we get~:\begin{align*}\int_{s}^{t}du\ \tilde{\E}_{1}^{\alpha}(G_{\beta}(f_{u,t}^{(2,\alpha)}))&=-||G_{\beta}f_{s,t}^{(2,\alpha)}||_{2}^{2}+\int_{s}^{t}du\ ||G_{\beta}f_{u,t}^{(2,\alpha)}||_{2}^{2}\\ &+\int_{s}^{t}du\ 2\Re\tau(G_{\beta}(f_{u,t}^{(2,\alpha)*}) G_{\beta}\Gamma_{\alpha}(\phi_{t-u,\alpha}(a),\phi_{t-u,\alpha}(b))).\end{align*}

Taking $\alpha\rightarrow\infty$ (we keep equality at this stage) and then $\beta\rightarrow\infty$ (using for this second limit first Fatou's lemma to get $\int_{s}^{t}du\liminf_{\beta}\tilde{\E}_{1}(G_{\beta}(f_{u,t}^{(2)})))<\infty$ so that by lemma \ref{basic} (ii)' 
 one gets almost  surely $f_{u,t}^{(2)}\in D(\E)$ and thus by (iii) the liminf is actually a lim equal to $\tilde{\E}_{1}((f_{u,t}^{(2)})))$, one can then get equality applying DCT, with domination (via (i),(i)' of lemma \ref{basic}) by  $16(K+1)^{2}$ times the limit now already known to be integrable), one concludes the proof of the second formula.  If we first take $\beta\rightarrow\infty$, and then $\alpha\rightarrow\infty$, one thus deduces $$\int_{s}^{t}du\ \tilde{\E}_{1}^{\alpha}(f_{u,t}^{(2,\alpha)})\rightarrow \int_{s}^{t}du\ \tilde{\E}_{1}(f_{u,t}^{(2)}).$$
Finally (we give a stronger convergence for further use using an argument used earlier in this proof) for any $\gamma$,
\begin{align*}\int_{u}^{t}ds &\ |\E_{1}^{\alpha}(\hat{G_{\gamma}}f_{u,t}^{(2)}(a,1,b),f_{u,t}^{(2,\alpha)}(a,1,b)-f_{u,t}^{(2)}(a,1,b))|\\ &\leq \int_{0}^{t}ds\  ||A^{*}\hat{G_{\gamma}}f_{u,t}^{(2)}(a,1,b)||_{2} ||f_{u,t}^{(2,\alpha)}(a,1,b)-f_{u,t}^{(2)}(a,1,b))||_{2}\\ &\leq 2\gamma ||f_{u,t}^{(2)}(a,1,b)||_{2}\int_{0}^{t}||f_{u,t}^{(2,\alpha)}(a,1,b)-f_{u,t}^{(2)}(a,1,b))||_{2}ds ,\end{align*}
also goes to $0$, and 
\begin{align*}&\int_{u}^{t}ds\  |\E_{1}^{\alpha}((\hat{G_{\gamma}}-id)f_{u,t}^{(2)}(a,1,b)),f_{u,t}^{(2,\alpha)}(a,1,b)-f_{u,t}^{(2)}(a,1,b)))|
\\ &\leq 4(K+1)\left(\int_{u}^{t}ds\ \tilde{\E}_{1}((\hat{G_{\gamma}}-id)f_{u,t}^{(2)}(a,1,b))\right)^{1/2}\left(\int_{u}^{t}ds\ \tilde{\E}_{1}^{\alpha}(f_{u,t}^{(2,\alpha)}(a,1,b)-f_{u,t}^{(2)}(a,1,b)))\right)^{1/2},\end{align*}
the second integral is bounded independently of $\alpha$ and the first goes to zero in $\gamma\rightarrow\infty$ by DCT. The symmetric case $\int_{u}^{t}ds |\E_{1}^{\alpha}(f_{u,t}^{(2,\alpha)}(a,1,b)-f_{u,t}^{(2)}(a,1,b)),f_{s,t}^{(2)}(a,1,b))))|\rightarrow 0$ is easier.
Summing up we get~:
$$\int_{s}^{t}du\ \tilde{\E}_{1}^{\alpha}(f_{u,t}^{(2,\alpha)}(a,1,b)-f_{u,t}^{(2)}(a,1,b)))\rightarrow 0.$$
\end{proof}

\begin{lemma}\label{GammaLim}
If $\xi_{\alpha}=f\o g^{\alpha}\o h$ with $f,h\in D(\Delta)\cap D(A)\cap M, g^{\alpha}\in M$ uniformly bounded and convergent to $g^{\infty}$ in $L^{2}(M)$, then 
$$||\Gamma_{\alpha}(\xi_{\alpha})-\Gamma(\xi_{\infty})||_{1}\rightarrow 0.$$
Especially, if $D(\Delta)\cap D(A)\cap M$ is a core for $D(\Delta^{1/2})$ (e.g. $A=\Delta$) then this is true for any $f,h\in D(\Delta^{1/2})$.
\end{lemma}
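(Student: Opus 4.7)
The plan is to exploit the integral representation $G_\alpha(x) = \alpha\int_0^\infty e^{-\alpha t}\phi_{2t}(x)\,dt$ together with the multiplicative-defect identity of Lemma \ref{l3}, $\phi_{2t}(ab) = \phi_{2t}(a)\phi_{2t}(b) + f_{0,2t}^{(2)}(a,1,b)$, to rewrite $\Gamma_\alpha$ in a form where the limit $\alpha\to\infty$ becomes transparent. Starting from the defining formula $2\Gamma_\alpha(f,g,h) = AG_\alpha(fg)h + fAG_\alpha(gh) - AG_\alpha(fgh) - fAG_\alpha(g)h$, substituting $AG_\alpha = \alpha(1-G_\alpha)$ and the Laplace representation of $G_\alpha$ yields
$$2\Gamma_\alpha(f, g, h) = \alpha^2\int_0^\infty e^{-\alpha t}\bigl[\phi_{2t}(fgh) - \phi_{2t}(fg)h - f\phi_{2t}(gh) + f\phi_{2t}(g)h\bigr]\,dt.$$
Repeated application of Lemma \ref{l3} decomposes the integrand as a sum of three pieces, each vanishing at $t=0$: a \emph{double-endpoint} term $[\phi_{2t}(f)-f]\phi_{2t}(g)[\phi_{2t}(h)-h]$, a \emph{mixed} term $f_{0,2t}^{(2)}(f,1,g)[\phi_{2t}(h)-h]$, and a \emph{leading} term $f_{0,2t}^{(2)}(fg,1,h) - f\cdot f_{0,2t}^{(2)}(g,1,h)$.

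Using the regularity $f, h \in D(A)$, the estimates $||\phi_{2t}(f) - f||_2 \le t\,||Af||_2$ (and similarly for $h$) combine with the uniform $M$-bound on $g^\alpha$ and a Hölder/$L^p$-interpolation argument to produce, for the first two pieces, $L^1$-integrands of order at least $t^{5/4}$ in $t$; after integration against $\alpha^2 e^{-\alpha t}\,dt$ these contribute vanishing amounts uniformly in $g^\alpha$ bounded in $M$. For the leading piece, Lemma \ref{l3} expresses $f_{0,2t}^{(2)}(a,1,b) = \int_0^{2t}\phi_s[\Gamma(\phi_{2t-s}(a),1,\phi_{2t-s}(b))]\,ds$, and the derivation identity $\Gamma(fg,1,h) - f\,\Gamma(g,1,h) = \Gamma(f, g, h)$ (a direct consequence of the Leibniz structure of $\Gamma$) shows that its $L^1$-behaviour near $t=0$ is $2t\,\Gamma(f,g,h) + o(t)$; since $\alpha^2\int_0^\infty e^{-\alpha t}(2t)\,dt = 2$, this recovers $\Gamma(f, g^\alpha, h)$ in the limit.

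It remains to show $\Gamma(f, g^\alpha, h) \to \Gamma(f, g^\infty, h)$ in $L^1$. This would follow from the representation $\Gamma(f,g,h) = \langle g^*\delta(f^*), \delta(h)\rangle_{L^1(M,\tau)}$ together with normality of the $M$-bimodule action on $\H$: $L^2$-convergence with uniform $M$-bound is equivalent to $*$-strong convergence on bounded sets, hence $g^\alpha\delta(h) \to g^\infty\delta(h)$ in $\H$-norm, and the $L^1$-convergence of the carré-du-champ then follows by Cauchy-Schwarz. The main obstacle I anticipate is precisely this $\H$-norm convergence: it requires that left multiplication on the closed subspace generated by $\delta(D(\delta))$ be $*$-strongly continuous on $M$-bounded sets, which holds e.g. for the coarse bimodule and, more generally, is a standard consequence of the normality of the bimodule structure underlying the Dirichlet form. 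The final ``especially'' assertion for $f, h \in D(\Delta^{1/2})$ extends by a density argument: approximate $f, h$ in the form norm $\tilde{\E}_1$ by $f_n, h_n \in D(\Delta)\cap D(A)\cap M$ and interchange the limits $\alpha\to\infty$ and $n\to\infty$ using the uniform Cauchy-Schwarz bound $||\Gamma_\alpha(a, g, b)||_1 \le C||g||_M\,\tilde{\E}^\alpha(a)^{1/2}\tilde{\E}^\alpha(b)^{1/2}$ together with $\tilde{\E}^\alpha(f_n - f) \to \tilde{\E}(f_n - f)$ from Lemma \ref{basic}.
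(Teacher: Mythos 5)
Your overall skeleton matches the paper's: the same Laplace representation of $G_\alpha$, the same decomposition of the integrand into a double-endpoint piece, a mixed piece, and a leading piece (this is exactly the $\Gamma_\alpha^{(0)} + \Gamma_\alpha^{(1)}$ split of Lemma~\ref{l4} together with the Claim inside the paper's proof), the same Cauchy--Schwarz bounds for the easy pieces, and the same density argument for the ``especially'' assertion. The step where you pass from $\Gamma(f,g^\alpha,h)$ to $\Gamma(f,g^\infty,h)$ via normality of the bimodule action is also fine and is essentially what the paper does with its Cauchy--Schwarz bound $\tau(\Gamma(f^*,f)(\phi_t(g_\alpha)-g_\infty)(\phi_t(g_\alpha)-g_\infty)^*)^{1/2}\|\Delta^{1/2}h\|_2$.

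The gap is in the assertion that the leading piece $f_{0,2t}^{(2)}(fg,1,h) - f\cdot f_{0,2t}^{(2)}(g,1,h)$ equals $2t\,\Gamma(f,g,h) + o(t)$ in $L^1$, \emph{uniformly in the family $g^\alpha$ bounded in $M$}. Expanding it via Lemma~\ref{l3} and the Leibniz identity produces error terms involving $f_{s,t}^{(2)}(f,1,g)$ inside $\Gamma$, and the only estimate available at that stage of the paper (the ``preliminary estimate'') is
$$\Bigl(\int_0^t\|\Delta^{1/2}f_{s,t}^{(2)}(f,1,g)\|_2^2\,ds\Bigr)^{1/2}\ \lesssim\ c'(f,g)^{1/2}\,t^{1/4},$$
so that after Cauchy--Schwarz against $\|\Delta^{1/2}\phi_{t-s}(h)\|_2$ the contribution of this error is of order $t^{3/4}$, which is \emph{larger} than $t$ near $0$, not $o(t)$. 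Fed into $\alpha^2\int_0^\infty e^{-\alpha t}\,(\cdot)\,dt$ it produces $\sim\alpha^{1/4}$, which diverges. The bulk of the paper's proof of this lemma --- the choice of $\delta$, $y_\epsilon$, $\gamma$, $x_\epsilon$, and the long chain of inequalities culminating in
$$\Bigl(\int_u^t\|\Delta^{1/2}f_{s,t}^{(2)}(f,1,g)\|_2^2\,ds\Bigr)^{1/2}\ \le\ \sqrt{\epsilon}\,C(f,g)\sqrt{t-u}\ +\ d(f,g,\gamma,\epsilon,\delta)\,(t-u)^{5/8}(1+(t-u)^{1/8})$$
--- exists precisely to replace the $t^{1/4}$ bound by this $\sqrt{\epsilon}\,\sqrt{t}$ plus lower order, so that the Laplace integral of the error stays bounded by a quantity $\lesssim\sqrt{\epsilon}$ at $\limsup_{\alpha\to\infty}$, after which one lets $\epsilon\to 0$. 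This is where the paper exploits the approximability of $g$ ``only in $\|\cdot\|_2$'' by regular elements (via $G_\gamma(g)$) on the problematic term alone, as the remark preceding the proof stresses; your outline applies no such improvement and would not close the argument for $g$ merely bounded and $L^2$-convergent.
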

Of course the whole statement is in the (almost) absence of assumption on $g$, at least in terms of domain of $\Delta$, we don't want to assume even $g\in B$, in which case the result would be rather easy. The crucial point will be to approximate $g$ only in $||.||_{2}$ with elements in $D(A)$ but this only on the problematic term. The main part of the proof will deal with estimates for all other terms.

\begin{proof}
Let us recall 
$\Gamma_{\alpha}^{(i)}(f\o g\o h)=\Gamma_{\alpha}^{(i)}(fg\o h)-f\Gamma_{\alpha}^{(i)}(g\o h)$.

Without loss of generality, $\xi= f\o g^{\alpha} \o h$, $f,g_{\alpha},h$ self-adjoints in $M$, $f,h\in D(A)\cap D(\Delta)$. Indeed via $||\Gamma_{\alpha}(f\o g \o h)||_{1}\leq C ||g|| \tilde{\E}^{\alpha}_{1}(f)^{1/2} \tilde{\E}^{\alpha}_{1}(h)^{1/2}\leq 16 C(K+1)^{2}||g||\tilde{\E}_{1}(f)^{1/2}\tilde{\E}_{1}(h)^{1/2}$, the second statement follows from the first. We will write $g$ for a generic element $g^{\alpha}$, $||g||$ their common uniform bound etc.
First, recall $||1-\phi_{t}(a)||_{2}\leq t||A(a)||_{2}$.
Second let us note that $||\phi_{t}(fg)-f\phi_{t}(g)||_{2}\leq ||\phi_{t}(fg)-\phi_{t}(f)\phi_{t}(g)||_{2}+||1-\phi_{t}(f)||_{2}||g||$.

Using the previous lemmas, we have to bound $||f_{0,t}^{(2)}(f,1,g)||_{2}=||\phi_{t}(fg)-\phi_{t}(f)\phi_{t}(g)||_{2}$. We first give a preliminary estimate, to be used later in a better one.
\begin{align*}||&f_{u,t}^{(2)}(f,1,g)||_{2}^{2}+\int_{u}^{t}ds\ ||\Delta^{1/2}(f_{s,t}^{(2)}(a,1,b))||_{2}^{2}\leq 2\int_{u}^{t}ds\ |\tau\left( (f_{s,t}^{(2)}(f,1,g))^{*}\Gamma(\phi_{t-s}(f), \phi_{t-s}(g))\right)|\\ &\leq 2\int_{u}^{t}ds\ ||(f_{s,t}^{(2)}(f,1,g))|| ||\Delta^{1/2}\phi_{t-s}(f)||_{2} ||\Delta^{1/2}\phi_{t-s}(g)||_{2}\\&\leq 4||f||||g||^{2}\left(\int_{u}^{t}ds\ ||\phi_{t-s}(Af)||_{2}||f||_{2}\right)^{1/2} \\&\leq 4||f||^{3/2}||g||^{2} ||Af||_{2}^{1/2}(t-u)^{1/2}\leq c'(f,g)(t-u)^{1/2}
\end{align*}



Let us now improve this first estimate

Let us fix an $\epsilon >0$
Choose $\delta$, $2||(G_{\delta}-id)Af||_{2}^{1/2}\leq \epsilon ||f||_{2}^{1/2}$.
Then choose a $y_{\epsilon}\in M$ such that~:
$||\Gamma(f^{*},f)-y_{\epsilon}||_{1}<\epsilon ||f||_{2}^{2}.$
Next take $\gamma$ (and $\alpha_{0}$) such that (for $\alpha>\alpha_{0}$, and consider from now on only those $g=g^{\alpha}$, recall we call $||g||$ the supremum of norms of $g_\alpha$) $||(1-G_{\gamma})(g)||_{2}^{2}||y_{\epsilon}||\leq \epsilon ||g||^{2}||f||_{2}^{2}$
Finally choose an $x_{\epsilon}\in M$ with $||\Gamma(f, f^{*})-x_{\epsilon}||_{1}^{1/2}||AG_{\gamma}(g)||_{2}^{1/2}<\epsilon||g||_{2}^{1/2}||f||_{2}$.

Now let us compute.
\begin{align*}&(\ast)=||f_{u,t}^{(2)}(f,1,g)||_{2}^{2}+\int_{u}^{t}ds\ ||\Delta^{1/2}(f_{s,t}^{(2)}(a,1,b))||_{2}^{2}\\ &\leq 2\int_{u}^{t}ds\ |\tau\left( (f_{s,t}^{(2)}(f,1,g))^{*}\Gamma(\phi_{t-s}(f), \phi_{t-s}(g))\right)|\\ &\leq 2\int_{u}^{t}ds\ |\tau\left( (f_{s,t}^{(2)}(f,1,g))^{*}\Gamma(f, \phi_{t-s}(G_{\gamma}(g)))\right)|\\ &+2\int_{u}^{t}ds\ |\tau\left( (f_{s,t}^{(2)}(f,1,g))^{*}\Gamma(f, \phi_{t-s}((1-G_{\gamma})(g)))\right)|\\ &+4||f||||g||\int_{u}^{t}ds\ ||(\phi_{t-s}-id)(Af)||_{2}^{1/2}||\phi_{t-s}(f)-f||_{2}^{1/2} ||\Delta^{1/2}\phi_{t-s}(g)||_{2}
\\ &\leq 2\int_{u}^{t}ds\ \tau\left( (f_{s,t}^{(2)}(f,1,g))^{*}\Gamma(f, f^{*})(f_{s,t}^{(2)}(f,1,g)))\right)^{1/2}\times \\ &\times \tau\left(\Gamma(\phi_{t-s}(G_{\gamma}(g)),\phi_{t-s}(G_{\gamma}(g)))\right)^{1/2}\\ &+2\int_{u}^{t}ds\  |-\tau\left( \Gamma(f,\phi_{t-s}((1-G_{\gamma})(g)),f_{s,t}^{(2)}(f,1,g))^{*})\right)\\ &+\tau\left( (f_{s,t}^{(2)}(f,1,g))^{*}\Delta(f)\phi_{t-s}((1-G_{\gamma})(g)))\right)|\\ &+4||f||||g||^2\left(\int_{u}^{t}ds\ \left((t-s)||A^{2}G_{\delta}f||_{2}+2||(G_{\delta}-id)Af||_{2}\right)(t-s)||Af||_{2}\right)^{1/2}
\\ & \leq  4\int_{u}^{t}ds\ \left(||\Gamma(f, f^{*})-x_{\epsilon}||_{1}^{1/2}2||f||||g||+|| (f_{s,t}^{(2)}(f,1,g))^{*}||_{2}||x_{\epsilon}||^{1/2}\right) ||AG_{\gamma}(g)||_{2}^{1/2}||g||_{2}^{1/2}
\\ &+ 2\int_{u}^{t}ds\ \tau\left( \Gamma(f,\phi_{t-s}((1-G_{\gamma})(g)))\phi_{t-s}((1-G_{\gamma})(g^{*}))),f^{*})\right)^{1/2}||\Delta^{1/2}f_{s,t}^{(2)}(f,1,g))||_{2}\\ &+4\int_{u}^{t}ds||f_{s,t}^{(2)}(f,1,g))||_{2}||\Delta(f)||_{2}||g||\\ &+\frac{4}{\sqrt{3}}||f||||g||^{2}(t-u)\left((t-u)^{1/2}||A^{2}G_{\delta}f||_{2}^{1/2}+\epsilon ||f||_{2}^{1/2}\right)||Af||_{2}^{1/2}
\\ &\leq 4\left((t-u)\epsilon2||f||^{2}||g||^{2}+\frac{4}{5}c'(f,g)^{1/2}(t-u)^{5/4}||x_{\epsilon}||^{1/2} ||AG_{\gamma}(g)||_{2}^{1/2}||g||_{2}^{1/2}\right)
\\ &+ 2\left(\int_{u}^{t}ds\ ||\Gamma(f^{*},f)-y_{\epsilon}||_{1}4||g||^{2}+||\phi_{t-s}((1-G_{\gamma})(g)))||_{2}^{2}||y_{\epsilon}||\right)^{1/2}\times \\ &\times\left(\int_{u}^{t}ds||\Delta^{1/2}f_{s,t}^{(2)}(f,1,g))||_{2}^{2}\right)^{1/2}\\ &+\frac{16}{5}c'(f,g)^{1/2}(t-u)^{5/4}||\Delta(f)||_{2}||g||\\ &+\frac{4}{\sqrt{3}}||f||||g||^{2}(t-u)\left((t-u)^{1/2}||A^{2}G_{\delta}f||_{2}^{1/2}+\epsilon ||f||_{2}^{1/2}\right)||Af||_{2}^{1/2} \end{align*}
\begin{align*}
&(\ast)\leq
 6\sqrt{\epsilon}||g||||f||\sqrt{t-u}\left(\int_{u}^{t}ds||\Delta^{1/2}f_{s,t}^{(2)}(f,1,g))||_{2}^{2}\right)^{1/2}\\ &+ 4(t-u)\epsilon||f||^{3/2}||g||^{2}(||Af||_{2}^{1/2}+||f||^{1/2})\\ &+8\delta||f||||g||^{2}(t-u)^{3/2}||Af||_{2} \\ &+\frac{8}{5}c'(f,g)^{1/2}(t-u)^{5/4} \left(||x_{\epsilon}||^{1/2} ||AG_{\gamma}(g)||_{2}^{1/2}||g||^{1/2}+2||\Delta(f)||_{2}||g||\right)
\end{align*}
Thus \begin{align*}&\left(\int_{u}^{t}ds\ ||\Delta^{1/2} f_{s,t}^{(2)}(f,1,g))||_{2}^{2}\right)^{1/2}\leq \sqrt{\epsilon}C(f,g)\sqrt{t-u}+d(f,g,\gamma,\epsilon,\delta)(t-u)^{5/8}(1+(t-u)^{1/8})
\end{align*}

 As a consequence 
\begin{align*}||&f_{u,t}^{(2)}(f,1,g)||_{2}^{2}\leq \epsilon C'(f,g)(t-u)+d'(f,g,\gamma,\epsilon,\delta)(t-u)^{9/8}(1+(t-u)^{3/8})
\end{align*}
We can now come back to the main line of the proof of convergences to zero, with all estimates required at hand.
\begin{align*}||\Gamma_{\alpha}^{(0)}(\xi)||_{1}&\leq \int_{0}^{\infty}dt\ \alpha^{2} e^{-\alpha t}||(1-\phi_{2t})(fg)-f((1-\phi_{2t})(g))||_{2}||1-\phi_{2t}(h)||_{2} \\&\leq \int_{0}^{\infty}dt\ \alpha^{2} e^{-\alpha t}||\phi_{2t}(fg)-f\phi_{2t}(g))||_{2}||1-\phi_{2t}(h)||_{2}\\&\leq \int_{0}^{\infty}dt\ \alpha^{2} e^{-\alpha t}\left(\sqrt{c'(f,g)}t^{1/4}+2t||Af||_{2}||g||\right)2t||Ah||_{2}.\end{align*}



 This converges to 0 since the measures $dt1_{|0,\infty]}\alpha^{2} e^{-\alpha t}t$ converges weakly to a Dirac in $0$.

Now we have to bound~:
\begin{align*}||&(G_{\alpha}\Gamma G_{\alpha}^{\o}-\Gamma)(\xi)||_{1}\\ &\leq
\alpha^{2}||\int_{0}^{\infty}dt\ e^{-\alpha t}\int_{0}^{t}ds\ \phi_{s}(\Gamma(\phi_{t-s}(fg),\phi_{t-s}(h)))-f\phi_{s}(\Gamma(\phi_{t-s}(g),\phi_{t-s}(h)))-\Gamma(f,g,h)||_{1}\\ &\leq \alpha^{2}\int_{0}^{\infty}dt\ e^{-\alpha t}||\int_{0}^{t}ds\ (f-\phi_{t}(f))\phi_{s}(\Gamma(\phi_{t-s}(g),\phi_{t-s}(h)))||_{1}\\ &+\alpha^{2}||\int_{0}^{\infty}dt\ \int_{0}^{t}ds\ \phi_{s}(\Gamma(\phi_{t-s}(fg),\phi_{t-s}(h)))-\phi_{t}(f)\phi_{s}(\Gamma(\phi_{t-s}(g),\phi_{t-s}(h)))-\Gamma(f,g,h)||_{1}\end{align*}
Let's bound each term~:\begin{align*}\alpha^{2}&\int_{0}^{\infty}dt\ e^{-\alpha t}||(f-\phi_{t}(f))\int_{0}^{t}ds\ \phi_{s}(\Gamma(\phi_{t-s}(g),\phi_{t-s}(h)))||_{1} \\ &\leq \alpha^{2}\int_{0}^{\infty}dt\ e^{-\alpha t}||(f-\phi_{t}(f))||_{2}||f_{0,t}^{(2)}(g,1,h)||_{2}\\\ &\leq\alpha^{2}\int_{0}^{\infty}dt\ e^{-\alpha t}2t||Af||_{2}c'(h,g)^{1/2}t^{1/4}\end{align*}

Again this gives convergence to zero.

Let us decompose a bit more the last term in the next~:

\begin{claim}
\begin{align*}&\int_{0}^{t}ds\ \phi_{s}(\Gamma(\phi_{t-s}(fg),\phi_{t-s}(h)))-\phi_{t}(f)\phi_{s}(\Gamma(\phi_{t-s}(g),\phi_{t-s}(h)))-\Gamma(f,g,h)\\ &=\int_{0}^{t}ds\ \phi_{s}(\Gamma(f_{s,t}^{(2)}(f,1,g),\phi_{t-s}(h)))+\phi_{s}(\Gamma(\phi_{t-s}(f),\phi_{t-s}(g),\phi_{t-s}(h)))-\Gamma(f,g,h)\\ &+\int_{0}^{t}du\ \phi_{u}(\Gamma(\phi_{t-u}(f),f_{u,t}^{(2)}(g,1,h)))
\end{align*}
\end{claim}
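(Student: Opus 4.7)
The plan is to perform three algebraic decompositions and then apply Fubini to swap integrations. All the raw material is already established earlier: lemma \ref{l3} gives the identity $\phi_t(ab)=\phi_t(a)\phi_t(b)+f_{0,t}^{(2)}(a,1,b)$, and the identity $\Gamma(ac,1,b)=\Gamma(a,c,b)+a\Gamma(c,1,b)$ is the defining relation of the three-argument $\Gamma$ recorded just before lemma \ref{l3}.

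First, I would substitute $\phi_{t-s}(fg)=\phi_{t-s}(f)\phi_{t-s}(g)+f_{s,t}^{(2)}(f,1,g)$ into the first term of the left-hand side. By linearity of $\Gamma$ in its first argument, this splits $\Gamma(\phi_{t-s}(fg),\phi_{t-s}(h))$ into two pieces: the piece $\Gamma(f_{s,t}^{(2)}(f,1,g),\phi_{t-s}(h))$, which is the first summand the claim asks for, and $\Gamma(\phi_{t-s}(f)\phi_{t-s}(g),1,\phi_{t-s}(h))$, to which I apply the three-argument identity $\Gamma(ac,1,b)=\Gamma(a,c,b)+a\Gamma(c,1,b)$. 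This produces the second target summand $\Gamma(\phi_{t-s}(f),\phi_{t-s}(g),\phi_{t-s}(h))$ plus a leftover contribution $\phi_{t-s}(f)\Gamma(\phi_{t-s}(g),\phi_{t-s}(h))$, on which $\phi_s$ still has to be applied.

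The second step handles that leftover. With $X=\Gamma(\phi_{t-s}(g),\phi_{t-s}(h))$ and $a=\phi_{t-s}(f)$, applying lemma \ref{l3} to $\phi_s(aX)$ gives
\[
\phi_s\!\left(\phi_{t-s}(f)X\right)=\phi_t(f)\phi_s(X)+f_{0,s}^{(2)}(\phi_{t-s}(f),1,X).
\]
The first term on the right is exactly the subtracted term $\phi_t(f)\phi_s(\Gamma(\phi_{t-s}(g),\phi_{t-s}(h)))$ appearing in the left-hand side of the claim, so it cancels. What survives is the correction $f_{0,s}^{(2)}(\phi_{t-s}(f),1,\Gamma(\phi_{t-s}(g),\phi_{t-s}(h)))$, which, unfolded by the definition of $f_{0,s}^{(2)}$, reads
\[
\int_0^s du\ \phi_u\!\left(\Gamma(\phi_{t-u}(f),1,\phi_{s-u}(\Gamma(\phi_{t-s}(g),\phi_{t-s}(h))))\right),
\]
using the semigroup property $\phi_{s-u}\phi_{t-s}=\phi_{t-u}$.

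For the third step, I integrate the preceding expression against $ds$ over $[0,t]$ and apply Fubini (over the triangle $0\le u\le s\le t$) to obtain
\[
\int_0^t du\ \phi_u\!\left(\Gamma\!\left(\phi_{t-u}(f),1,\int_u^t ds\ \phi_{s-u}(\Gamma(\phi_{t-s}(g),\phi_{t-s}(h)))\right)\right),
\]
and the inner integral is precisely $f_{u,t}^{(2)}(g,1,h)$ by definition. This matches the last term of the claim. The $-\Gamma(f,g,h)$ terms on both sides coincide trivially. The only genuine issue is to justify the interchanges of $\phi_s$ with integrals and $\Gamma$ with integrals in its arguments (so that $\phi_s(\phi_{t-s}(f)X)$ extends to $X\in L^1$ and $\Gamma(a,1,\cdot)$ factors through the integral); both follow from the boundedness of $\phi_s$ as a normal contraction on $L^1$ and from the fact that $\Gamma$, extended from $M\hat\otimes M$ to $L^1(M)$ in its middle argument as set up after lemma \ref{l4}, is continuous there, so a dominated-convergence argument is routine.
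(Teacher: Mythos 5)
Your algebraic skeleton matches the paper's exactly: the paper itself remarks that the claim is ``formally only two successive applications of lemma~\ref{l3}\ldots and the use of a derivation property for $\Gamma$.'' The gap is in the step you dismiss as routine. Lemma~\ref{l3} (and the integral formula for $f_{0,s}^{(2)}$) is established for $a,b\in M$, but in your second step you apply it with $b=X=\Gamma(\phi_{t-s}(g),\phi_{t-s}(h))$, which lives only in $L^{1}(M)$. The pointwise integrands you write before Fubini, namely $\Gamma(\phi_{t-u}(f),1,\phi_{s-u}(X))$, are therefore not defined: $\Gamma(a,1,\cdot)$ needs its argument in the domain of $\delta$, and $\phi_{s-u}$ is merely an $L^{1}$-contraction, so it does not carry $L^{1}$ into $D(\delta)$ (or even $L^{2}$). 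The extension of $\Gamma$ discussed after lemma~\ref{l4} does not rescue this: that extension concerns the \emph{target} of $\Gamma$ landing in $L^{1}(M)$ (with arguments still in $D(\delta)$), not an extension of any argument to $L^{1}$. So neither ``boundedness of $\phi_s$ on $L^{1}$'' nor ``dominated convergence'' makes the pre-Fubini expression meaningful; the object you Fubini only becomes well-defined \emph{after} the $ds$-integral recombines into $f^{(2)}_{u,t}(g,1,h)\in M$.

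The paper handles this by running your identical two-step calculation with the $M$-valued approximation $\Gamma_\beta$ in place of $\Gamma$ (so every inner argument stays in $M$, lemma~\ref{l3} applies verbatim, and Fubini is harmless), and then sending $\beta\to\infty$. The key estimates that let one pass to the limit are the $L^{1}$-convergence of $(\Gamma-\Gamma_\beta)(\phi_{t-s}(g),\phi_{t-s}(h))$ from lemma~\ref{l5}, dominated by a quantity of the form $(1+16(K+1)^{2})\|f\|\,\|\Delta^{1/2}\phi_{t-s}(h)\|_{2}\,\|\Delta^{1/2}\phi_{t-s}(g)\|_{2}$, together with the energy-norm convergence $\int_{0}^{t}du\ \tilde{\E}_{1}^{\beta}\bigl(f^{(2,\beta)}_{u,t}(g,1,h)-f^{(2)}_{u,t}(g,1,h)\bigr)\to 0$ proved at the end of lemma~\ref{l6}. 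Neither is a formality; the second is precisely what makes the ``Fubini'' step close. If you rewrite your argument at the $\Gamma_\beta$ level and then explicitly invoke lemmas~\ref{l5} and~\ref{l6} to control the $\beta\to\infty$ limit, your plan is exactly the paper's proof.
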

\begin{proof}
 (Note we use $\Gamma_{\beta}$ to be in the condition of lemma \ref{l3} (since $\Gamma_{\beta}(f,g)\in M$), otherwise, formally this claim is only two successive applications of this lemma (once in the computation of this page, once in the one of the next page) and the use of a derivation property for $\Gamma$ (in the next page)) .
 
 \begin{align*}&\int_{0}^{t}ds\ \phi_{t}(f)\phi_{s}(\Gamma(\phi_{t-s}(g),\phi_{t-s}(h)))-\phi_{s}(\Gamma(\phi_{t-s}(fg),\phi_{t-s}(h)))\\ &=-\int_{0}^{t}ds\ \phi_{s}(\Gamma(\phi_{t-s}(fg),\phi_{t-s}(h)))\\ &+\int_{0}^{t}ds\ \phi_{s}(\phi_{t-s}(f))\phi_{s}(\Gamma(\phi_{t-s}(g),\phi_{t-s}(h)))\\ & \ \ \ \ \ \ \ \ \ \ \ \ \ \ \ \ \ \ \ \ \ \ \ \ \ \ \ \ \  -\int_{0}^{t}ds\ \phi_{s,\beta}(\phi_{t-s,\beta}(f))\phi_{s,\beta}(\Gamma(\phi_{t-s}(g),\phi_{t-s}(h)))\\&+\int_{0}^{t}ds\ \phi_{s,\beta}(\phi_{t-s,\beta}(f))\phi_{s,\beta}(\Gamma_{\beta}(\phi_{t-s}(g),\phi_{t-s}(h)))\\ & \ \ \ \ \ \ \ \ \ \ \ \ \ \ \  -\int_{0}^{t}ds\  \phi_{s,\beta}(\phi_{t-s,\beta}(f))\phi_{s,\beta}(\Gamma_{\beta}(\phi_{t-s,\beta}(g),\phi_{t-s,\beta}(h)))\\ &+\int_{0}^{t}ds\ \phi_{s,\beta}(\phi_{t-s,\beta}(f))\phi_{s,\beta}((\Gamma-\Gamma_{\beta})(\phi_{t-s}(g),\phi_{t-s}(h)))\\  &+\int_{0}^{t}ds\ \phi_{s,\beta}(\phi_{t-s,\beta}(f)\Gamma_{\beta}(\phi_{t-s,\beta}(g),\phi_{t-s,\beta}(h)))\\ & \ \ \ \ \ \ \ \ \ \ \ \ \ \ \ \ \ \ \ \ \  -\int_{0}^{t}ds\ \int_{0}^{s}du\ \phi_{u,\beta}(\Gamma_\beta(\phi_{t-u,\beta}(f),\phi_{s-u,\beta}\Gamma_{\beta}(\phi_{t-s,\beta}(g),\phi_{t-s,\beta}(h)))
\end{align*}
\begin{align*}&\int_{0}^{t}ds\ \phi_{t}(f)\phi_{s}(\Gamma(\phi_{t-s}(g),\phi_{t-s}(h)))-\phi_{s}(\Gamma(\phi_{t-s}(fg),\phi_{t-s}(h)))\\ &=\int_{0}^{t}ds\ \phi_{s}(\phi_{t-s}(f))\phi_{s}(\Gamma(\phi_{t-s}(g),\phi_{t-s}(h)))\\ & \ \ \ \ \ \ \ \ \ \ \ \ \ \ \  -\int_{0}^{t}ds\ \phi_{s,\beta}(\phi_{t-s,\beta}(f))\phi_{s,\beta}(\Gamma(\phi_{t-s}(g),\phi_{t-s}(h)))\\ &+\int_{0}^{t}ds\ \phi_{s,\beta}(\phi_{t-s,\beta}(f))\phi_{s,\beta}(\Gamma_{\beta}(\phi_{t-s}(g),\phi_{t-s}(h)))\\ & \ \ \ \ \ \ \ \ \ \ \ \ \ \ \  -\int_{0}^{t}ds\  \phi_{s,\beta}(\phi_{t-s,\beta}(f))\phi_{s,\beta}(\Gamma_{\beta}(\phi_{t-s,\beta}(g),\phi_{t-s,\beta}(h)))\\&-\int_{0}^{t}ds\ \phi_{s,\beta}(\phi_{t-s,\beta}(f)\Gamma_{\beta}(\phi_{t-s}(g),\phi_{t-s}(h)))\\ & \ \ \ \ \ \ \ \ \ \ \ \ \ \ \  +\int_{0}^{t}ds\ \phi_{s,\beta}(\phi_{t-s,\beta}(f)\Gamma_{\beta}(\phi_{t-s,\beta}(g),\phi_{t-s,\beta}(h)))\\ &+\int_{0}^{t}ds\ \phi_{s,\beta}(\phi_{t-s,\beta}(f)\Gamma(\phi_{t-s}(g),\phi_{t-s}(h)))\\ & \ \ \ \ \ \ \ \ \ \ \ \ \ \ \  -\int_{0}^{t}ds\ \phi_{s}(\phi_{t-s}(f)\Gamma(\phi_{t-s}(g),\phi_{t-s}(h)))\\ &+\int_{0}^{t}ds\ \phi_{s,\beta}(\phi_{t-s,\beta}(f))\phi_{s,\beta}((\Gamma-\Gamma_{\beta})(\phi_{t-s}(g),\phi_{t-s}(h)))\\ & \ \ \ \ \ -\int_{0}^{t}ds\ \phi_{s,\beta}(\phi_{t-s,\beta}(f)(\Gamma-\Gamma_{\beta})(\phi_{t-s}(g),\phi_{t-s}(h)))\\ &+\int_{0}^{t}ds\ \phi_{s}(\phi_{t-s}(f)\Gamma(\phi_{t-s}(g),\phi_{t-s}(h)))\\ & \ \ \ \ \ \ \ \ \ \ \ \ \ \ \  -\int_{0}^{t}ds\ \int_{0}^{s}du\ \phi_{u,\beta}(\Gamma_{\beta}(\phi_{t-u,\beta}(f),\phi_{s-u,\beta}\Gamma_{\beta}(\phi_{t-s,\beta}(g),\phi_{t-s,\beta}(h)))\\ &-\phi_{s}(\Gamma(\phi_{t-s}(fg),\phi_{t-s}(h)))
\end{align*}
The two terms of the fifth pair of lines of the last equation tend to $0$ with $\beta\rightarrow\infty$ in $L^{1}(M)$, by DCT using lemma \ref{l5} for the pointwise limit and domination via $(1+16(K+1)^{2})||f||||\Delta^{1/2}\phi_{t-s}(h)||_{2} ||\Delta^{1/2}\phi_{t-s}(g)||_{2}$.
The four first pair of lines tend to 0 (each  pair of lines) at least weakly in $L^{1}$ using only standard results (including one proven during the proof of lemma \ref{l6} for the second and third line $\int_0^t\tilde{\E}_{1}^{\alpha}(\phi_{t-s}(a)-\phi_{t-s,\alpha}(a))\to 0$ any $a\in M$).
We will write those five first  pair of lines $(1)$ from now on in the computation. 

\begin{align*}&\int_{0}^{t}ds\ \phi_{t}(f)\phi_{s}(\Gamma(\phi_{t-s}(g),\phi_{t-s}(h)))-\phi_{s}(\Gamma(\phi_{t-s}(fg),\phi_{t-s}(h)))\\ &=(1)+\int_{0}^{t}ds\ \phi_{s}(\Gamma(\phi_{t-s}(f)\phi_{t-s}(g),\phi_{t-s}(h)))-\phi_{s}(\Gamma(\phi_{t-s}(fg),\phi_{t-s}(h)))\\ &-\int_{0}^{t}ds\ \phi_{s}(\Gamma(\phi_{t-s}(f),\phi_{t-s}(g),\phi_{t-s}(h)))\\ &-\int_{0}^{t}du\ \phi_{u,\beta}(\Gamma_{\beta}(\phi_{t-u,\beta}(f),f_{u,t}^{(2,\beta)}(g,1,h)))
\\ &=(1)-\int_{0}^{t}ds\ \phi_{s}(\Gamma(f_{s,t}^{(2)}(f,1,g),\phi_{t-s}(h)))+\phi_{s}(\Gamma(\phi_{t-s}(f),\phi_{t-s}(g),\phi_{t-s}(h)))\\ &+\int_{0}^{t}du\ \phi_{u}(\Gamma(\phi_{t-u}(f),f_{u,t}^{(2)}(g,1,h)))-\int_{0}^{t}du\ \phi_{u,\beta}(\Gamma_{\beta}(\phi_{t-u,\beta}(f),f_{u,t}^{(2,\beta)}(g,1,h)))\\ &-\int_{0}^{t}du\ \phi_{u}(\Gamma(\phi_{t-u}(f),f_{u,t}^{(2)}(g,1,h)))
\end{align*}

Now, the second line of the last equation tends to zero, like at the beginning of the proof of lemma \ref{l6}, using the fact proven at the end of  lemma \ref{l6} that $\int_{0}^{t}du\ \tilde{\E}_{1}^{\beta}(f_{u,t}^{(2,\beta)}(g,1,h)-f_{u,t}^{(2)}(g,1,h))\rightarrow 0$.
\end{proof}

We can now use this claim to conclude the proof of the lemma. The previously obtained results readily gives (and this will conclude since $\epsilon$ was arbitrary):

\begin{align*}&\alpha^{2}\int_{0}^{\infty}dt\ e^{-\alpha t}||\int_{0}^{t}ds\phi_{s}(\Gamma(f_{s,t}^{(2)}(f,1,g),\phi_{t-s}(h)))||_{1}\\ &\leq \alpha^{2}\int_{0}^{\infty}dt\ e^{-\alpha t}\left(\sqrt{\epsilon}C(f,g)\sqrt{t}+d(f,g,\gamma,\epsilon,\delta)t^{5/8}(1+t^{1/8})\right)\sqrt{t}||Ah||_{2}^{1/2}||h||_{2}^{1/2},
\\ &\limsup_{\alpha\rightarrow\infty}\ \alpha^{2}\int_{0}^{\infty}dt\ e^{-\alpha t}||\int_{0}^{t}ds\ \phi_{s}(\Gamma(f_{s,t}^{(2)}(f,1,g),\phi_{t-s}(h)))||_{1}\leq \sqrt{\epsilon}C(f,g)||Ah||_{2}^{1/2}||h||_{2}^{1/2},
\\ & \alpha^{2}\int_{0}^{\infty}dt\ e^{-\alpha t}||\int_{0}^{t}ds\ \phi_{s}(\Gamma(\phi_{t-s}(f)-f,\phi_{t-s}(g),\phi_{t-s}(h)))||_{1}\\ &\leq \alpha^{2}\int_{0}^{\infty}dt\ e^{-\alpha t}\left(\int_{0}^{t}ds\ 2\sqrt{t-s}||Af||_{2}^{2}\right)^{1/2}||g||\sqrt{t}||Ah||_{2}^{1/2}||h||_{2}^{1/2}\rightarrow0
\\ & \alpha^{2}||\int_{0}^{\infty}dt\ e^{-\alpha t}\int_{0}^{t}ds\ \phi_{s}(\Gamma(f,\phi_{t-s}(g_{\alpha})-g_{\infty},h))||_{1}=||\alpha\int_{0}^{\infty}dt\ e^{-\alpha t}G_{\alpha}(\Gamma(f,\phi_{t}(g_{\alpha})-g_{\infty},h))||_{1}\\ &\leq \alpha\int_{0}^{\infty}dt\ e^{-\alpha t}\tau(\Gamma(f^{*},f)(\phi_{t}(g_{\alpha})-g_{\infty})(\phi_{t}(g_{\alpha})-g_{\infty})^{*})^{1/2}||\Delta^{1/2}h||_{2}\rightarrow0
\\ &\alpha^{2}\int_{0}^{\infty}dt\ e^{-\alpha t}\int_{0}^{t}ds(\phi_{s}-id)(\Gamma(f,g_{\infty},h))=(G_{\alpha}-id)\Gamma(f,g_{\infty},h)\ ...
\end{align*}

\end{proof}
\subsection{Semigroups on $M\star_{alg} M$}
For Markov processes on commutative algebras, a semigroup as the one of the previous part $\phi_{t}$  is sufficient to define the law of the process. Indeed as is well-known, we can use for ordered times $t_1<t_2<...<t_n$, the formula $\tau(X_1^{[t_1]}...X_n^{[t_n]})=\tau(X_1\phi_{t_2-t_1}(X_2...\phi_{t_n-t_{n-1}}(X_n))$ to define the new trace (also written $\tau$ on the commutative Path space ($X_i$ thought of at time $t_i$ denoted $X_i^{[t_i]}$)). And of course, one may think $\phi_t$ as a semigroup on $M\otimes M$ defined by $\Phi_{t}=\phi_{t}\o Id$ (the second variable being thought of as non evolving and useful to determine the joint law after multiplication). 

We will analogously consider semigroups $\Phi_{s,t}$ on $M\star_{alg} M$ above the previous semigroup i.e. with $\Phi_{s,t}(XYZ)=X\phi_{t-s}(Y)Z$ for $X,Z$ in the second (zero-time) summand and $Y$ in the first (t-time) summand of the free product (we also require this property with $\phi_{t-s}(Y)$ replaced by $\Phi_{s,t}(Y)$ and a general $Y$ in the free product  i.e. $\Phi_{s,t}$ behaves like a conditional expectation since it will the restriction to $M^{[t]}*M^{[0]}$ of the conditional expectation on $M^{[s]}*M^{[0]}$). For the processes we are interested in, taking a conditional expectation of a polynomial in two times variables $(t,0)$ on the $0-s$ time subalgebra will lie in this algebraic free product (or rather this conditional expectation will be $\Phi_{s,t}$ applied on this polynomial). 
Let us introduce notations to compute more easily in this setting. 
 We will put in exponent the time of the algebra considered (the non-zero time being the first summand by convention). We will often consider symmetric semigroups, i.e. those satisfying~:$\tau(\Phi_{0,t}(X_{1}^{[t]}X_{2}^{[0]}...X_{2n-1}^{[t]}))=\tau(\Phi_{0,t}(X_{1}^{[0]}X_{2}^{[t]}...X_{2n-1}^{[0]}))$ where $\tau$ denotes (again) the composition of the trace $\tau$ on $M$ with the natural product on algebraic free product with value $M$. We may later write $S_{t}(X_{1}^{[t]}X_{2}^{[0]}...X_{2n-1}^{[t]})=X_{1}^{[0]}X_{2}^{[t]}...X_{2n-1}^{[0]}$ and write this $\tau\circ\Phi_{0,t}=\tau\circ\Phi_{0,t}\circ S_{t}$.

Moreover, we will need in the sequel (in order to define n times free products and not only two times free products) a slightly more general context given a family $M^{\rho}$ ($\rho$ in a set R) of (positively exhaustively) filtered *-algebraic normed non-commutative probability spaces containing $M$ as a sub-probability space (Let us explain our main example of interest. $R_{n}=\{0\leq t_1\leq t_2 ...\leq t_n\}$, for $\rho\in R_n$, $M^{\rho}=M^{*_{alg}(n+1)}$ is a multi-time free product (the first summand thought of at time $t_n$, the second at time $t_{n-1}$ up to the last one at time 0)
 the filtration $M^{\rho}_{p}$  of algebra, crucial for inductions, is the length in free product (i.e. $P\in M^{\rho}_{p}$ if it can be written as a sum of products of not more than $p$ alternating times). 
We think we have already built a tracial state on $M^{\rho}$ and $M$ lies inside as the highest time sub-algebra here $M_{t_n}$, the state on it coinciding with the previous one. We will make evolve a new highest time put in the other term of the free product. The norm will be the projective norm on the algebraic free product.).
Moreover, let us assume given a set of monomials of degree $p$, $Mon^{\rho}_{p}$ a metric space, with
a continuous (``multiplication") map onto $M^{\rho}_{p}$. This seems a little artificial beyond our main example, but we don't want to be as general as possible, the above notations being mainly a way of emphasizing the main assumptions. Especially we limit here generality to avoid later intricate general inequalities where the argument is quite clear in the main example of interest. (Thus in our main example $Mon^{\rho}_{p}=\sqcup M^{p}$ is a disjoint union of products of $M$ with the product topology, one copy in the disjoint union for each possible alternation of times. The map to the free product being clear). We will use it as a way of having fixed a product decomposition and saying a given product decomposition converges to another one. We put two metrics on it, in our main example, each component of the direct sum being at infinite distance, and $d_{\infty}((a_1,...,a_p),(b_1,...,b_p))=\max||a_i-b_i||$, and $d_{2}((a_1,...,a_p),(b_1,...,b_p))=\max||a_i-b_i||_2$. We  will be interested in sequences of ``monomials" bounded in $d_{\infty}$ converging in $d_{2}$.

 We will consider semigroups  $\Phi_{s,t}^{\rho}$ on $M\star_{alg} M^{\rho}$ (with obvious filtration adding the smallest filtration degrees, we count $1$ as filtration degree for the first summand, we may obviously take the smallest products to get the smallest filtration degrees. In that way, in our main example, for $\rho= (t_1\leq t_2 ...\leq t_n)$, we identify (with the same filtration norm etc) $M\star_{alg} M^{\rho}$ with $M^{(\rho,t)}$, by definition $(\rho,t)=(t_1\leq t_2 ...\leq t_n\leq t_n+t)$). This semigroup will be above the previous semigroup as before, i.e. will satisfy $\Phi_{s,t}^{\rho}(XYZ)=X\phi_{t-s}(Y)Z$ for $X,Z$ in the second (``zero-time") summand (i.e. $M^{\rho}$) and $Y$ in the first (t-time) summand of the free product (we also require this property with $\phi_{t-s}(Y)$ replaced by $\Phi_{s,t}^{\rho}(Y)$ and a general $Y$ in the free product). 

We assume given an evolution  (a continuous linear map) up to ``real time 0" $\Psi^{(\rho)}:M^{\rho}\rightarrow M$ (we want to think of as an already built evolution via semigroup from the highest time in $M^{\rho}$ to $0$. More specifically in our main example, if $\rho=(t_1)$ then $\Psi^{(\rho)}=\Phi_{0,t_1}$ the two times map described before. If $\rho= (t_1\leq t_2 ...\leq t_n)=(\rho',t_n-t_{n-1})$ with  $\rho'= (t_1\leq t_2 ...\leq t_{n-1})$ $\Psi^{(\rho)}=\Psi^{(\rho')}\circ \Phi_{0,t_n-t_{n-1}}^{\rho'}$). 


 We assume given two maps $\sigma~:\R_{+}\times R\rightarrow R$ and $\tau: R\rightarrow\R_{+}$ (we think of them as taking the new time $t$ above the highest time and the previously built times (ordered) $(t_{1}\leq...t_{p})=\rho$ as a point of $R_p$, and giving via the map $(\tau,\sigma)$ the corresponding symmetric family $(t_{1},(t\leq t+t_{p}-t_{p-1}\leq...t+t_{p}-t_{1}))$). We assume this map gives an involution~: $\tau(\sigma(t,\rho))=t,\sigma(\tau(\rho),\sigma(t,\rho))=\rho$. We also need a symmetry map (preserving filtration, continuous for ``projective" 
norms, in fact isometric on monomials for $d_{2}$ and $d_{\infty}$) $S_{t,\rho}:M\star_{alg} M^{\rho}\rightarrow M\star_{alg} M^{\sigma(t,\rho)}$ (again we think it as doing the above symmetry especially exchanging the new highest time and 0, we write analogously maps on underlying tensor products, e.g. in our example $S_{t,(t_1)}(X^{[t+t_1]}Y^{[t_1]}Z^{[0]}T^{[t+t_1]}U^{[t_1]}V^{[0]})=(X^{[0]}Y^{[t]}Z^{[t+t_1]}T^{[0]}U^{[t]}V^{[t+t_1]})$  thus seen on monomials this gives $S_{t,(t_1)}(X^{[t+t_1]},Y^{[t_1]}Z^{[0]},T^{[t+t_1]},U^{[t_1]}V^{[0]})=(X^{[0]}Y^{[t]},Z^{[t+t_1]},T^{[0]}U^{[t]},V^{[t+t_1]})$. ) We will assume $S_{t,\rho}$ has inverse $S_{\tau(\rho),\sigma(t,\rho)}$. We will often consider $(\tau,\sigma,S)$-symmetric families satisfying $\tau(\Phi_{0,t}^{\rho}(X_{1}^{[t]}X_{2}^{[0]}...X_{2n-1}^{[t]}))=\tau(\Phi_{0,t}^{\sigma(t,\rho)}(S_{t,\rho}(X_{1}^{[t]}X_{2}^{[0]}...X_{2n-1}^{[t]})))$, where again $\tau$ denotes the state either in $M^{\rho}$ or $M^{\sigma(t,\rho)}$ composed with the multiplication maps induced on free product by inclusion of $M$.
Finally, to prove that our formulas will keep this symmetry, we will need to prove in a next section alternative formulas replacing evolution of the last time by evolutions in terms of the first time (We will also assume we have first time rewriting of evolutions on $M^{\rho}$, of course, and they will be given by induction in applications). To achieve this goal, 
we don't only need the evolution $\Psi^{(\rho)}$ up to time $0$, but also a decomposition of it via a map 
$\Psi^{(\rho),\tau(\rho)}:M^{\rho}\rightarrow M_{[\tau(\rho)]}*_{alg}M_{[0]}$ (the indices only showing the times we think M to live at, recall, we think about $\tau(\rho)$, not depending on $t$, in general as the first time in $\rho$) such that $\Psi^{(\rho)}=\Phi_{0,\tau(\rho)}\circ\Psi^{(\rho),\tau(\rho)}$ (recall $\Phi$ is the map we will build first on $M*_{alg} M$ as described first in the first paragraph of this section, we should say our semigroup is above this $\Phi$ instead of only $\phi$. In our example of main interest, we of course have $\rho=(t_1=\tau(\rho)\leq...t_p)$. If $p=2$ $\Psi^{(\rho),\tau(\rho)}=\Phi_{0,t_2-t_1}^{(\tau(\rho))}$ so that with have the claimed identity since by definition $\Psi^{(\rho)}=\Psi^{(\tau(\rho))}\circ \Phi_{0,t_2-t_{1}}^{(\tau(\rho))=\rho'}$. In general inductively, we take  $\Psi^{(\rho),\tau(\rho)}=\Psi^{(\rho'),\tau(\rho)}\circ \Phi_{0,t_p-t_{p-1}}^{\rho'}$ where $\rho'=(t_1=\tau(\rho)\leq...t_{p-1})$. Especially, this map is given for free by the construction).

 In that context we may also add filtration degrees in exponents and not only times, e.g. $X^{[0,k]}$.

\begin{definition}
The \emph{level-family} of the semigroup $\Phi_{s,t}$ (always assumed above $\phi$ in the previous sense) on $M\star_{alg} M$ (resp. $\Phi_{s,t}^{\rho}$ on $M\star_{alg} M^{\rho}$)  is a family $f_{s,t}^{(n)}:M^{2n-1}\rightarrow M$ ( resp. $f_{s,t}^{(n,\rho)}:\bigoplus_{n=p+\sum n_{i}}M\otimes M^{\rho}_{n_{1}}\otimes M...M^{\rho}_{n_{p-1}}\otimes M\rightarrow M$) of maps inductively defined (if they exist) such that~: \begin{align*}&\Phi_{s,t}(X_{1}^{[t]}X_{2}^{[0]}...X_{2n-1}^{[t]})=f_{s,t}^{(n)}(X_{1}^{[t]},X_{2}^{[0]},...,X_{2n-1}^{[t]})^{[s]} + \sum_{i_{1}< ...< i_{p}\in[1,n-1]}\\&(f_{s,t}^{(i_{1})}(X_{1}^{[t]},...,X_{2i_{1}-1}^{[t]}))^{[s]}X_{2i_{1}}^{[0]}(f_{s,t}^{(i_{2}-i_{1})}(X_{2i_{1}+1}^{[t]},...,X_{2i_{2}-1}^{[t]}))^{[s]}X_{2i_{2}}^{[0]}...(f_{s,t}^{(n-i_{p})}(X_{2i_{p}+1}^{[t]},...,X_{2n-1}^{[t]}))^{[s]}\\ &
\\&
(resp .\ \Phi_{s,t}^{\rho}(X_{1}^{[t]}X_{2}^{[0,n_{1}]}...X_{2n}^{[0,n_{p}]}X_{2p-1}^{[t]})=f_{s,t}^{(n,\rho)}(X_{1}^{[t]},X_{2}^{[0]},...,X_{2p-1}^{[t]})^{[s]} + \sum_{i_{1}< ...< i_{q}\in[1,p-1]}\\ &(f_{s,t}^{(d_{0,i_{1}},\rho)}(X_{1}^{[t]},...,X_{2i_{1}-1}^{[t]}))^{[s]}X_{2i_{1}}^{[0]}(f_{s,t}^{(d_{i_{1},i_{2}},\rho)}(X_{2i_{1}+1}^{[t]},...,X_{2i_{2}-1}^{[t]}))^{[s]}X_{2i_{2}}^{[0]}...(f_{s,t}^{(d_{i_{q},p},\rho)}(X_{2i_{q}+1}^{[t]},...,X_{2p-1}^{[t]}))^{[s]}),\end{align*}
With $d_{i,j}=j-i+n_{i+1}+...+n_{j-1}$. We may remark $d_{0,p}=n$. We will also sometimes write a degree in index before $\rho$ in $\Phi$ to explicit the filtration degree. More generally we will consider maps (``semigroups below level $N$")  $\Phi^{(N,\rho)}$ only defined on the filtration space $(M* M_{\rho})_{N}$ so that this is of course equivalent to getting an associated level family $(f^{(n,\rho)})_{n\leq N}$.

Said otherwise $f_{0,t}$ is another way of writing a \textit{boolean cumulant} in the (Boolean) non-commutative probability space $M_{[t]}*_{alg}M_{[0]}$, with $\Phi_{0,t}$ as ``conditional expectation" on the component $M_{[0]}$ thought of at time $0$. (see e.g. definition 4.1 in \cite{Po08}, the case $s\neq 0$ is a variant where $\Phi_{s,t}$ is the restriction to $M_{[t]}*_{alg}M_{[0]}$ of the conditional expectation on $M_{[s]}*_{alg}M_{[0]}$ defined in $M_{[t]}*_{alg}M_{[s]}*_{alg}M_{[0]}$, in that context, the claim concerning existence says such a corresponding boolean cumulant has to be valued in $M^{[s]}$; For us this notation has no relation with Boolean probability beyond the fact $f$ satisfy a differential equation easier to read really likely because Boolean probability is natural in non-commutative setting without trace, and we thus divide here the part depending on trace in $f$ and the purely non-commutative one here).

Especially $f_{s,t}^{(1)}=\phi_{t-s}$. We say it is \emph{bounded} (resp. \emph{locally bounded}) (in $M$) 

\noindent if $||f_{s,t}^{(n)}(X_{1},...,X_{2n-1})||\leq C_{n}||X_{1}||...||X_{2n-1}||$ (resp. for $s,t\in[0,T]$ for $C_{n}(T)$).

Given a generator $-1/2A$ of an extension $\phi_{t}:L^{2}(M)\rightarrow L^{2}(M)$ of $f_{0,t}^{(1)}$, generator of a conservative (non-symmetric) completely Dirichlet form as in the previous part, consider the corresponding carr\'{e} du champs $\Gamma(f,g,h)$, $\Delta$ the generator of the symmetric part (recall we assumed $\Gamma$ is also the Carr\'{e} du champs of the symmetric part).
We say $f$ is \emph{locally $\delta$-bounded} if it is Bochner measurable as valued in $D(\Delta^{1/2})=D(\E)$ and if $\int_{u}^{t}ds ||\Delta^{1/2}f_{s,t}^{(n)}(X_{1}^{[t]},X_{2}^{[0]},...,X_{2n-1}^{[t]})||_{2}^{2}<\infty$ for any $u,t,n.$

Given another semigroup bellow level $(N-1)$ $H_{s,t}^{N-1}$ ($H_{s,t}^{N-1,\rho}$ in the case of supplementary index $\rho$), and corresponding family $(h_{s,t}^{(n,\rho)})_{n\leq N-1}$. We assume $H$ is locally bounded in $M$
 We say $(f^{(n)})_{n\leq N}$ is \emph{affiliated to $A$} (or $\phi_{t}$) relative to $H$ if it is locally bounded in $M$ and locally $\delta$-bounded so that the following integral converges absolutely (as Bochner integral)  and~:
\begin{align*}&f_{s,t}^{(n)}(X_{1}^{[t]},...,X_{2n-1}^{[t]})=\sum_{i\leq j=1}^{n-1}\int_{s}^{t}du\\ & \phi_{u-s}(\Gamma(f_{u,t}^{(i)}(X_{1}^{[t]},...,X_{2i-1}^{[t]}),H_{0,u}^{(j-i+1)}(X_{2i}^{[u]},F\Phi_{u,t}(X_{2i+1}^{[t]},...,X_{2j-1}^{[t]}),X_{2j}^{[u]}),f_{u,t}^{(n-j)}(X_{2j+1}^{[t]},...,X_{2n-1}^{[t]})),\\ &
\end{align*}(resp. in forgetting arguments since indices determine them uniquely $$f_{s,t}^{(n,\rho)}=\sum_{i\leq j=1}^{p-1}\int_{s}^{t}du\ \phi_{u-s}\circ\Gamma\circ f_{u,t}^{(d_{0,i},\rho)}\o\left(\Psi_{H}^{(\sigma(u,\rho))}\circ H_{0,\tau(\rho)}^{(N-1,\sigma(u,\rho))}\circ S_{(\tau,\sigma)(u,\rho)}\circ Id\o\Phi_{u,t}^{\rho}\o Id\right)\o f_{u,t}^{(d_{j,p},\rho)}.$$
($F$ is the formal change of indices u-0 corresponding to the symmetry in second case and not really useful in the first with our definition not taking into account indices explicitly except implicitly by position in the maps, we put it for clarity)

When $\Phi$ is symmetric, one may  want $H$ to be an inductively already built version of $\Phi$.
The meaning of the sum is then that we evolve up to time $u$ gathering the prescribed blocks and at that time, we gather them so that outside we are in $M$ seen at time $u$ evolving according to $\phi$ up to $s$, in the middle we have to take a trace of evolution from time $u$ up to $0$ and we use symmetry in order to have (real) 0 times on $\delta$'s included in $\Gamma$. To do this we can evolve up to time $0$ understood as being in $M^{(\sigma(u,\rho))}$ (we should note the degree of the middle term is at most $d_{i-1,j+1}-2$) first and then evolve via $\Psi$ to the real time $0$ in it.

 We have the same definitions for a family as above up to level $N$ and call it an \emph{N-level-semigroup-family} which includes the corresponding restriction of semigroup property for the above defined $\Phi_{s,t}$ and corresponding restriction of property 
 of being above $\phi$ as before.
An \emph{1-level-semigroup-family} is thus merely a semigroup on $M$ equal to $\phi$.\end{definition}

In order to define inductively an \emph{N-level-semigroup-family}, we will need a notion of $\alpha$-approximation. Recall $\phi_{t,\alpha}$ denote the semigroup of generator $-1/2AG_{\alpha}$
Replacing also $\Gamma$ by $\Gamma_{\alpha}$ (of the previous part) we have all the notions for an \emph{$\alpha$-N-level-semigroup-family}. In the case with extra index $\rho$, we have also to give us variants $H^{\alpha}$, $\Psi^{\alpha}$. Let us define approximation properties for our level semigroup families.
\begin{definition}
Let $(f_{s,t}^{(n,\alpha)})_{n\leq N}$ an \emph{$\alpha$-N-level-semigroup-family} and $(f_{s,t}^{(n)})_{n\leq N}$ an \emph{N-level-semigroup-family}. $ f_{s,t}^{(n,\alpha)}$ converges weakly in $L^{1}$ to $f$ if for any $m\in M$, $n\leq p$, $s,t$~: 

\noindent$ \tau(m^{[s]}f_{s,t}^{(n,\alpha)}(X_{1}^{[t]},X_{2}^{[0]},...,X_{2n-1}^{[t]})^{[s]})\rightarrow \tau(m^{[s]}f_{s,t}^{(n)}(X_{1}^{[t]},X_{2}^{[0]},...,X_{2n-1}^{[t]})^{[s]})$ as $\alpha\rightarrow\infty$.

It converges in $L^{2}$ if for any $n\leq N$, $s,t$~:

\noindent $||f_{s,t}^{(n,\alpha)}(X_{1}^{[t]},X_{2}^{[0]},...,X_{2n-1}^{[t]})-f_{s,t}^{(n)}(X_{1}^{[t]},X_{2}^{[0]},...,X_{2n-1}^{[t]})^{[s]})||_{2}\rightarrow 0$.

It is $\delta$-convergent if (they are locally $\delta$-bounded, it converges in $L^{2}$ and) for any $n\leq N$, $u,t$~: $$\int_{u}^{t}ds\ \tilde{\E_{1}^{\alpha}}(f_{s,t}^{(n,\alpha)}(X_{1}^{[t]},X_{2}^{[0]},...,X_{2n-1}^{[t]})-f_{s,t}^{(n)}(X_{1}^{[t]},X_{2}^{[0]},...,X_{2n-1}^{[t]}))\rightarrow 0.$$

It is $\delta^{+}$-convergent (resp $L^{2+}$) if all the above $L^{2}$ and $\delta$-convergences  (resp only the $L^{2}$) can be improved for sequences $X_{i}^{\alpha}$ (of Monomials in the $\rho$ case)  uniformly bounded in $M$ (resp $d_\infty$) and converging in $L^{2}(M)$ to $X_{i}$ (resp in $d_{2}$ for monomials). (The whole point of this last definition is to circumvent lack of traciality of approximations in the non-symmetric case by improving convergences obtained). 
\end{definition}

We have an analogous definition for $\Psi^{\alpha}$ converging to $\Psi$. The next result basically says we have gathered all potentially useful convergence properties to carry on a definition by induction, as soon as we assume as an extra assumption a boundedness we will prove later by a positivity argument. The analogue statement with $\rho$'s is of course also true, if one assumes $\Psi_H^{\alpha}$ $L^{2+}$-convergent to $\Psi_H$ and locally bounded in $M$ (uniformly in $\alpha$) (i.e. basically the assumptions bellow for $H$).
\begin{theorem}Let $(f_{s,t}^{(n,\alpha)})_{n\leq N}$,$(h_{s,t}^{(n,\alpha)})_{n\leq N}$ be \emph{$\alpha$-N-level-semigroup-families} with $f$ affiliated to $AG_{\alpha}$ relative to $(h_{s,t}^{(n,\alpha)})_{n\leq N-1}$ and $(f_{s,t}^{(n)})_{n\leq N},(h_{s,t}^{(n)})_{n\leq N}$ be \emph{N-level-semigroup-families} with $f$ affiliated to $A$ relative to $h$ similarly. Assume moreover that $(f_{s,t}^{(n,\alpha)})_{n\leq N}$ is $\delta^{+}$-convergent to $(f_{s,t}^{(n)})_{n\leq N}$ and  $(h_{s,t}^{(n,\alpha)})_{n\leq N}$ bounded (uniformly in $\alpha$) in M 
and converges in $L^{2+}$ to $(h_{s,t}^{(n)})_{n\leq N}$. Assume moreover either $D(A)\cap D(\Delta)\cap M$ is a core in $D(\Delta^{1/2})$ or $(f_{s,t}^{(n,\alpha)})_{n\leq N}$ valued in $D(A)\cap D(\Delta)\cap M$. Then the above formulas define an \emph{(N+1)-level-semigroup-family} and \emph{$\alpha$-(N+1)-level-semigroup-family}. We assume the $\alpha$ family $(f_{s,t}^{(n,\alpha)})_{n\leq N+1}$ is locally bounded in $M$ uniformly in $\alpha$. Then it is $\delta^{+}$-convergent to $(f_{s,t}^{(n)})_{n\leq N+1}$ (and they  are necessarily affiliated in the above sense since (in part assumed in part proven to be) locally bounded in $M$). We will call $f$ an \emph{$\alpha$-approximated} (N+1)-level-semigroup-family affiliated to $A$ relative to $h$. When $f$ an \emph{$\alpha$-approximated} (N+1)-level-semigroup-family affiliated to $A$ relative to $h$ and $h$ an \emph{$\alpha$-approximated} (N+1)-level-semigroup-family affiliated to $A^{*}$ relative to $f$ we will say $f$ and $h$ are  \emph{$\alpha$-approximated} (N+1)-level-semigroup-families mutually affiliated to $A$ and $A^{*}$.
 \end{theorem}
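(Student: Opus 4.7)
The plan is to extend by one level: use the integral formula to construct $f^{(N+1)}$ and $f^{(N+1,\alpha)}$, then propagate $L^2$, $\delta$, and their $+$-versions of convergence from level $\le N$ to level $N+1$. Well-definedness of the Bochner integrand comes from the Cauchy-Schwarz-Stinespring estimate $\|\Gamma(a,cc^*,b^*)\|_1\le\|c\|^2\tilde{\E}(a)^{1/2}\tilde{\E}(b)^{1/2}$ recorded before Lemma \ref{l3}: together with uniform $M$-boundedness of the middle piece $H^{(j-i+1)}_{0,u}$ and local $\delta$-boundedness of the outer pieces $f^{(i)},f^{(n-j)}$ (both of level $\le N$, inductive hypothesis), it dominates the integral. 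The level-family expansion across the $X_{2k}^{[0]}$ markers then follows from linearity of $\Gamma$ in its outer slots.

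For $L^2$ convergence at level $N+1$, I telescope
\[
f_{s,t}^{(N+1,\alpha)}-f_{s,t}^{(N+1)}=\int_s^t du\,\Bigl(\phi_{u-s,\alpha}\Gamma_\alpha(\cdots_\alpha)-\phi_{u-s}\Gamma(\cdots)\Bigr)
\]
into three parts: (a) replace the inner level-$\le N$ pieces inside $\Gamma_\alpha$ by their $\alpha=\infty$ limits, controlled by the inductive $L^2/L^{2+}$ convergence, the uniform $M$-bounds on $h$ and on $f^{(n,\alpha)}$ for $n\le N+1$, and the Cauchy-Schwarz-Stinespring domination; (b) replace $\Gamma_\alpha$ by $\Gamma$ on the fixed limit, which is exactly Lemma \ref{GammaLim} — the outer slots land in $D(A)\cap D(\Delta)\cap M$ by the core/valuedness hypothesis, while the middle slot is only uniformly $M$-bounded with $L^2$ convergence; (c) replace $\phi_{u-s,\alpha}$ by $\phi_{u-s}$ by strong $L^2$ continuity. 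DCT in $du$ closes the three parts. For $\delta$-convergence I mimic the endgame of Lemma \ref{l6}: differentiate $\|G_\beta(f^{(N+1,\alpha)}_{u,t}-f^{(N+1)}_{u,t})\|_2^2$ in $u$ and integrate to express $\int_s^t\tilde{\E}_1^\alpha(G_\beta(\cdots))\,du$ via the level-$N+1$ integrand paired with $G_\beta^2$ of itself. Sending $\alpha\to\infty$ first (with $\beta$ fixed) reduces everything to the $L^2$ convergence just obtained and to convergences at level $\le N$; Fatou combined with Lemma \ref{basic}(ii)' then shows $f^{(N+1)}_{u,t}\in D(\E)$ almost everywhere, and Lemma \ref{basic}(iii) together with DCT dominated via Lemma \ref{basic}(i),(i)' lets $\beta\to\infty$.

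The $\delta^+$-upgrade for sequences $X_i^\alpha$ uniformly $M$-bounded and $L^2$-convergent (or $d_2$-convergent monomials) follows by inspection: every estimate above depends only on $M$-norms and form norms, both under uniform control, and the crucial passage to the limit inside $\Gamma_\alpha$ with varying middle argument is precisely the generality in which Lemma \ref{GammaLim} was proved. The hardest step throughout is the $\delta$-convergence, because non-symmetry of $A$ forces working with the coercive norm $\tilde{\E}_1^\alpha$ and Lemma \ref{basic}(i) introduces the coercivity constant $K$ together with mixed $\tilde{\E}_1/\E^{(\beta)}$ terms. The remedy is the double-limit $\alpha\to\infty$ then $\beta\to\infty$ with Fatou providing $D(\E)$-membership at the intermediate stage, exactly as in Lemma \ref{l6}; every DCT application there hinges on the standing hypothesis that $(f^{(n,\alpha)})_{n\le N+1}$ is locally $M$-bounded uniformly in $\alpha$, without which the $L^\infty\cdot L^2$ dominations would collapse.
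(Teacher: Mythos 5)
Your overall strategy matches the paper's: induct from level $N$ to $N{+}1$ via the affiliation integral, control $\Gamma_\alpha\to\Gamma$ through Lemma \ref{GammaLim}, prove $L^2$ convergence first, then $\delta$-convergence with the $\alpha\to\infty$/$\beta\to\infty$ double limit and Lemma \ref{basic}. Your $L^2$ step is organized differently (a three-part telescoping of $f^{(N+1,\alpha)}-f^{(N+1)}$) from the paper's (prove $\sigma$-weak convergence in $M$ by weak compactness plus a duality/uniqueness argument, then deduce norm convergence from convergence of the $L^2$-norm formula), but that difference is cosmetic and either route closes.

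The $\delta$-convergence endgame, however, has a genuine gap. You propose to differentiate $\|G_\beta(f^{(N+1,\alpha)}_{u,t}-f^{(N+1)}_{u,t})\|_2^2$ directly. Writing $\partial_u f^{(N+1,\alpha)}=\tfrac12 AG_\alpha f^{(N+1,\alpha)}-\sum\Gamma_\alpha$ and formally $\partial_u f^{(N+1)}=\tfrac12 A f^{(N+1)}-\sum\Gamma$, the derivative of your quantity contains the mismatch term $\tfrac12(AG_\alpha-A)f^{(N+1)}$. Since the limit objects are only guaranteed to be Bochner $D(\E)=D(\Delta^{1/2})$-valued (local $\delta$-boundedness), not $D(A)$-valued, $A f^{(N+1)}$ and $AG_\alpha f^{(N+1)}=\alpha(1-G_\alpha)f^{(N+1)}$ do not converge in $L^2$ as $\alpha\to\infty$, and even after pushing a $G_\beta$ through ($G_\beta A=\beta(1-G_\beta)$ bounded) you are left with $\alpha G_\beta(1-G_\alpha)f^{(N+1)}$, which is not controllable with $\beta$ fixed and $\alpha\to\infty$ without $D(A)$-regularity. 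The paper sidesteps this: it differentiates only $\|G_\beta f^{(N+1,\alpha)}_{u,t}\|_2^2$ (everything $\alpha$-regular, all operators bounded) to get $\int_s^t\tilde{\E}_1^\alpha(f^{(N+1,\alpha)})\to\int_s^t\tilde{\E}_1(f^{(N+1)})$, and then handles the cross term $\int|\E_1^\alpha(f^{(N+1)},f^{(N+1,\alpha)}-f^{(N+1)})|$ by a separate two-part split: replace $f^{(N+1)}$ by $\hat{G}_\gamma f^{(N+1)}\in D(A^*)$ (which permits the crude bound $2\gamma\|\cdot\|_2\|\cdot\|_2$), and estimate the residual $(\hat{G}_\gamma-\mathrm{id})f^{(N+1)}$ piece by Lemma \ref{basic}(i) with a uniform-in-$\alpha$ bound on $\int\tilde{\E}_1^\alpha(f^{(N+1,\alpha)}-f^{(N+1)})$ plus DCT in $\gamma$. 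Your write-up omits this $\hat{G}_\gamma$ regularization entirely, and without it the argument does not go through.
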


\begin{proof}

The proof is similar to the one of lemma \ref{l6} using of course lemma \ref{GammaLim} instead of lemma \ref{l5}. Note also the proof of lemma \ref{l6} contains the initialization for the induction this theorem will enable.
The semigroup property is obvious from the definition.

We can consider all results on each term of the definition above of $f_{s,t}^{(N+1)}$ (the only map for which something needs to be proven), all of the form~:
$$j_{s,t}=\int_{s}^{t}du\\ \ \phi_{u-s}(\Gamma(f_{u,t},g_{0,u;t},h_{u,t})),$$
with by assumption $g_{0,u}$,$f_{u,t}$,$h_{u,t}$ locally bounded in $M$, and $f_{u,t}$,$h_{u,t}$ for each $t$ in $L^{2}([0,t],D(\Delta^{1/2}))$. We also consider $\alpha$ variants ( with $X_{i}^{\alpha}$ instead $X_{i}$ as in the definitions of $+$ convergences) uniformly in $\alpha$ with the same properties and the convergences of the previous definition (remark that the conjunction of boundedness and the various $L^{2}+$ convergences, of $\Phi, H ,\Psi$, with the various isometric on monomials of $S$ (for $d_2$ and $d_\infty$),
 enable $g_{0,u;t}^{(\alpha)}$ to converge in $L^{2}(M)$ (pointwise in $u,t$ to $g_{0,u;t}$)). We prove first $\sigma$-weak convergence in $M$ of $f_{s,t}^{(N+1,\alpha)}$. By weak compacity (since $f_{s,t}^{(N+1,\alpha)}$ uniformly bounded in M), it suffices to prove uniqueness of the limit of $j_{s,t}^{\alpha}$ (we will identify it to be $j_{s,t}$ and as a consequence prove the sum of such $j_{s,t}$ to be in $M$  and equal $f_{s,t}^{(N+1)}$ thus in $M$ not only in $L^{1}$ like $j_{s,t}$) and by density, in taking duality with elements of the form $b\in M$  we in fact prove $j_{s,t}^{\alpha}$ which is only a priori in $L^{1}$ converges weakly in that space and this suffices to get convergence in $M$ of the sum of them $f_{s,t}^{(N+1,\alpha)}$ for which we assumed boundedness 
. Now the lemma \ref{GammaLim} above proves $(\Gamma^{\alpha})(f_{u,t},g_{0,u}^{\alpha},h_{u,t})-(\Gamma^{\alpha})(f_{u,t},g_{0,u},h_{u,t})$ converges pointwise in  $L^{1}(M)$, since we can dominate them as in this lemma by $(1+16(K+1)^{2})||\Delta^{1/2}f_{u,t}||_{2}||\Delta^{1/2}h_{u,t}||_{2} \sup_{\alpha,u}||g_{0,u}^{\alpha}||$ we assumed in $L^{1}([0,t])$ we get convergence in $L^{1}([0,t],L^{1}(M))$ by DCT. Moreover $||\Gamma^{\alpha}(f_{u,t}^{\alpha}-f_{u,t},g_{0,u}^{\alpha},h_{u,t}^{\alpha}))||_{1}\leq \tilde{\E_{1}^{\alpha}}(f_{u,t}^{\alpha}-f_{u,t})^{1/2}\tilde{\E_{1}^{\alpha}}(h_{u,t}^{\alpha})^{1/2}||g||$ since we assumed convergence of this in $L^{1}$ modulo Cauchy-Schwarz, this goes to zero. 
Finally, $(\phi_{u}^{\alpha}-\phi_{u})\Gamma(f_{u,t},g_{0,u},h_{u,t})$ is also well known to converge in $L^{1}$. We have thus even proven norm convergence of $j_{s,t}^{\alpha}$ in $L^{1}$ at this stage.

We now need a formula for the $L^{2}$-norm~: $$||f_{s,t}^{(N+1,\alpha)}||_{2}^{2}=\sum\int_{s}^{t}du\ 2\Re\tau(\phi_{u-s}^{\alpha}(f_{s,t}^{(N+1,\alpha)*}) \phi_{u-s}^{\alpha}(\Gamma^{\alpha}(f_{u,t}^{\alpha},g_{0,u;t}^{\alpha},h_{u,t}^{\alpha})),$$

the Fubini Theorem used being justified since $\Gamma^{\alpha}$ valued in $L^{2}$ (the sum corresponds to various $j^{\alpha}$ terms). Using boundedness in $M$ of $f_{s,t}^{(N+1,\alpha)}$ and its weak convergence in $M$ proven earlier, we get as above convergence of the second term to the corresponding term without $\alpha$. Using a variant with $\alpha,\beta$ scalar product and previously proven weak convergence in $M$ one gets this is actually $||f_{s,t}^{(N+1)}||_{2}$. As a consequence we deduce norm convergence in $L^{2}$. 
 Similarly, we get formulas (idem without $\tilde{\E}_{1}^{\alpha}$ and $AG_{\alpha}$, or without $G_{\beta}$)~: \begin{align*}\tilde{\E}_{1}^{\alpha}(G_{\beta}(f_{u,t}^{(N+1,\alpha)}))=\sum\int_{s}^{t}du&\ \Re\tau((1+AG_{\alpha})G_{\beta}(\phi_{u-s}^{\alpha}(f_{u,t}^{(N+1,\alpha)*})) G_{\beta}\phi_{u-s}^{\alpha}\Gamma_{\alpha}(f_{s,t}^{\alpha},g_{0,u;t}^{\alpha},h_{s,t}^{\alpha}))\\&+\Re\tau(G_{\beta}(\phi_{u-s}^{\alpha}(f_{u,t}^{(N+1,\alpha)*})) (1+AG_{\alpha})G_{\beta}\phi_{u-s}^{\alpha}\Gamma_{\alpha}(f_{s,t}^{\alpha},g_{0,s;t}^{\alpha},h_{s,t}^{\alpha})).\end{align*}

Now note that the derivative in $u$ of $||G_{\beta}f_{u,t}^{(N+1,\alpha)}||_{2}^{2}$ is~:

$\tilde{\E}_{1}^{\alpha}(G_{\beta}(f_{u,t}^{(N+1,\alpha)}))-||G_{\beta}f_{u,t}^{(N+1,\alpha)}||_{2}^{2}-\sum2\Re\tau(G_{\beta}(f_{u,t}^{(N+1,\alpha)*}) G_{\beta}\Gamma_{\alpha}(f_{u,t}^{\alpha},g_{0,u;t}^{\alpha},h_{u,t}^{\alpha}))$

so that we get~:\begin{align*}\int_{s}^{t}du\  \tilde{\E}_{1}^{\alpha}(G_{\beta}(f_{u,t}^{(N+1,\alpha)}))=&-||G_{\beta}f_{s,t}^{(N+1,\alpha)}||_{2}^{2}+\int_{s}^{t}du\ ||G_{\beta}f_{u,t}^{(N+1,\alpha)}||_{2}^{2}\\\ &+\sum\int_{s}^{t}du\ 2\Re\tau(G_{\beta}(f_{u,t}^{(N+1,\alpha)*}) G_{\beta}\Gamma_{\alpha}(f_{u,t}^{\alpha},g_{0,u;t}^{\alpha},h_{u,t}^{\alpha})).\end{align*}

Taking $\alpha\rightarrow\infty$ (we keep equality at this stage and only use previous $L^{2}$ convergence or techniques already used) and then $\beta\rightarrow\infty$ (using for this second limit first Fatou's lemma to get $\int_{s}^{t}du\liminf_{\beta}\tilde{\E}_{1}(G_{\beta}(f_{u,t}^{(N+1)})))<\infty$ so that by lemma \ref{basic} (ii)' one gets almost  surely $f_{u,t}^{(2)}\in D(\E)$ and thus by (iii) of the same lemma the liminf is actually a lim equal to $\tilde{\E}_{1}((f_{u,t}^{(N+1)})))$, one can then get equality applying DCT, with domination $16(K+1)^{2}$ times the limit now already known to be integrable).  If we first take $\beta\rightarrow\infty$, and then $\alpha\rightarrow\infty$, one thus deduces $$\int_{s}^{t}du\ \tilde{\E}_{1}^{\alpha}(f_{u,t}^{(N+1,\alpha)})\rightarrow \int_{s}^{t}du\ \tilde{\E}_{1}(f_{u,t}^{(N+1)}).$$
Finally (in order to prove $\delta$-convergence) for any $\gamma$,
\begin{align*}\int_{u}^{t}ds\ |\E_{1}^{\alpha}(\hat{G_{\gamma}}f_{s,t}^{(N+1)},f_{s,t}^{(N+1,\alpha)}-f_{s,t}^{(N+1)})|&\leq \int_{0}^{t}ds\ ||A^{*}\hat{G_{\gamma}}f_{s,t}^{(2)}||_{2} ||f_{s,t}^{(N+1,\alpha)}-f_{s,t}^{(N+1)})||_{2}\\ &\leq 2\gamma \int_{0}^{t}||f_{s,t}^{(2)}||_{2}||f_{s,t}^{(N+1,\alpha)}-f_{s,t}^{(N+1)})||_{2}ds ,\end{align*}
also goes to $0$, and 
\begin{align*}\int_{u}^{t}ds\ |\E_{1}^{\alpha}&((\hat{G_{\gamma}}-id)f_{s,t}^{(N+1)}),f_{s,t}^{(N+1,\alpha)}-f_{s,t}^{(N+1)}))|
\\ &\leq 4(K+1)\left(\int_{u}^{t}ds\ \tilde{\E}_{1}((\hat{G_{\gamma}}-id)f_{s,t}^{(N+1)})\right)^{1/2}\left(\int_{u}^{t}ds\ \tilde{\E}_{1}^{\alpha}(f_{s,t}^{(N+1,\alpha)}-f_{s,t}^{(N+1)}))\right)^{1/2},\end{align*}
the second integral is bounded independently of $\alpha$ and the first goes to zero in $\gamma\rightarrow\infty$ by DCT (using proposition 2.13(ii) in \cite{MaR}, earlier stated as lemma \ref{basic} (iii)). The symmetric case $\int_{u}^{t}ds\ |\E_{1}^{\alpha}(f_{u,t}^{(N+1,\alpha)}-f_{u,t}^{(N+1)}),f_{s,t}^{(N+1)}))|\rightarrow 0$ is as easy.
Summing up we get~:
$$\int_{s}^{t}du\ \tilde{\E}_{1}^{\alpha}(f_{u,t}^{(N+1,\alpha)}-f_{u,t}^{(N+1)}))\rightarrow 0.$$

\end{proof}

\subsection{Positivity}
From now on we always consider that $f$ and $h$ are  \emph{$\alpha$-approximated} (N+1)-level-semigroup-families mutually affiliated to $A$ and $A^{*}$. We focus on the two time case and let the (mainly notational) multitime generalization to the reader. Actually, as we explained in the introduction, we learned just before publication of this paper that \cite{JRS} found an alternative construction of a dilation of $\phi_{t-s,\alpha}$ which turned out to be the same as our $\alpha$-approximation (in the symmetric case they consider). This gives a more natural proof of this part using another SDE. We explain here our original proof, with several notational improvements though, with respect to a previous preprint. This proof is mainly combinatorial in nature, but we write the implicit combinatorics in algebraic way for our convenience, letting the reader understand the quite obvious combinatorics behind.

We want to show $\tau(X_{0}^{[0]}\Phi_{0,t}^{(N,\rho,\alpha)}(X_{1}^{[t]}X_{2}^{[0]}...X_{2n-1}^{[t]}))$ define positive linear functionals. Since we will stick to sub-filtrations of free product, by this we mean~:$$\left(\tau(X_{0,i}^{[0]}\Phi_{0,t}^{(N,\rho,\alpha)}(X_{1,i}^{[t]}X_{2,i}^{[0]}...X_{n,i}^{[0/t]}X_{n,j}^{[0/t]*}...X_{1,j}^{[t]*}X_{0,j}^{[0*]}))\right)_{(i,j)}$$ is a positive matrix so that one gets a scalar products on the linear span of elements of the form $X_{0}^{[0]}X_{1}^{[t]}X_{2}^{[0]}...X_{n}^{[0/t]}$ (less than $n+1$ elements starting at time 0). Note that we can deduce from this the various boundedness assumptions in the main theorem of the last part (assuming H and $\Phi$ are built in the same way as in the case we consider when they are mutually associated to $A, A^{*}$). Indeed $\tau(X_{0}^{[0]}\Phi_{0,t}^{(N,\rho,\alpha)}(X_{1}^{[t]}X_{2}^{[0]}...X_{n}^{[0/t]}...X_{1}^{[t]*}X_{0}^{[0*]}))$ then define a state in M in position $X_{n}^{[0/t]}$ thus a standard $C^{*}$ algebraic result gives after an induction this bounded by $\tau(X_{0}^{[0]}X_{0}^{[0*]})||X_{n}^{[0/t]}||...||X_{1}^{[t]}||^{2}$ thus we get value of $\Phi_{0,t}^{(N,\rho,\alpha)}$ in $M$ with the right boundedness by duality.

Consider $\eta: M_0\rightarrow M_2(M_0)$ with $\eta(a)=Diag(G_\alpha(a),\hat{G}_\alpha(a))$ a normal completely positive unital map. Consider $\F(\eta)$ (see e.g. \cite{SAVal99} section 2.4) the full Fock space associated to the canonical Hilbert $M-M$ bimodule $H(M,\eta)$ (e.g.  \cite{SAVal99} lemma 2.2) so that we get to creation operators $L_1=L(\xi_1)$, $L_2=L(\xi_2)$ (with the notations of \cite{SAVal99}) such that (seeing $m\in M$ acting on $\F(\eta)$  by a preservation operator) we have $L_imL_j^*=\eta_{ij}(m)$ i.e. here $L_1mL_1^*=G_\alpha(m)$, $L_2mL_2^*=\hat{G}_\alpha(m)$. (especially $||L_i||=1$) We work on $\M=C^*(M,L_i)$. We define $G_{\alpha,B}(x)=L_1xL_1^*$ a completely positive map on $\M$ $\A_\alpha(x)=\alpha(x-G_{\alpha,B}(x))$ extending $AG_{\alpha}$. By boundedness we can exponentiate in $\phi_{t,\alpha}^B=exp(-\A_{\alpha}t/2)$.
If we write $\Gamma_{\alpha,B}(x,y)=\A_{\alpha}(x)y+x\A_{\alpha}(y)-\A_{\alpha}(xy)=\alpha [x,L_1][L_1^*,y]$, we have as previously~:
$$\phi_{t,\alpha}^{B}(BC)=\phi_{t,\alpha}^{B}(B)\phi_{t,\alpha}^{B}(C)+\int_{0}^{t}ds\ \phi_{s,\alpha}^{B}(\Gamma_{\alpha,B}(\phi_{t-s,\alpha}^{B}(B),\phi_{t-s,\alpha}^{B}(C))).$$

We can get a converging expansion.

\setlength{\hoffset}{-3cm}
\begin{align*}&\phi_{t,\alpha,(i)}^{B}(BC)=\phi_{t,\alpha,(i)}^{B}(B)\phi_{t,\alpha,(i)}^{B}(C)\\ &+\alpha\int_{0}^{t}ds\phi_{s,\alpha,(i+1)}^{B}\left(\left[\phi_{t-s,\alpha,(i)}^{B}(B),L_1\right]\right)\phi_{s,\alpha,(i+1)}^{B}\left(\left[L_1^{*},\phi_{t-s,\alpha,(i)}^{B}(C))\right]\right)\\ &+\alpha^2\int_{0}^{t}ds\int_0^sdu\phi_{u,\alpha,(i+1)}^{B}\Gamma_{\alpha,B}(\phi_{s-u,\alpha,(i+1)}^{B}\left(\left[\phi_{t-s,\alpha,(i)}^{B}(B),L_1\right]\right),\phi_{s-u,\alpha,(i+1)}^{B}\left(\left[L_1^{*},\phi_{t-s,\alpha,(i)}^{B}(C))\right]\right)).\end{align*}
\setlength{\hoffset}{-2cm}

We need a notation to keep the next iterated equation reasonable, thus let us give a name to iterated commutators of $L_1's$, precisely for $s=\{s_{1}\geq...\geq s_{j+1}\}$ with $s_{1}\leq t, s_{j+1}\geq u$,~:$$\CV_{u,s,t}^{j+1}(B)=\alpha^{(j+1)/2}\phi_{s_{j+1}-u,\alpha}^{B}\left(\left[...\phi_{s_2-s_3,\alpha}^{B}\left(\left[\phi_{s_1-s_2,\alpha,}^{B}\left(\left[\phi_{t-s_1,\alpha}^{B}(B),L_1\right]\right),L_1\right]\right)...,L_1\right]\right),$$

and by convention, $$\CV_{u,s,t}^{0}(B)=\phi_{t-u,\alpha}^{B}(B),$$


We get~:\begin{align*}&\phi_{t,\alpha}^{B}(BC)=\phi_{t,\alpha}^{B}(B)\phi_{t,\alpha}^{B}(C)+\sum_{k=0}^\infty\int_{0}^{t}ds_{1}...\int_{0}^{s_{k}}ds_{k+1}\CV_{0,(s_{1},...s_{k+1}),t}^{k+1}(B)\CV_{0,(s_{1},...s_{k+1}),t}^{k+1}(C^{*})^*.\end{align*}

Note that $||\phi_{t,\alpha,(j+1)}^{B}(B)||\leq e^{\alpha t}||B||$ so that $||\CV_{u,s,t}^{j+1}(B)||\leq (4\alpha)^{(j+1)/2}e^{\alpha(t-u)}||B||$ implying convergence of the series. 
 We thus got again a positive like expansion (implying complete positivity of $\phi_{t,\alpha}^{B}$ on $\M$, the remaining part of the proof of positivity will consist in getting such an expression for all the formulas above $\Phi_{0,t}^{N,\alpha}$.
We will also need another notation for our convenience~: 
 $$\CV_{u,s,t}^{(1),j+1}(B,C)=\alpha^{(j+1)/2}\phi_{s_{j+1}-u,\alpha}^{B}\left(\left[...\phi_{s_2-s_3,\alpha}^{B}\left(\left[\phi_{s_1-s_2,\alpha,}^{B}\left(\left[\phi_{t-s_1,\alpha}^{B}(B),L_1\right]C\right),L_1\right]\right)...,L_1\right]\right).$$
  We have obvious analogs with respect to $\hat{G}_\alpha$. 

The next lemma gives an alternative recursive definition where we add derivation from below instead of above (like in the previous part, where this was useful to get limits in taking care of domain issues), so that we can get more easily the decomposition as product of operators in an inductive way. For this, we need a slightly more general (partially) 3-time case, without the (notational) troubles of the full  3-times case and useful for inductions, we write for 
$v,w\leq t$
\begin{align*}&f_{s,[v,t,w]}^{(n,\alpha)}(X_{1}^{[v]},X_{2}^{[0]},X_{3}^{[t]}...X_{2n-3}^{[t]},X_{2n-2}^{[0]},X_{2n-1}^{[w]})\\ &=\sum_{i< j+1=2}^{n}\int_{s}^{v\wedge w}du\ \phi_{u-s,\alpha}(\Gamma_\alpha(f_{u,[v,t,t]}^{(i,\alpha)}(X_{1}^{[v]},...,X_{2i-1}^{[t]}), \\ &\ \ \ \ \ \ \ \ \ \ \ H_{0,u}^{(j-i+1,\alpha)}(X_{2i}^{[u]},F\Phi_{u,t}^{\alpha}(X_{2i+1}^{[t]},...,X_{2j-1}^{[t]}),X_{2j}^{[u]}),f_{u,[t,t,w]}^{(n-j,\alpha)}(X_{2j+1}^{[t]},...,X_{2n-1}^{[w]}))
\end{align*}
(strictly speaking, we have to define the case $n=1$ separately, we only define $f_{s,[v,t,t]}^{(n,\alpha)}= f_{s,[t,t,v]}^{(n,\alpha)}=f_{s,v}^{(1,\alpha)}$)

Analogously, we have extensions to $\M$ we denote $f_{s,[v,t,w],B}^{(n,\alpha)}$ where $\phi_{t,\alpha}$ is replaced by $\phi_{t,\alpha,}^{B}$, $\Gamma_{\alpha}$ by $\Gamma_{\alpha,B}$ etc. The following lemma being also true with the obvious changes for those maps.

\begin{lemma}\label{l11}The following equalities are true for any $r$, for $n\geq 2$:

1.\begin{align*}&f_{s,[v,t,w]}^{(n,\alpha)}(X_{1}^{[v]},X_{2}^{[0]},X_{3}^{[t]}...X_{2n-3}^{[t]},X_{2n-2}^{[0]},X_{2n-1}^{[w]})=\sum_{1\leq i < j\leq n}\int_{s}^{v\wedge (w/1_{j=n})}du\ \\ &f_{s,[u,t,w\vee (t1_{j=n})]}^{(n-j+1,\alpha)}(\Gamma_\alpha(\phi_{v-u,\alpha}(X_{1}),\\ &\ \ \ \ \ H_{0,u}^{(i,\alpha)}(X_{2}^{[u]},F\Phi_{u,t}^\alpha(X_{3}^{[t]}...X_{2i-1}^{[t]}),X_{2i}^{[u]}),f_{u,[t,t,t\wedge (w/1_{j=n})]}^{(j-i,\alpha)}(X_{2i+1}^{[t]}...X_{2j-1}^{[t]}))^{[u]},X_{2j}^{[0]}...X_{2n-1}^{[w]})
\end{align*}

We write \begin{align*}&f_{s,[v,t,w]}^{(n,\alpha)}=f_{s,[v,t,w]}^{(n,\alpha),(t)}=f_{s,[v,t,w]}^{(n,\alpha),(a,l)}+f_{s,[v,t,w]}^{(n,\alpha),(b,l)}
\end{align*} the $(a)$ term corresponding to $l+1\leq j\leq n$ (sum over $i$) of the previous term, $(b)$ the remaining part.

2.\begin{align*}&f_{s,[v,t,w]}^{(n,\alpha)}(X_{1}^{[v]},X_{2}^{[0]},X_{3}^{[t]}..X_{2n-3}^{[t]},X_{2n-2}^{[0]},X_{2n-1}^{[w]})=\sum_{0\leq i < j\leq n-1}\int_{s}^{(v/1_{i=0})\wedge w}du\ \\ &f_{s,[v\vee (t1_{j=n}),t,u]}^{(n-j+1,\alpha)}(X_{1}^{[u]}..X_{2i}^{[0]},\Gamma_\alpha(f_{u\wedge (v/1_{j=n}),t}^{(j-i,\alpha)}(X_{2i+1}^{[t]}..X_{2j-1}^{[t]}),\\ &\ \ \ \ \ \ \ \ H_{0,u}^{(i,\alpha)}(X_{2j}^{[u]},F\Phi_{u,t}^\alpha(X_{2j+1}^{[t]}..X_{2n-3}^{[t]}),X_{2n-2}^{[u]}),\phi_{w-u,\alpha}(X_{2n-1}))^{[u]})
\end{align*}
We write \begin{align*}&f_{s,[v,t,w]}^{(n,\alpha)}=f_{s,[v,t,w]}^{(n,\alpha),(a,r,m)}+f_{s,[v,t,w]}^{(n,\alpha),(b,r,m)}
\end{align*} the $(a)$ term corresponding to $i+m-n<r$ (sum over $j$) of the previous term, $(b)$ the remaining part.

3. \begin{align*}&f_{s,[v,t,w]}^{(n,\alpha)}(X_{1}^{[v]},X_{2}^{[0]},X_{3}^{[t]}...,X_{2n-3}^{[t]},X_{2n-2}^{[0]},X_{2n-1}^{[w]})\\ &=\sum_{\small \begin{matrix}&(i, j,t_i,t_j)\in\{(1,n-1,(t),(t));\\ &(1,k,(t),(a,r,n)),k\leq r;\\ &(l,n-1,(a,r),(t)),l\geq r+1\}\end{matrix}}\int_{s}^{v\wedge w}du\ \\ & \phi_{u-s,\alpha}(\Gamma_\alpha(f_{u,[v,t,t]}^{(i,\alpha),t_i}(X_{1}^{[v]},...,X_{2i-1}^{[t]}),\\ &\ \ \ \ \ \ \  \ \ H_{0,u}^{(j-i+1,\alpha)}(X_{2i}^{[u]},F\Phi_{u,t}(X_{2i+1}^{[t]},...,X_{2j-1}^{[t]}),X_{2j}^{[u]}),f_{u,[t,t,w]}^{(n-j,\alpha),t_j}(X_{2j+1}^{[t]},...,X_{2n-1}^{[w]}))
\\ &+\sum_{1\leq i < j\leq r}\int_{s}^{v}du\ f_{s,[u,t,w]}^{(n-j+1,\alpha)}(\Gamma_\alpha(\phi_{v-u,\alpha}(X_{1}),\\ &\ \ \ \ \ \ H_{0,u}^{(i,\alpha)}(X_{2}^{[u]},F\Phi_{u,t}^\alpha(X_{3}^{[t]}...X_{2i-1}^{[t]}),X_{2i}^{[u]}),f_{u,t}^{(j-i,\alpha)}(X_{2i+1}^{[t]}...X_{2j-1}^{[t]}))^{[u]},X_{2j}^{[0]}...X_{2n-1}^{[w]})
\\ &+\sum_{r<i < j\leq n-1}\int_{s}^{w}du\ f_{s,[v,t,u]}^{(i+1,\alpha)}(X_{1}^{[v]}..X_{2i}^{[0]},\Gamma_\alpha(f_{u,t}^{(j-i,\alpha)}(X_{2i+1}^{[t]}..X_{2j-1}^{[t]}),\\ &\ \ \ \ \ \ \  \ \ H_{0,u}^{(n-j,\alpha)}(X_{2j}^{[u]},F\Phi_{u,t}^\alpha(X_{2j+1}^{[t]}..X_{2n-3}^{[t]}),X_{2n-2}^{[u]}),\phi_{w-u,\alpha}(X_{2n-1}))^{[u]})\\ &-\sum_{1\leq i' < j'\leq r<i < j\leq n-1}\int_{s}^{v}du_1\int_{s}^{w}du_2\ \\ &
f_{s,[u_1,t,u_2]}^{(n,\alpha)}(\Gamma_\alpha(\phi_{v-u_1,\alpha}(X_{1}),H_{0,u_1}^{(i,\alpha)}(X_{2}^{[u_1]},F\Phi_{u_1,t}^\alpha(X_{3}^{[t]},...,X_{2i'-1}^{[t]}),X_{2i'}^{[u_1]}),\\ &\ \ \ \ \ \ \ \ \ \ \ \ f_{u_1,t}^{(j'-i',\alpha)}(X_{2i'+1}^{[t]},...,X_{2j'-1}^{[t]}))^{[u_1]},X_{2j'}^{[0]}...X_{2i}^{[0]},\Gamma_\alpha(f_{u_2,t}^{(j-i,\alpha)}(X_{2i+1}^{[t]},...,X_{2j-1}^{[t]}),\\ &\ \ \ \ \ \ \ \ \ \ \ \ H_{0,u_2}^{(n-j,\alpha)}(X_{2j}^{[u_2]},F\Phi_{u_2,t}^\alpha(X_{2j+1}^{[t]},...,X_{2n-3}^{[t]}),X_{2n-2}^{[u_2]}),\phi_{w-u_2,\alpha}(X_{2n-1})^{[u_2]})
\end{align*}

We write for each term of the above sum~:\begin{align*}&f_{s,[v,t,w]}^{(n,\alpha)}=f_{s,[v,t,w]}^{(n,\alpha),(12-21,r)}+f_{s,[v,t,w]}^{(n,\alpha),(3-321,r)}+f_{s,[v,t,w]}^{(n,\alpha),(123-3,r)}-f_{s,[v,t,w]}^{(n,\alpha),(3-3,r)}.\end{align*}
Note that with our previous notation $f_{s,[v,t,w]}^{(n,\alpha),(3-321,r)}=f_{s,[v,t,w]}^{(n,\alpha),(b,r)}$

\end{lemma}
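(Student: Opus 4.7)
My plan is to prove identities (1) and (2) in parallel by induction on $n \geq 2$ and then derive (3) from them by an inclusion--exclusion argument. The base case $n=2$ is immediate: the original definition of $f_{s,[v,t,w]}^{(2,\alpha)}$ contains only the term with $i=j=1$ and no inner $f$'s to recurse on, so it reduces to a single integral that matches both the right-hand side of (1) (where the outer $f_{s,[u,t,w]}^{(1,\alpha)}$ is just $\phi_{u-s,\alpha}$) and, symmetrically, of (2).

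For the inductive step of (1), I would start from the defining formula for $f_{s,[v,t,w]}^{(n,\alpha)}$ and apply the induction hypothesis (1) to the inner block $f_{u,[v,t,t]}^{(i,\alpha)}(X_{1}^{[v]},\ldots,X_{2i-1}^{[t]})$ sitting inside the outermost $\Gamma_\alpha$. By induction this inner block is itself a sum over an additional integration time $u'\leq u$ in which the leftmost $\Gamma_\alpha$ sees $\phi_{v-u',\alpha}(X_1)$ in its first slot, an $H_{0,u'}$-block in the middle, and an $f_{u',t}$-block on the right, with the remaining variables from $X_{2i}$ onward being processed by an outer $f_{u',[u,t,t]}$. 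Exchanging the $u$ and $u'$ integrations by Fubini and then identifying the combination of the outer $f_{u',[u,t,t]}^{(\cdot,\alpha)}$ with the original outer integral against the variables $X_{2j}^{[0]},\ldots,X_{2n-1}^{[w]}$ as a new $f_{s,[u',t,w]}^{(n-j+1,\alpha)}$ via the recursive definition yields exactly the right-hand side of (1). The auxiliary bookkeeping devices $w/1_{j=n}$ and $w\vee(t1_{j=n})$ in the integration bounds and time-arguments encode uniformly whether the rightmost block $X_{2n-1}^{[w]}$ is consumed by the outer $\Gamma_\alpha$ (when $j=n$) or sits untouched on the right as a new $(w)$-boundary for the recursive $f_{s,[u,t,w]}$. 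Identity (2) follows by the mirror argument peeling off the rightmost $\Gamma_\alpha$ instead of the leftmost.

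For (3), I partition the pairs $(i,j)$ indexing the defining sum into three regions according to the threshold $r$. On the region $1\leq i< j\leq r$, I apply (2) to rewrite each term as the second displayed sum of (3); on $r+1\leq i<j\leq n-1$, I apply (1) to obtain the third sum; on the middle region $i\leq r<j$, the terms are kept in their original form and give the first sum, with the decorations $t_i,t_j\in\{(t),(a,r,\cdot),(a,r)\}$ recording the range of indices on which further recursive rewriting via (1) or (2) is still possible on each side of the outer $\Gamma_\alpha$. The inclusion--exclusion correction $-f_{s,[v,t,w]}^{(n,\alpha),(3-3,r)}$, which is the double integral over independent times $(u_1,u_2)$ with two simultaneous outer $\Gamma_\alpha$'s, precisely compensates the overlap produced by iterating both (1) and (2) on the subfactors $f_{s,[u,t,w]}^{(n-j+1,\alpha)}$ and $f_{s,[v,t,u]}^{(i+1,\alpha)}$ of the second and third sums respectively: a tree in which a leftmost $\Gamma_\alpha$ has been pushed all the way to the left \emph{and} a rightmost $\Gamma_\alpha$ has been pushed all the way to the right is counted in both the left- and right-extracted sums, and the double integral term subtracts it once.

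The principal obstacle is not analytic but purely combinatorial: consistent bookkeeping of all index ranges, the boundary cases $i=0$, $j=n$, $v=t$, $w=t$, and the interaction of the indicator-valued time bounds with the Fubini exchange. The induction hypothesis must be stated precisely enough to give (1) and (2) in the \emph{three}-time version $f_{s,[v,t,w]}$ (not only in the two-time version) at the previous level, because the recursion naturally creates three-time sub-blocks; checking that the boundary conventions remain self-consistent as the recursion is unrolled is the main verification, and is exactly the reason for the $(v\vee(t1_{j=n}))$ and $(w/1_{j=n})$ notations.
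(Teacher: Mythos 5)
Your treatment of identities (1) and (2) is in line with the paper's: induction on $n$, with the base case $n=2$ being the single term $i=j=1$ of the defining recursion, and the inductive step obtained by unfolding the inner block $f_{u,[v,t,t]}^{(i,\alpha)}$ using the three-time induction hypothesis and performing a Fubini exchange to reassemble the outer block as a new $f_{s,[u',t,w]}^{(\cdot,\alpha)}$. The paper phrases this more tersely ("the case $i=1$ of the defining formula gives $j=n$ of this one; for all other values apply the induction hypothesis on $f^{(i,\alpha)}$"), but the mechanism you describe is the same, including the role of the indicator notation $w/1_{j=n}$ and $w\vee(t1_{j=n})$ in tracking whether the rightmost block is absorbed into the top-level $\Gamma_\alpha$ or survives as an argument of the new outer $f$.

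For identity (3), however, your combinatorial bookkeeping goes wrong in several places. First, you have (1) and (2) swapped: the second displayed sum $f^{(n,\alpha),(3-321,r)}$ contains $\Gamma_\alpha(\phi_{v-u,\alpha}(X_1),\ldots)$ with $\phi(X_1)$ in the leftmost slot and is precisely the range $1\leq i<j\leq r$ (the $(b,r)$ piece) of formula \emph{(1)}, as the paper's explicit remark $f^{(n,\alpha),(3-321,r)}=f^{(n,\alpha),(b,r)}$ records; symmetrically the third displayed sum $f^{(n,\alpha),(123-3,r)}$ has $\phi_{w-u,\alpha}(X_{2n-1})$ in the rightmost slot and comes from formula \emph{(2)}. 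Second, your claim that the first displayed sum is "the middle region $i\leq r<j$, terms kept in their original form" does not match the structure of $f^{(n,\alpha),(12-21,r)}$: the tuples there are $(1,n-1,(t),(t))$, $(1,k,(t),(a,r,n))$ with $k\leq r$, and $(l,n-1,(a,r),(t))$ with $l\geq r+1$, i.e.\ the first sum collects the terms of the defining sum where at least one of the two outer blocks is a single $\phi$ (the $(11)$, $(1La)$, $(La1)$ cases in the paper's notation), not terms crossing the threshold. Third, the assertion "apply (2) to rewrite each term" is not literally well-posed: (1) and (2) are identities for the full $f^{(n,\alpha)}$, not rewrites of individual summands of the defining sum. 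What the paper actually does is decompose the defining sum into six (then ten, after splitting $L$-terms into $a/b$ via formulas (1), (2)) disjoint categories $(11),(1L),(L1),(\ell L),(L\ell),(\ell\ell)$ according to whether the left and right $f$-blocks cross the imaginary line at $r$, and then matches these categories line by line with the four displayed sums of (3), with the $(\ell\ell)$, $(\ell Lb)$, $(Lb\ell)$ categories appearing in both the second and third sums and being subtracted once by the $(3\text{-}3,r)$ double-integral term. Your inclusion--exclusion intuition is in the right spirit, but without this precise category bookkeeping the claimed cancellation cannot be verified.
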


\begin{proof}
The proof of the first formula by induction on $n$ is quite clear (the case $i=1$ of the defining formula before the lemma 
 gives $j=n$ of this one (this is the only case at initialization $n=2$), for all other  values we apply the induction hypothesis on $f^{(i,\alpha)}$ of the defining formula, in which case in the second formula produced by induction hypothesis $j$ is bellow this $i$ of the first formula). The second formula of the lemma is similar. 

Explained in words, in the first formula, we have written terms where we see a $\phi(X_1)$ and just after application of $\Gamma$. Of course, we have just said that by the inductive definitions of the $f$'s such a way of emphasizing this $\Gamma$ whatever the terms involved around exists.
The second formula is the symmetric with $\phi(X_{2n-1})$
Having in mind a imaginary line $r$ where we intend to cut our formulas into to symmetric and adjoint expressions to get positivity by an explicit rewriting, the two notations introduced with those two formulas emphasized the end of the other side of $\Gamma$ (the side not containing $\phi(X_1)$ or $\phi(X_{2n-1})$) and call with a $(b)$ terms not crossing the imaginary line, with an (a) the terms crossing the imaginary line. 

We now want to use these formulas to prove the third.
Let us explain in words what terms appear in each line.
The second and third line are quite clear and explained at the end of the previous paragraph via the remark at the end of the lemma, since they correspond to terms already appearing previously where the $\Gamma$ of $\phi(X_1)$ or $\phi(X_{2n-1})$ does not cross the imaginary line.
Now of course, this could be possible that both cases happen in the same time, we subtract this in the fourth line. The first line is based on all other terms, but we don't want here to write the terms in emphasizing necessarily a term with $\phi(X_1)$ or $\phi(X_{2n-1})$.

Thus look from above, from the defining formula where $\Gamma$ may involve $f's$ as first and last arguments. Necessarily one of those $f$'s does not cross the imaginary line, if this $f$ is not a $\phi$ it involves bellow another $\Gamma$ and has been taken into account before. Thus, the only remaining terms have one or two $\phi$ (as the first or third argument) at their top level $\Gamma$, and the one not a $\phi$ has to cross the imaginary line. But moreover, we want that this one cannot have at its bottom level $\Gamma$, a $\Gamma$ not crossing the imaginary line, in which case it would have been included already before. Thus we use our $(a)$ terms defined with formulas (1) and (2) of the lemma in order to avoid the bad $(b)$ term.
Before explaining this formally, instead of in words, let us emphasize that in all three last lines (with a $(3)$ in their notations) one Gamma being on one side, of the imaginary line we will cut this imaginary line (in our goal of getting a square like expression explicitly positive) only by induction hypothesis. This lemma consist in emphasizing exactly the configurations of terms for which we will have to work later (to avoid repeating a boring work easily done by induction). Let us come back to the proof.

 For, let us give names to several specific terms depending on the $r$ (position of imaginary line) fixed in the statement in the defining sum $f_{s,[v,t,w]}^{(n,\alpha)}=f_{s,[v,t,w],(11)}^{(n,\alpha)}+f_{s,[v,t,w],(1L)}^{(n,\alpha)}+f_{s,[v,t,w],(L1)}^{(n,\alpha)}+f_{s,[v,t,w],(\ell L)}^{(n,\alpha)}+f_{s,[v,t,w],(L\ell)}^{(n,\alpha)}+f_{s,[v,t,w],(\ell\ell)}^{(n,\alpha)}.$  All those terms correspond to specific values of $i, j$ in the defining sum~: respectively $(11)$ for $i=1, j=n-1$; $(1L)$ for $i=1, j\leq r$; $(L1)$ for $i\geq r+1, j=n-1$; $(\ell L)$ for $1<i\leq r, j\leq r$; $(L\ell)$ for $i\geq r+1, r+1\leq j<n-1$; $(\ell\ell)$ for $i\leq r, j\geq r+1$ and not simultaneously $i=1$ and $j=n-1$. Of course since $i\leq j$ those are the only decomposition possible.

Now we also want to decompose more terms with an $L$, i.e. one side crossing the $r$ limit, we do this using the decomposition of the two first statements of this lemma, in looking at the large L side term $f^{(n-j)}$ or $f^{(i)}$, we get e.g $f_{s,[v,t,w],(L1)}^{(n,\alpha)}=f_{s,[v,t,w],(La1)}^{(n,\alpha)}+f_{s,[v,t,w],(Lb1)}^{(n,\alpha)}$ cutting here $f^{(i)}$ the one crossing $r$ (thinking as i as the  n of the decomposition in the first part of the lemma, n as m and r as the same). We thus got 10 terms. 

In the formula  we have to prove, the $(11)$,$(1La)$,$(La1)$ terms are those of the first line $f_{s,[v,t,w]}^{(n,\alpha),(12-21,r)}$  

The second line $f_{s,[v,t,w]}^{(n,\alpha),(3-321,r)}$ of the new formula corresponds to the sum of $(\ell\ell)$, $(\ell L)$,  $(\ell 1)$, $(Lb1)$, and $(Lb\ell)$. The third line $f_{s,[v,t,w]}^{(n,\alpha),(123-3,r)}$ to $(\ell\ell)$, $(L\ell )$,  $(1\ell)$, $(1Lb)$, and $(\ell Lb)$,  the fourth line $f_{s,[v,t,w]}^{(n,\alpha),(3-3,r)}$ with a  minus term corresponds to  $(\ell\ell)$ $(\ell Lb)$, $(Lb \ell)$, those appearing  in both above, so that they subtracts the redundancy in to the previous lines.

Identifying each line with the stated sum uses only the definitions.
\end{proof}

Let us define the following operators inductively on $n$, formulas for $n\leq N$:
$\sigma_{v,s,[u,t]}^{k}:=\sigma_{v,s,[u,t]}^{k,(1)}+\sigma_{v,s,[u,t]}^{k,(2)}+\sigma_{v,s,[u,t]}^{k,(3)},$
as maps defined 
on $\M$
, $k\geq 1$.
\begin{align*}\sigma_{v,s=(\tilde{s},s',s_{j+1},...,s_{k}),[u,t]}^{k,(1)}&(X_1^{[u]},X_2^{[0]},...,X_n^{[0/t]})=1_{\{s_{j+1}\leq u\}}\sum_{j=0}^{k-1}\sum_{l=0}^{j}\\ &\CV_{v,(s_{j+1},...,s_{k}),u}^{(1),k-j}(X_1,\Theta_{0,s',s_{j+1}}^{j-l}(X_2^{[s_{j+1}]}F\Sigma_{s_{j+1},\tilde{s},t}^{l}(X_3^{[t]};...,X_n^{[0/t]}))),
\end{align*}
\begin{align*}&\sigma_{v,s=(\tilde{s},s_{j+1},...,s_{k}),[u,t]}^{k,(2)}(X_1^{[u]},X_2^{[0]}...X_n^{[0/t]})=
\sum_{i}\sum_{j=1}^{k}\int_{1_{j\neq k}s_{j+1}+1_{j=k}v}^{ \tilde{s}_j\wedge u}du_1\CV_{v,(s_{j+1}...s_{k}),u_1}^{k-j}(\\&\Gamma_{\alpha,B}(\phi_{u-u_1,B}(X_1),H_{0,u_1}^{(i,\alpha)}(X_{2}^{[u_1]},F\Phi_{u_1,t}^{(\alpha)}(X_{3}^{[t]}...X_{2i-1}^{[t]}),X_{2i}^{[u_1]}),\sigma_{u_1,\tilde{s},[t,t]}^{j}(X_{2i+1}^{[t]}...X_{n}^{[0/t]})))
\end{align*}
\begin{align*}&\sigma_{v,s,[u,t]}^{k,(3)}(X_1^{[u]},X_2^{[0]},...,X_n^{[0/t]}) =\sum_{i< j=2}^{\lfloor (n+1)/2\rfloor}\int_{v1_{k=0}+s_k1_{k\neq 0}}^{u }du_1\ \sigma_{v,s,[u_{1},t]}^{k}\\&(\Gamma_{\alpha,B}(\phi_{v-u_1,\alpha}(X_{1}),H_{0,u_1}^{(i,\alpha)}(X_{2}^{[u_1]},F\Phi_{u_1,t}^{(\alpha)}(X_{3}^{[t]}...),X_{2i}^{[u_1]}),f_{u_1,t}^{(j-i,\alpha)}(..X_{2j-1}^{[t]}))^{[u_1]},X_{2j}^{[0]}..X_n^{[0/t]})\end{align*}

In this definition, we have written ($l\geq 1$) $$\Sigma_{u,s',t}^{l}(X_1^{[t]};...,X_n^{[0/t]}):=\sum_{i=1}^{\lfloor(n-1)/2\rfloor} \Phi_{u,t}^{\alpha}(X_{1}^{[t]},...,X_{2i-1}^{[t]})X_{2i}^{[0]}\sigma_{u,s',t}^{l}(X_{2i+1}^{[t]};...,X_n^{[0/t]}))^{[u]},$$

and for ($l=0$)$$\Sigma_{u,s',t}^{0}(X_1^{[t]};...,X_n^{[0/t]}):= \Phi_{u,t}^{\alpha}(X_{1}^{[t]},...,X_{n}^{[0/t]}).$$

More crucially, we need to assume given a corresponding decomposition (obtained inductively) for $H$ of the type we will prove bellow for $\Phi$, i.e. we assume there are compact spaces with fixed (positive) Radon measures $T_{s}^{k,H}$ included in a locally compact $I_{k,H}$ such that $\mu(T_{s}^{k,H})$ increases with $s$, we have the infinite radius of convergence condition~: for any $K>0$ $\sum_k\mu(T_{s}^{k,H})K^k<\infty$. We also have constants $C, K$ (maybe depending on $N, u$ but not $k,s$) such that $||\Theta_{0,s,u}^{k}(X_1^{[u]},X_2^{[0]},...,X_n^{[0/u]})||_\M \leq C K^{k}||X_1||_\M...||X_n||_\M$ and for all $i=n+p\leq N-1$, $X_{i}\in \M$,  
\begin{align*}H_{0,u}^{(i,\alpha)}&(X_{1}^{[u]},X_{2}^{[0]},X_{3}^{[u]}...,X_n^{[0/u]},X_p'^{[u/0]*}...,X_2'^{[0]*},X_1'^{[u]*})\\ &=\sum_{k=0}^{\infty}\int_{T_{u}^{k,H}}ds \ \Theta_{0,s,u}^{k}(X_1^{[u]},X_2^{[0]},...,X_n^{[0/u]})(\Theta_{0,s,u}^{k}(X_1'^{[u]},X_2'^{[0]},...,X_p'^{[u/0]}))^{*}.\end{align*} 
The previous conditions especially imply this series converges absolutely.
We also assume that the maps (noted identically for different $n$) $\Theta_{0,.,.}^{k}:\{(u,s)\in I_{k,H}\times \R_{+} \ | \ u\in T_{s}^{k,H}\}\rightarrow B(\M^{\hat{\o} n},\M))$
 is continuous and agrees by restriction of $i$ (so that the various extensions of H, also assumed given, do agree).
We of course assume $T_{s}^{0,H}$ is a point and $\Theta_{0,s,u}^{j,0}(X_1^{[u]},X_2^{[0]},...,X_n^{[0/u]})=H_{0,u}^{(\alpha),j}(X_1^{[u]},X_2^{[0]},...,X_n^{[0/u]})$. Note that the previous properties are obvious with our previous expressions (knowing the next lemma to interpret them). 

We can now prove the positivity decomposition lemma~:

\begin{lemma}
We have the following relations (for $i=n+p\leq N$):
\begin{align*}&f_{v,[u,t,w]}^{((n+p+1)/2,\alpha)}(X_{1}^{[v]},X_{2}^{[0]},X_{3}^{[t]}...,X_n^{[0/t]},X_p'^{[t/0]*}...,X_2'^{[0]*},X_1'^{[w]*})\\ &=\sum_{k=1}^{\infty}\int_{T_{v,t}^k}ds\ \sigma_{v,s,[u,t]}^{k}(X_1^{[u]},X_2^{[0]},...,X_n^{[0/t]})(\sigma_{v,s,[w,t]}^{k}(X_1'^{[w]},X_2'^{[0]},...,X_p'^{[t/0]}))^{*}\\ &
f_{v,[u,t,w]}^{((n+p+1)/2,\alpha),(123-3)}(X_{1}^{[v]},X_{2}^{[0]},X_{3}^{[t]}...,X_n^{[0/t]},X_p'^{[t/0]*}...,X_2'^{[0]*},X_1'^{[w]*})\\ &=\sum_{k=1}^{\infty}\int_{T_{v,t}^k}ds\ \sigma_{v,s,[u,t]}^{k}(X_1^{[u]},X_2^{[0]},...,X_n^{[0/t]})(\sigma_{v,s,[w,t]}^{k,(3)}(X_1'^{[w]},X_2'^{[0]},...,X_p'^{[t/0]}))^{*}
\end{align*}
where \begin{eqnarray*}T_{v,u}^k &~:=  \sqcup_{l}\sqcup_{0< i_{1}< j_{1}< i_{2}< ...j_{l-1}< i_{l}= k} \  & \{ (s_{1},...s_{i_1},s'_{1},s_{j_1+1},...s_{i_2},s'_{2},...s_{i_l})\ \\  & &\ \ | \ u\geq s_{1},\ i\mapsto s_{i}\  \text{non-increasing},\ s_{i_l}\geq v,\  s'_m\in T_{0,s_{i_m}}^{j_m-i_m,H}\}\\ \subset I_{k}&~:=  \sqcup_{l}\sqcup_{0< i_{1}< j_{1}< i_{2}< ...j_{l-1}< i_{l}= k}\  & \R_+^{i_1}\times I_{j_1-i_1,H}\times...\R_+^{i_l-j_{l-1}}
\end{eqnarray*}
Moreover, for the canonical product measure (using Lebesgue measure for intervals in $\R$), $\mu(T_{a,b}^k)\leq \mu(T_{a',b'}^k)$ for $a'\leq a\leq b\leq b'$ and we have the infinite radius of convergence condition~: for any $K>0$ $\sum_k\mu(T_{a,b}^k)K^k<\infty$. We also have constants $C, K$ (maybe depending on $N, t$ but not $k$) such that $||\sigma_{v,s,[u,t]}^{k}(X_1^{[u]},X_2^{[0]},...,X_n^{[0/t]})||_\M \leq C K^{k}||X_1||_\M...||X_n||_\M$, so that the previous sums converge absolutely.
\end{lemma}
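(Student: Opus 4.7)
The plan is to prove the statement by induction on $i = n+p$ (equivalently on the filtration level $N$), with the first equality giving positivity directly (every term in the sum is a product $X X^*$) and the second equality serving as the bookkeeping identity that makes the induction close. At each induction step, my main tool will be the three-way decomposition provided by formula (3) of Lemma~\ref{l11}, applied at a cutting parameter $r$ that I will let play the role of the imaginary line separating the ``left'' arguments $X_1,\dots,X_n$ from the ``right'' arguments $X_1',\dots,X_p'$. The Fock-space expansion for $\phi_{t,\alpha}^B(BC)$ derived just before Lemma~\ref{l11}, together with the $\CV$ operators, will supply the infinite sum over $k$ in the definition of $\sigma^{k,(1)}$; and the hypothesized $\Theta$-decomposition of $H$ will handle the ``middle'' $H$-factor that appears inside each $\Gamma_\alpha$.

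Concretely, I would proceed as follows. First, unfold $f_{v,[u,t,w]}^{((n+p+1)/2,\alpha)}$ via Lemma~\ref{l11}(3) with cut $r = \lfloor n/2 \rfloor$ (or whatever $r$ makes the split symmetric between the $X$'s and $X'$'s). The term $f^{(12-21,r)}$, in which both $X_1$ and $X_1'$ appear inside a single outer $\Gamma_\alpha$, is precisely what the Fock-space / $\CV$-expansion is designed to handle: each iterated commutator $[\,\cdot\,,L_1]$ produces one index $s_{j+1}$ in the integration variable, and the polarization $L_i m L_i^* = G_\alpha(m)$ or $\hat G_\alpha(m)$ lets us split a single $\Gamma_{\alpha,B}$ contribution into a product of two creation-annihilation factors acting on the two sides. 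Inserting the $\Theta$-decomposition of the $H$-factor in the middle then splits the $H$ into a product $\Theta\cdot\Theta^*$, so this term contributes exactly the $\sigma^{k,(1)} (\sigma^{k,(1)})^*$ piece of the product $\sigma\sigma^*$. The two ``outer-integral'' terms $f^{(3-321,r)}$ and $f^{(123-3,r)}$ are the ones where only the left side (resp. only the right side) crosses the line, and by induction on the smaller $f$ appearing as their inner argument they contribute the cross-terms $\sigma^{k,(3)}(\sigma^{k,(2)})^*$ and $\sigma^{k,(2)}(\sigma^{k,(3)})^*$ respectively. Finally the subtracted $f^{(3-3,r)}$ term, which records the overlap where \emph{both} sides failed to cross, is exactly $\sigma^{k,(2)}(\sigma^{k,(2)})^*$ after using the induction hypothesis to write the inner $f$ as a sum of $\sigma\sigma^*$ products. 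Adding everything yields $(\sigma^{k,(1)}+\sigma^{k,(2)}+\sigma^{k,(3)})(\sigma^{k,(1)}+\sigma^{k,(3)})^*$ minus $\sigma^{k,(2)}(\sigma^{k,(2)})^*$... which after a careful accounting becomes $\sigma^k(\sigma^k)^*$ as claimed. The second identity in the lemma, relating $f^{(123-3)}$ to $\sigma\cdot (\sigma^{(3)})^*$, is read off from the same matching by keeping track of which side of the cut each $\sigma^{(j)}$ came from; this is needed to feed the induction back in at the next level (where an $f^{(123-3)}$ appears as an inner argument).

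Construction of the measure spaces $T_{v,t}^k$ is inductive too: the indices $0<i_1<j_1<\cdots<i_l=k$ record the alternation between ``pure $\CV$-steps'' (contributing copies of $\R_+$ via the $s_m$'s) and ``$\Theta$-from-$H$-steps'' (contributing copies of $I_{j_m-i_m,H}$ via the $s'_m$'s). The monotonicity $\mu(T_{a,b}^k)\leq \mu(T_{a',b'}^k)$ for $[a,b]\subset[a',b']$ is immediate since each $s_i$ ranges over a subinterval of $[a,b]$. For the infinite-radius-of-convergence bound, each intervening $\CV$-block of length $j$ contributes a factor of at most $(b-a)^j/j!$ after iterated integration, while each $\Theta$-block contributes $\mu(T_{s_{i_m}}^{j_m-i_m,H})$ which by hypothesis sums against $K^{j_m-i_m}$ to something finite; the product structure of $T^k$ then gives $\sum_k \mu(T^k_{a,b})K^k<\infty$ by a straightforward exponential-type majorization. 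The uniform $M$-bound $\|\sigma^k\|\leq CK^k\prod\|X_i\|$ follows from $\|L_1\|=1$, the contractivity $\|\phi_{t,\alpha}^B\|\leq 1$ on $\M$, the hypothesized bound on $\Theta^k$, and an induction on $k$ tracking each commutator as contributing at most a factor $2\sqrt{\alpha}$ (absorbed into $K$).

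The main obstacle, I expect, will be \emph{not} the analytic estimates but the combinatorial matching between the four pieces produced by Lemma~\ref{l11}(3) and the three pieces $\sigma^{k,(1)}+\sigma^{k,(2)}+\sigma^{k,(3)}$ in the $\sigma\sigma^*$ expansion — in particular, verifying that the cross-terms between $\sigma^{(1)}$, $\sigma^{(2)}$, $\sigma^{(3)}$ line up with the signed sum $f^{(12-21)}+f^{(3-321)}+f^{(123-3)}-f^{(3-3)}$, and that the recursive occurrence of $f^{(123-3)}$ inside $\sigma^{k,(3)}$ matches the second identity exactly. This is why both identities must be proved simultaneously by the same induction, and why $\sigma^{k,(2)}$ and $\sigma^{k,(3)}$ are defined with the asymmetric integration limits $\int_{1_{j\neq k}s_{j+1}+1_{j=k}v}^{\tilde s_j\wedge u}$ and $\int_{v1_{k=0}+s_k1_{k\neq 0}}^u$ respectively — these limits are precisely what Lemma~\ref{l11}(3) demands for the cuts to fit together without gap or overlap.
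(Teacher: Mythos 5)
Your overall strategy matches the paper's --- induction on $n+p$, the three-way decomposition of Lemma~\ref{l11}(3), the $\CV$-expansion supplying the infinite $k$-sum, and the $\Theta$-decomposition of $H$ handling the middle factor; your treatment of the measure estimates is essentially the paper's own. But the combinatorial heart of the argument has a concrete error: your attribution of the four pieces $f^{(12-21)}$, $f^{(3-321)}$, $f^{(123-3)}$, $f^{(3-3)}$ to individual products $\sigma^{(i)}(\sigma^{(j)})^*$ does not hold and does not close. You claim $f^{(12-21)}\to\sigma^{(1)}(\sigma^{(1)})^*$, $f^{(3-321)}\to\sigma^{(3)}(\sigma^{(2)})^*$, $f^{(123-3)}\to\sigma^{(2)}(\sigma^{(3)})^*$, $f^{(3-3)}\to\sigma^{(2)}(\sigma^{(2)})^*$; the signed sum of these four terms is not $\sigma\sigma^*$ (it misses five of the nine cross-terms), and your summary line
$\bigl(\sigma^{(1)}+\sigma^{(2)}+\sigma^{(3)}\bigr)\bigl(\sigma^{(1)}+\sigma^{(3)}\bigr)^*-\sigma^{(2)}(\sigma^{(2)})^*$
does not equal the four terms you listed either, so the ``careful accounting'' you defer to would surface a genuine gap rather than close it.

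The correct matching, which one can read off from the second identity of the lemma together with the symmetry between formulas~(1) and~(2) of Lemma~\ref{l11}, is: $f^{(123-3)}=\sigma\,(\sigma^{(3)})^*$ (the \emph{full} $\sigma^k$ on the left, not $\sigma^{(2)}$ --- this is exactly the second asserted identity and is the easy case by direct induction, which is why the paper handles it first); by symmetry $f^{(3-321)}=\sigma^{(3)}\sigma^*$; likewise $f^{(3-3)}=\sigma^{(3)}(\sigma^{(3)})^*$; and therefore the remaining piece to analyze is $f^{(12-21)}=(\sigma^{(1)}+\sigma^{(2)})(\sigma^{(1)}+\sigma^{(2)})^*$, a full $2\times 2$ block of cross-terms, not just the diagonal $(1,1)$ entry. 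The real work of the proof is precisely unpacking this $f^{(12-21)}$ part further into the $i=1,j=n-1$ term and the two outer-derivation terms (with the $(a,r)$ cuts), then carefully tracking, on each side of the cut, whether the last $\CV$-iteration falls inside an $H$-factor or not --- this is what forces the sub-split of the $\sigma^{(1)}(\sigma^{(1)})^*$ diagonal into the $j\neq l$ and $j=l$ cases, and it is what produces the half-integral that must be completed by the symmetric term. Your sketch omits this entirely and instead posits a one-to-one matching that cannot exist because $f^{(12-21)}$ carries four of the nine cross-terms, not one. Also, the cut $r$ should sit at the boundary between the $X$ arguments and the $X'$ arguments --- it is dictated by the statement, not chosen as $\lfloor n/2\rfloor$.
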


\begin{proof}

Let us first prove the infinite radius of convergence result~:
\begin{align*}&\mu(T_{a,b}^k):=\\ &\sum_{l,0< i_{1}< j_{1}< i_{2}< ...j_{l-1}< i_{l}= k}\int_a^b ds_1...\int_a^{s_{i_1-1}} ds_{i_1}\int_{ T_{0,s_{i_1}}^{j_1-i_1,H}}ds'_1\int_a^{s_{i_1}} ds_{j_1+1}...\int_a^{s_{i_2-1}} ds_{i_2}...\int_a^{s_{i_l-1}} ds_{i_l},\end{align*}

so that this is obviously increasing with $a$ $b$ as stated in the theorem and~: \begin{align*}&\sum_k K^k\mu(T_{a,b}^k)\sum_{k,l,0< i_{1}< j_{1}< i_{2}< ...j_{l-1}< i_{l}= k}K^k\times\leq\\ &\times\int_a^b ds_1...\int_a^{s_{i_1-1}} ds_{i_1}\int_a^{s_{i_1}} ds_{j_1+1}...\int_a^{s_{i_2-1}} ds_{i_2}...\int_a^{s_{i_l-1}} ds_{i_l}\prod_m\mu(T_{0,b}^{j_m-i_m,H})\\ &\leq\sum_{k,l,0< i_{1}< j_{1}< i_{2}< ...j_{l-1}< i_{l}=k}K^k\frac{(b-a)^{k-\sum_m(j_m-i_m)}}{(k-\sum_m(j_m-i_m))!}\prod_{m=1}^{l-1}\mu(T_{b}^{j_m-i_m,H})\\ &\leq\sum_{k',l\leq k',j_{1}',...j_{l}'}(2K)^{k'+(j_1)'+...+(j_{l-1})'}\frac{(b-a)^{k'}}{k'!}\prod_{m=1}^{l-1}\mu(T_{b}^{j'_m,H})\\ &\leq\sum_{k'}(2K)^{k'}\frac{(b-a)^{k'}}{k'!}\left(1+\sum_j(2K)^j\mu(T_{b}^{j,H})\right)^{k'}<\infty, \end{align*}

In the fourth line we noted $k'$ (the previous power of $(b-a)$) is automatically above $l$ since the sequence is (strictly) increasing, and moreover, the number of terms in the sum over sequences is always bellow the number of parts of $[1,k]$ (a sequence being the positions of changes of colors) thus a bound in $2^k$.The bound on $\sigma$ is obvious as seen formulas and by induction on $n+p$)

The end of the proof is by induction on $n+p$. First note that the second formula is obvious by induction, as is obvious an analogue formula for $f_{v,[u,t,w]}^{(n,\alpha),(3-3)}$. Those terms have been put into emphasis before, especially because the induction is easy on them. 

It thus remains to check the formula for $f_{v,[u,t,w]}^{(n,\alpha),(12-21)}$. In the defining sum, the term $i=1, j=n-1$ ( let's call the corresponding sum $(1)$) is almost obvious either by induction or by our assumption on $H$. However, let us note that we need to use the decomposition for $H$ but with one middle term maybe of the form $(XY)^{[u]}$ where $X$ and $Y$ are two terms coming from a same $f(X_{i},...X_k)$ divided in two $\sigma$'s. But since at the final level we cut in the middle between a $0$ and a $t$, this actually happens only if $k\geq i+2$, so that the number of terms in the alternating pattern on which we apply $H$ being less than two less than $N-1$ the maximal point of our knowledge for $H$, we can add an alternating pattern $X^{[u]}1^{[0]}Y^{[u]}$ to get the wanted decomposition by the assumption. (Of course, $H$ like $\Phi$ is compatible with these insertions of $1$'s). One then decomposes $\Gamma$ and $\phi$ as above (in the decomposition for $\phi$) to get a $\CV$ term. In a shortened formula (without arguments easily deducible by the reader), one gets (sum-sum and sum-integral exchanges being justified by  summability either assumed or inductively proven)~:

\begin{align*}&(1)=\sum_{k=1}^{\infty}\sum_{j=0}^{k-1}\sum_{l=0}^{j}\int_{v}^{u\wedge w}ds_{j+1}...\int_{v}^{s_{k-1}}ds_{k}\int_{T^{j-l,H}_{0,s_{j+1}}}ds'\int_{T^{l}_{s_{j+1},t}}d\tilde{s} \\ &\CV_{v,(s_{j+1},...s_{k}),u}^{(1),k-j}(X_1,\Theta_{0,s',s_{j+1}}^{j-l}(F(\Sigma_{s_{j+1},\tilde{s},t}^{l})))\CV_{v,(s_{j+1},...s_{k}),w}^{(1),k-j}(X_1',(\Theta_{0,s',s_{j+1}}^{j-l}F( \Sigma_{s_{j+1},\tilde{s},t}^{l} )))^*.
\end{align*}

This sums thus takes into account a part of the sum of $\sigma^{(1),k}(\sigma^{(1),k})^*$. Precisely, when $j\neq l$, the disjointness of integration areas removes the double sum corresponding to each  $\sigma^{(1),k}$ (actually the double double sum, one double sum for $j$, one for $l$,the disjointness of integration areas comes from the fact we take a disjoint union over list of integers corresponding to the number of $\Gamma$'s cut in between both sides of the $\sigma\sigma^*$ product, and certain of those integers also roughly corresponds to $j,l$ and $k$'s at least in the case we consider here) and reduces it in (two, one for $j$, one for $l$) simple sums as above. All those terms are thus taken into account here. When $j=l$, the double sum over $l$ cannot be removed (contrary to what happened in the first case, in absence of term coming from $H$, the k-l terms are only a part of the last bunch ot variables $s_{j_{l-1}+1},...,s_{i_l}$ in the space of integration $T$, exactly those corresponding to $\CV$'s above $X_{1}$'s we can always emphasize those variables and we only have here the part with the same number of variable of that kind for $X_1$ and $X_1'$), and we only have here the diagonal part of it, the second part will appear later. In other words, with obvious notations, we have obtained~:

\begin{align*}&(1)=\sum_{k=1}^{\infty}\int_{T_{v,u\wedge w}^k}ds\ \sigma_{v,s,[u,t]}^{J,k,(1,l\neq j)}(\sigma_{w,s,[w,t]}^{J,k,(1,l\neq j)})^{*}\\&+\sum_{k=1}^{\infty}\sum_{l(=j)=0}^{k-1}\int_{v}^{u\wedge w}ds_{j+1}...\int_{v}^{s_{k-1}}ds_{k}\int_{T^{0,H}_{v,s_{j+1}}}ds'\int_{T^{l}_{s_{j+1},t}}d\tilde{s} \\ &\CV_{v,(s_{j+1},...s_{k}),u}^{(1),k-l}(X_1,\Theta_{0,s',s_{j+1}}^{0}(F(\Sigma_{s_{j+1},\tilde{s},t}^{l})))\CV_{v,(s_{j+1},...s_{k}),w}^{(1),k-l}(X_1',(\Theta_{0,s',s_{j+1}}^{0}F( \Sigma_{s_{j+1},\tilde{s},t}^{l} )))^*
,
\end{align*}
where we may have also used the form of $\Theta^{0}$~:
$$\Theta_{0,s',s_{j+1}}^{0}(F(\Sigma_{s_{j+1},\tilde{s},t}^{l}))=H_{0,s_{j+1}}(F(\Phi_{s_{j+1},t}^{J,l}))(\sigma_{s_{j+1},\tilde{s},t}^{l}).$$

Let us now discuss the part, say $(2)$, indexed by $(1,k,(t),(a,r,n)),k\leq r$ of the defining sum of $f_{v,[u,t,w]}^{(n,\alpha),(12-21)}$. Here we obviously want to apply induction hypothesis to $f_{u,[t,t,w]}^{(n-j,\alpha),t_j}$.

As we have noted,  $f_{s,[v,t,w]}^{(n,\alpha),(3-321,r)}=f_{s,[v,t,w]}^{(n,\alpha),(b,r)}$ so that $f_{s,[v,t,w]}^{(n,\alpha),(a,r)}$ decomposes in a $\sigma\sigma^{1}+\sigma\sigma^{2}$ pattern. 

We thus obtain (using of course a derivation property for $\Gamma$):
\begin{align*}&(2)=\sum_{j=1}^{\infty}\sum_{k=j+1}^{\infty}\int_{v}^{u\wedge w}ds_{j+1}...\int_{v}^{s_{k-1}}ds_{k}\int_{T^{j}_{s_{j+1},t}}d\tilde{s} \\ &\CV_{v,(s_{j+1},...s_{k}),u}^{(1),k-j}(X_1,\left(H_{0,s_{j+1}}(F(\Phi_{s_{j+1},t}))\sigma^{j}_{s_{j+1},\tilde{s}[t,t]}\right))(\CV_{v,(s_{j+2},...s_{k}),s_{j+1}}^{k-j-1}([\sigma^{j,(1+2)}_{s_{j+1},\tilde{s},[w,t]},L_{1}]))^*+\\ &\CV_{v,(s_{j+2},...s_{k}),s_{j+1}}^{k-j-1}(\Gamma_{\alpha,B}\left(\phi_{u-s_{j+1},\alpha}(X_1),H_{0,s_{j+1}}(F(\Phi_{s_{j+1},t})),\sigma^{j}_{s_{j+1},\tilde{s},[t,t]}\right))\times\\& \ \ \ \ \ \ \ \ \ \  \ \ \ \ \ \ \ \ \   \ \ \ \ \ \ \  \times (\CV_{v,(s_{j+2},...s_{k}),s_{j+1}}^{k-1-j}(\sigma^{j,(1+2)}_{s_{j+1},\tilde{s},[w,t]}))^*.
\end{align*}

The first part of the first term (with a (1) in the right) exactly gives the lacking component of the (1-1) term, or at least the part where the number of $L_1$'s on $X_1'$ is greater than the one on $X_1$ (of course in the $l=j$ case for the $\sigma^{(1)}$ of the left, nothing being imposed on the right, except a minimal number of $\CV$'s).

The term of the first line with (2) and of the second line with (1) corresponds to parts of (1-2) and (2-1) terms respectively, again the global sum being divided depending on the number of variables of integration in the above bunch of them fixed on $X_1$ or $X_1'$ respectively. 
More precisely, note we get all the terms we need since when $j_r\neq l_r$ in the side of $\sigma^{(1)}$ (assuming it on the right to fix notation) with the terminology of the defining sum, we have to have an higher $j_l> j_r$ in the side of $\sigma^{(2)}$
, this appears thus in the second line of our formula for (2). If $j_r= l_r$, we have as above two cases $j_l> j_r$ appears again likewise in the second line, and $j_r\geq j_l$ in the first line (of the corresponding term  $(3)$ of $f^{(12-21)}$ , here appears the symmetric case with inversion of right left positions of $\sigma^{(2)}$  and $\sigma^{(1)}$). We discuss the appearance of the integral in variable $u_1$ in definition of $\sigma^{(2)}$ bellow.

Finally,  the (2) term in the last line corresponds to  part of the (2-2) term. Of course this contains the case with fewer many terms in the $\CV$ of the left (i.e. $j_l>j_r$ if those are the $j$'s of the definitions for $\sigma^{(2)}$ on left and right), but also, half of the case of the same number of terms (case $j_l=j_r$, i.e. $j=k$ in the defining sum of the right $\sigma^{(2)}$ of the above expression (and not any more of the concluding expression we want it to be equal)). In all cases the variable $s_{j+1}$ (almost) does not appear on the right (after applying the composition of two $\phi$'s one coming from $\CV$, one from $\sigma$, note that this simplification also happened in the $(1-2)$ case). But we said almost since it appears in the integral of the case $j=k$ in the defining sum of the right $\sigma^{(2)}$.  Actually, in the definition of $\sigma^{(2)}$, it is called $u_1$, and we have to apply Fubini Theorem (for continuous functions) in our expression for (2) above to get an integral of the form $\int_{v}^{u\wedge w}ds_{j+2}...\int_{v}^{s_{k-1}}ds_{k}\int_{T^{l}_{v,t}}d\tilde{s} \int_{s_{j+2}1_{j\neq k-1}+v1_{j=k-1}}^{\tilde{s}_j}ds_{j+1}$.
In the case $j_l=j_r$ as we said ($j=k$ in the other notation), we only get the part $\int\ du_l\int_{u_r\geq u_l} du_r$ of the integral (here $u_r$ for instance is the $u_1$ occurring in the definition of the right $\sigma^{(2)}$). Of course the symmetric case (coming from the third term $(3)$ of $f^{(12-21)}$) will give the second half of this integral, so that we get the last lacking term.
\end{proof}

\begin{remark}
We can now prove the required boundedness property of $f_{s,t}^{\alpha}$ (beyond $s=0$). Indeed, we can apply the reasoning of the beginning of the section after having noticed~: \begin{align*}\tau(X_{0}^{[u]}\Phi_{u,t}^{(\alpha)}(X_{1}^{[t]}X_{2}^{[0]}...X_{2n-1}^{[t]})X_{2n}^{[u]})&:=\tau(\Phi_{0,u}^{\alpha}(X_{0}^{[u]}\Phi_{u,t}^{(\alpha)}(X_{1}^{[t]}X_{2}^{[0]}...X_{2n-1}^{[t]})X_{2n}^{[u]}))\\ &=\tau(\Phi_{0,[u,t,u]}^{(\alpha)}(X_{0}^{[u]}X_{1}^{[t]}X_{2}^{[0]}...X_{2n-1}^{[t]}X_{2n}^{[u]})\end{align*} and moreover is of the form $$\tau(X_{0}^{[u]}\Phi_{u,t}^{(\alpha)}(X_{1}^{[t]}X_{2}^{[0]}...X_{2n-1}^{[t]})X_{2n}^{[u]})=\tau(X_{0}f_{u,t}^{(\alpha)}(X_{0}^{[u]}X_{1}^{[t]}X_{2}^{[0]}...X_{2n-1}^{[t]})X_{2n}))+ \text{ terms of smaller order}.$$ Of course this boundedness can also be proven via the complete multitime case, this being just said to have an (almost) completely written proof in the two times case.
\end{remark}

\subsection{Symmetry and traciality}

We keep notations of the previous part 2.3.

Let us start with the symmetry for the two times case~: $\tau\circ\Phi_{0,t}^{\alpha}=\tau\circ\Phi_{0,t}^{\alpha}\circ S_{t}$ in the symmetric semigroup case. As we will see, the general case will follow easily. (of course we can also get the case without $\alpha$ using limits proven in the previous part, but we will stick to that case in computations for boundedness reasons.) In the non-symmetric Dirichlet form context, we will prove $\tau\circ\Phi_{0,t}=\tau\circ\H_{0,t}\circ S_{t}$, but the relation won't be valid at level $\alpha$. 

Since indices are enough to uniquely identify equations, we don't write here arguments ($X_i$'s of the previous formulas). Let us first note the following alternative equation for $f_{s,t}^{(n,\alpha)}$:

\begin{equation}\label{alternate}f_{s,t}^{(n,\alpha)}=\sum_{l,i_1+j_1+...+j_{l-1}+i_l-l=n-1}\int_{s}^{t}du \ \phi_{u-s,\alpha}\Gamma_{\alpha}(f_{u,t}^{(i_{1},\alpha)},h_{t-u,t}^{(j_{1},\alpha)}f_{u,t}^{(i_{2},\alpha)}h_{t-u,t}^{(j_{2},\alpha)}...h_{t-u,t}^{(j_{l-1},\alpha)},f_{u,t}^{(i_{l},\alpha)}).\end{equation}

Indeed, we have a priori, in the middle of the original definition, to apply $H_{0,u}$  to several $f_{u,t}^{(g_{1},\alpha),[0]}X_{2k_{1}}^{[u]}f_{u,t}^{(g_{2},\alpha),[0]}...f_{u,t}^{(g_{p},\alpha),[0]}$, itself determined by a sum of products of the form $h_{0,u}^{(j_{p}',\alpha)}(X_{2k_{1}}^{[u]},f_{u,t}^{(g_{1},\alpha),[0]}X_{2k_{2}}^{[u]}f_{u,t}^{(g_{2},\alpha),[0]}...f_{u,t}^{(g_{q-1},\alpha),[0]},X_{2k_{q}}^{[u]})$, the point is that one can show by a translation of variable by $t-u$ and by induction this equals (after the right summation) a corresponding term of  $h_{t-u,t}^{(j_{p},\alpha)}$. For the reader's convenience, let us write this formally in the following~:
\begin{lemma}\label{symmsemig}
\begin{align*}h_{t-u,t}&^{(j_{p},\alpha)}(X_{2k_{1}},...,X_{2k_{1}'})\\ &=\sum_{\small\begin{matrix}&q,g_i\geq 1,\\ &g_1+...+g_q+1=j_p,\\ &k_{i+1}-k_i=g_i\end{matrix}} h_{0,u}^{(q+1,\alpha)}(X_{2k_{1}}^{[u]},f_{u,t}^{(g_{1},\alpha),[0]}X_{2k_{2}}^{[u]}f_{u,t}^{(g_{2},\alpha),[0]}...f_{u,t}^{(g_{q},\alpha),[0]},X_{2k_{q+1}}^{[u]})\end{align*}
\end{lemma}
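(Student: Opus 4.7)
The plan is to proceed by induction on the filtration level $j_p$, relying on the recursive definition of the $h$-level family and a time-reversal change of variable in the defining integral.

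For the base case $j_p=1$, the only admissible decomposition on the right is $q=0$, and the sum collapses to $h_{0,u}^{(1,\alpha)}(X_{2k_{1}}^{[u]})=\phi_{u,\alpha}^{*}(X_{2k_{1}})$. This coincides with the left-hand side $h_{t-u,t}^{(1,\alpha)}(X_{2k_{1}})=\phi_{t-(t-u),\alpha}^{*}(X_{2k_{1}})=\phi_{u,\alpha}^{*}(X_{2k_{1}})$, since the one-parameter semigroup depends only on the time difference.

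For the inductive step, I would expand $h_{t-u,t}^{(j_p,\alpha)}$ via its defining integral formula
\[
h_{t-u,t}^{(j_p,\alpha)} = \sum_{i\le j}\int_{t-u}^{t}du'\,\phi_{u'-(t-u),\alpha}^{*}\circ\Gamma_{\alpha}\Bigl(h_{u',t}^{(i,\alpha)},F\Phi_{u',t}^{\alpha}(\cdots),h_{u',t}^{(j_p-j,\alpha)}\Bigr),
\]
and then perform the substitution $v=t-u'$, so that $v$ ranges over $[0,u]$ and $u'-(t-u)=u-v$. Inside $\Gamma_{\alpha}$, the outer $h$-blocks become $h_{t-v,t}^{(i,\alpha)}$ and $h_{t-v,t}^{(j_p-j,\alpha)}$, both of strictly smaller filtration degree, to which the induction hypothesis applies and produces sums of $h_{0,v}^{(\cdot,\alpha)}$ terms with $f_{v,t}^{(\cdot,\alpha),[0]}$ insertions. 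Likewise, the central factor $\Phi_{u',t}^{\alpha}(\cdots)$ is by definition a sum of products of $f_{u',t}^{(\cdot,\alpha),[0]}=f_{t-v,t}^{(\cdot,\alpha),[0]}$ pieces separated by $X_{2k_{i}}^{[u]}$'s. Re-assembling, the transformed integrand is precisely the integrand appearing in the defining formula for $h_{0,u}^{(q+1,\alpha)}$ applied to the pattern $X_{2k_{1}}^{[u]}, f_{u,t}^{(g_{1},\alpha),[0]}X_{2k_{2}}^{[u]}f_{u,t}^{(g_{2},\alpha),[0]}\cdots X_{2k_{q+1}}^{[u]}$, with $v$ as the running variable of integration.

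The main obstacle is the combinatorial bookkeeping: the right-hand side is a double sum, one over the skeleton level $q+1$ produced by $h_{0,u}^{(q+1,\alpha)}$ and one over the compositions $j_{p}=q+g_{1}+\cdots+g_{q}$ with $g_{i}\ge 1$, whereas the left-hand side arises from iterating a single recursion. One must verify that each way of inserting a $\Gamma_{\alpha}$ in the nested recursion for $h_{t-u,t}^{(j_{p},\alpha)}$ corresponds, under the change of variable, to a choice of a block $(q+1,(g_{1},\dots,g_{q}))$ on the right, with each $g_{i}$ accounting for the subtree of $\Gamma_{\alpha}$-insertions occurring strictly between $X_{2k_{i}}^{[u]}$ and $X_{2k_{i+1}}^{[u]}$, and the $q+1$ outer ``anchors'' carrying the top-level recursion of $h_{0,u}^{(q+1,\alpha)}$. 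This identification is straightforward, if tedious, once one observes that both sides satisfy the same integral recursion with the same base case, so uniqueness forces equality.
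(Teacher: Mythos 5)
Your overall strategy matches the paper's: induction on $j_p$, start from the integral recursion for $h_{t-u,t}^{(j_p,\alpha)}$, perform the change of variable shifting the integration window from $[t-u,t]$ down to $[0,u]$, apply the induction hypothesis to the nested $h$-blocks, and then re-assemble into the defining integral for $h_{0,u}^{(q+1,\alpha)}$. The base case and the change of variable are handled correctly.

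However, there is a genuine gap in the re-assembly step, and it is exactly where the real work lies. After the change of variable (say $v'=u-v$, or equivalently the paper's $v'=v-(t-u)$), the induction hypothesis applied to an outer block $h_{v'+(t-u),t}^{(i',\alpha)}$ yields a sum of $h_{0,u-v'}^{(\cdot,\alpha)}$ evaluated on $X^{[u-v']}$'s with $f_{u-v',t}^{(\cdot,\alpha),[0]}$ insertions. But the defining recursion for the target $h_{0,u}^{(q+1,\alpha)}$ has integrand built from $h_{v',u}^{(\cdot,\alpha)}$ blocks evaluated on $X^{[u]}$'s with $f_{u,t}^{(\cdot,\alpha),[0]}$ insertions, not $h_{0,u-v'}$-blocks with $f_{u-v',t}$-insertions. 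Your claim that the transformed integrand ``is precisely the integrand appearing in the defining formula for $h_{0,u}^{(q+1,\alpha)}$'' skips the conversion between these two forms, which is not a matter of bookkeeping alone. One must (i) decompose each $f_{u-v',t}^{(g,\alpha)}$ through the intermediate time $u$ via the semigroup property, $f_{u-v',t}^{(g,\alpha)} = \sum f_{u-v',u}^{(\cdot,\alpha)}\bigl(f_{u,t}^{(\cdot,\alpha)}X\cdots\bigr)$, and then (ii) re-apply the induction hypothesis a second time, with the substitution $t\mapsto u$, $u\mapsto u-v'$, to contract $h_{0,u-v'}^{(\cdot,\alpha)}$ with its $f_{u-v',u}^{(\cdot,\alpha)}$ insertions back into a single $h_{v',u}^{(\cdot,\alpha)}$. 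Only then does the integrand match the one in the defining recursion for $h_{0,u}^{(q+1,\alpha)}$. Without these two steps, your final assertion that ``both sides satisfy the same integral recursion with the same base case, so uniqueness forces equality'' is not established; you have only shown they satisfy integral recursions of the same shape with \emph{different} kernels, and the identification of those kernels is the content of the lemma.
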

\begin{proof}We carry an overall induction over $j_p$ (and prove simultaneously the analogue for $f$). Initialization is tautological. By formula (\ref{alternate}) (the variant for $h$ derived from induction hypothesis thanks to the above reasoning), we have~:
\begin{align*}h_{t-u,t}^{(j_{p},\alpha)}&(X_{2k_{1}},...,X_{2k_{1}'})\\ &=\sum_{l,j_1'+i_1'+...+i_{l-1}'+j_l'-l=j_{p}-1}\int_{t-u}^{t}dv \ \phi_{v-(t-u),\alpha}^{*}\hat{\Gamma}_{\alpha}(h_{v,t}^{(j_{1}',\alpha)},f_{t-v,t}^{(i_{1}',\alpha)}h_{v,t}^{(j_{2}',\alpha)}f_{t-v,t}^{(i_{2}',\alpha)}...f_{t-v,t}^{(i_{l-1}',\alpha)},h_{v,t}^{(j_{l}',\alpha)}).\end{align*}
As we said, we take the new variable $v'=v-(t-u)$ to get~:
\begin{align*}&h_{t-u,t}^{(j_{p},\alpha)}(X_{2k_{1}},...,X_{2k_{1}'})\\ &=\sum_{l,j_1'+i_1'+...+i_{l-1}'+j_l'-l=j_{p}-1}\int_{0}^{u}dv' \ \phi_{v',\alpha}^{*}\hat{\Gamma}_{\alpha}(h_{v'+(t-u),t}^{(j_{1}',\alpha)},f_{u-v',t}^{(i_{1}',\alpha)}h_{v'+(t-u),t}^{(j_{2}',\alpha)}f_{u-v',t}^{(i_{2}',\alpha)}...f_{u-v',t}^{(i_{l-1}',\alpha)},h_{v'+(t-u),t}^{(j_{l}',\alpha)}).\end{align*}
(To keep coherent notations, let us write $l_i$ defined such that $l_1=k_1$ and $l_{m+1}-l_m=j_m'+i_m'-1$, assuming fixed a sum as above).

Applying once again induction hypothesis, one gets 
\begin{align*}&h_{v'+(t-u),t}^{(i_{l}',\alpha)}\\ &=\sum_{\small \begin{matrix}&q'_l,g_{i,l}\geq 1, m_{1,l}=l_{l}\\ &g_{1,l}+...+g_{q'_l,l}+1=i_l'\\ &m_{i+1,l}-m_{i,l}=g_{i,l}\end{matrix}} h_{0,u-v'}^{(q'_l+1,\alpha)}(X_{2m_{1,l}}^{[u-v']},f_{u-v',t}^{(g_{1,l},\alpha),[0]}X_{2m_{2,l}}^{[u-v']}f_{u-v',t}^{(g_{2,l},\alpha),[0]}...f_{u-v',t}^{(g_{q,l},\alpha),[0]},X_{2m_{q'_l+1,l}}^{[u-v']})
\end{align*}

Now, we have to use the semigroup property for $$f_{u-v',t}^{(g_{j,l},\alpha)}=\sum_{\small \begin{matrix}&q''_{j,l}\\ &g_{i,l}\geq 1,g_{1,j,l}''+...+g_{q''_{j,l},j,l}''=g_{j,l}\\ &n_{i+1,j,l}-n_{i,j,l}=g_{i,j,l}, n_{0,j,l}=m_{j,l}\end{matrix}} f_{u-v',u}^{(q_{j,l}''+1,\alpha)}(f_{u,t}^{(g_{1,j,l}'',\alpha)}X_{2n_{1,j,l}}...f_{u,t}^{(g_{q_{j,l}'',j,l}'',\alpha))}).$$ 

We can now reapply induction hypothesis with u instead of t and u-v' instead of u, 0 instead of v' to get
\begin{align*}&h_{v'+(t-u),t}^{(i_{l}',\alpha)}\\ &=\sum_{\small \begin{matrix}&q'_l,g_{i,l}, m_{1,l}=l_{l}\geq 1\\ &g_{1,l}+...+g_{q'_l,l}+1=i_l'\\ &m_{i+1,l}-m_{i,l}=g_{i,l}\end{matrix}} h_{v',u}^{(q'_l+1,\alpha)}(X_{2m_{1,l}}^{[u]},f_{u,t}^{(g_{1,l},\alpha),[0]}X_{2m_{2,l}}^{[u]}f_{u,t}^{(g_{2,l},\alpha),[0]}...f_{u,t}^{(g_{q,l},\alpha),[0]},X_{2m_{q'_l+1,l}}^{[u]}).
\end{align*}
Putting together this expression the previous one and the second relation of the proof for the searched quantity, this readily concludes (also reusing the definition of $h_{0,u}$).
\end{proof}

 We start by proving several relations coming from differentiation of $\tau(f_{s,t}^{(i_{1},\alpha)}h_{t-s,t}^{(j_{1},\alpha)}...f_{s,t}^{(i_{l},\alpha)}h_{t-s,t}^{(j_{l},\alpha)})$ in $s$ (computing these expressions is maybe motivated by the previous remark). Since every maps involved in the $\alpha$-case are bounded, we readily get the following~:
\begin{lemma}
The derivative in $s$ of $\tau(f_{s,t}^{(i_{1},\alpha)}h_{t-s,t}^{(j_{1},\alpha)}...f_{s,t}^{(i_{l},\alpha)}h_{t-s,t}^{(j_{l},\alpha)})$ is~:
\begin{align*}&\frac{1}{4}\sum_{k}\tau(f_{s,t}^{(i_{1},\alpha)}...(AG_{\alpha}-A^{*}G_{\alpha}^{*})(f_{s,t}^{(i_{k},\alpha)})...h_{t-s,t}^{(j_{l},\alpha)})\\ &+\frac{1}{4}\sum_{k}\tau(f_{s,t}^{(i_{1},\alpha)}...(AG_{\alpha}-A^{*}G_{\alpha}^{*})(h_{t-s,t}^{(j_{k},\alpha)})...h_{t-s,t}^{(j_{l},\alpha)})
\\&+\frac{1}{2}\sum_{k<k'}\tau(f_{s,t}^{(i_{1},\alpha)}...(\Gamma_{\alpha}+\hat{\Gamma}_{\alpha})(f_{s,t}^{(i_{k},\alpha)},..,f_{s,t}^{(i_{k'},\alpha)})..h_{t-s,t}^{(j_{l},\alpha)})\\&-\frac{1}{2}\sum_{k<k'}\tau(f_{s,t}^{(i_{1},\alpha)}..(\Gamma_{\alpha}+\hat{\Gamma}_{\alpha})(h_{t-s,t}^{(j_{k},\alpha)},...,h_{t-s,t}^{(j_{k'},\alpha)})...h_{t-s,t}^{(j_{l},\alpha)})\\&+\sum_{k,p}\sum_{j_{k,1}+i_{k,1}+..+j_{k,p}=j_k-1+p>1}\\ &\tau(f_{s,t}^{(i_{1},\alpha)}h_{t-s,t}^{(j_{1},\alpha)}..\hat{\Gamma}_{\alpha}(h_{t-s,t}^{(j_{k,1},\alpha)},f_{s,t}^{(i_{k,1},\alpha)}..f_{s,t}^{(i_{k,p-1},\alpha)},h_{t-s,t}^{(j_{k,p},\alpha)})f_{s,t}^{(i_{k+1},\alpha)}..f_{s,t}^{(i_{l},\alpha)}h_{t-s,t}^{(j_{l},\alpha)})\\ &-\sum_{k,p}\sum_{i_{k,1}+j_{k,1}+..+i_{k,p}=i_k-1+p>1}\\ &\tau(f_{s,t}^{(i_{1},\alpha)}h_{t-s,t}^{(j_{1},\alpha)}..\Gamma_{\alpha}(f_{s,t}^{(i_{k,1},\alpha)},h_{t-s,t}^{(j_{k,1},\alpha)}...h_{t-s,t}^{(j_{k,p-1},\alpha)},f_{s,t}^{(i_{k,p},\alpha)})h_{t-s,t}^{(j_{k},\alpha)}..f_{s,t}^{(i_{l},\alpha)}h_{t-s,t}^{(j_{l},\alpha)})\end{align*}

\end{lemma}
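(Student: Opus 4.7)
The plan is to differentiate $F(s) := \tau(f_1 h_1 \cdots f_l h_l)$, with $f_k := f_{s,t}^{(i_k,\alpha)}$ and $h_k := h_{t-s,t}^{(j_k,\alpha)}$, by Leibniz's rule and then to rewrite each piece using the defining integral equations for $f$ and $h$. Since all $f_k$ and $h_k$ are uniformly bounded in $M$, differentiation under $\tau$ is unproblematic.

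First, I would apply the alternate recursive representation (\ref{alternate}) to compute $\partial_s f_{s,t}^{(n,\alpha)}$. Because $\phi_{\cdot,\alpha}$ is generated by $-AG_\alpha/2$, one has $\partial_s \phi_{u-s,\alpha} = \tfrac12 AG_\alpha\phi_{u-s,\alpha}$, so $\partial_s f_{s,t}^{(n,\alpha)} = \tfrac12 AG_\alpha\bigl(f_{s,t}^{(n,\alpha)}\bigr) - g_n(s)$, where $-g_n(s)$ is the boundary value at $u=s$ of the integrand in (\ref{alternate}): a sum of $\Gamma_\alpha$-terms on $f_{s,t}^{(\cdot,\alpha)}$ and $h_{t-s,t}^{(\cdot,\alpha)}$. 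Symmetrically, the dual formula for $h_{u,t}^{(j,\alpha)}$ combined with $\partial_s(t-s) = -1$ gives $\partial_s h_{t-s,t}^{(j,\alpha)} = -\tfrac12 A^*G_\alpha^*\bigl(h_{t-s,t}^{(j,\alpha)}\bigr) + \hat g_j(t-s)$. Plugged into the Leibniz sum, the boundary contributions $-g_n(s)$ and $+\hat g_j(t-s)$ reproduce, term for term, the last two sums of the claimed formula: the summation over admissible partitions $j_{k,1}+i_{k,1}+\cdots+j_{k,p}=j_k-1+p$ is precisely that produced by the alternate expansion, and lemma~\ref{symmsemig} allows one to recognise any intermediate $H_{0,s}^{(p,\alpha)}(\cdots)$ as an $h_{t-s,t}^{(\cdot,\alpha)}$-factor of the required form.

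It then remains to identify $\tfrac12\sum_k \tau(\cdots AG_\alpha(f_k)\cdots) - \tfrac12\sum_k \tau(\cdots A^*G_\alpha^*(h_k)\cdots)$ with the first four sums of the claim. I split $\tfrac12 \lambda_k = \tfrac14(AG_\alpha - A^*G_\alpha^*) + \tfrac14 \epsilon_k(AG_\alpha + A^*G_\alpha^*)$, where $\lambda_k = AG_\alpha$ on $f$-positions ($\epsilon_k = +1$) and $\lambda_k = -A^*G_\alpha^*$ on $h$-positions ($\epsilon_k = -1$). The symmetric $\tfrac14$-piece yields directly the first two sums of the claim. For the anti-symmetric $\epsilon_k$-weighted piece I would use conservativity, $\tau(AG_\alpha(P))=\tau(A^*G_\alpha^*(P))=0$ for any product $P$, together with the iterated product rule $AG_\alpha(x_1\cdots x_n) = \sum_j x_1\cdots AG_\alpha(x_j)\cdots x_n - 2\sum_{j<j'} x_1\cdots \Gamma_\alpha(x_j, x_{j+1}\cdots x_{j'-1}, x_{j'})\cdots x_n$ and its $A^*$-analogue involving $\hat\Gamma_\alpha$.

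The main obstacle is precisely this last step: verifying the signed combinatorial identity $\sum_k \epsilon_k \tau(\cdots (AG_\alpha+A^*G_\alpha^*)(x_k)\cdots) = \sum_{k<k'}(\epsilon_k+\epsilon_{k'})\tau(\cdots (\Gamma_\alpha+\hat\Gamma_\alpha)(x_k, x_{k+1}\cdots x_{k'-1}, x_{k'})\cdots)$, which must force the mixed $(f,h)$-pair contributions to cancel (since $\epsilon_k + \epsilon_{k'} = 0$ there) and leave only $ff$- and $hh$-pairs with opposite signs. I would establish this either by induction on $l$, using the trace duality $\tau(AG_\alpha(x) y) = \tau(x A^*G_\alpha^*(y))$ to shift a single $AG_\alpha + A^*G_\alpha^*$ from one factor to its complementary sub-product, or by grouping the factors into consecutive blocks $y_m := f_m h_m$, applying the unsigned master identity to the product $y_1\cdots y_l$, and then re-expanding each $(AG_\alpha + A^*G_\alpha^*)(y_m)$ and $(\Gamma_\alpha + \hat\Gamma_\alpha)(y_m, \cdot, y_{m'})$ through the derivation property of $\Gamma_\alpha$ and $\hat\Gamma_\alpha$ in their outer arguments; the intra-block cross-terms between $f_m$ and $h_m$ should provide exactly the sign reversal. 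This combinatorial identity, elementary to check directly for small $l$ (for $l=1$ it reduces to the duality relation itself), is the genuinely delicate point; the rest consists in bookkeeping of the semigroup derivatives and of the alternate integral representation.
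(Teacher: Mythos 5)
Your strategy matches the paper's almost exactly: differentiate by Leibniz, use the alternate recursive form~(\ref{alternate}) to identify $\partial_s f_{s,t}^{(n,\alpha)}=\tfrac12 AG_\alpha f_{s,t}^{(n,\alpha)}-(\text{boundary }\Gamma_\alpha\text{-term at }u=s)$ and its dual for $h_{t-s,t}^{(j,\alpha)}$, so that the boundary contributions give the last two sums of the claim; then split $\tfrac12 AG_\alpha$ and $-\tfrac12 A^{*}\hat G_\alpha$ as $\tfrac14(AG_\alpha-A^*\hat G_\alpha)\pm\tfrac14(AG_\alpha+A^*\hat G_\alpha)$ to get the first two sums outright and reduce the remainder to the signed combinatorial identity $\sum_k\epsilon_k\tau(\cdots(AG_\alpha+A^*\hat G_\alpha)(x_k)\cdots)=\sum_{k<k'}(\epsilon_k+\epsilon_{k'})\tau(\cdots(\Gamma_\alpha+\hat\Gamma_\alpha)(x_k,\cdots,x_{k'})\cdots)$. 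You also correctly observe that for $l=1$ this reduces to $\tau$-symmetry of $AG_\alpha+A^*\hat G_\alpha$, and that mixed $(f,h)$-pairs carry coefficient zero. All of that is what the paper does.

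The genuine gap is that the signed identity is asserted but not proved, and you flag it yourself as \emph{the} delicate point. Your two proposed routes (induction on $l$ via the duality $\tau(AG_\alpha(x)\,y)=\tau(x\,A^*\hat G_\alpha(y))$, or grouping into blocks $y_m=f_mh_m$ and re-expanding) are plausible outlines, but neither is carried out, and it is not self-evident that, for instance, the intra-block cross-terms in the block route really produce exactly the sign reversal with nothing left over. The paper proves the identity by fully expanding both $\sum_{k<k'}(\Gamma_\alpha+\hat\Gamma_\alpha)(X_k,\cdot,X_{k'})$ and $\sum_{k<k'}(\Gamma_\alpha+\hat\Gamma_\alpha)(Y_k,\cdot,Y_{k'})$ in terms of $AG_\alpha+A^*\hat G_\alpha$ applied to consecutive sub-products, organizing the resulting double sums into a telescoping sum where ``inner'' and ``outer'' occurrences of the same sub-product cancel in pairs, and then using both $\tau$-symmetry \emph{and} conservativity of $AG_\alpha+A^*\hat G_\alpha$ to eliminate the remaining boundary terms. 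Until you actually exhibit that cancellation scheme (or successfully run the induction through the trace duality), the proof is not complete at precisely the step you identify as hardest.
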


\begin{proof}
The very definition (and formula (\ref{alternate}) above) gives the following derivative in s of $\tau(f_{s,t}^{(i_{1},\alpha)}h_{t-s,t}^{(j_{1},\alpha)}...f_{s,t}^{(i_{l},\alpha)}h_{t-s,t}^{(j_{l},\alpha)})$:\begin{align*}&\frac{1}{2}\sum_{k}\tau(f_{s,t}^{(i_{1},\alpha)}h_{t-s,t}^{(j_{1},\alpha)}..AG_{\alpha}(f_{s,t}^{(i_{k},\alpha)})h_{t-s,t}^{(j_{k},\alpha)}..f_{s,t}^{(i_{l},\alpha)}h_{t-s,t}^{(j_{l},\alpha)})\\ &-\frac{1}{2}\sum_{k}\tau(f_{s,t}^{(i_{1},\alpha)}h_{t-s,t}^{(j_{1},\alpha)}..f_{s,t}^{(i_{k},\alpha)}A^{*}G_{\alpha}^{*}(h_{t-s,t}^{(j_{k},\alpha)})...f_{s,t}^{(i_{l},\alpha)}h_{t-s,t}^{(j_{l},\alpha)})\\&+\sum_{k,p}\sum_{j_{k,1}+i_{k,1}+..+j_{k,p}=j_k-1+p>1}\\ &\tau(f_{s,t}^{(i_{1},\alpha)}h_{t-s,t}^{(j_{1},\alpha)}..\hat{\Gamma}_{\alpha}(h_{t-s,t}^{(j_{k,1},\alpha)},f_{s,t}^{(i_{k,1},\alpha)}..f_{s,t}^{(i_{k,p-1},\alpha)},h_{t-s,t}^{(j_{k,p},\alpha)})f_{s,t}^{(i_{k+1},\alpha)}..f_{s,t}^{(i_{l},\alpha)}h_{t-s,t}^{(j_{l},\alpha)})\\ &-\sum_{k,p}\sum_{i_{k,1}+j_{k,1}+..+i_{k,p}=i_k-1+p>1}\\ &\tau(f_{s,t}^{(i_{1},\alpha)}h_{t-s,t}^{(j_{1},\alpha)}..\Gamma_{\alpha}(f_{s,t}^{(i_{k,1},\alpha)},h_{t-s,t}^{(j_{k,1},\alpha)}..h_{t-s,t}^{(j_{k,p-1},\alpha)},f_{s,t}^{(i_{k,p},\alpha)})h_{t-s,t}^{(j_{k},\alpha)}..f_{s,t}^{(i_{l},\alpha)}h_{t-s,t}^{(j_{l},\alpha)})\end{align*}

Now, we can compute the first line 
\begin{align*}&\frac{1}{2}\sum_{k}\tau(f_{s,t}^{(i_{1},\alpha)}h_{0,s}^{(j_{1},\alpha)}..AG_{\alpha}(f_{s,t}^{(i_{k},\alpha)})h_{0,s}^{(j_{k},\alpha)}..f_{s,t}^{(i_{l},\alpha)}h_{0,s}^{(j_{l},\alpha)})\\ &-\frac{1}{2}\sum_{k}\tau(f_{s,t}^{(i_{1},\alpha)}h_{0,s}^{(j_{1},\alpha)}..f_{s,t}^{(i_{k},\alpha)}A^{*}G_{\alpha}^{*}(h_{0,s}^{(j_{k},\alpha)})...f_{s,t}^{(i_{l},\alpha)}h_{0,s}^{(j_{l},\alpha)})\\ &=\frac{1}{4}\sum_{k}\tau(f_{s,t}^{(i_{1},\alpha)}..(AG_{\alpha}-A^{*}G_{\alpha}^{*})(f_{s,t}^{(i_{k},\alpha)})..h_{0,s}^{(j_{l},\alpha)})\\ & +\frac{1}{4}\sum_{k}\tau(f_{s,t}^{(i_{1},\alpha)}..(AG_{\alpha}-A^{*}G_{\alpha}^{*})(h_{0,s}^{(j_{k},\alpha)})..h_{0,s}^{(j_{l},\alpha)})
\\ &+\frac{1}{4}\sum_{k}\tau(f_{s,t}^{(i_{1},\alpha)}..(AG_{\alpha}+A^{*}G_{\alpha}^{*})(f_{s,t}^{(i_{k},\alpha)})h_{0,s}^{(j_{k},\alpha)}..h_{0,s}^{(j_{l},\alpha)})\\ &-\frac{1}{4}\sum_{k}\tau(f_{s,t}^{(i_{1},\alpha)}..f_{s,t}^{(i_{k},\alpha)}(AG_{\alpha}+A^{*}G_{\alpha}^{*})(h_{0,s}^{(j_{k},\alpha)})..h_{0,s}^{(j_{l},\alpha)})\\ &=\frac{1}{4}\sum_{k}\tau(f_{s,t}^{(i_{1},\alpha)}..(AG_{\alpha}-A^{*}G_{\alpha}^{*})(f_{s,t}^{(i_{k},\alpha)})..h_{0,s}^{(j_{l},\alpha)})\\ &+\frac{1}{4}\sum_{k}\tau(f_{s,t}^{(i_{1},\alpha)}..(AG_{\alpha}-A^{*}G_{\alpha}^{*})(h_{0,s}^{(j_{k},\alpha)})..h_{0,s}^{(j_{l},\alpha)})
\\ &+\frac{1}{2}\sum_{k<k'}\tau(f_{s,t}^{(i_{1},\alpha)}..(\Gamma_{\alpha}+\hat{\Gamma}_{\alpha})(f_{s,t}^{(i_{k},\alpha)},..,f_{s,t}^{(i_{k'},\alpha)})..h_{0,s}^{(j_{l},\alpha)})\\ &-\frac{1}{2}\sum_{k<k'}\tau(f_{s,t}^{(i_{1},\alpha)}..(\Gamma_{\alpha}+\hat{\Gamma}_{\alpha})(h_{t-s,t}^{(j_{k},\alpha)},...,h_{t-s,t}^{(j_{k'},\alpha)})...h_{0,s}^{(j_{l},\alpha)})\end{align*}
At the last line we have used the following identity (summing only the definition of $ \Gamma_{\alpha}$):

\begin{align*}&2\sum_{k<k'}\tau(X_1Y_1...(\Gamma_{\alpha}+\hat{\Gamma}_{\alpha})(X_k,Y_k...,X_{k'})...X_lY_l)\\ &-2\sum_{k<k'}\tau(X_1Y_1...X_k(\Gamma_{\alpha}+\hat{\Gamma}_{\alpha})(Y_k,...X_{k'},Y_{k'})...X_lY_l)\\ =&\sum_{k<k'}\tau(X_1Y_1..(AG_{\alpha}+A^{*}\hat{G}_{\alpha})(X_kY_k..Y_{k'-1})X_{k'}..X_lY_l)\\ &+\tau(X_1Y_1..X_k(AG_{\alpha}+A^{*}\hat{G}_{\alpha})(Y_k..Y_{k'-1}X_{k'})..X_lY_l)\\ &\ \ -\tau(X_1Y_1...X_k(AG_{\alpha}+A^{*}\hat{G}_{\alpha})(Y_k...Y_{k'-1})X_{k'}...X_lY_l)\\ &-\tau(X_1Y_1...(AG_{\alpha}+A^{*}\hat{G}_{\alpha})(X_kY_k...X_{k'})...X_lY_l)\\ &-\tau(X_1Y_1...X_k(AG_{\alpha}+A^{*}\hat{G}_{\alpha})(Y_k...X_{k'})...X_lY_l)\\ &-\tau(X_1Y_1...Y_k(AG_{\alpha}+A^{*}\hat{G}_{\alpha})(X_{k+1}...X_{k'}Y_{k'})...X_lY_l)\\ &\ \ +\tau(X_1Y_1...X_k(AG_{\alpha}+A^{*}\hat{G}_{\alpha})(Y_k...Y_{k'})...X_lY_l)\\ &+\tau(X_1Y_1...(AG_{\alpha}+A^{*}\hat{G}_{\alpha})(X_{k+1}...X_{k'})...X_lY_l)\\ =&\sum_{1<k}\tau((AG_{\alpha}+A^{*}\hat{G}_{\alpha})(X_1..Y_{k})..Y_l)-\tau((AG_{\alpha}+A^{*}\hat{G}_{\alpha})(X_1..X_{k})..Y_l)\\ &+\tau(X_1..(AG_{\alpha}+A^{*}\hat{G}_{\alpha})(X_k)..Y_l)\\+&\sum_{k<l}\tau(X_1...(AG_{\alpha}+A^{*}\hat{G}_{\alpha})(Y_k..Y_{l}))+\tau(X_1..(AG_{\alpha}+A^{*}\hat{G}_{\alpha})(X_k..Y_{l}))\\ &-\tau(X_1..(AG_{\alpha}+A^{*}\hat{G}_{\alpha})(Y_k)..Y_l)\\ =&\sum_{k}\tau(X_1...(AG_{\alpha}+A^{*}\hat{G}_{\alpha})(X_k)...Y_l)-\tau(X_1...(AG_{\alpha}+A^{*}\hat{G}_{\alpha})(Y_k)...Y_l)\end{align*}

In the third line we have used the following reasoning to simplify a telescopic sum, all terms with an even number of terms bellow the $A$'s, thus beginning by an X ending by a Y or vice versa, appear twice with different signs, once coming from a $\Gamma$ with X's at end points, once from with Y's at end points; of course this does not apply to boundary terms thus remaining in the third line; all terms (again except boundary terms) with an odd number of terms also appear twice with a different sign, once as the inner term of a $\Gamma$ (in 
$\Gamma(A,B,C)$ a $\A(B)$), once as an outer term (in 
$\Gamma(A,B,C)$ a $\A(ABC)$). For the last equality, we used symmetry of $(AG_{\alpha}+A^{*}\hat{G}_{\alpha})$ to remove almost all terms.
\end{proof}
We thus want to integrate those relations, and sum them so that almost all terms cancel. The point is that the boundary terms of the integrals in 0 and t vanish except when $i_k$'s are $1$ in case we get a non zero value at t, or $j_k$'s are $1$ in case we get a non zero value at 0. Moreover those boundary terms give exactly the expression we want to relate, we have thus obtained~:
\begin{lemma}
\begin{align*}&\tau\circ H_{0,t}^{(n,\alpha)}\circ S_{t}-\tau\circ\Phi_{0,t}^{(n,\alpha)}=\sum_{i_{1}+j_{1}+...+i_{l}+j_{l}=n-1+l>1}\int_{0}^{t}ds \\ &\frac{1}{4}\sum_{k}\tau(f_{s,t}^{(i_{1},\alpha)}...(AG_{\alpha}-A^{*}G_{\alpha}^{*})(f_{s,t}^{(i_{k},\alpha)})...h_{t-s,t}^{(j_{l},\alpha)})\\ &+\frac{1}{4}\sum_{k}\tau(f_{s,t}^{(i_{1},\alpha)}...(AG_{\alpha}-A^{*}G_{\alpha}^{*})(h_{t-s,t}^{(j_{k},\alpha)})...h_{t-s,t}^{(j_{l},\alpha)})
\\ &+\frac{1}{2}\sum_{k<k'}\tau(f_{s,t}^{(i_{1},\alpha)}...(\hat{\Gamma}_{\alpha}-\Gamma_{\alpha})(f_{s,t}^{(i_{k},\alpha)},..,f_{s,t}^{(i_{k'},\alpha)})...h_{t-s,t}^{(j_{l},\alpha)})\\ &-\frac{1}{2}\sum_{k<k'}\tau(f_{s,t}^{(i_{1},\alpha)}..(\Gamma_{\alpha}-\hat{\Gamma}_{\alpha})(h_{t-s,t}^{(j_{k},\alpha)},..,h_{t-s,t}^{(j_{k'},\alpha)})..h_{0,s}^{(j_{l},\alpha)})\end{align*}
Especially, in the symmetric case, the right hand side vanishes as claimed earlier.
\end{lemma}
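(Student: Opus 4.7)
The plan is to integrate the derivative formula proved in the preceding lemma over $s\in[0,t]$ and sum over all admissible multi-indices $(i_1,j_1,\ldots,i_l,j_l)$ (with $l\geq 1$ and $\sum_k(i_k+j_k)=n-1+l$), indexing the family of scalar functions
\[ \Psi_{i,j}(s):=\tau\bigl(f_{s,t}^{(i_1,\alpha)}h_{t-s,t}^{(j_1,\alpha)}\cdots f_{s,t}^{(i_l,\alpha)}h_{t-s,t}^{(j_l,\alpha)}\bigr) \]
evaluated on a fixed alternating monomial with $2n-1$ arguments. By the fundamental theorem of calculus, the integrated sum collapses to the difference of boundary values $\sum[\Psi_{i,j}(t)-\Psi_{i,j}(0)]$, so it suffices first to identify these boundary sums and second to carry out the combinatorial simplification of the integrand produced by the previous lemma.

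For the boundary values, note that $h_{t,t}^{(j,\alpha)}$ vanishes for $j>1$ (its defining integral runs over an empty interval) and equals $\mathrm{Id}$ for $j=1$. Thus at $s=0$ only multi-indices with every $j_k=1$ contribute, and comparing with the level-family definition of $\Phi_{s,t}$, the resulting sum is exactly $\tau\circ\Phi_{0,t}^{(n,\alpha)}$ applied to the monomial. Symmetrically $f_{t,t}^{(i,\alpha)}$ vanishes for $i>1$ and equals the identity for $i=1$, so at $s=t$ only tuples with every $i_k=1$ survive and lemma \ref{symmsemig} identifies the resulting sum with $\tau\circ H_{0,t}^{(n,\alpha)}\circ S_t$, the time-flip $S_t$ arising from the change of variable $s'=t-s$ acting on the $h$-factors.

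For the integrand, one inserts the six groups of terms from the derivative lemma and combines them using the following key bijection: each block-splitting term (groups 5 and 6) in a decomposition of length $l$ coincides exactly with a specific two-argument pair-term in the unique finer decomposition of length $l+p-1$ obtained by replacing the split block with its $p$ constituents. Tracking the coefficients, each $f$-pair $(f^{(a)},f^{(b)})$ in a given decomposition receives contribution $+\tfrac12(\Gamma_\alpha+\hat\Gamma_\alpha)$ from group 3 of its own decomposition plus $-\Gamma_\alpha$ from group 6 of the unique coarser decomposition in which $f^{(a)}$ and $f^{(b)}$ are amalgamated into a single enlarged $f$-block, yielding the net coefficient $\tfrac12(\hat\Gamma_\alpha-\Gamma_\alpha)$; analogously each $h$-pair receives $-\tfrac12(\Gamma_\alpha-\hat\Gamma_\alpha)$. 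The single-block antisymmetric terms in groups 1 and 2 have no block-splitting counterpart and survive with their $\tfrac14$ coefficient. Assembling these reproduces precisely the right-hand side of the claimed formula, and in the symmetric case where $\Gamma_\alpha=\hat\Gamma_\alpha$ and $AG_\alpha=A^*\hat G_\alpha$ every group vanishes, giving $\tau\circ H_{0,t}^{(n,\alpha)}\circ S_t=\tau\circ\Phi_{0,t}^{(n,\alpha)}$. The main obstacle is verifying this bijection is truly one-to-one with consistent signs and multiplicities — an exercise in careful bookkeeping on the poset of alternating multi-index decompositions that must also take into account that the ``stuff in between'' the two paired blocks is a specific alternating sub-word of $f$'s and $h$'s and uniquely determines both the coarse and the fine decomposition contributing to a given final expression.
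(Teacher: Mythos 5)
Your proposal follows the same plan as the paper: integrate the derivative identity from the preceding lemma over $s\in[0,t]$, sum over all admissible decompositions, identify the boundary sums at $s=0$ (where $h_{t,t}^{(j)}$ forces every $j_k=1$, giving $\tau\circ\Phi_{0,t}^{(n,\alpha)}$) and at $s=t$ (where $f_{t,t}^{(i)}$ forces every $i_k=1$, giving $\tau\circ H_{0,t}^{(n,\alpha)}\circ S_t$), and cancel the block-splitting terms of groups 5--6 against the pair-terms of groups 3--4 via the bijection on the poset of refinements to obtain the net $\tfrac12(\hat\Gamma_\alpha-\Gamma_\alpha)$ coefficient. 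Your explicit description of the bijection and coefficient bookkeeping is a correct unpacking of what the paper merely asserts as ``almost all terms cancel''; the one small slip is that the $s=t$ boundary does not actually need lemma \ref{symmsemig} (which degenerates to a tautology at $u=t$) --- it follows directly from the level-family definition of $H$ applied to the time-flipped monomial.
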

 In the non-symmetric case, it remains to prove this converges to 0 when $\alpha$ goes to infinity, this is the result of the next~:

\begin{proposition}
With the previous notations (especially $D(\Delta)\cap D(A) \cap M$ is a core for $D(\Delta^{1/2})$), $\tau\circ H_{0,t}^{(n)}\circ S_{t}=\tau\circ\Phi_{0,t}^{(n)}.$
\end{proposition}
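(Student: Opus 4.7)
The plan is to pass to the limit $\alpha\to\infty$ in the identity of the preceding lemma and show that every term on the right-hand side vanishes. The ingredients are the $\delta^+$-convergence $f^{(i,\alpha)}\to f^{(i)}$ and $h^{(j,\alpha)}\to h^{(j)}$ (established inductively via the theorem of Section 2.2) together with their uniform $M$-boundedness, Lemma \ref{GammaLim}, and the derivation hypothesis \eqref{derForm}.

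The two terms of the form $\tau(\cdots(\hat{\Gamma}_\alpha-\Gamma_\alpha)(\cdots)\cdots)$ are handled directly: by assumption \eqref{derForm}, the weak limits of $\Gamma_\alpha$ and $\hat{\Gamma}_\alpha$ coincide (both equal $\Gamma$), so Lemma \ref{GammaLim} gives $\|(\Gamma_\alpha-\hat{\Gamma}_\alpha)(\xi_\alpha)\|_1\to 0$ for any admissible sequence of the form $\xi_\alpha=f\otimes g^\alpha\otimes h$. Using the $\delta^+$-convergence to approximate the outer factors $f^{(i_k,\alpha)},f^{(i_{k'},\alpha)}$ (and analogously the $h$'s) in form norm by elements of the core $D(A)\cap D(\Delta)\cap M$, the uniform $M$-bound on the remaining factors together with the Cauchy--Schwarz estimate $\|\Gamma_\alpha\|_1\leq C\,\tilde{\E}_1^\alpha(\cdot)^{1/2}\tilde{\E}_1^\alpha(\cdot)^{1/2}$ (and its $\hat{\Gamma}_\alpha$-analogue) dominate the error; Lebesgue's theorem then passes the limit through the $s$-integration.

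The two terms involving $(AG_\alpha-A^*\hat{G}_\alpha)$ applied to a single factor I plan to handle by a summation-by-parts step based on the iterated Leibniz-defect identity: for any product $X_1\cdots X_N$,
\begin{align*}
(AG_\alpha-A^*\hat{G}_\alpha)(X_1\cdots X_N)\;=\;&\sum_k X_1\cdots(AG_\alpha-A^*\hat{G}_\alpha)(X_k)\cdots X_N\\
&\quad-\;2\sum_{k<k'} X_1\cdots(\Gamma_\alpha-\hat{\Gamma}_\alpha)(X_k,X_{k+1}\cdots X_{k'-1},X_{k'})\cdots X_N.
\end{align*}
Applying $\tau$ and invoking conservativity $\tau\circ AG_\alpha=\tau\circ A^*\hat{G}_\alpha=0$, the left-hand side vanishes identically. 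Combining the two first lines of the previous lemma therefore sums the ``$(AG_\alpha-A^*\hat{G}_\alpha)$ at position $k$'' contributions over \emph{all} positions (both $f$'s and $h$'s), so this combined sum collapses to a sum of ``$(\Gamma_\alpha-\hat{\Gamma}_\alpha)$ at pairs'' contributions, which are dealt with exactly as in the previous paragraph.

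The main obstacle will be the domain mismatch: the factors $f^{(i,\alpha)},h^{(j,\alpha)}$ only lie in $M\cap D(\Delta^{1/2})$, whereas the Leibniz-defect expansion and Lemma \ref{GammaLim} both require smoother arguments (ideally in $D(A)\cap D(\Delta)\cap M$). The assumption, built into the proposition, that $D(A)\cap D(\Delta)\cap M$ is a core for $D(\Delta^{1/2})$, combined with the fact that $\delta^+$-convergence controls $\tilde{\E}_1^\alpha$-norms of the errors uniformly in $\alpha$, is precisely what is needed: one threads a core approximation through each integrand, keeps all Cauchy--Schwarz bounds uniform in $\alpha$ via the $\tilde{\E}_1^\alpha$-domination, and then lets $\alpha\to\infty$ after the approximation, concluding term-by-term vanishing.
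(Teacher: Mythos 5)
Your proposal is correct and is, in substance, the paper's own proof. Both start from the identity of the preceding lemma and show the four integrands go to zero as $\alpha\to\infty$: lines 3--4 by Lemma \ref{GammaLim} combined with assumption \eqref{derForm} (which makes $\Gamma_\alpha$ and $\hat\Gamma_\alpha$ converge to the same limit) and the $\delta^{+}$-convergence to control the $\alpha$-dependent arguments; lines 1--2 by first rewriting the $(AG_\alpha-A^*\hat G_\alpha)$ terms as $(\Gamma_\alpha-\hat\Gamma_\alpha)$ terms via conservativity, then reducing to the same estimate.

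The only genuine difference is the form of the rewriting. Your ``iterated Leibniz-defect'' expansion is the fully symmetric one over all pairs $(k,k')$, with single elements as first/third arguments and a product as middle argument. The paper instead telescopes from the right, obtaining a sum over adjacent pairs $(1\ldots k,\,k{+}1)$ in which the first argument of $\Gamma_\alpha-\hat\Gamma_\alpha$ is the accumulated partial product, the middle is $1$, and the third is the next single factor. The two are algebraically equivalent via the bi-derivation identities $\Gamma_\alpha(ab,1,c)=a\Gamma_\alpha(b,1,c)+\Gamma_\alpha(a,b,c)$ and $\Gamma_\alpha(a,1,bc)=\Gamma_\alpha(a,1,b)c+\Gamma_\alpha(a,b,c)$. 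Your all-pairs version produces $\binom{2l}{2}$ terms, each directly in the $f\otimes g^\alpha\otimes h$ shape Lemma \ref{GammaLim} asks for; the paper's telescoping produces $2l-1$ terms with a product in the first slot, which is why the paper invokes ``a derivation property for $\delta$'' to split that product before applying the lemma. Either way the final vanishing argument (Cauchy--Schwarz with $\tilde{\mathcal E}^\alpha_1$-seminorms, $\delta^+$-convergence of the $\alpha$-arguments, Lemma \ref{GammaLim}, then DCT for the $s$-integral) is identical. One cosmetic point: you needn't actually track the cancellation between the collapsed lines 1--2 and the original lines 3--4 (it leaves only the mixed $f$/$h$ pairs); since each pair term vanishes individually, the cancellation is harmless but unnecessary, just as in the paper, which treats all four lines independently.
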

\begin{proof}We have to prove that the right hand side of the formula of the previous lemma goes to zero. First, rewrite the $AG_{\alpha}$ terms in terms of the corresponding $\Gamma_{\alpha}$, for instance~: \begin{align*}&\frac{1}{4}\sum_{k}\tau(f_{s,t}^{(i_{1},\alpha)}...(AG_{\alpha}-A^{*}G_{\alpha}^{*})(f_{s,t}^{(i_{k},\alpha)})...h_{t-s,t}^{(j_{l},\alpha)})\\ &+\frac{1}{4}\sum_{k}\tau(f_{s,t}^{(i_{1},\alpha)}...(AG_{\alpha}-A^{*}G_{\alpha}^{*})(h_{t-s,t}^{(j_{k},\alpha)})...h_{t-s,t}^{(j_{l},\alpha)})\\ &=\frac{1}{2}\sum_{k}\tau((\Gamma_{\alpha}-\hat{\Gamma}_{\alpha})(f_{s,t}^{(i_{1},\alpha)}...(f_{s,t}^{(i_{k},\alpha)}),1,h_{t-s,t}^{(j_{k},\alpha)})...h_{t-s,t}^{(j_{l},\alpha)})\\ &+\frac{1}{2}\sum_{k}\tau((\Gamma_{\alpha}-\hat{\Gamma}_{\alpha})(f_{s,t}^{(i_{1},\alpha)}...(h_{t-s,t}^{(j_{k},\alpha)}),1,f_{s,t}^{(i_{k+1},\alpha)}))...h_{t-s,t}^{(j_{l},\alpha)}).\end{align*}
Using $\delta^+$-convergence 
(and the positivity of $\Gamma_{\alpha}$, and a derivation property for $\delta$, to get $||.||_1$ of terms in $\Gamma_{\alpha}$), we can replace $h_{t-s,t}^{(j_{k},\alpha)}$ by $h_{t-s,t}^{(j_{k})}$, idem for $f$, bellow $\Gamma_{\alpha}$ (first and third arguments). We can use the $L^{1}$-convergence of $\Gamma_{\alpha}$ and $\hat{\Gamma}_{\alpha}$ to $\Gamma$ (lemma \ref{GammaLim}) to see that the integral of the  four lines indeed go to zero (a DCT applies as in previous parts to get convergence of this integrals). 
\end{proof}

Let us now sketch the proof of symmetry in multitime case in our main example of interest~: $\tau\circ\Psi_{\rho}\circ\Phi_{0,t}^{\rho}=\tau\circ\Psi_H^{\sigma(t,\rho)}\circ H_{0,\tau(\rho)}^{\sigma(t,\rho)}\circ S_{t,\rho}.$ The proof will only be a matter of decomposing the highest and smallest time to use in the right way induction hypothesis and the two times case. 

%
Recall that via the identification of $\rho=(\rho',u)$ we have a well defined $\sigma(\rho)=(\sigma(u,\rho'),\tau(\rho'))$.

The following equation is in the spirit of (\ref{alternate}) and is the crucial part in extending the two times case to the general case. 
\begin{lemma}
Recall $\tau(\sigma(t,\rho))=\tau(\sigma((\rho,t)))=t$ so that we have defined $\Psi^{(\sigma((\rho,t)),t)}$.
We have the relation~:
$$\Psi_H^{\sigma(\rho)}\circ S_\rho\circ \Phi_{0,t}^{\rho}=\Phi_{0,t}\circ S_{t}\circ \Psi_H^{(\sigma((\rho,t))),t}\circ S_{t,\rho}$$
\end{lemma}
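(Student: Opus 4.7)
My plan is to argue by induction on the length $n$ of $\rho = (t_1 \leq \ldots \leq t_n)$. Throughout, I will unwind the inductive definitions of $\Psi^{(\rho)}$ and $\Psi^{(\rho),\tau(\rho)}$ and, at each induction step, reduce the non-trivial piece to the already-proven two-times symmetry $\tau \circ \Phi_{0,t} = \tau \circ H_{0,t} \circ S_t$ from the preceding subsection.

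For the base case ($n=1$, so $\rho = (t_1)$), the identity $\Psi^{(\rho)} = \Phi_{0,t_1}$ and $\Psi^{(\rho),\tau(\rho)} = \mathrm{Id}$ reduce the claim to a statement about two successive two-times evolutions. One then sees that the LHS reads $\Psi_H^{\sigma(t_1)} \circ S_{t_1} \circ \Phi_{0,t}^{(t_1)}$: here $\Phi_{0,t}^{(t_1)}$ evolves the added top time $t+t_1$ down to time $t_1$, and $S_{t_1}$ swaps times $t_1$ and $0$; the RHS, conversely, first symmetrizes via $S_{t,(t_1)}$, evolves the (new) top down to $t$, symmetrizes again via $S_t$, and finally evolves down to $0$ via $\Phi_{0,t}$. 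Using the decomposition $\Psi^{(\sigma((t_1)),t)} = H_{0,\tau((t_1))}^{(\sigma(t,(t_1)))} \circ \ldots$ and the two-times symmetry identity applied in the right algebra, both sides are seen to coincide.

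For the inductive step, I would write $\rho = (\rho', u)$ with $u = t_n - t_{n-1}$ at the top, so that $\Psi_H^{(\sigma(\rho))} = \Psi_H^{(\sigma(u,\rho'))} \circ \Phi_{0,\tau(\rho')}^{\rho'}$ (via the inductive definition of $\Psi_H$ and the first-time decomposition $\Psi^{(\sigma(\rho)),\tau(\sigma(\rho))}$ stated earlier). Then the key idea is a ``pushing'' argument: the evolution $\Phi_{0,t}^{\rho}$ of the added top time $t$ commutes, up to the semigroup property above $\phi$, with the already-built evolutions in $\rho$, allowing one to move $\Phi_{0,t}^{\rho}$ past the symmetry $S_\rho$ at the cost of applying the two-times identity $\tau \circ \Phi_{0,t} = \tau \circ H_{0,t} \circ S_t$ to the top piece. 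Applying the induction hypothesis to $\rho'$ then rearranges the middle terms into the $\Psi_H^{(\sigma((\rho,t))),t} \circ S_{t,\rho}$ on the RHS, after using the involutive relations $\tau \circ \sigma = \mathrm{Id}$ and $\sigma \circ (\tau,\sigma) = \mathrm{Id}$ to identify the indices.

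The main obstacle will be the bookkeeping of the various symmetries and indices: tracking how $S_{t,\rho}$, $S_t$, $S_\rho$ interact with nested free products, and verifying that $\Psi^{(\sigma((\rho,t))),t}$ inductively decomposes in the way needed to match $\Psi^{(\sigma(u,\rho'))} \circ S_{u,\rho'}$ after the top two-times symmetry has been applied. A secondary technical point is that one must use the relation derived from equation (\ref{derForm}) or the analogous level-family formula relating $\Phi$ and $H$ to handle the mismatch between $\Phi$-evolutions appearing on the LHS and $H$-evolutions on the RHS: this is exactly the content of the preceding proposition ($\tau \circ H_{0,t}^{(n)} \circ S_t = \tau \circ \Phi_{0,t}^{(n)}$) packaged at the level of semigroups rather than states, applied in the free product $M_{[t+t_n]} *_{alg} M^\rho$.
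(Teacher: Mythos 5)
There is a genuine gap in your approach, and it stems from the role you assign to the two‑times trace symmetry. Lemma~18 is a purely algebraic operator identity between maps $M\star_{alg}M^{\rho}\to M$; the paper establishes it \emph{without} invoking the trace, by expanding both sides in boolean cumulants and reducing to the two precise cumulant identities
$f_{u,t}\circ S_{t}\circ \Psi_H^{(\sigma((\rho,t))),t}\circ S_{t,\rho}=f^{n,\rho}_{u,t}$ and
$\tilde{h}^{n,(\sigma((\rho,t)),t)}\circ S_{t,\rho}=\tilde{h}^{(\sigma((\rho,u)),u)}\circ S_{u,\rho}\circ \Phi_{u,t}^{\rho}$,
which are proven by induction on the cumulant degree $n$ (not on the length of $\rho$), using formula~(\ref{alternate}), Lemma~\ref{symmsemig}, the semigroup property, and the first‑time decomposition $\Psi^{(\rho)}=\Phi_{0,\tau(\rho)}\circ\Psi^{(\rho),\tau(\rho)}$. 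The trace identity $\tau\circ\Phi_{0,t}=\tau\circ H_{0,t}\circ S_{t}$ is used only \emph{after} Lemma~18, in the surrounding argument, to pass from the two‑times symmetry to the multi‑time one. That identity is a statement about a scalar (a trace after multiplication) and cannot be ``applied to the top piece'' inside a composition of operator‑valued evolutions — there is no trace in the interior of the LHS or RHS of Lemma~18 to attack. Feeding it into the proof of Lemma~18 as you propose would either fail (type mismatch) or be circular, since the whole point of Lemma~18 is to provide the combinatorial bridge that lets the two‑times trace symmetry propagate.

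Your ``pushing'' heuristic — that $\Phi_{0,t}^{\rho}$ commutes past $S_{\rho}$ up to the semigroup property — is precisely the nontrivial content being proved; it cannot be assumed. To make it precise one needs exactly the rewriting in terms of $\hat{h}^{n,\rho,u,t}$ and the induction on cumulant degree (with the length‑of‑$\rho$ induction only as a secondary reduction for the $\tilde h$‑identity), which your proposal does not identify and which is what tames the bookkeeping you flag as the main obstacle. Also, your primary induction variable (length of $\rho$) does not match the structure of the recursions: the defining formula for $f^{(n,\rho)}_{s,t}$ recurses on $n$, and the base case $n=1$ is $\phi_{t-s}$, not the $\rho$ of length one.
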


We can now conclude the proof of the symmetry property as follows.
By an induction hypothesis, we know~: $\tau\circ\Psi_{\rho}=\tau\circ\Psi_H^{\sigma(\rho)}\circ S_{\rho}.$
The previous lemma thus gives us  (using the two times case for the second equality) $$\tau\circ\Psi_{\rho}\circ\Phi_{0,t}^{\rho}=\tau\circ\Phi_{0,t}\circ S_{(t)}\circ \Psi_H^{(\sigma((\rho,t))),t}\circ S_{t,\rho}=\tau\circ H_{0,t}\circ \Psi_H^{(\sigma(\rho,t)),t}\circ S_{t,\rho}.$$

Said in words, we want to compute an evolution with a sequence of times gathered in $\rho$ and above a time $t$, after this highest time evolution we first inductively apply symmetry to the part with $\rho$ to get a term like the one in the left hand side of the lemma. This lemma means that if we evolve the highest time $t$, then invert times and evolves, this is the same as inverting times, evolving the same bunch of times corresponding to $\rho$, re-inverting and finally making evolve the highest time $t$. With this highest time evolution $\Phi_{0,t}$ just under the trace, we can now apply symmetry we proved in the two times case to get only $H$ terms.

Recall $\sigma((\rho,t))=(\sigma(t,\rho),\tau(\rho))$ so that by definition, we have the following concluding equation~: $$H_{0,t}\circ \Psi_H^{(\sigma((\rho,t))),t}=H_{0,t}\circ \Psi_H^{(\sigma(t,\rho)),t}\circ H_{0,\tau(\rho)}^{\sigma(t,\rho)}=\Psi_H^{(\sigma(t,\rho))}\circ H_{0,\tau(\rho)}^{\sigma(t,\rho)}.$$

Explained in words again, the evolution of the part corresponding to $\rho$ symmetrized can be decomposed in the evolution of the highest time range in $\rho$ followed by the inversion of the remaining bunch of times in $\rho$, by the very inductive definition, then we can gather this bunch of times with the highest time range $t$ to get the second part of the evolution in the symmetric of the union of $\rho$ with the extra time $t$...
\begin{proof}[Sketch of Proof of lemma 18]
Let us fix several notations. As in definition 8, let us call $\tilde{h}^{n,\sigma(\rho)}$, $\tilde{h}^{n,(\sigma(\rho,t),t)}$ for $\Psi_H^{\sigma(\rho)}$,
$\Psi_H^{(\sigma((\rho,t))),t}$ with the same definition as $f^{n,\rho}_{0,t}$ for $\Phi^{\rho}$, i.e. as we said variants of Boolean cumulants. Let us remind the reader that, in this paragraph we stick to what is called in subsection 2.2 our main example, i.e. $\rho$ is nothing but an ordered sequence of times, $M^{\rho}$ being an algebraic free product of one copy of $M$ for each time. Those maps are defined on $M^{\sigma(\rho)}$ , with value respectively in $M$ or $M_{[t]}*M_{[0]}$ (the free product of $M$ thought of at time $t$ with $M$ thought of at time $0$). As we noted in this definition 8, as soon as such a formula for $\Phi^{\rho}$ or here $\Psi_H^{\sigma(\rho)}$,
$\Psi_H^{(\sigma((\rho,t))),t}$ exists, it completely determines $f$ or $h$, existence follows inductively from our definitions, let us emphasize we gather here all non-zero times, i.e. $\tilde{h}^{n,\sigma(\rho)}$ is really a boolean cumulant with $\Psi_H^{\sigma(\rho)}$ as conditional expectation on $M_{[0]}$,  $\tilde{h}^{n,(\sigma(\rho,t),t)}$ is a boolean cumulant with $\Psi_H^{(\sigma((\rho,t))),t}$ as conditional expectation on $M_{[t]}*M_{[0]}$.

 For example imagine $\sigma(\rho,t_1)=(0\leq t_1\leq t_2)$ then \begin{align*}&\Psi_H^{(\sigma(\rho,t_1),t_1)}(X_{t_2}X_{t_1}X_{t_2}X_{t_1}X_{0}X_{t_1}X_{0}X_{t_2}X_{t_1}))\\ &=\tilde{h}^{9,(\sigma(\rho,t_1),t_1)}(X_{t_2}X_{t_1}X_{t_2}X_{t_1}X_{0}X_{t_1}X_{0}X_{t_2}X_{t_1})^{[t_1]}\\ &+\tilde{h}^{6,(\sigma(\rho,t_1),t_1)}(X_{t_2}X_{t_1}X_{t_2}X_{t_1}X_{0}X_{t_1})^{[t_1]}X_{0}\tilde{h}^{2,(\sigma(\rho,t_1),t_1)}(X_{t_2}X_{t_1})^{[t_1]}\\ &+\tilde{h}^{6,(\sigma(\rho,t_1),t_1)}(X_{t_2}X_{t_1}X_{t_2}X_{t_1})^{[t_1]}X_{0}\tilde{h}^{4,(\sigma(\rho,t_1),t_1)}(X_{t_1}X_{0}X_{t_2}X_{t_1})^{[t_1]}\\ &+\tilde{h}^{3,(\sigma(\rho,t_1),t_1)}(X_{t_2}X_{t_1}X_{t_2}X_{t_1})^{[t_1]}X_{0}(X_{t_1})^{[t_1]}X_{0}\tilde{h}^{2,(\sigma(\rho,t_1),t_1)}(X_{t_2}X_{t_1})^{[t_1]}.\end{align*}

Thus, expanding in this way, $\Psi_H^{\sigma(\rho)}\circ S_\rho\circ \Phi_{0,t}^{\rho}$  is a sum of terms each one being expressed as a product of terms like~:
$(\tilde{h}^{n,\sigma(\rho)}\circ S_\rho\circ \Phi_{0,t}^{\rho})$ or $f^{n,\rho}_{0,t}$ and non-involving terms (originally at highest time in $\rho$ and at arrival thought of at time $0$ because of time inversion $S_\rho$). Indeed after $\Phi_{0,t}^{\rho}$ we got products of unchanged terms (in $M^{\rho}$) and of various $f^{n,\rho}_{0,t}$ thought of at the highest time in $M^{\rho}$, then after time inversion via $S_\rho$, this highest time becomes $0$, and there may be several $f^{n,\rho}$'s non evolving via $\Psi_H^{\sigma(\rho)}$ or they may appear bellow an $\tilde{h}^{n,\sigma(\rho)}$.

Likewise, $\Phi_{0,t}\circ S_{t}\circ \Psi_H^{(\sigma((\rho,t))),t}\circ S_{t,\rho}$ is a sum of terms each one being expressed as a product of terms like $\tilde{h}^{n,(\sigma(\rho,t),t)}\circ S_{t,\rho}$, $f_{0,t}\circ S_{t}\circ \Psi_H^{(\sigma((\rho,t))),t}\circ S_{t,\rho}$ and the same non-evolving terms.

The only point in the proof of the equality of our lemma is the remark~:
$f_{0,t}\circ S_{t}\circ \Psi_H^{(\sigma((\rho,t))),t}\circ S_{t,\rho}=f^{n,\rho}_{0,t}$
and $\tilde{h}^{n,(\sigma(\rho,t),t)}\circ S_{t,\rho}=(\tilde{h}^{\sigma(\rho)}\circ S_\rho\circ \Phi_{0,t}^{\rho})$. In case this may not seem obvious, we explain the first, the second being a 
question of rewriting inductive definition for $\tilde{h}$.
 
To prove those formulas by induction (again on $n$ the number of alternating times term on which they are applied), we have to prove simultaneously $f_{u,t}\circ S_{t}\circ \Psi_H^{(\sigma((\rho,t))),t}\circ S_{t,\rho}=f^{n,\rho}_{u,t}$
and $\tilde{h}^{n,(\sigma((\rho,t)),t)}\circ S_{t,\rho}=(\tilde{h}^{(\sigma((\rho,u)),u)}\circ S_{u,\rho}\circ \Phi_{u,t}^{\rho})$.
Of course $\tilde{h}^{(\sigma((\rho,0)),0)}=\tilde{h}^{\sigma(\rho)}$ and this generalizes what we need.

The very definition of $f^{n,\rho}_{u,t}$ gives, if we write $\hat{h}^{n,\rho,u,t}=\tilde{h}^{(\alpha,\sigma((\rho,u)))}\circ S_{(\tau,\sigma)(u,\rho)}\circ \Phi_{u,t}^{n,\rho}$~:
\begin{align*}f_{s,t}^{(n,\alpha,\rho)}=\sum_{l,i_1+j_1+...+j_{l-1}+i_l-l=n-1}\int_{s}^{t}du \ \phi_{u-s,\alpha}\Gamma_{\alpha}(f_{u,t}^{(i_{1},\alpha,\rho)},\hat{h}^{j_{1},\rho,u,t}f_{u,t}^{(i_{2},\alpha)}\hat{h}^{j_{2},\rho,u,t}...\hat{h}^{j_{l-1},\rho,u,t},f_{u,t}^{(i_{l},\alpha,\rho)}).\end{align*}

But $\hat{h}^{n,\rho,u,t}=h_{0,u}^{(\alpha)}\Psi_H^{(\alpha,\sigma((\rho,u)),u)}\circ S_{(\tau,\sigma)(u,\rho)}\circ \Phi_{u,t}^{n,\rho}=h_{t-u,t}^{(\alpha)}\Psi_H^{n,(\sigma((\rho,t)),t)}\circ S_{t,\rho}$ by induction hypothesis and lemma \ref{symmsemig} .
Using now induction hypothesis in the above expression on $f_{u,t}$'s of the right hand side, and using (\ref{alternate}), we got $f_{s,t}\circ S_{t}\circ \Psi_H^{(\sigma((\rho,t))),t}\circ S_{t,\rho}=f^{n,\rho}_{s,t}$. 
As far as the other equation is concerned, we first note that by definition $\tilde{h}^{n,(\sigma((\rho,t)),t)}=\tilde{h}^{(\sigma(t,\rho),t)}\circ H^{(n,\alpha,\sigma(t,\rho))}_{0,\tau(\rho)}$. Thus an obvious induction on the number of times in $\rho$ reduces the result to proving $h^{(n,\alpha,\sigma(t,\rho))}_{0,\tau(\rho)}\circ S_{t,\rho}=h^{(n,\alpha,\sigma(u,\rho))}_{0,\tau(\rho)}\circ S_{u,\rho}\circ \Phi_{u,t}^{\rho}.$ But modulo inversion of $f$ and $h$ in what we previously proved, we checked (using $S_{t,\rho}$ is the inverse of $S_{\tau(\rho),\sigma(t,\rho)}$ and $\sigma((\sigma(t,\rho),\tau(\rho)))=(\rho,t)$)~: $h_{s,\tau(\rho)}\circ S_{\tau(\rho)}\circ \Psi^{((\rho,t),\tau(\rho))}=h^{(n,\sigma(t,\rho))}_{s,\tau(\rho)}\circ S_{t,\rho}$. But by the semigroup property $\Psi^{((\rho,t),\tau(\rho))}=\Psi^{((\rho,u),\tau(\rho))}\circ\Phi_{u,t}^{\rho}$, so that applying twice the previous formula, one deduces the formula we want.
%

\end{proof}
\begin{remark}
Let us note that traciality of $\tau\circ\Psi_{\rho}\circ\Phi_{0,t}^{\rho}$ is now obvious by induction. Indeed we can move a ``0-time" element around the state by induction hypothesis, and we can move a $t$ -time element in the same way after applying symmetry.
\end{remark}


\subsection{Summary of construction}

Using the remark of the first section on Path spaces,
we have thus obtained the following~:

\begin{theorem}\label{main}
With all the previous notations. Let us assume as in part 2.1, $A$ is the generator of a non-symmetric Dirichlet form, such that the anti-symmetric part is a derivation in the sense of (\ref{derForm}).
Let us assume $D(A)\cap D(\Delta)\cap M$ is a core in $D(\Delta^{1/2})$.
There exists a pair of  $f$ and $h$   \emph{$\alpha$-approximated} (N+1)-level-semigroup-families mutually affiliated to $A$ and $A^{*}$, for all N. The linear functional $\tau$ they induce on Path space $\P_{alg}(M)$ is a tracial state extending $\tau$ in each time and it extends to $\P_{max}(M)$. If $\phi_t=e^{-tA/2}$ is symmetric, $\tau$ is also symmetric (i.e. as soon as only products of terms with time between 0 and t are involved, invariant by the symmetry of times around $t/2$). It is also translation invariant, so that describing $\P_{\R, alg}(M)$ by an inductive limit of $\P_{[-t,\infty), alg}(M)$, it extends to a tracial symmetric state on $\P_{\R, alg}(M)$ and then $\P_{\R,max}(M)$. To distinguish this one of the previous one we call this extension $\tau_{\R}$. Because of the translation invariance, the translation of times on Path space induces a *-homorphism $\alpha_{t}$  on the GNS construction   $(\widetilde{M},\tau)$ of $(\P_{max}(M),\tau)$ and an automorphism (also denoted) $\alpha_{t}$ of the GNS construction   $(\hat{M},\tau)$ of $(\P_{\R,max}(M),\tau_{\R})$. $\alpha_{t}$ is a dilation of $\phi_t$ in the sense that $E_{M_0}(\alpha_t(x))=\phi_t(x)$.  When $\phi_t$ is symmetric, the reversal of time involution also induces a trace preserving involutive automorphism $\beta$ on $(\hat{M},\tau)$ satisfying of course $\beta\alpha_{t}\beta= \alpha_{-t}$.
\end{theorem}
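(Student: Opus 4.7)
My plan is to execute a simultaneous induction on $N$ to produce both $f$ and $h$, and then combine the results of the preceding four subsections in a specific order. The base case $N=1$ is immediate: take $f^{(1)}_{s,t} = \phi_{t-s}$ and $h^{(1)}_{s,t} = \phi^*_{t-s}$, with $\alpha$-approximations given by the Yosida approximations $\phi_{t-s,\alpha}$ and $\phi^*_{t-s,\alpha}$. For the induction step, I would use the formula in Definition 8 (applied to both the generator $AG_\alpha$ relative to $h^{(\alpha)}$ and its adjoint relative to $f^{(\alpha)}$) to define $f^{(N+1,\alpha)}$ and $h^{(N+1,\alpha)}$ as Bochner integrals on the $\alpha$-level. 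The key subtlety is to check that the local boundedness hypothesis of the main theorem of subsection 2.2 is available; this is exactly what the positivity argument of subsection 2.3 supplies, via the lemma expressing $f^{(\alpha)}_{v,[u,t,w]}$ as an absolutely convergent sum $\sum_k \int_{T^k_{v,t}} \sigma \sigma^*$ and the $C^*$-state argument in the remark following it. Once uniform $M$-boundedness is in hand, the main theorem of 2.2 gives simultaneously the existence of the limiting $(N+1)$-level families $f^{(N+1)}$ and $h^{(N+1)}$ together with $\delta^+$-convergence, closing the induction.

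With the families built, the state $\tau$ on $\mathcal{P}_{alg}(M)$ is defined on alternating products by $\tau(X_0^{[0]} \cdots) = \tau(X_0 \Phi_{0,t_k}(X_1 \cdots))$, reduced recursively to the two-time maps $\Phi_{0,t}$ via the multitime $\Psi^{(\rho)}$. Positivity follows directly from the $\sigma\sigma^*$-decomposition of 2.3, and the fact that it restricts to $\tau$ on each copy of $M$ is built into the $1$-level case. Traciality and (in the symmetric case) symmetry are exactly the content of subsection 2.4: the key step there is the symmetry identity $\tau \circ \Psi_\rho \circ \Phi^\rho_{0,t} = \tau \circ \Psi_H^{\sigma(t,\rho)} \circ H^{\sigma(t,\rho)}_{0,\tau(\rho)} \circ S_{t,\rho}$, which I would invoke to deduce traciality exactly as in the remark after Lemma 18. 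Passage to $\mathcal{P}_{max}(M)$ is automatic by the universal property, as explained in section 1 via Nica--Speicher's Proposition 7.2.

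Translation invariance is transparent from the construction since all semigroup formulas depend only on differences $t-s$, so an inductive limit over $\mathcal{P}_{[-t,\infty),alg}(M)$ produces the translation-invariant state $\tau_{\mathbb{R}}$ on $\mathcal{P}_{\mathbb{R},alg}(M)$, extending to $\mathcal{P}_{\mathbb{R},max}(M)$ in the same way. Performing the GNS construction then yields $(\widetilde{M}, \tau)$ and $(\hat{M}, \tau)$; the time-translation $*$-endomorphism of $\mathcal{P}_{[0,\infty),alg}(M)$ preserves the state and hence descends to a trace-preserving $*$-homomorphism $\alpha_t$ on $\widetilde{M}$, which becomes an automorphism on $\hat{M}$ since on the real line translations are invertible. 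The dilation identity $E_{M_0}(\alpha_t(x)) = \phi_t(x)$ is just the identity $\tau(y \alpha_t(x)) = \tau(y\phi_t(x))$ for $y \in M_0$ and $x \in M$, which is the base level $f^{(1)} = \phi_{t-s}$ property of the construction. Finally, when $\phi_t$ is symmetric, the time-reversal $t \mapsto -t$ also preserves $\tau_{\mathbb{R}}$ by the symmetry property established in 2.4, hence induces an involutive trace-preserving automorphism $\beta$ of $\hat{M}$; the relation $\beta \alpha_t \beta = \alpha_{-t}$ is immediate from its commutative-algebra analogue on the dense $\mathcal{P}_{\mathbb{R},alg}(M)$.

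The main obstacle, which is absorbed into the reference to subsection 2.3, is the positivity/boundedness step: everything else (induction, limits, symmetry, GNS, translation) is formal once $f^{(N+1,\alpha)}$ is known to be uniformly bounded in $M$. The combinatorial $\sigma$-decomposition of Lemma 15 and the ensuing estimate $\|\sigma^k\| \le C K^k$ with summable radius are what close the loop in the induction, so the whole theorem rests on that decomposition being carried out consistently at each level.
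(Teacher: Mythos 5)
Your reconstruction matches the paper's (implicit) proof of Theorem~\ref{main}: the paper does not write out a separate argument for this theorem, calling it a ``Summary of construction,'' and the proof is indeed the combination you describe — base case $f^{(1)}_{s,t}=\phi_{t-s}$, $h^{(1)}_{s,t}=\phi^*_{t-s}$ with Yosida $\alpha$-approximants, the inductive step via the unlabeled theorem of subsection~2.2, the positivity/$\sigma\sigma^*$-decomposition of subsection~2.3 supplying the otherwise unverifiable $M$-boundedness hypothesis, traciality and symmetry from the identity $\tau\circ\Psi_\rho\circ\Phi^\rho_{0,t}=\tau\circ\Psi_H^{\sigma(t,\rho)}\circ H^{\sigma(t,\rho)}_{0,\tau(\rho)}\circ S_{t,\rho}$ and the remark following it, and then the purely formal GNS, translation-invariance, and time-reversal arguments. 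You also correctly identified the bottleneck: everything hinges on the positivity decomposition of subsection~2.3, without which the induction in subsection~2.2 cannot close.
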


\begin{remark}Let us note that for those semigroups considered, our construction shows $\phi_t$ is factorizable in the sense of \cite{AD04}. 
Actually, one can prove our processes are freely Markovian in the sense of \cite{Vo6}, so that when restricted to integer multiples of $t$, we recover the non-commutative Markov Chain construction of Theorem 6.6 in \cite{AD04}, based on reduced free product with amalgamation. Especially, reversed martingale properties proven in that theorem are also valid here (as it may have been clear using our symmetry properties).
\end{remark}

\section{Application to Transportation cost inequality}
A free Talagrand transportation cost inequality was first proven in \cite{BV01} in the one variable case and we will extend here there approach to the general multivariable case. The key point is to get an estimate on Wasserstein distance between two infinitesimally close points in an Orstein Uhlenbeck process.
More generally, we will first get such estimates along any solution of a free SDE with polynomial drift. Again the key argument is based on starting simultaneously our stationary process at $X_t$ a point of this SDE and get an inequality between the path of this SDE and the stationary variant in comparing drifts. Thus we will start by proving in a first subsection that our process indeed satisfy another free SDE when the derivation is a free difference quotient. In fact, we will prove a slightly more general result about subsystems looking like a free difference quotient for further use when we will prove that our process do satisfy a SDE in a much more general context. We will then turn back to our infinitesimal estimate in a second subsection, and finally to free transportation cost inequality in a third.

\subsection{Solution of our SDE in the free difference quotient case}
We will use the following variant of Paul L\'{e}vy's theorem giving a characterization of Brownian motion proven in \cite{BGC} for the free Brownian motion, we use here an immediate extension to Speicher's $B$-Gaussian stochastic processes \cite{Sp98}. Thus $B$ is a fixed von Neumann sub-algebra with its canonical $\tau$ preserving conditional expectation $E_B$, $\eta:B\rightarrow M_m(B)$ (if $m$ infinite, this denote $B\o B(H_m)$ $H_m$ of dimension $m$), a completely positive map, assumed to be $\tau$-symmetric ($\tau(\eta_{ij}(x)y)=\tau(x\eta_{ji}(y)$) so that via Proposition 2.20 in \cite{SAVal99} the associated $B$-semicircular system is tracial. Given $B_{s}$ be an increasing filtration of von Neumann algebras $B\subset B_{0}$, we will call adapted B-free Brownian motion of covariance $\eta$ a family $X_s^j$ of adapted processes such that $(X_s^j)_{s\geq t}$ is Speicher's $B_s$-Gaussian stochastic processes $X_s^j$ with covariance given by $E_{B_{t}}((X_s^i-X_t^i)b(X_u^j-X_t^j))=((s\wedge u)-t)\eta_{ij}(E_{B}(b))$, for any $s,u\geq t$, $b\in B_{t}$. Stated otherwise in the notations of \cite{SAVal99} $W^{*}(B_t, X_s^j,s\geq t)=\Phi(B_t,\tilde{\eta}\circ E_B)$ where $\tilde{\eta}:B\rightarrow B(L^2([t,\infty))\o H_m)\o B$ given by $\langle 1_{[ts)}\o i,\tilde{\eta}(b)(1_{[tu)}\o j)\rangle=((s\wedge u)-t)\eta_{ij}(b)$ with obvious notations.

\begin{theorem}
Let $B_{s}$ be an increasing filtration of von Neumann algebras in a non-commutative tracial probability space $(M,\tau)$
$Z_{s}=(Z_{s}^{1},...,Z_{s}^{n}), s\in \R_{+}$ an m-tuple of self-adjoint processes adapted to this filtration $Z_{0}=0$ and~:
\begin{enumerate}
\item $\tau(Z_{t}|B_{s})=Z_{s}$
\item$Z_{t}-Z_{s}=U_{t,s}+V_{t,s}$ with $\tau(|U_{t,s}|^{4})\leq K (t-s)^{3/2}$ and $\tau(|V_{t,s}|^{2})\leq K (t-s)^{2}$ 
\item $\tau(Z_{t}^{k}AZ_{t}^{l}B)=\tau(Z_{s}^{k}AZ_{s}^{l}B)+(t-s)\tau(A\eta_{lk}(E_{B}(B)))+o(t-s)$ for any $A,B\in B_s$.
\end{enumerate}
Then $Z$ is a B-free Brownian motion of covariance $\eta$. 
\end{theorem}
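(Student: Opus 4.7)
The plan is to follow the blueprint of the free L\'evy characterization proved in \cite{BGC}, extended to Speicher's operator-valued setting \cite{Sp98,SAVal99}. Since a $B$-free Brownian motion of covariance $\eta$ is determined (through the operator-valued Wick formula) by its $B$-valued mixed moments, it suffices to prove that for every $0 \leq t_0 \leq \ldots \leq t_n$, every sequence of indices $j_1,\ldots,j_n$, and every choice of $b_0,\ldots,b_n$ adapted to $B_{t_0}$, the moments
\[
\tau\bigl(b_0(Z_{t_1}^{j_1}-Z_{t_0}^{j_1})b_1\cdots(Z_{t_n}^{j_n}-Z_{t_{n-1}}^{j_n})b_n\bigr)
\]
coincide with the corresponding free Wick sums over non-crossing pair partitions on $\{1,\ldots,n\}$ weighted by the covariance $\eta$.

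I would first upgrade hypothesis (3), via (1) and the module-bilinearity of the trace-preserving conditional expectations $E_{B_s}$, to the conditional identity
\[
E_{B_s}\bigl((Z_t^k-Z_s^k)\,b\,(Z_t^l-Z_s^l)\bigr) = (t-s)\eta_{lk}(E_B(b)) + o(t-s), \qquad b \in B_s,
\]
by testing (3) against an arbitrary left multiplier $c \in B_s$. This is the form actually satisfied by a $B$-free Brownian motion. The main argument then proceeds by induction on $n$. For $n=1$, (1) together with $Z_0=0$ yields a vanishing moment. For the inductive step, subdivide each interval $[t_{i-1},t_i]$ into $N$ equal subintervals $u_0<\ldots<u_N$ and telescope each increment $Z_{t_i}^{j_i}-Z_{t_{i-1}}^{j_i}$ into small increments, decomposed by (2) as $U_k+V_k$. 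Expanding the full product leads to a sum over configurations of small increments indexed by a choice of subinterval for each of the $n$ positions.

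Standard moment estimates from (2) (via H\"older's inequality in $(M,\tau)$, exploiting the $L^4$-control on $U$-increments and the $L^2$-control on $V$-increments) show that as $N\to\infty$ the only configurations surviving the Riemann-sum limit are those where every small $U$-increment is paired with exactly one other: unpaired blocks of three or more $U$-increments, as well as any factor involving a $V$-increment, contribute vanishing sums as the refinement grows. The martingale condition (1), applied through successive innermost conditional expectations $E_{B_{u_k}}$, further forces these pairings to be non-crossing, since any crossing would leave a single unpaired $U$-increment under an innermost $E_{B_{u_k}}$ which then vanishes. For each surviving non-crossing pairing, the upgraded version of (3) identifies the innermost paired block
\[
E_{B_{u_k}}\bigl((Z_{u_{k+1}}^{l}-Z_{u_k}^{l}) X (Z_{u_{k+1}}^{l'}-Z_{u_k}^{l'})\bigr) = (\Delta u)\,\eta_{l'l}(E_B(X))+o(\Delta u),
\]
turning the sum over $k$ into the Lebesgue integral $\int_{t_{i-1}}^{t_i}\eta_{l'l}(E_B(X))\,du$ along the diagonal. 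Peeling innermost blocks successively and applying the induction hypothesis to the remaining lower-order adapted moment reconstructs the operator-valued Wick formula for a $B$-free Brownian motion of covariance $\eta$, completing the induction.

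The main obstacle is the combinatorial--analytic argument forcing non-crossing pairings and driving the non-survivor contributions to zero in the $B$-valued context: unlike in the scalar case of \cite{BGC}, one cannot freely pull factors through the conditional expectations, and the nested module structure must be manipulated carefully (folding from innermost nesting outward, exploiting adaptedness of the $b_i$'s and the $B_s$-bimodule property of $E_{B_s}$). I also expect some care is needed in checking that the $o(\Delta u)$ remainders in the upgraded (3) are uniform enough (with dependence only on the $L^2$-norms of the intermediate $X$'s, themselves bounded using the closeability from (2)) to survive summation into the limiting Lebesgue integral.
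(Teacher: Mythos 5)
Your proposal overlooks the central difficulty that the paper's proof is actually designed to solve. The theorem does \emph{not} assume $Z_s \in M$ (i.e.\ that the process is bounded); as the paper explicitly remarks immediately after the statement, this is the main improvement over \cite{BGC}, and it is precisely why hypothesis~(2) is weakened to the $U_{t,s}+V_{t,s}$ decomposition with only $L^4$-control on $U$ and $L^2$-control on $V$. Your entire argument proceeds by expanding mixed moments
$\tau\bigl(b_0(Z_{t_1}^{j_1}-Z_{t_0}^{j_1})\cdots(Z_{t_n}^{j_n}-Z_{t_{n-1}}^{j_n})b_n\bigr)$
and running H\"older estimates to kill non-paired configurations. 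But with only $L^4$-control available, moments of order larger than four are not even defined a priori, and the ``intermediate $X$'s'' you want to insert into the upgraded version of~(3) are not bounded, whereas~(3) is stated only for $A,B\in B_s$. So the moment method as you describe it cannot get off the ground without first establishing that $Z_t$ is bounded, and establishing boundedness is exactly what the paper does differently.

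The paper's proof proceeds in two stages. First, it works with the resolvent $(b+tc-Z_t)^{-1}$ (the $B$-valued Cauchy transform), which is bounded even when $Z_t$ is not. A telescoping resolvent identity, together with hypotheses (1)--(3), yields after conditional expectation onto $B$ a Burgers-type equation
\begin{equation*}
E_B\bigl((b+ct-Z_t^k)^{-1}\bigr) = b^{-1} + \int_0^t E_B\bigl((b+cs-Z_s^k)^{-1}\bigl(\eta_{kk}(E_B((b+cs-Z_s^k)^{-1}))-c\bigr)(b+cs-Z_s^k)^{-1}\bigr)\,ds,
\end{equation*}
and the method of characteristics (choose $c=\eta_{kk}(b^{-1})$, apply Gronwall) together with the existence--uniqueness result for the quadratic matrix equation from \cite{HRSp07} identifies $E_B\bigl((z-Z_t^k)^{-1}\bigr)$ with that of a $B$-Brownian motion. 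This pins down the one-dimensional $B$-valued distribution of $Z_t^k$ and, crucially, proves that $Z_t^k$ is \emph{bounded}. Only then does the paper strengthen (2) to an $L^4$-bound on the full increments and invoke the moment/combinatorial argument of \cite{BGC} as a second stage --- the step you are sketching. You should incorporate the resolvent/Burgers first stage before attempting the Wick-expansion; without it the argument has a genuine gap.
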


Note that we don't assume $Z_s\in M$ a priori, this is the main improvement with respect to the result of \cite{BGC}. In order to prove that boundedness follows from our assumptions, we will crucially use \cite{HRSp07}. The idea is to deduce a variant of Burger's equation for the resolvent of our process. We will actually apply in the same time the method of characteristics to get uniqueness of the solution, thus equality in 1-variable distribution with Brownian motion. We also slightly weaken assumption (2) even though this will be easy to recover their assumption when boundedness will be proven. This will be crucial for us since  $V_{t,s}$ will correspond to an ordinary integral term easy to bound but only in $||.||_2$-norm if we don't want to add strong conditions in the theorem solving our SDEs.
\begin{proof}[Sketch of proof]
Let us use classical resolvent equations (and compute for $b,c\in B$ such that $(b+sc-Z_{s})$ admits an inverse in $M$ for all $s\in [0,t]$, with $||(b+sc-Z_{s})^{-1}||\leq C$):
\begin{align*}&(b+tc-Z_{t})^{-1}=b^{-1}+\\&\sum_{i=0}^{n-1}(b+c(i+1)t/n-Z_{(i+1)t/n})^{-1}(Z_{(i+1)t/n}-Z_{it/n}-ct/n)(b+cit/n-Z_{it/n})^{-1}
\end{align*}\begin{align*}&(b+tc-Z_{t})^{-1}\\ &
=b^{-1}+\sum_{i=0}^{n-1}(b+cit/n-Z_{it/n})^{-1}(Z_{(i+1)t/n}-Z_{it/n}-ct/n)(b+cit/n-Z_{it/n})^{-1}\\ &+\sum_{i=0}^{n-1}(b+cit/n-Z_{it/n})^{-1}((Z_{(i+1)t/n}-Z_{it/n}-ct/n)(b+cit/n-Z_{it/n})^{-1})^2\\ &+\sum_{i=0}^{n-1}((b+c(i+1)t/n-Z_{(i+1)t/n})^{-1}-(b+cit/n-Z_{it/n})^{-1})\times \\ &\ \ \ \ \ \ \ \ \ \times((U_{(i+1)t/n,it/n}+V_{(i+1)t/n,it/n}-ct/n)(b+cit/n-Z_{it/n})^{-1})^2
\end{align*}

After taking a conditional expectation on $B$ and using our assumptions, one gets~:\begin{align*}&E_B((b+tc-Z_{t}^k)^{-1})=b^{-1}+no(t/n)+\\ &\frac{t}{n}\sum_{i=0}^{n-1}E_B((b+cit/n-Z_{it/n}^k)^{-1}\left(\eta_{kk}(E_{B}((b+cit/n-Z_{it/n}^k)^{-1}))-c\right)(b+cit/n-Z_{it/n}^k)^{-1})+E_{B}(R_n)\end{align*}

with \begin{align*}&R_n=\sum_{i=0}^{n-1}(b+c(i+1)t/n-Z_{(i+1)t/n})^{-1}(U_{(i+1)t/n,it/n}+V_{(i+1)t/n,it/n}-ct/n)\times \\ &\ \ \ \ \ \ \ \ \ \times(b+cit/n-Z_{it/n})^{-1}(U_{(i+1)t/n,it/n}(b+cit/n-Z_{it/n})^{-1})^2\\ &-\sum_{i=0}^{n-1}(b+cit/n-Z_{it/n})^{-1}(ct/n)(b+cit/n-Z_{it/n})^{-1}\times \\ &\ \ \ \ \ \ \ \ \ \times(Z_{(i+1)t/n}-Z_{it/n}-ct/n)(b+cit/n-Z_{it/n})^{-1})\\ &-\sum_{i=0}^{n-1}(b+cit/n-Z_{it/n})^{-1}(Z_{(i+1)t/n}-Z_{it/n})(b+cit/n-Z_{it/n})^{-1}\times \\ &\ \ \ \ \ \ \ \ \ \times(ct/n)(b+cit/n-Z_{it/n})^{-1}\\ &\\ &+\sum_{i=0}^{n-1}((b+c(i+1)t/n-Z_{(i+1)t/n})^{-1}-(b+cit/n-Z_{it/n})^{-1})\times \\ &\ \ \ \ \times(U_{(i+1)t/n,it/n}+V_{(i+1)t/n,it/n}-ct/n)(b+cit/n-Z_{it/n})^{-1}(V_{(i+1)t/n,it/n}-ct/n)\times \\ &\ \ \ \ \ \ \ \ \ \times(b+cit/n-Z_{it/n})^{-1}\\ &\\ &+\sum_{i=0}^{n-1}((b+c(i+1)t/n-Z_{(i+1)t/n})^{-1}-(b+cit/n-Z_{it/n})^{-1})\times \\ &\ \ \ \ \ \ \ \ \ \times(V_{(i+1)t/n,it/n}-ct/n)(b+cit/n-Z_{it/n})^{-1}(U_{(i+1)t/n,it/n})(b+cit/n-Z_{it/n})^{-1}\\ &=:S_n+T_n+T'_n+W_n+W'_n\end{align*}

Using a non-commutative H\"{o}lder's inequality, we deduce from our assumptions~:
\begin{align*}||S_n||_1&\leq \sum_{i=0}^{n-1}||U_{(i+1)t/n,it/n}||_4^2||(b+cit/n-Z_{it/n})^{-1}||^3||(b+c(i+1)t/n-Z_{(i+1)t/n})^{-1}||\times \\ &\times ||U_{(i+1)t/n,it/n}+V_{(i+1)t/n,it/n}-ct/n||_2\\ &\leq C^4K^{1/2}(t/n)^{3/4}n((K)^{1/4}(t/n)^{3/8}+((K)^{1/2}+||c||)t/n)
\\ &||T_n||_2,||T_n'||_2\leq ||c||tC^3((K)^{1/4}(t/n)^{3/8}+((K)^{1/2}+||c||)t/n)
\\ &||W_n||_1,||W_n'||_1\leq t2C^3((K)^{1/2}+||c||)((K)^{1/4}(t/n)^{3/8}+((K)^{1/2}+||c||)t/n)
\end{align*}

Thus, we found (using a Riemann integral since $Z_t$ is continuous in $||.||_2$):\begin{align*}E_B&((b+ct-Z_{t}^k)^{-1})\\&=b^{-1}+\int_{0}^tds\ E_B((b+cs-Z_{s}^k)^{-1}\left(\eta_{kk}(E_{B}((b+cs-Z_{s}^k)^{-1}))-c\right)(b+cs-Z_{s}^k)^{-1}).
\end{align*}
If we can apply this to $c=\eta_{kk}(b^{-1})$, Gronwall's Lemma immediately gives $E_B((b+\eta_{kk}(b^{-1})t-Z_{t}^k)^{-1})=b^{-1}$.

Now, we apply Theorem 2.1 in \cite{HRSp07} so that for any $z$ in the upper half plane, one gets a unique solution $W\in B_{+}$ (the elements of $B$ with strictly positive real parts, i.e. larger than $\epsilon I$) to $-izW+t\eta_{kk}(W)W=1$. Let us call $b=(-iW)^{-1}\in B$ (note that $-iW$ has strictly negative imaginary part and is thus invertible in $B$ via e.g. lemma 3.1 in \cite{HaagTho02} (and a double commutant argument) ). As a consequence, $1+t\eta_{kk}(b^{-1})b^{-1}=zb^{-1}$ or $b+t\eta_{kk}(b^{-1})=z$. Thus $b+\eta_{kk}(b^{-1})s-Z_{s}^k=z-Z_{s}^k-\eta_{kk}(b^{-1})(t-s)=(1+\eta_{kk}(iW)(t-s)(z-Z_{s}^k)^{-1})(z-Z_{s}^k)$. Since  $Z_{s}^k$ is self-adjoint, $||(z-Z_{s}^k)^{-1}||\leq 1/|\Im z|$ and by Theorem 2.1 in \cite{HRSp07}, we also have $||W||\leq 1/|\Im z|$, thus $||\eta_{kk}(iW)(t-s)(z-Z_{s}^k)^{-1}||\leq t/|\Im z|^2||\eta_{kk}||$, so that our invertibility assumption is satisfied using Neumann's lemma for $z$ with sufficiently large imaginary part.

We have thus verified our assumptions to get for such a $z$:
$$E_B((z-Z_{t}^k)^{-1})=b^{-1}.$$

Now everything can be applied for a B-Brownian motion of covariance $\eta_{kk}$, so that the equality (and an analytic continuation to upper half plane) proves $Z_{t}^k$ has the same distribution that such a Brownian motion, thus is bounded. The same can be applied to $Z_{t}^k-Z_{s}^k$ to strengthen our assumption to an analogue with $V=0$, i.e. an appropriate $||.||_4$ boundedness.

We can now apply the argument of \cite{BGC} to conclude.
\end{proof}
\begin{theorem}\label{FDQ}
Let $A$, $\delta$ as in section 2 and $(\widetilde{M},\tau)$ given by Theorem \ref{main} with canonical filtration $\widetilde{M}_s$ (induced by the one on path space). Let us assume given $B\subset N\subset M$ von Neumann sub-algebras

We assume $\delta(B)=0$, $A(B)=0$, $N=W^*(B,X^1_0,...,X^n_0)$, $\delta=\delta_1\oplus\delta_2$ with $\delta_2(N)=0$, and after restriction $\delta_1:B\langle X_1,...,X_n\rangle \rightarrow H(B\langle X^1_0,...,X^n_0\rangle,\eta_X)$ is identical to the (multi-variable obvious generalization of) the free difference quotient with respect to a completely positive map $\eta$ of \cite{ShlyFreeAmalg00}, i.e. $\eta_X=\eta\circ E_B$, $H(B\langle X^1_0,...,X^n_0\rangle,\eta_X)$ the bimodule generated by (i.e. completion of the linear span of) $bS_ib'$ ($S_i$ often written $1\o 1$ in one variable case) with $B\langle X_1,...,X_n\rangle$-valued scalar product $\langle bS_ib',cS_jc'\rangle_{B\langle X\rangle}=b'^*\eta_{X,ij}(b^*c)c'$,$\langle bS_ib',cS_jc'\rangle=\tau(\langle bS_ib',cS_jc'\rangle_{B\langle X\rangle}$) , and $\delta_1=(\partial_1,...,\partial_n)$, $\partial_i$ the unique derivation satisfying $\partial_i(X^j_0)=1_{i=j}S_i$, $\partial_i(b)=0, b\in B$.

Let us also assume $X^i_0\in D(A)\cap D(\Delta)$ so that $\xi^i_0=A(X^i_0)\in L^2(M_0)$ and $\tau((X_0^i)^3A(X_0))=\tau((X_0^i)^3\Delta(X_0))$.

Let us call $S_{t}^{i}=X_{t}^{i}-X_{0}^{i}+\frac{1}{2}\int_{0}^{t}\xi_{s}^{i}ds$ 
then $(S_{t}^{1},...,S_{t}^{n})$ is a B-free Brownian motion of covariance $\eta$ adapted to the filtration $\widetilde{M}_s$.
\end{theorem}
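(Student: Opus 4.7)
The strategy is to apply the L\'evy-type characterization of $B$-free Brownian motion stated just above, to $Z^i_t := S^i_t$. Adaptedness is immediate since $X^i_u = \alpha_u(X^i_0) \in \widetilde{M}_u$ and $\xi^i_u := \alpha_u(\xi^i_0)$ lies in $L^2(\widetilde{M}_u)$ for $u \leq t$. For the martingale condition, the Markov property of the dilation (Theorem \ref{main}) gives $E_{\widetilde{M}_s}(X^i_t) = \phi_{t-s}(X^i_s)$ and $E_{\widetilde{M}_s}(\xi^i_u) = \phi_{u-s}(\xi^i_s)$ for $u \geq s$; combined with the identity $\phi_{t-s}(X^i_s) - X^i_s = -\tfrac{1}{2}\int_0^{t-s}\phi_v(\xi^i_s)\,dv$ (using $\xi^i_s = A(X^i_s)$ and commutation of $A$ with $\phi_v$), this yields $E_{\widetilde{M}_s}(S^i_t) = S^i_s$ by exact cancellation of the drift.

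For the moment condition, write $S^i_t - S^i_s = U_{t,s} + V_{t,s}$ with $U_{t,s} := X^i_t - \phi_{t-s}(X^i_s)$ the centered martingale increment and $V_{t,s} := \phi_{t-s}(X^i_s) - X^i_s + \tfrac{1}{2}\int_s^t \xi^i_u\,du$ the residual drift. The bound $\|V_{t,s}\|_2 \leq (t-s)\|\xi^i_0\|_2$ follows at once from $L^2$-contractivity of $\phi_v$ and stationarity. For $\tau(U_{t,s}^4)$, expand into the six cyclic trace monomials in $Y = X^i_t$ and $Z = \phi_{t-s}(X^i_s) \in \widetilde{M}_s$: the Markov identity $E_{\widetilde{M}_s}(Y^k) = \alpha_s(\phi_{t-s}((X^i_0)^k))$ reduces five of them to polynomial traces in $\phi_{t-s}(X^i_0)$ inside $M_0$, while the mixed term $\tau(YZYZ)$ reduces, by translation invariance of $\tau_\R$, to $\tau(X^i_{t-s}\phi_{t-s}(X^i_0)X^i_{t-s}\phi_{t-s}(X^i_0))$, which is computable via the two-time formula $\Phi_{0,t-s}$ in terms of $f^{(2)}$. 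Taylor-expanding in $h = t-s$ and using $X^i_0 \in D(A)\cap D(\Delta)$ together with the hypothesis $\tau((X^i_0)^3 A(X^i_0)) = \tau((X^i_0)^3 \Delta(X^i_0))$ --- precisely what neutralizes the antisymmetric part of $A$ at order $h$ in $\tau(U_{t,s}^4)$ --- then gives $\tau(U_{t,s}^4) = O(h^{3/2})$.

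For the covariance condition, apply the two-time formula $\Phi_{s,t}(X^k_{[t]} A^{[s]} X^l_{[t]}) = f^{(2)}_{s,t}(X^k,A,X^l)^{[s]} + \phi_{t-s}(X^k)^{[s]} A^{[s]} \phi_{t-s}(X^l)^{[s]}$ with $A\in \widetilde{M}_s$ playing the role of the initial-time element for the step $s \to t$. The defining formula expresses $f^{(2)}_{s,t}(X^k,A,X^l)$ as $\int_s^t \phi_{u-s}(\Gamma(\phi_{t-u}(X^k), \phi^*_{u-s}(A), \phi_{t-u}(X^l)))\,du$, and the free difference quotient identity $\Gamma(X^k,A,X^l) = \eta_{kl}(E_B(A))$ (immediate from $\partial_j(X^i) = \delta_{ij}S_j$ and $\langle S_k, AS_l\rangle_{L^1} = \eta_{kl}(E_B(A))$) then yields leading order $(t-s)\tau(A\eta_{lk}(E_B(B)))$ after the martingale property absorbs the $O((t-s)^2)$ drift corrections coming from the definition of $S^i_t$ and cyclic invariance of $\tau$ is used. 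The main obstacle is the $L^4$ estimate in the second step: the crude bound $\|U_{t,s}\|_4^4 \leq \|U_{t,s}\|^2\|U_{t,s}\|_2^2$ gives only $O(t-s)$, so a direct path-space fourth-moment computation combining stationarity, the two-time carr\'e-du-champ identity and the precise role of the hypothesis on $\tau((X^i_0)^3 A(X^i_0))$ is unavoidable.
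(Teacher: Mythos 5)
Your plan is the same as the paper's: verify the three conditions of the L\'evy-type characterization stated just above. For the martingale condition and the covariance condition your route matches the paper's computations via the two-time formula and the carr\'e-du-champ identity $\Gamma(X_{0}^{k},Y,X_{0}^{l})=\eta_{kl}(E_B(Y))$. The only cosmetic difference is the decomposition: you take $U_{t,s}=X_t-\phi_{t-s}(X_s)$ (the centered increment), whereas the paper takes $U_{t,s}=X_t-X_s$ and $V_{t,s}=\frac{1}{2}\int_s^t\xi_u\,du$; both satisfy the required $V$-bound since $\|\phi_h(X_0)-X_0\|_2=O(h)$, but the paper's choice makes the fourth-moment expansion cleaner, since by time invariance it reduces to $\tau((X_h-X_0)^4)$ with $h=t-s$.

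The genuine gap is the fourth-moment estimate itself: you observe that a path-space computation ``is unavoidable'' but never perform it, and this is precisely where the hypothesis $\tau((X_0^i)^3 A(X_0^i))=\tau((X_0^i)^3\Delta(X_0^i))$ earns its keep, so skipping it leaves condition (2) unproved. Concretely, the paper (taking $s=0$, $X_0=X_0^i$) uses traciality and $\tau(X_t^4)=\tau(X_0^4)$ to write $\tau((X_t-X_0)^4)=2\tau((X_t-X_0)X_0(X_t-X_0)X_0)+4\tau((X_t-X_0)^2X_0^2)+8\tau(X_0^3(X_t-X_0))+4\tau(X_t^3(X_t-X_0))$, then replaces $X_t-X_0$ by $S_t-\frac{1}{2}\int_0^t\xi_s\,ds$ in the first two terms (an $O(t^{3/2})$ error from the $V$-bound) and applies the already-proved covariance condition to get $2t\tau(X_0\eta_{ii}(E_B(X_0)))+4t\tau(\eta_{ii}(1)X_0^2)+O(t^{3/2})$. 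The last two terms are evaluated to linear order as $-4t\tau(X_0^3 AX_0)+2t\tau(A(X_0^3)X_0)+O(t^2)$; the hypothesis identifies this with $-2t\tau(X_0^3\Delta X_0)$, and the explicit evaluation of $\langle\partial_i(X_0^3),\partial_i(X_0)\rangle$ shows this equals $-2t\tau(X_0\eta_{ii}(E_B(X_0)))-4t\tau(\eta_{ii}(1)X_0^2)$. So the linear-in-$t$ contributions cancel exactly, leaving $\tau(U_{t,0}^4)=O(t^{3/2})$. This cancellation is the whole point of the domain and drift hypotheses, and until it is written out your argument does not establish condition (2) of the L\'evy theorem.
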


\begin{proof}
For any $Y\in L^2(M_{0})$ the function $Y_{s}$ 
is easily seen to be continuous in $L^{2}(\tilde{M})$ 
 Since $||Y_{s}||_{2}$ is constant, they are 
 Riemann integrable, and thus $||\int_{0}^{t}Y_{s}ds||_{2}\leq t||Y_{0}||_{2}$. 
 We write $S_t-S_s=U_{t,s}+V_{t,s}$ with $U_{t,s}=X_t-X_s$, $V_{t,u}=\frac{1}{2}\int_{u}^{t}\xi_{s}^{i}ds$, we have thus checked the assumption on $V_{t,s}$ in (2).

To check the first assumption of Paul L\'{e}vy's theorem, we have to prove cancellation of $\tau((S_{t}^{i}-S_{s}^{i})A_{s})=\tau((X_{t}^{i}-X_{s}^{i}+\frac{1}{2}\int_{s}^{t}du\xi_{u}^{i})A_{s})$
(for $A_{s}$ a non-commutative polynomial in $X_{u},u\leq s$)
By definition of our state \begin{align*}\frac{1}{2}\tau(\int_{s}^{t}du\xi_{u}^{i}A_{s})&=\frac{1}{2}\int_{s}^{t}du\tau((\phi_{u-s}(\xi_{0}^{i}))_{s}A_{s})\\ &=-\int_{s}^{t}\frac{d}{du}\tau((\phi_{u-s}(X_0^{i}))_{s}A_{s})\\ &=\tau(X^{i}_{s}A_{s})-\tau(\phi_{t-s}(X_0^{i})_{s}A_{s})
\\ &=\tau(X^{i}_{s}A_{s})-\tau(X^{i}_{t}A_{s})
\end{align*}

\bigskip
We know check, in a similar way, the third assumption, with the first proven, it suffices to show 
$\tau((S_{t}^{k}-S_{s}^{k})A(S_{t}^{l}-S_{s}^{l})B)=(t-s)\tau(\eta_{kl}(E_B(A))B),$ with again $A,B$ non-commutative polynomials as $A$ above.

Again let us compute  (The first computation is really the same as the previous one)

\begin{align*}\frac{1}{4}\tau(\int_{s}^{t}du\ \xi_{u}^{k}A\int_{s}^{t}du\ \xi_{u}^{l}B)&=\frac{1}{4}\int_{s}^{t}du\int_{s}^{t}dv\ 1_{u\leq v}\tau(\xi_{u}^{k}A(\phi_{v-u}(\xi_{0}^{l}))_{u}B)+1_{v\leq u}\tau((\phi_{u-v}(\xi_{0}^{k}))_{v}A\xi_{v}^{l})B)\\&=\frac{1}{2}\int_{s}^{t}du\ \left(\tau(\xi_{u}^{k}A(X_{0}^{l})_{u}B)-\tau(\xi_{u}^{k}A(\phi_{t-u}(X_{0}^{l}))_{u}B)\right)\\ &+\frac{1}{2}\int_{s}^{t}dv\ \left(\tau((X_{0}^{k})_{v}A\xi_{v}^{l})B)-\tau((\phi_{t-v}(X_{0}^{k}))_{v}A\xi_{v}^{l})B)\right)\\
&=\frac{1}{2}\int_{s}^{t}du\ \tau(\xi_{u}^{k}AX_{u}^{l}B)+\tau(X_{u}^{k}A\xi_{u}^{l}B)\\ &-\frac{1}{2}\tau(\int_{s}^{t}du\ \xi_{u}^{k}AX_{t}^{l}B)-\frac{1}{2}\tau(X_{t}^{k}A\int_{s}^{t}du\ \xi_{u}^{l}B)
\end{align*}

\begin{align*}\frac{1}{2}\int_{s}^{t}du\ \tau(\xi_{u}^{k}AX_{u}^{l}B)&=\frac{1}{2}\int_{s}^{t}du\ \tau((\phi_{u-s}(\xi_{0}^{k}))_{s}A(\phi_{u-s}(X_{0}^{l}))_{s}B)\\&+\sum_{i}\int_{s}^{u}dv\ \tau(\phi_v(\Gamma(\phi_{u-v}(\xi_{0}^{k}),\Phi^*_v(A),\phi_{u-v}(X_{0}^{l})))_{s}B)
\end{align*}

where $\Phi^*_v(A)$ is only a formal expression to replace the term in the definition.

But 

\begin{align*}\frac{1}{2}\int_{s}^{t}du&\ \tau((\phi_{u-s}(\xi_{0}^{k}))_{s}A(\phi_{u-s}(X_{0}^{l}))_{s}B)+\tau((\phi_{u-s}(X_{0}^{k}))_{s}A(\phi_{u-s}(\xi_{0}^{l}))_{s}B)\\ &=\int_{s}^{t}du\ \frac{d}{du}\tau((\phi_{u-s}(X_{0}^{k}))_{s}A(\phi_{u-s}(X_{0}^{l}))_{s}B)\\ &=\tau(X_{s}^{k}AX_{s}^{l}B)-\tau((\phi_{t-s}(X_{0}^{k}))_{s}A(\phi_{t-s}(X_{0}^{l}))_{s}B).
\end{align*}

and likewise, \begin{align*}\frac{1}{2}&\int_{s}^{t}du\ \int_{s}^{u}dv\ \tau(\phi_v(\Gamma(\phi_{u-v}(\xi_{0}^{k}),\Phi^*_v(A),\phi_{u-v}(X_{0}^{l})))_{s}B)+\tau(\phi_v(\Gamma(\phi_{u-v}(X_{0}^{k}),\Phi^*_v(A),\phi_{u-v}(\xi_{0}^{l})))_{s}B)\\&=-\int_{s}^{t}dv\ \tau(\phi_v(\Gamma(\phi_{t-v}(X_{0}^{k}),\Phi^*_v(A),\phi_{t-v}(X_{0}^{l})))_{s}B)+\int_{s}^{t}dv\ \tau(\phi_v(\Gamma(X_{0}^{k},\Phi^*_v(A),X_{0}^{l}))_{s}B)\\&=\int_{s}^{t}dv\ \tau(\phi_v(\Gamma(\phi_{t-v}(X_{0}^{k}),\Phi^*_v(A),\phi_{t-v}(X_{0}^{l})))_{s}B)+\int_{s}^{t}dv\ \tau((\phi_{v}(\eta_{kl}(E_B(\Phi^*_v(A)))_s B)
\\&=\int_{s}^{t}dv\ \tau(\phi_v(\Gamma(\phi_{t-v}(X_{0}^{k}),\Phi^*_v(A),\phi_{t-v}(X_{0}^{l})))_{s}B)+(t-s)\tau(\eta_{kl}(E_B(A)) B)
\end{align*}

We have used that $\Gamma(X,Y,Z)=\sum_{i,j}\langle\partial_i(X^*),Y\partial_j(Z)\rangle_{B\langle X\rangle})+\Gamma_2(X,Y,Z) (X,Z\in N)$,
and for $X_{0}^{k}$ since $\delta_2$ vanishes on them and with the values of free difference quotient we also have
$\Gamma(X_{0}^{k},Y,X_{0}^{l})=\eta_{kl}(E_B(Y))$. Finally, we also used that by symmetry $\tau((b)_0\Phi^*_v(A))=\tau((\phi_v(b))_sA)$ and for $b\in B$ since $A(b)=0$, $\delta(b)=0$,$\phi_v(b)=b$ this equals $\tau(b_0A)$ so that $E_B(\Phi^*_v(A))=E_B(A)$.


If we sum up, we got~:
\begin{align*}\tau((X_{t}^{k}+\frac{1}{2}\int_{s}^{t}du\ \xi_{u}^{k})A(X_{t}^{l}+\frac{1}{2}\int_{s}^{t}du\ \xi_{u}^{l})B)
&=\tau(X_{s}^{k}AX_{s}^{l}B)+(t-s)\tau(\eta_{kl}(E_B(A)) B)
\end{align*}

The result we wanted to prove ($\tau((S_{t}^{k}-S_{s}^{k})A(S_{t}^{l}-S_{s}^{l})B)=(t-s)\tau(\eta_{kl}(E_B(A))B)$) follows from this and the already checked $\tau((S_{t}^{k}-S_{s}^{k})AX_{s}^{l}B)=0$ (and the symmetric one, of course).
\bigskip

It only remains to prove the second assumption of Paul L\'{e}vy's Theorem (by invariance of time we only consider $s=0$, $X_0=X_0^i$). 
We will prove this without care about the constant $K$, recall we have to bound $\tau((X_{t}-X_{0})^{4})$. First note that using $\phi_t$ preserve $\tau$, we have $\tau((X_{t}-X_{0})^{2})=2\tau(X_{0}(X_{0}-\phi_{t}(X_{0}))\leq ||\xi_{0}||||X_{0}||t\leq Ct$. 
 
But  now, using traciality and $\tau(X_t^4)=\tau(X_0^4)$, we get $\tau((X_{t}-X_{0})^{4})=2 \tau((X_{t}-X_{0})X_{0}(X_{t}-X_{0})X_{0})+4\tau((X_{t}-X_{0})^{2}X_{0}^{2})+ 8\tau(X_{0}^{3}(X_{t}-X_{0}))+4\tau(X_{t}^{3}(X_{t}-X_{0}))$

For the two first terms up to a term of order $t^{3/2}$ (we could get a term in $t^2$ with explicit computations, but this bound is automatic by what we already noticed and sufficient) they are equal as above to 
$2 \tau(S_{t}X_{0}S_{t}X_{0})+4\tau(S_{t}^{2}X_{0}^{2})=2t\tau(X_{0}\eta_{ii}(X_0))+4t\tau(\eta_{ii}(1)X_{0}^{2}).$

Finally compute first
$\tau(X_{0}^{3}(X_{t}-X_{0}))=-\frac{1}{2}\int_{0}^{t}ds\tau(\phi_{s}^*(X_{0}^{3})\xi_{0})=-\frac{t}{2}\tau(X_{0}^{3}A(X_{0}))-\frac{1}{2}\int_{0}^{t}ds\tau((\phi_{s}^*-id)(X_{0}^{3})\xi_{0}).$
Likewise $\tau(X_{t}^{3}(X_{t}-X_{0}))=\tau((id-\phi_t)(X_{0}^{3})X_{0})=\frac{1}{2}\int_{0}^{t}ds\tau(\phi_{s}(A(X_{0}^{3}))X_{0})=\frac{t}{2}\tau(A(X_{0}^{3})X_{0})+\frac{1}{2}\int_{0}^{t}ds\tau(A(\phi_{s}-id)((X_{0}^{3}))X_{0}).$

But as far as the terms linear in $t$ are concerned in those two expressions, since we have among our assumptions $\tau(X_{0}^{3}(A-\Delta)(X_{0}))=0$, they sum up to $-4t\tau(X_{0}^{3}A(X_{0}))+2t\tau(A(X_{0}^{3})X_{0})=-2t\tau(X_{0}^{3}\Delta(X_{0}))=-2t\tau(\partial_i(X_0^3)\partial_i(X_0))=-2t\tau(\eta_{ii}(X_0)X_0)-4t\tau(\eta_{ii}(1)X_0^2)$ so that they cancel exactly the various previously found linear terms.
The second terms are bounded by $||A^*(X_{0}^{3})||_2||\xi_{0}||_2t^{2}$ and $||A(X_{0}^{3})||_2||A^*(X_{0})||_2t^{2}$ respectively, we are done.
\end{proof}

\subsection{Infinitesimal estimates along trajectories of free Brownian diffusions with polynomial drift}
Even though we will be mainly interested in an infinitesimal estimate along the Orstein Uhlenbeck process, which can be obtained using a change of variable expressing explicitly the law at time $t$ of this process, we will give here the proof of a result of independent interest, for more general solutions of free SDEs with polynomial drift. Thus if $V_i(s)$ is a ($C^1$, i.e. with coefficients of monomials $C^{1}$ in $s$, see later the second main theorem for details) family of non-commutative polynomials over a sub-algebra $B$, we will assume given on [0,T] a process satisfying~:
$$X_t^i=X_0^i+\int_0^t V_i(s)(X_s^1,...,X_s^n)ds +S_t^i,$$
where $S_t^i$ is a B-free Brownian motion (say of covariance $H:B\to M_n(B)$ $H(b)=Diag(\eta(b))$). Such solutions can be obtained via Lipschitz arguments as in \cite{BS}, but we only assume here we are given a solution, whatever the way we get it.

We state first a more general result and then check that the bounded conjugate variable assumption used here is satisfied in our polynomial drift case. Let us recall the definition of Wasserstein distance relative to a sub-algebra $B$ (with a given state $\theta$ on it), as defined in \cite{BV01} ($\simeq$ denotes equality in distribution):

\begin{align*}d_W((X_1,...,X_n),(Y_1,...,Y_n):B)&=\inf\{||(X_i'-Y_i')_{1\leq i\leq n}||_2\ |\\ & \ (X_1',...,X_n',Y_1',...,Y_n')\subset (M_3,\tau_3),\ B\subset M_3\  \tau_3|_B=\theta,\\ & (X_1',...,X_n',B)\simeq (X_1,...,X_n,B),(Y_1',...,Y_n',B)\simeq (Y_1,...,Y_n,B) \ \}\end{align*}

\begin{theorem}\label{infEst}
Let assume given a process satisfying $$X_t^i=X_0^i-\frac{1}{2}\int_0^t \Xi_s^i ds +S_t^i,$$ with $S_t^i$ is a B-free Brownian motion of covariance $\eta$,  $\Xi_s^i\in W^{*}(B,X_s^1,...,X_s^n)$ (such that the integral make sense). Assume moreover that $X_s^1,...,X_s^n$ has $L^2$ conjugate variable with respect to $B$ and $\eta$ (as in \cite{SAVal99}, or with the notations recalled in Theorem \ref{FDQ} $\partial^*S_i=\xi_s^i\in L^2(M)$), and $\Xi_t^i$ is continuous to the right in $||.||_2$-norm at $s$.
Then for $t>s$ close to $s$~: $$d_W((X_t^1,...,X_t^n),(X_s^1,...,X_s^n):B)=\frac{t-s}{2}||\Xi_s-\xi_s||_2+o(t-s).$$
\end{theorem}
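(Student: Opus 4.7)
The plan is to prove the claimed asymptotic by separately establishing the matching upper and lower bounds, and I expect the upper bound to be essentially a coupling argument while the lower bound is the genuine obstacle.

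For the upper bound, I would use Theorem \ref{main} and Theorem \ref{FDQ} to build a stationary dilation $(\tilde X_u)_{u\geq 0}$ of the semigroup generated by the divergence form operator $\partial^*\partial$ associated to the conjugate variables $\xi_s$ at the point $X_s$. By Theorem \ref{FDQ} this dilation satisfies a free SDE
\[
\tilde X_u^i \;=\; X_s^i \;-\;\tfrac12\int_0^u \alpha_v(\xi_s^i)\,dv \;+\; \tilde S_u^i,
\]
for some adapted $B$-free Brownian motion $\tilde S$ of covariance $\eta$. Since a $B$-free Brownian motion increment is uniquely determined in law by its covariance, I can couple this dilation to $(X_{s+u})_{u\geq 0}$ inside an enlarged tracial space by identifying $\tilde S_u$ with $S_{s+u}-S_s$ while keeping the joint distribution with $B$ and the initial condition $X_s$ matched on both sides. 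Subtracting the two SDEs gives
\[
X_{s+h}^i - \tilde X_h^i \;=\; -\tfrac12\int_0^h \bigl(\Xi_{s+v}^i - \alpha_v(\xi_s^i)\bigr)\,dv.
\]
Taking $\|\cdot\|_2$-norms, using right-continuity of $\Xi_{s+v}^i$ at $v=0$, and the fact that $\alpha_v$ acts by $\|\cdot\|_2$-isometries (so $\alpha_v(\xi_s)\to\xi_s$ in $L^2$ by strong continuity of $\phi_v$ on $L^2$ and the identity $E_{M_0}\alpha_v=\phi_v$ applied to a standard density argument on $D(\Delta^{1/2})$), the integrand converges in $L^2$ to $\Xi_s^i-\xi_s^i$, whence the $L^2$ difference is $\tfrac{h}{2}\|\Xi_s-\xi_s\|_2+o(h)$. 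Since $\tilde X_h$ has the same joint $*$-distribution with $B$ as $X_s$ by stationarity, this is an admissible coupling and the upper bound follows.

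For the lower bound, which is the hard part, the plan is to use a free Cramér--Rao/Stein type identity pairing any coupling against the conjugate variable. Given a coupling $(Y,Z)$ in $(M_3,\tau_3)$ realizing $Y\sim_B X_{s+h}$ and $Z\sim_B X_s$, one computes $\tau_3\bigl(\xi_s^i\,(Y_i-Z_i)\bigr)$. Because $Z$ realizes $X_s$ with its $B$-joint law, Voiculescu's conjugate variable characterization of $\xi_s^i$ together with an integration by parts expresses this pairing purely in terms of the $B$-joint law of $Y$, and hence purely in terms of the $B$-joint law of $X_{s+h}$. One then expands this quantity using the SDE satisfied by $X_{s+h}$: writing $X_{s+h}=X_s-\tfrac12\int_s^{s+h}\Xi_v\,dv+(S_{s+h}-S_s)$, the free Brownian increment contribution cancels against $\xi_s$ by freeness with $M_s$ to leading order, and one is left with $-\tfrac h2\,\tau(\xi_s^i(\Xi_s^i-\xi_s^i))+o(h)$ after using $\tau(\xi_s^i\cdot \xi_s^i)$ telescoping through the free Fisher structure. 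Summing over $i$ and applying Cauchy--Schwarz gives
\[
\|Y-Z\|_2\cdot\|\xi_s\|_2 \;\geq\; \Bigl|\sum_i\tau_3(\xi_s^i(Y_i-Z_i))\Bigr| \;\geq\; \tfrac{h}{2}\,\|\Xi_s-\xi_s\|_2\cdot\|\xi_s\|_2 + o(h),
\]
after a symmetric argument using $\xi_{s+h}$ on the other side to eliminate the $\|\xi_s\|_2$ factor (equivalently, projecting the coupling difference onto the normalized direction $(\Xi_s-\xi_s)/\|\Xi_s-\xi_s\|_2$ via an appropriate testing against a suitable linear combination of $\xi_s$ and $\xi_{s+h}$). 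Taking infimum over couplings yields the matching lower bound.

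The main obstacle is making the free Stein/integration-by-parts pairing rigorous when only one side of the coupling carries a canonical conjugate variable: one must carefully choose the test direction so that the pairing depends only on the $B$-joint distributions (which are fixed by the admissibility of the coupling), and control the error arising from the fact that $\xi_{s+h}$ differs from $\xi_s$ by an $o(1)$ quantity in $L^2$ (itself a consequence of continuity of $t\mapsto\xi_t$ extracted from the SDE and the $L^2$-continuity hypothesis on $\Xi_t$). Once this pairing is set up, both bounds combine to give the stated asymptotic $d_W=\tfrac{t-s}{2}\|\Xi_s-\xi_s\|_2+o(t-s)$.
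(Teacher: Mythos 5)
Your upper-bound argument is essentially the paper's proof. The paper also realizes the stationary process of Theorem \ref{FDQ} started at $X_s$ inside an amalgamated free product $\M=\widetilde{M}\star_{M_S}\hat{M}$ over the common Brownian filtration, subtracts the two SDEs, and controls $\frac12\int_0^{t-s}\|\Phi_v(\xi_s^i)-\Xi_{s+v}^i\|_2\,dv$ using right-continuity of $\Xi$ at $s$ and $\|\Phi_v(\xi_s^i)-\xi_s^i\|_2^2\leq 2\|\xi_s^i\|_2\|(\phi_v-\mathrm{id})\xi_s^i\|_2\to 0$. So that half of your proposal matches.

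Your lower-bound argument, however, has a genuine gap, and it is worth noting that the paper's own proof establishes only the inequality $\leq$ (which is all that is used later, in the proof of Theorem \ref{Tal}); the ``$=$'' in the statement is not supported by the paper's argument either. The claimed free Stein pairing does not work as you describe: for an arbitrary coupling $(Y,Z)\subset(M_3,\tau_3)$, write $\xi_Z^i\in W^*(B,Z)$ for the image of $\xi_s^i$ under the $B$-preserving law isomorphism $W^*(B,X_s)\simeq W^*(B,Z)$. Voiculescu's conjugate-variable integration by parts gives $\tau_3(\xi_Z^i\,P)=\langle\Xi_i,\partial_i P\rangle$ only for $P$ in the algebra $W^*(B,Z)$; it says nothing about $\tau_3(\xi_Z^i\,Y_i)$, since $Y_i\in W^*(B,Y)$ and the relative position of $W^*(B,Y)$ and $W^*(B,Z)$ inside $M_3$ is precisely the data of the coupling, not of the marginal $B$-joint laws. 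So the claim that $\tau_3(\xi_s^i(Y_i-Z_i))$ depends only on the $B$-joint law of $X_{s+h}$ is false. Similarly, ``expanding this quantity using the SDE satisfied by $X_{s+h}$'' is not available inside an arbitrary admissible coupling: the definition of $d_W$ allows any $(Y,Z)$ realizing the two $B$-joint laws, with no requirement that $Y$ stand in the dynamical relation $Y=Z-\frac{h}{2}\int\Xi+(\text{Brownian increment free from }Z)$ to $Z$. Consequently the inequality $\bigl|\sum_i\tau_3(\xi_s^i(Y_i-Z_i))\bigr|\geq\tfrac{h}{2}\|\Xi_s-\xi_s\|_2\|\xi_s\|_2+o(h)$ has no justification, and the symmetrization trick using $\xi_{s+h}$ does not rescue it, because the same cross-term problem reappears. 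A lower bound on the free Wasserstein distance of the asserted form would require a genuine Benamou--Brenier/Otto-calculus style argument which is not available in this generality; neither you nor the paper supplies one.
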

\begin{proof} We fix $s$.
We apply Theorem \ref{FDQ} to the case $N=M=W^{*}(B,X_s^1,...,X_s^n)$ $\delta$ the associated free difference quotient as required by the Theorem, $A=\Delta$. Since $\xi_s^i=\Delta X_s^i\in L^2(M)$ the assumptions of the theorem are satisfied. We are thus given a B-free Brownian motion $S_t^i$ satisfying, if we denote by $\Phi$ evolution according to this stationary SDE~: $\Phi_u(X_s^i)=X_s^i-\frac{1}{2}\int_0^udv\Phi_v(\xi_s^i)+S_u^i.$

Thus we have in $\widetilde{M}$ a sub-algebra $M_S=W^{*}(B,X_s^1,...,X_s^n,\{S_t^i\})$. We also have $\hat{M}$ containing the solution $X_t,t\geq s$ of our SDE, and a copy of $M_S$ in it. We can thus consider the amalgamated free product $\M=\widetilde{M}\star_{M_S}\hat{M}$. 

Now we can compute in $\M$~: \begin{align*}||\Phi_{t-s}(X_s^i)-X_t^i||_2&\leq \frac{1}{2}\int_0^{t-s}dv||\Phi_v(\xi_s^i)-\Xi_{s+v}^i||_2\\ &\leq \frac{t-s}{2}||\xi_s^i-\Xi_{s}^i||_2+\frac{1}{2}\int_0^{t-s}dv||\Phi_v(\xi_s^i)-\xi_s^i||_2+||\Xi_{s}^i-\Xi_{s+v}^i||_2 \\ &\leq \frac{t-s}{2}||\xi_s^i-\Xi_{s}^i||_2+o(t-s)\end{align*}
Indeed, with our assumptions it suffices to note $||\Phi_v(\xi_s^i)-\xi_s^i||_2^2\leq2||\xi_s^i||_2||(\phi_v-id)(\xi_s^i)||_2\to 0$ with $v$. This concludes.
\end{proof}
The following result is an adaptation in free probability of (a special case of) lemma 4.2 in \cite{RoTh02}, except that we have to use Ito Formula for the proof instead of Girsanov Theorem, not (yet) available in free probability.

\begin{theorem}\label{boundConj}
Let assume given on [0,T] a process, bounded by $R\geq 1$ in M on this interval, satisfying~:
$$X_t^i=X_0^i+\int_0^t V_i(s)(X_s^1,...,X_s^n)ds +S_t^i,$$
where $S_t^i$ is a B-free Brownian motion (say of covariance $H:B\to M_n(B)$ $H(b)=Diag(\eta(b))$, $\eta:B\to B$ $\tau$-symmetric $||\eta(1)||\leq 1$).
Moreover $V_i(s)$ is assumed to be a B-valued non-commutative polynomial (of degree bounded on $[0,T]$ by $p$), so that considering $X_{i_1}...X_{i_n}$ we can speak of the coefficient of this variable in $B^{\o_{alg}n+1}$. We assume that those coefficients are $C^{1}$ with value in the corresponding projective tensor product (We say  $V_i(s)$ is $C^1$). Also assume $\partial_iV_j(s)=\sigma\partial_jV_i(s)$ ($\sigma$ the flip, $\partial_i$ ordinary free difference quotient relative to $B$, this is for instance the case when $V_i(s)=D_iV(s)$ with $D_i=m\sigma\partial_i$ the cyclic derivative).

Then $X_t^1,...,X_t^n$ have bounded conjugate variables with respect to $B$, $\eta$, and $$\xi_s^i=\frac{1}{s}E_{W^*(B,X_s^1,...,X_s^n)}\left(X_s^i-X_0^i+\int_0^sdt \ tF_{V_i}(t,X_t^1,...,X_t^n)\right)-V_i(s)(X_s^1,...,X_s^n),$$

where for a $C^1$ B-polynomial $V$ ($X=(X^1,...,X^n)$): \begin{align*}F_V(t,X)=\frac{\partial}{\partial t}V(t,X)+\sum_j(\partial_jV)(t,X)\#V_j(t,X)+ m(1\o \eta E_B\o 1)(\partial_i\o 1\partial_i)(V)(t,X).\end{align*}
\end{theorem}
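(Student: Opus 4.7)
The plan is to mimic, in the free setting, the approach of \cite{RoTh02}, replacing the classical Girsanov change of measure (unavailable in free probability) by direct use of the free Ito formula of \cite{BS98}. The heart of the argument is to identify an explicit bounded element of $M$ that satisfies the defining duality of the conjugate variable $\xi_s^i$, namely $\tau(\xi_s^i\,P(X_s)) = (\tau\otimes\tau)\bigl((\eta\circ E_B \otimes 1)\partial_i P\bigr)(X_s)$ for every $B$-polynomial $P$.

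First I would apply the free Ito formula to $V_i(t,X_t)$. Since $V_i(t)$ is a $C^1$-family of $B$-polynomials with bounded-degree coefficients and $X_t$ is bounded in $M$, one gets
\begin{equation*}
d V_i(t,X_t) = F_{V_i}(t,X_t)\,dt + \sum_j (\partial_j V_i)(t,X_t)\#\,dS_t^j,
\end{equation*}
where $F_{V_i}$ gathers the time-derivative $\partial_t V_i$, the first-order term $\sum_j(\partial_jV_i)\#V_j$ coming from substituting the drift $dX_t^j = V_j\,dt+dS_t^j$, and the quadratic-variation correction $\sum_j m(1\otimes\eta E_B\otimes 1)(\partial_j\otimes 1\,\partial_j V_i)$ produced by the covariance $\eta$ of the $B$-free Brownian motion (this is exactly $F_{V_i}$ up to possible reconciliation of a factor $\tfrac{1}{2}$). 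Then I would compute $d\bigl(t V_i(t,X_t)\bigr) = V_i\,dt + t\,dV_i$, integrate from $0$ to $s$, and substitute the SDE to eliminate $\int_0^s V_i(t,X_t)\,dt = X_s^i - X_0^i - S_s^i$. This gives the key identity
\begin{equation*}
\frac{1}{s}\Bigl(X_s^i-X_0^i+\int_0^s t\,F_{V_i}(t,X_t)\,dt\Bigr) - V_i(s,X_s) = \frac{1}{s}\Bigl(S_s^i - \int_0^s t\sum_j(\partial_j V_i)(t,X_t)\#\,dS_t^j\Bigr),
\end{equation*}
call the right-hand side $\Xi_s^i/s$. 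Conditioning by $E_{W^*(B,X_s)}$ yields a candidate $\xi_s^i$ matching the formula in the statement. Boundedness in $M$ is automatic from the left-hand expression: $F_{V_i}(t,X_t)$ is a polynomial in a bounded process (hence bounded uniformly on $[0,T]$), $X_s^i-X_0^i$ and $V_i(s,X_s)$ are bounded, and $E_{W^*(B,X_s)}$ is a conditional expectation.

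The remaining step, and the real obstacle, is to verify the conjugate-variable duality, i.e.\ to show $\tau(\Xi_s^i\,P(X_s)) = s\cdot(\tau\otimes\tau)\bigl((\eta E_B\otimes 1)\partial_i P\bigr)(X_s)$ for every $B$-polynomial $P$. I would apply Ito to $P(X_t)$ to split $P(X_s) = P^{BV}(X_s) + \sum_k\int_0^s (\partial_kP)(X_t)\#dS_t^k$. Pairing $S_s^i$ with the martingale part via the free Ito isometry (for $B$-free Brownian motion of covariance $\eta$, only the $k=i$ term survives by orthogonality of the $B$-semicircular coordinates) produces $\int_0^s (\tau\otimes\tau)\bigl((\eta E_B\otimes 1)\partial_i P\bigr)(X_t)\,dt$. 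The pairing of the stochastic integral $\int_0^s t(\partial_jV_i)\#dS_t^j$ with the martingale part of $P(X_s)$ gives, again by Ito isometry, a $\sum_j\int_0^s t\,(\tau\otimes\tau)$--type term pairing $\partial_j V_i$ against $\partial_j P$. Here the symmetry hypothesis $\partial_iV_j=\sigma\partial_jV_i$ is essential: it rewrites that pairing as a pairing of $\partial_i V_j$ against $\partial_j P$, and combined with the identity $\tfrac{d}{dt}(\tau\otimes\tau)\bigl((\eta E_B\otimes 1)\partial_iP\bigr)(X_t)$ expanded via Ito on $\partial_i P(X_t)$, converts the integrand into a total $t$-derivative modulo cross contributions with the bounded-variation part of $P(X_s)$. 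The remaining cross contributions---which exist because $\tau(S_s^i\cdot A_t) = \tau(S_t^i\cdot A_t)$ is nonzero when $A_t\in M_t$---are handled by the same symmetry, producing cancellations that leave only the boundary term at $t=s$ with the required factor $s$. Once this careful bookkeeping is done, dividing by $s$ and conditioning on $W^*(B,X_s)$ gives the conjugate-variable relation.

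The bulk of the work is thus the algebraic rearrangement in the last step; the free Ito formula, the Ito isometry for $B$-free Brownian motion, and the boundedness of all the quantities appearing are ingredients available in the literature. The symmetry $\partial_iV_j = \sigma\partial_jV_i$ (equivalent to $V_j$ being a cyclic derivative of a common potential) is precisely what closes the identity and is expected to be the decisive ingredient in identifying the stochastic integral on the right-hand side of the key identity as (the dual of) a gradient.
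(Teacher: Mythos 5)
Your preliminary reduction is correct and is a useful variant of the paper's setup: you first apply free Ito to $tV_i(t,X_t)$, use the SDE to eliminate $\int_0^s V_i\,dt$, and arrive at the explicit stochastic-integral representation
\[
\Xi_s^i \;:=\; X_s^i-X_0^i+\int_0^s t\,F_{V_i}(t,X_t)\,dt - sV_i(s,X_s)\;=\; S_s^i - \sum_j\int_0^s t\,(\partial_jV_i)(t,X_t)\#\,dS_t^j,
\]
from which boundedness of the candidate $\xi_s^i$ is indeed immediate once the duality is proved. The paper does not write this stochastic integral explicitly (it works instead with a Malliavin-type weighted difference quotient $\delta_s$ satisfying $\delta_sP(X_s)=s\,\partial P(X_s)$), but the two formulations are equivalent and yours is arguably cleaner.

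The genuine gap is in the verification that $\tau(\Xi_s^iP(X_s))=s\,(\tau\otimes\tau)\bigl((\eta E_B\otimes 1)\partial_iP\bigr)(X_s)$. You split $P(X_s)$ via Ito into a bounded-variation part and a martingale part; Ito isometry handles the pairing of $\Xi_s^i$ with the martingale part as you describe, and the symmetry $\partial_iV_j=\sigma\partial_jV_i$ correctly kills the $\partial V$-vs.-$\partial P$ term. But the pairing of $\Xi_s^i$ (a martingale) with the bounded-variation part of $P(X_s)$ produces, by the martingale property, $\int_0^s\tau(\Xi_t^i\beta_t)\,dt$ where $\beta_t$ is the drift-plus-quadratic-variation term of $P(X_t)$; this term does \emph{not} cancel and is exactly the same type of quantity you are trying to show vanishes, only with $P$ replaced by $\beta_t$, a polynomial of \emph{strictly higher degree} than $P$. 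Your proposal asserts that these cross contributions are "handled by the same symmetry, producing cancellations that leave only the boundary term at $t=s$", but there is no algebraic cancellation: the identity you obtain is a genuine recursion in polynomial degree. Iterating it pushes the degree up by $p$ at each step, so the argument cannot terminate by a finite rearrangement. The paper closes this with an analytic bootstrap: it sets up bounds $A_l(t,k)$ on the discrepancy for monomials of degree $\le kp$, proves $A_l(t,k)\le A_{l+1}(t,k)$-type estimates with a factorial gain in $l$, shows $A_l(t,k)\to 0$ as $l\to\infty$ on intervals of length less than a constant $1/(2DpR^p)$ independent of $k$, and then covers $[0,T]$ by finitely many such intervals. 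This Gronwall-type degree-escalation argument is indispensable and is the step your proposal is missing; it is analytic, not the "algebraic rearrangement" or "bookkeeping" you anticipate.
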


\begin{proof}
We have to prove that $\tau(\langle S_i,\partial P(X_t^1,...,X_t^n)\rangle_{B\langle X_t \rangle})=\tau(\xi_t^i P(X_t^1,...,X_t^n))$ for a B-non-commutative polynomial $P$, $\partial$ is the $B-\eta$-free difference quotient, recall $S_i$ is the formal variable in the bimodule where $X_t^i$ is sent. Let us write $\delta_s$ the following (Malliavin) Derivation operator defined on B-non-commutative polynomials in $X_u^i$'s (as usual one can assume them algebraically free without loss of generality).
$$\delta_s(P(X_{s_1}^{i_1},...,X_{s_n}^{i_n}))=\sum_{j}(\partial_{(j)}(P))(X_{s_1}^{i_1},...,X_{s_n}^{i_n}) (s\wedge s_j),$$
where $\partial_{(j)}$ is the $B-\eta$-free difference quotient in the j-th variable for $P$ (sending $X_{s_j}^{i_j}$ to $S_{i_j}$). Obviously, $\delta_t P(X_t^1,...,X_t^n)=t\partial P(X_t^1,...,X_t^n)$ so that it suffices to show~:
$$\tau(\langle S_i,\delta_s P(X_s^1,...,X_s^n)\rangle_{B\langle X_s \rangle})-\tau(\Xi_s^iP_s)=0,$$
for $\Xi_s^i=X_s^i-X_0^i-\int_0^sdt\ t F_{V_i}(t,X_t^1,...,X_t^n)-sV_i(s)(X_s^1,...,X_s^n),$ and any non-commutative polynomial $P_s=P(X_s^1,...,X_s^n)$. 
We will first prove using Ito formula a differential equation for the above differences.

Applying Ito formula, one gets ($\partial_{j}$ the ordinary difference quotient ):
\begin{align*}P_t&=P(X_t^1,...,X_t^n)=P(X_0^1,...,X_0^n)+\\ &\sum_i\int_0^tds \ \partial_i(P)(X_s^1,...,X_s^n)\#V_i(s,X_s)+\sum_j m\circ 1\o \eta E_B\o 1(\partial_i\o1\partial_i)(P)(X_s^1,...,X_s^n)\\ &+\int_0^t \partial(P)(X_s^1,...,X_s^n)\#dS_s.\end{align*}
Let us write $\beta_s=\sum_i\partial_i(P)(X_s^1,...,X_s^n)\#V_i(s,X_s)+\sum_j m\circ 1\o \eta E_B\o 1(\partial_i\o1\partial_i)(P)(X_s^1,...,X_s^n)$.

Thus, let us compute likewise~: $\tau(P_t(X_t^i-X_0^i))=\int_0^tds \tau(P_s V_i(s,X_s)+\beta_s(X_s^i-X_0^i)+\langle S_i, \partial(P)(X_s^1,...,X_s^n)\rangle_{B\langle X\rangle}).$

\begin{align*}\tau(P_t tV_i(t,X_t))&=\int_0^tds\  \tau(P_s V_i(s,X_s)+P_s s F_{V_i}(s,X_s^1,...,X_s^n)+\beta_s sV_i(s,X_s))\\ & +\int_0^tds\tau(\langle \partial(P^*)(X_s^1,...,X_s^n),\partial(sV_i(s,X_s) )\rangle_{B\langle X\rangle}).\end{align*}

Thus $$\tau(P_t\Xi_t^i)=\int_0^tds\ \tau(\beta_s\Xi_s^i)+\tau(\langle S_i,\partial(P)(X_s^1,...,X_s^n)\rangle_{B\langle X\rangle}-\langle \partial(P^*)(X_s^1,...,X_s^n),\partial(sV_i(s,X_s) )\rangle_{B\langle X\rangle}).$$

We have to compare this with \begin{align*}\tau&(\langle S_i,\delta_t P(X_t^1,...,X_t^n)\rangle_{B\langle X_s \rangle})=\int_0^t ds\tau(\langle S_i,\partial P(X_s^1,...,X_s^n)\rangle_{B\langle X_s \rangle})+\int_0^tds\tau(\langle S_i,\delta_s \beta_s\rangle_{B\langle X_s \rangle})\\ &-\sum_j\int_0^tds \tau(\langle S_i,\partial_j(P)(X_s^1,...,X_s^n)\#\delta_s^i V_j(s,X_s)\rangle_{B\langle X_s \rangle}-\langle S_j,\partial_j(P)(X_s^1,...,X_s^n)\#\sigma\delta_s^j V_i(s,X_s)\rangle_{B\langle X_s \rangle})\\ &-\int_0^tds \tau(\langle \partial(P^*)(X_s^1,...,X_s^n),\delta_s V_i(s,X_s)\rangle_{B\langle X_s \rangle})\end{align*}
We can note that the second line vanish by our assumption (and the diagonal form of the covariance), so that summing up we have obtained our ``differential equation"~:$$\tau(P_t\Xi_t^i)-\tau(\langle S_i,\delta_t P(X_t^1,...,X_t^n)\rangle_{B\langle X_s \rangle})=\int_0^tds\ \tau(\beta_s\Xi_s^i)-\tau(\langle S_i,\delta_s \beta_s\rangle_{B\langle X_s \rangle}).$$

Let us call $M_n:=n\sup_{s\in[0,T]}\sum_i ||b_i(s)||=Cn$ where $b_i(s)$ are the coefficients of all $V_j(s)$, as explained earlier the norm is the projective norm making those coefficients $C^1$ in $s$. Let $p$ be the maximum degree of $V_j(s)$.

 Let $\tilde{M}_n:=M_n+2n(\frac{R^p}{R^p-1})^2=Dn$. Finally, let $\theta$ a time such that for all monomial $P$ $\tau(P_t\Xi_t^i)-\tau(\langle S_i,\delta_t P(X_t^1,...,X_t^n)\rangle_{B\langle X_s \rangle})=0.$
 
 Let us show quickly that for $P$ monomial of degree less than $n=kp$ (with coefficient in projective norm less than 1 (for $t\geq\theta$ since by definition the left hand side is 0 before)~: $$\tau(P_t\Xi_t^i)-\tau(\langle S_i,\delta_t P(X_t^1,...,X_t^n)\rangle_{B\langle X_s \rangle})\leq \frac{(t-\theta)^l}{l!}R^{(k+l)p}(C+Tp(k+l))\prod_{i=0}^{l-1}\tilde{M}_{(k+i)p}=:A_l(t,k),$$
 
 where $C=sup_{[0,T]}||\Xi_t^i||<\infty$. We prove this by induction on $l$, initialization is obvious by boundedness of $X_t$ by $R\geq 1$.
 
 To prove induction step, note that $\beta_s(P)$ is a sum of (less than $n$ times the number of coefficients of $V_i$) monomials of degree less than $(k+1)p$, plus at most $np$ monomials of degree less than $kp$, plus again at most $np$ monomials of degree less than $(k-1)p$ etc up to degree $0$. Since this list of coefficients comes from second derivatives of $P$, those of degree between $(k-1)p$ and $kp$ has coefficients bounded by $1, R, ...,R^{p-1}$ depending on the number of elements projected on $B$. At the end, using induction hypothesis one thus gets~:\begin{align*}\tau(P_t\Xi_t^i)-\tau(\langle S_i,\delta_t P(X_t^1,...,X_t^n)\rangle_{B\langle X_s \rangle})&\leq\int_\theta^tds A_l(s,k+1)M_{kp}+\sum_{i=1}^k A_l(s,i)n\frac{R^p-1}{R-1}\leq A_{l+1}(t,k).\end{align*}
 
where we noted (before using our notations) \begin{align*}&\sum_{i=1}^k A_l(t,i)\leq \frac{(t-\theta)^l}{l!}R^{lp}\prod_{i=0}^{l-1}\tilde{M}_{(k+i)p}\sum_{i=1}^k R^{ip}(C+Tp(i+l))\\&\leq \frac{(t-\theta)^l}{l!}R^{lp}\prod_{i=0}^{l-1}\tilde{M}_{(k+i)p}\left((C+Tpl)\frac{R^{(k+1)p}-1}{R^p-1}+ Tp((k+2)\frac{R^{p(k+2)}}{R^{p}-1}-R^{p}\frac{R^{p(k+2)}-1}{(R^{p}-1)^2})\right)\\ &\leq \frac{(t-\theta)^l}{l!}R^{(k+l+1)p}\prod_{i=1}^{l}\tilde{M}_{(k+i)p}2\frac{R^{p}}{R^{p}-1}(C+Tpl+Tp((k+1)).\end{align*}
But now, $A_{l}(t,k)\leq \frac{(DpR^p(t-\theta))^l(k+l-1)!}{l!(k-1)!}R^{kp}(C+Tp(k+l))\leq (C+Tp(k+l))2^{k-1}(2DpR^p(t-\theta))^l\to_{l\to\infty} 0$ when $t-\theta<1/2DpR^p$ value independent of $k$, so that one easily deduces by induction one can take $\theta=T$.
\end{proof}
 
\subsection{Transportation cost inequality}
 Let us recall the definition of free entropy with respect to a completely positive map $\eta:B\to B$ as in \cite{ShlyFreeAmalg00} (with a corrected typo concerning normalization):
\begin{align*} \chi^{*}(X_{1},...,X_{n};B,\eta)=\frac{1}{2}&\int_0^\infty\left(\frac{n\tau(\eta(1))}{1+t}-\Phi^*(X_1+\sqrt{t}S_1,...,X_n+\sqrt{t}S_n;B,\eta)\right)ds\\ &+\frac{n}{2}\log (2\pi e)\tau(\eta(1)),
\end{align*}
where $\{S_1,...,S_n\}$ is a B-free semicircular system of covariance $H=Diag(\eta)$, free with amalgamation over $B$ with $\{X_1,...,X_n\}$. Let us also remind the Fisher information 

\noindent $\Phi^{*}(Y_1,...,Y_n;B,\eta)=\sum_i||\xi_i||_2^2$ where $\xi_i$ are the conjugate variables relative to $B,\eta$ of \cite{ShlyFreeAmalg00}, already used earlier.
With our notations (and again a corrected typo), Proposition 8.3 $(a)$ of \cite{ShlyFreeAmalg00} becomes for $C^2=\sum_i\tau(X_i^2)$~:

$$\chi^{*}(X_{1},...,X_{n};B,\eta)\leq \frac{n\tau(\eta(1))}{2}\log(\frac{2\pi e C^2}{n\tau(\eta(1))})\leq \frac{n\tau(\eta(1))}{2}\log(2\pi e)+\frac{C^2-n\tau(\eta(1))}{2}.$$

The second inequality above, coming from a usual concavity inequality of $\log$ is used to prove positivity of the right hand side of the next free Talagrand inequality:
\begin{theorem}\label{Tal}
If $(S_{1},...,S_{n})$ is a family of B-semicircular elements with covariance $\eta$ free with amalgamation over $B$, then for any non-commutative variables $Y_{1},...,Y_{n},B$ (with a joint law with B as in the definition of the corresponding Wasserstein distance).
\begin{align*}d_{W}(&(Y_{1},...,Y_{n}),(S_{1},...,S_{n}):B) \leq \\ & \sqrt{2}\left( \chi^{*}(S_{1},...,S_{n}:B,\eta)-\chi^{*}(Y_{1},...,Y_{n}:B,\eta)-\frac{n\tau(\eta(1))}{2}+\frac{1}{2}\sum_{i=1}^{N}\tau(Y_{i}^{2}))\right)^{1/2}.\end{align*}
\end{theorem}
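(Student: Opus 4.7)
My plan follows the Biane--Voiculescu strategy of \cite{BV01}, upgraded to the multivariable $B$-amalgamated setting via the infinitesimal Wasserstein estimate of Theorem \ref{infEst}. First, I reduce to $Y_1,\dots,Y_n$ bounded self-adjoint elements of some tracial $W^*$-probability space $(N,\tau)$ containing $B$ (the unbounded case follows by approximating $Y$ by its Ornstein--Uhlenbeck regularization $X_\varepsilon$ at small time $\varepsilon>0$ and using lower semi-continuity of $d_W$ together with the continuity properties of $\chi^*$, noting also that the inequality is vacuous when the right-hand side is infinite). Enlarging $N$ to contain a $B$-free Brownian motion $(S^i_t)$ of covariance $\eta$, free from $Y$ with amalgamation over $B$, I consider the free Ornstein--Uhlenbeck SDE $X_t^i=Y_i-\tfrac{1}{2}\int_0^tX_s^i\,ds+S_t^i$. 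This is an instance of Theorem \ref{boundConj} with the linear drift $V_i=-X_i/2$ (the symmetry $\partial_iV_j=\sigma\partial_jV_i$ holds trivially), so $X_t$ has bounded conjugate variables $\xi_t^i$ relative to $B,\eta$ for every $t>0$, and $X_t$ converges in $L^2$-distribution as $t\to\infty$ to a $B$-free semicircular family $(S_1,\dots,S_n)$ of covariance $\eta$.

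Set $A(t):=\sum_i\|X_t^i-\xi_t^i\|_2^2$. Theorem \ref{infEst} gives $d_W(X_{t+h},X_t:B)\leq\tfrac{h}{2}\sqrt{A(t)}+o(h)$, so $t\mapsto d_W(X_t,S:B)$ is locally Lipschitz with metric derivative bounded by $\tfrac{1}{2}\sqrt{A(t)}$ almost everywhere. A key ingredient is the Wasserstein contraction $d_W(X_t,S:B)\leq e^{-t/2}d_W(Y,S:B)$: for any coupling $(Y,\tilde{S},B)$ realizing $d_W(Y,S:B)$ up to $\varepsilon$, running two copies of the SDE driven by the same $B$-free Brownian motion yields $X_t-X'_t=e^{-t/2}(Y-\tilde{S})$ while the stationary $\tilde{S}$-process remains $\tilde{S}$-distributed. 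Hence $\psi(t):=d_W(X_t,S:B)^2$ satisfies $\psi(\infty)=0$ and $|\psi'(t)|\leq d_W(X_t,S:B)\sqrt{A(t)}$ almost everywhere. Integration over $[0,\infty)$, Cauchy--Schwarz and the contraction $\int_0^\infty d_W(X_t,S:B)^2\,dt\leq d_W(Y,S:B)^2\int_0^\infty e^{-t}\,dt= d_W(Y,S:B)^2$ together yield
\[
d_W(Y,S:B)^2 = -\int_0^\infty\psi'(t)\,dt \leq d_W(Y,S:B)\,\left(\int_0^\infty A(t)\,dt\right)^{1/2},
\]
whence $d_W(Y,S:B)^2\leq\int_0^\infty A(t)\,dt$.

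To match this bound with the right-hand side of the theorem, I invoke the free Stein identity $\tau(X_t^i\xi_t^i)=\tau(\eta(1))$, which follows at once from $\partial_i^*(1\otimes 1)=\xi_t^i$ and the bimodule inner product $\langle S_i,S_i\rangle_B=\eta(1)$, giving $A(t)=\|X_t\|_2^2+\Phi^*(X_t;B,\eta)-2n\tau(\eta(1))$. Applying It\^o to $(X_t^i)^2$ yields $\tfrac{d}{dt}\|X_t\|_2^2=-\|X_t\|_2^2+n\tau(\eta(1))$, hence $\int_0^\infty(\|X_t\|_2^2-n\tau(\eta(1)))\,dt=\sum_i\tau(Y_i^2)-n\tau(\eta(1))$. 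The change of variable $u=\log(1+t)$ in the definition of $\chi^*$, together with the scaling $\Phi^*(cX;B,\eta)=c^{-2}\Phi^*(X;B,\eta)$, produces the de Bruijn-type identity $\chi^*(S;B,\eta)-\chi^*(Y;B,\eta)=\tfrac12\int_0^\infty(\Phi^*(X_u;B,\eta)-n\tau(\eta(1)))\,du$. Summing the two contributions, $\int_0^\infty A(t)\,dt=2(\chi^*(S;B,\eta)-\chi^*(Y;B,\eta))-n\tau(\eta(1))+\sum_i\tau(Y_i^2)$, which is exactly the square of the stated right-hand side.

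The main technical obstacle is the rigorous justification of the differential inequality for $\psi$ given only the local Lipschitz regularity supplied by Theorem \ref{infEst}, which I handle by Rademacher's theorem and absolute continuity of $\psi$ on compact intervals. The Ornstein--Uhlenbeck contraction in Wasserstein follows straightforwardly from linearity of the SDE, and the extension to unbounded $Y_i$ proceeds by the regularization described at the outset (using $\chi^*$-semicontinuity along Ornstein--Uhlenbeck perturbation, as well as the Proposition 8.3(a) bound of \cite{ShlyFreeAmalg00} cited before the theorem ensuring the nonnegativity of the right-hand side).
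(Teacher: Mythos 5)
Your argument is correct and takes a genuinely different route from the paper's. Both proofs start from the Ornstein--Uhlenbeck flow, invoke Theorem~\ref{infEst} (via Theorems~\ref{FDQ} and \ref{boundConj}) to get the local metric-derivative bound $|\tfrac{d}{dt}d_W(X_t,S:B)|\le\tfrac12\sqrt{A(t)}$, and use the scaling $X_u\simeq e^{-u/2}(Y+\sqrt{e^u-1}\,S)$ together with $\Phi^*(cX)=c^{-2}\Phi^*(X)$ to identify $\int_0^\infty(\Phi^*(X_u)-n\tau(\eta(1)))\,du$ with $2(\chi^*(S)-\chi^*(Y))$; both also need semi-continuity of $\chi^*$ to pass to the limit. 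Where they diverge is the integration in time. The paper follows Biane--Voiculescu (adapting Otto--Villani): one shows $t\mapsto d_W(X_t,S:B)-\sqrt{2\Sigma(X_t)}$ is non-decreasing, and the crucial sign in that derivative comes from the free logarithmic Sobolev inequality $\Sigma(X_t)\le\tfrac12 I(X_t)$ (Proposition 7.9 of \cite{Vo5}, whose amalgamated version rests on the Stam inequality of \cite{ShlyFreeAmalg00}). You instead avoid log-Sobolev entirely: you prove the exponential Wasserstein contraction $d_W(X_t,S:B)\le e^{-t/2}d_W(Y,S:B)$ by an elementary synchronous coupling of two solutions of the linear SDE (which is new relative to the paper's argument and works in the amalgamated setting since the second copy, started from a $B$-semicircular $\tilde S$ free from the driving noise over $B$, remains $B$-semicircular), and then close the loop by Cauchy--Schwarz in $t$: $d_W(Y,S)^2=-\int\psi'\le(\int d_W(X_t,S)^2\,dt)^{1/2}(\int A(t)\,dt)^{1/2}\le d_W(Y,S)(\int A(t)\,dt)^{1/2}$. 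This is slicker, trades a nontrivial functional inequality for a structural fact about the linear flow, and arguably explains more transparently where the constant $\sqrt2$ comes from; its cost is that it is special to the OU drift (the paper's monotonicity approach is closer in spirit to what one would try for general convex potentials, as the author remarks after the theorem).

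Two small points worth tightening. First, in the reduction you phrase it as "bounded self-adjoint," but boundedness of $Y$ is neither necessary nor sufficient — what is needed is finiteness of $\chi^*(Y;B,\eta)$ (otherwise the statement is vacuous, and otherwise $A(0)$ may be infinite and the Lipschitz bound of Theorem~\ref{infEst} does not apply at $t=0$); your OU-smoothing $X_\varepsilon$ and semi-continuity of $\chi^*$ is exactly the right fix, so simply state the reduction in terms of finite $\chi^*$ (or finite $\Phi^*$) rather than boundedness. Second, the absolute continuity of $\psi=d_W(X_\cdot,S:B)^2$ across $t=0$ deserves one more sentence: on $(0,\infty)$ the function $d_W(X_\cdot,S:B)$ is locally Lipschitz with a.e.\ derivative bounded by $\tfrac12\sqrt{A(t)}$ (by Theorem~\ref{infEst} applied with $\Xi_s=X_s$, $\xi_s$ the conjugate variable from Theorem~\ref{boundConj}, and right-continuity in $s$), and when $\int_0^T A(t)\,dt<\infty$ Cauchy--Schwarz on $[0,T]$ forces $\int_0^T\sqrt{A(t)}\,dt<\infty$, which gives bounded variation and hence justifies $-\int_0^\infty\psi'\,dt=\psi(0)$ once the contraction supplies $\psi(t)\to0$. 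With these clarifications the proof stands.
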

\begin{proof}
It is well known that the following SDE as in Theorem \ref{boundConj} has a solution $$X_{i}(t)=Y_i-\frac{1}{2}\int_{0}^{t}X_i(s)ds + S^i_t,$$ 

With $S^i_t$ a B-free semicircular family of covariance $\eta$ free with amalgamation over $B$ and moreover in law $X_{i}(s)\simeq e^{-s/2}Y_{i}+\sqrt{1-e^{-s}}S_{i}'$.
Since $X_i(s)$ is continuous in $||.||_2$, we can apply theorem \ref{infEst} to get ($t\geq s$):
$d_{W}((X_{1}(t),...,X_{n}(t)),(X_{1}(s),...,X_{n}(s))^{2}\leq (t-s)^{2}/4 I(X_{1}(s),...,X_{n}(s))+o((t-s)^{2}),$
the latter quantity $I(X_{1},...,X_{n})=\Phi^{*}(X_{1},...,X_{n};B,\eta)-2n\tau(\eta(1))+\sum_{i=1}^{N}\tau(X_{i}^{2})=\sum_{i=1}^{N}||X_{i}-\xi_{i}||_{2}^{2}$ being a variant of free Fisher information $\Phi^{*}(X_{1},...,X_{n};B,\eta)$.

From now on, the proof follows the adaptation of the argument of Otto and Villani \cite{OV} by Biane and Voiculescu in the non-commutative one variable case, this inequality giving their inequality (4) in \cite{BV01}  (with a crucial improvement that we have no $sup_{u\in(s,t)}$ on the right, otherwise the unknown continuity of $\Phi^{*}$ in multi-variable case would have been a problem). From now on we write $X(t)=(X_{1}(t),...,X_{n}(t))$ and $S=(S_{1},...,S_{n})$

As in their lemma 2.7, one deduces~: $$\limsup_{\epsilon\rightarrow 0+}\frac{1}{\epsilon}|d_{W}(X(t+\epsilon),S)-d_{W}(X(t),S)|\leq \frac{1}{2}I(X(t))^{1/2}.$$

Let us write 

\noindent $\Sigma(Y_{1},...,Y_{n})=\left( \chi^{*}(S_{1},...,S_{n}:B,\eta)-\chi^{*}(Y_{1},...,Y_{n}:B,\eta)-\frac{n\tau(\eta(1))}{2}+\frac{1}{2}\sum_{i=1}^{N}\tau(Y_{i}^{2})\right).$

We already noticed~:$\Sigma(Y_{1},...,Y_{n})\geq 0$.
Using (an obvious variant) of Proposition 7.5 b of \cite{Vo5} for the right derivative of $\chi^{*}$ along a semicircular translation, one deduces again as in \cite{BV01} , that the right derivative of $\Sigma(X_{1}(t),...,X_{n}(t))$ is $-\frac{1}{2}I(X_{1}(t),...,X_{n}(t))
$. As a consequence, 
\begin{align*}\liminf_{\epsilon\rightarrow 0+}\frac{1}{\epsilon}&\left(d_{W}(X(t+\epsilon),S)-(2\Sigma(X(t+\epsilon))^{1/2}-d_{W}(X(t),S)+(2\Sigma(X(t)))^{1/2}\right)\\ &\geq -\frac{1}{2}I(X(t))+\frac{1}{\sqrt{8}}I(X(t))(\Sigma(X(t)))^{-1/2}\geq 0.\end{align*}

where the last inequality comes from the logarithmic Sobolev like inequality of Proposition 7.9 in \cite{Vo5} (which as an extension to amalgamated $\eta$ context since the proof only uses a free Stam inequality proven in \cite{ShlyFreeAmalg00} Proposition 4.5
)~:
\begin{align*}\chi^*(Y_{1},...,Y_{n}:B,\eta)&\geq \frac{n\tau(\eta(1))}{2}\log(\frac{2\pi n e\tau(\eta(1))}{\Phi^*(Y_{1},...,Y_{n}:B,\eta)})\\ &\geq \frac{n\tau(\eta(1))}{2}\log(2\pi n e)-\frac{1}{2}(\Phi^*(Y_{1},...,Y_{n}:B,\eta)-n\tau(\eta(1))),\end{align*}
from what follows(the inequality we used) $\Sigma(X(t))\leq \frac{1}{2}I(X(t)).$

Since moreover $X(t)$ is continuous, so is $d_{W}(X(t),S)$ and likewise since so is $\Sigma(X(t))$ via 7.5 b of \cite{Vo5}, one deduces that $d_{W}(X(t),S)-(2\Sigma(X(t)))^{1/2}$ is non-decreasing.
But the semi-continuity of $\chi^{*}$ in Proposition 7.4 of \cite{Vo5} (in our context one replaces in the proof proposition 6.10 of \cite{Vo5} by proposition 4.7 in  \cite{ShlyFreeAmalg00}, see also \cite{BS},\cite{B} for an interpretation as logarithmic-Sobolev inequality) implies 
$\limsup_{t\rightarrow\infty} d_{W}(X(t),S)-(2\Sigma(X(t)))^{1/2}\leq -(2\Sigma(S))^{1/2}=0,$
so that for any $t>0$~: $d_{W}(X(t),S)-(2\Sigma(X(t)))^{1/2}\leq 0.$

The continuity of $\Sigma(X(t))$ at $t=0$ (via Proposition 7.5 of \cite{Vo5}) and continuity of $X(t)$ normwise imply we can take the limit $t\rightarrow 0$ to get the result.


\end{proof}

\begin{remark}
One may be interested in proving an analogue of Theorem 2.2 in \cite{HiaiUeda} for $\chi^*$. Of course, an approach following \cite{BV01}  would consider a SDE with as polynomial drift cyclic derivatives of the potential. For convex potentials as in   \cite{HiaiUeda}, solutions of those SDE have been proven to converge at infinity in \cite{SG}. Our previous section enables us to get local estimates on Wasserstein distance. At this time, the main lacking piece is an unknown change of variable for $\chi^*$ preventing us from computing the derivative of entropy along paths of the SDE.
\end{remark}

\section{Weak solutions of various stationary SDEs}

Given $A$,$\delta$  as in part 2 $(\tilde{M},\tau)$ given by theorem \ref{main}.
We want to prove it satisfies a stochastic differential equation weakly (recall that, in probabilistic literature, solving strongly a SDE means the filtration in which we build the solution is the filtration of the Brownian motion, equivalently (in our context), the solution is in a free product of a Brownian motion von Neumann algebra and the initial condition von Neumann algebra, solving it weakly means the filtration where we build the process is of course adapted to the Brownian motion so that Ito integral makes sense but this may be a larger filtration than the Brownian one).

We will need for this a general assumption. We consider $B\subset M$ a von Neumann sub-algebra with $A(B)=0$, $\delta(B)=0$. We assume the bimodule of value $\H$ of $\delta$ is given by a completely positive map $\eta:B\rightarrow M_m(B)$ in a standard way $\H=H(M;\eta\circ E_B)$ (with the notation introduced in theorem \ref{FDQ} We write $\Xi_1... \Xi_m$ the generating set. Especially any derivation in the coarse correspondence ($B=\C$) satisfy this assumption)

We also consider the usual extension $\Delta^1$ of $\Delta:M\to L^1(M)$, (see e.g. \cite{Pe04} section 1.4 where it is called $\Psi_\delta$).

\begin{theorem}\label{main2}
We keep the above assumption 
and assume moreover $A=\Delta$ and either (case 1) $\delta^*\Xi_i\in L^2(M)$ or (case 2) $\eta=Diag(\tilde{\eta})$, $\tilde{\eta}:B\to B$, B with separable predual and 
the set of regular elements $R=\{ X_i\in D(\Delta^1)\ | \ \delta(X_i)\in H(B'\cap M;\eta\circ E_B)\}$ contain a countable subset $\cC$ such that
 the algebra generated by B and $\cC$ is a core for $\delta$,  (this especially includes any derivation in the case $B$ central if M has separable predual, e.g. any derivation valued in the coarse).

There exists a $W^{*}$-probability space $(\hat{M},\tau)$ (containing $\widetilde{M}$ with agreeing traces in case (1)) such that there is in $\hat{M}$ a natural compatible filtration and an adapted $B$-free Brownian motion $\hat{S}_{t}$ with covariance $\eta$. 
  For any $X\in M_{0}\cap D(\Delta^1)$:
$$X_{t}=X_{0}-\frac{1}{2} \int_{0}^{t} (\Delta^1(X))_{s}ds + \int_{0}^{t}(\delta(X))_{s}\#d\hat{S}_{s}.$$
Moreover, for any $X\in M_{0}$:$$X_{t}=\phi_t(X_{0})+ \int_{0}^{t}(\delta\phi_{t-s}(X_s))_{s}\#d\hat{S}_{s}$$
\end{theorem}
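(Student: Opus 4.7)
The plan is to produce the $B$-free Brownian motion $\hat{S}_t$ --- which in general need not live inside $\widetilde{M}$ --- by coupling the dilation of our semigroup with an Ornstein--Uhlenbeck process driven by an auxiliary $B$-free semicircular system of covariance $\eta$. This is exactly the strategy announced in the introduction: the coupled generator is a \emph{non-symmetric} completely Dirichlet form of the type handled by Theorem~\ref{main}, while the OU side fits the hypotheses of Theorem~\ref{FDQ}, so the Brownian motion can be read off from the OU component.

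Concretely, take $(Y_1,\dots,Y_m)$ a $B$-free semicircular system of covariance $\eta$, free with amalgamation over $B$ from $M$, set $M'=M*_B W^*(B,Y_1,\dots,Y_m)$, and let $\partial^Y$ be the $(B,\eta)$-free difference quotient in the $Y_i$. Build a non-symmetric generator $A'$ on $M'$ whose symmetric part is the divergence form operator of $\delta''=\delta\oplus\partial^Y$ (with $\delta$ extended by $0$ on the $Y$-side and $\partial^Y$ extended by $0$ on the $X$-side) and whose antisymmetric part is the derivation obtained by pairing $\delta$ on the $X$-side with (the adjoint of) $\partial^Y$ on the $Y$-side. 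By construction, the antisymmetric piece is a derivation which vanishes on $B$, so \eqref{derForm} is satisfied; the core assumption of Theorem~\ref{main} is inherited from a core of $\delta$ and the polynomial core of $\partial^Y$. Theorem~\ref{main} then supplies a dilation $(\hat{M},\tau)$ of the coupled semigroup, with endomorphism $\alpha_t$, which in Case~(1) contains $\widetilde{M}$ canonically (since the coupled derivation restricted to $M$ is again $\delta$).

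Inside $\hat{M}$ one applies Theorem~\ref{FDQ} with $N=W^*(B,Y_1^{[0]},\dots,Y_m^{[0]})$, $\delta_1=\partial^Y$, and $\delta_2=\delta$ extended by zero on $Y$: the conjugate variable of $Y_0^i$ relative to $(B,\eta)$ is a multiple of $Y_0^i$ itself, hence bounded, and the cubic identity $\tau((Y_0^i)^3 A'(Y_0))=\tau((Y_0^i)^3\Delta'(Y_0))$ holds because the antisymmetric part of $A'$ sends $Y_0^i$ to an $X$-valued quantity that pairs trivially against $(Y_0^i)^3$ by freeness with amalgamation over $B$. This produces a $B$-free Brownian motion $\hat{S}_t$ of covariance $\eta$ adapted to the natural filtration of $\hat{M}$. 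For $X\in M_0\cap D(\Delta^1)$, one then computes $dX_t$ along the dilation by the same kind of identities used in the proof of Theorem~\ref{FDQ}, expressing the drift of $X_t$ as $-\tfrac12(\Delta^1 X)_t\,dt$ plus an extra term coming from the antisymmetric coupling, and the martingale part as a stochastic integral against the OU Brownian motion $S^Y_t$ with integrand $(\delta X)_t$. The extra antisymmetric drift is precisely the one absorbed by the very definition of $\hat{S}_t$ (which differs from $S^Y_t$ by exactly this drift, by construction of the OU equation solved via Theorem~\ref{FDQ}), yielding the strong form of the SDE. The mild form then follows by the usual variation of constants applied to $X_t-\phi_t(X_0)$ and the relation $E_{M_0}\alpha_t=\phi_t$.

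The main obstacle is the first step: writing down the antisymmetric part of $A'$ as a genuine derivation on $M'$ satisfying~\eqref{derForm}, defined on a core large enough for Theorem~\ref{main} to apply, and encoding the coupling in exactly the combination required for the drift cancellation in Step~3. Case~(2) is then handled by an approximation argument: a general $X\in M_0$ need not have $\delta(X)$ square-integrable as a bimodule vector, but it can be approximated in the graph norm of $\delta$ by elements of the countable set $\cC\subset R$ of regular elements, on which $\delta(X)\in H(B'\cap M;\eta\circ E_B)$ makes the stochastic integral against $\hat{S}_t$ literal, and one passes to the $\|\cdot\|_2$-limit using that the algebra generated by $B$ and $\cC$ is a core for $\delta$ to control convergence of both sides of the SDE.
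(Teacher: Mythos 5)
Your high-level strategy matches the paper exactly: couple the given Dirichlet form with an Ornstein--Uhlenbeck flow on an auxiliary $B$-semicircular system of covariance $\eta$, obtain a dilation of the coupled \emph{non-symmetric} generator via Theorem~\ref{main}, then apply Theorem~\ref{FDQ} to the $Y$-variables to extract $\hat{S}_t$. However, there are two concrete gaps, and one of them is where the real work of the proof lies.

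First, the $\beta$-regularization you omit is not optional. Your symmetric part $(\delta\oplus\partial^Y)^*(\delta\oplus\partial^Y)$, if read as a genuine direct sum, kills the cross-pairing $\langle\delta(x),\partial(y)\rangle-\langle\partial(x),\delta(y)\rangle$ that the paper identifies as the antisymmetric part of the form; the coupling only appears because $\delta$ and $\partial$ map into the \emph{same} bimodule, so the correct symmetric part is $\|\delta(x)+\partial(x)\|^2$. But then the derivation is a single map $\delta+\partial$, not a direct sum, and Theorem~\ref{FDQ} requires a decomposition $\delta_1\oplus\delta_2$ with $\delta_1$ a free difference quotient on $N$ and $\delta_2(N)=0$. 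The paper manufactures such a decomposition by adding $\beta\partial^*\partial$, so that the derivation is $(\delta+\partial)\oplus\sqrt{\beta}\,\partial$; on $N$ this restricts to $\partial\oplus\sqrt{\beta}\,\partial$, a free difference quotient of covariance $(1+\beta)\eta$. One gets a $B$-Brownian motion of covariance $(1+\beta)\eta$, not $\eta$, and must then pass to a $\beta\to 0$ limit of states. Your plan of applying Theorem~\ref{FDQ} directly at $\beta=0$ does not meet its hypotheses. Relatedly, the hypothesis $\xi^i_0 = A(X^i_0)\in L^2$ in Theorem~\ref{FDQ} is, in the coupled setting, $A'(Y^i_0)=2\delta^*\Xi_i + (1+\beta)Y^i_0$: the quantity whose integrability is at issue is $\delta^*\Xi_i$, not the $(B,\eta)$-conjugate variable of $Y^i_0$, so ``a multiple of $Y^i_0$ hence bounded'' is not the right reason.

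Second, your treatment of Case~(2) misses the point entirely. The obstruction in Case~(2) is that $\delta^*\Xi_i\notin L^2(M)$, and this breaks the \emph{construction} of $A'$: the antisymmetric derivation needs $\tilde{\partial}(S_i)=\delta^*\Xi_i$ to live in $L^2$ before any state, dilation, or stochastic integral is even defined. No a posteriori graph-norm approximation along $\cC$ can repair a generator that was never well-defined. The paper instead perturbs the algebra $M$ itself à la Shlyakhtenko, passing to $M^{(\epsilon)}=W^*(B, X_i+\sqrt{\epsilon}\,\overline{S_i})$ where the conjugate variables are automatically in $L^2(M^{(\epsilon)})$, builds the whole coupled dilation and Brownian motion there (Case 1 machinery), and only at the very end sends $\epsilon\to 0$ through a compactness argument in a carefully topologized space of continuous traces on a universal free-product path $C^*$-algebra. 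The role of the countable core $\cC$ and of the condition $\delta(X_i)\in H(B'\cap M;\eta\circ E_B)$ is to make this $\epsilon$-perturbation possible (via the flip $\sigma$ on $H(B'\cap M;\tilde\eta\circ E_B)$ and the formula for $\tilde\delta_i^*(\Xi)$), not to justify a limit of stochastic integrals at the end. Without this step your argument gives at most Case~(1).
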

$\hat{M}$ will be produced as a GNS construction for a larger path space associated to another non-symmetric Dirichlet form (even to solve SDEs with symmetric $A$, thus motivating the earlier extra work in this context ).
One could also derive from the SDEs the equations defining our states to show $\hat{M}$ contain $\widetilde{M}$ in case (2), we leave this to the reader.

\begin{remark}
 For instance, since it is easy to show a $L^1$ conjugate variable imply the free difference quotient closable as $L^2$ derivation
 , our theorem, always applies to free difference quotient with  $L^1$ conjugate variable, and thus extend Theorem \ref{FDQ} to this case (but in case with a completely positive map, this extends only the diagonal map case and generally only the symmetric Dirichlet form case).
\end{remark}

\begin{proof}
First note that if $M$ has separable predual and B central, $R=D(\Delta^1)\cap M$, taking a countable dense set D in $L^2(M)$ inside M, $\cC=\{\eta_{n}(x)\ | x\in D,\ n\in \N\}$ is a core for $\delta$ with the right properties.

\begin{step}Avoiding to assume $\delta^*\Xi_i\in L^2(M)$ in case 2 
.\end{step}When we are assuming $\delta^*\Xi_i\in L^2(M)$, one may replace in what follows $M^{(\epsilon)},\overline{M}$ by $M$ and get a more direct result. This step is inspired from \cite{Shly03} (subsection 2.2, basically the case $B=\C$).

Let us call $\cC=\{X_1,...,X_n\}$ (usually n countable).

One may assume without changing properties of $C$ that $||X_i||\leq 1$, $\sum_i ||\delta(X_i)||^2<\infty$, one may for instance replace $X_i$ by $X_i/(||X_i||+2^i ||\delta(X_i)||)$ since we only claim a property of the algebra generated by $\cC$ and $B$.

But here, let $\overline{\eta}=Diag(\tilde{\eta}):B\to M_n(B)$ (as before if $n$ countable, $M_n(B)=B\o B(H)$ $H$ Hilbert space of dimension $n$) $\overline{M}=\Phi(M,\overline{\eta}\circ E_B)$. Especially  
$\overline{M}=W^*(B,X_1,...,X_n,\overline{S_1},...,\overline{S_n})$ 
We can replace $\H$ by $\overline{\H}=H(\overline{M};\eta\circ E_B)$.
Since in this case, $\overline{\H}$ is a direct sum of $\tilde{\H}=H(\overline{M};\tilde{\eta}\circ E_B)$ (say generated by $\Xi$), the i-th component generated by $\Xi_i$ in our previous notations. We can call $\delta_i$ (valued in $\tilde{\H}$) the component of $\delta$ on $\Xi_i$. Let us call $\tilde{\delta}_i:B\langle X_1,...,X_n,\overline{S_1},...,\overline{S_n}\rangle \to \tilde{\H}$ the derivation defined by $\tilde{\delta}_i(B)=\tilde{\delta}_i(X_j)=0$
 $\tilde{\delta}_i(\overline{S_j})=\delta_i(X_j)$. We can also extend $\delta_i$ via ($\delta_i(\overline{S_j})=0$). Note that for any $X\in M\cap D(\Delta_i^1)$, any $P\in M\cap D(\delta_i)\langle \overline{S_1},...,\overline{S_n}\rangle$ $\tau(\Delta_i(X)P)=\langle \delta_i(X),\delta_i(P)\rangle$ (as in example 2.4 in \cite{Shly03}, using here $\overline{S_j}$ free with amalgamation with $M$ over $B$)
 
 We claim that $\tilde{\delta}_i^*(\Xi)=Y_i:=\sum_j(\sigma(\delta_i(X_j))\# \overline{S_j})^*\in L^2$ by our previous assumption on $X_i$, where $\sigma$ is the isometric flip operator extended from $\sigma(a\Xi b)=(b\Xi a)$ (isometry on $H(B'\cap M;\tilde{\eta}\circ E_B)$ come from $\tilde{\eta}$ $\tau$-symmetric, and $E_B(ab)=E_B(ba)$ for any $a\in B'\cap M$), $a\Xi b\# \overline{S_j}=a\overline{S_j} b$ is extended by linearity and isometry.
 Indeed, for $w_{j}\in B\langle X_{1},\ldots ,X_{n}\rangle$ and $i_{1},\ldots ,i_{n}\in \{1,\ldots ,n\}$, let's write $\delta_j(X_i)=\sum _{k}a_{k}^{ij}\Xi b_{k}^{ij}$ ($L^2$ sum, we use in the third line bellow $a_{k}^{ij},b_{k}^{ij}\in B'$ commute with $B$)\begin{eqnarray*}
\tau (Y_{j}^*w_{0}\overline{S_{i_{1}}}w_{1}\cdots w_{k-1}\overline{S_{i_{k}}}w_{k}) & = & \sum _{i}\sum _{s}\tau (b_{s}^{ij}\overline{S_{i}}a_{s}^{ij}w_{0}\overline{S_{i_{1}}}w_{1}\cdots w_{k-1}\overline{S_{i_{k}}}w_{k})\\
 & = & \sum _{i,\ s}\sum _{p,\ i_{p}=i}\tau (b_{s}^{i_{p}j}\tilde{\eta}(E_{B}(a_{s}^{i_{p}j}w_{0}S_{i_{1}}w_{1}\cdots w_{p-1}))w_{p}S_{i_{p+1}}\cdots S_{i_{k}}w_{k})\\
 & = & \sum _{i,\ s}\sum _{p,\ i_{p}=i}\tau (\tilde{\eta}(E_{B}(w_{0}S_{i_{1}}w_{1}\cdots w_{p-1}a_{s}^{i_{p}j}))b_{s}^{i_{p}j}w_{p}S_{i_{p+1}}\cdots S_{i_{k}}w_{k})\\
 & = & \langle \Xi,\tilde{\delta}_j(w_{0}S_{i_{1}}w_{1}\cdots w_{k-1}S_{i_{k}}w_{k})).
\end{eqnarray*}

But now if we consider $M^{(\epsilon)}=W^*(B,X_1+\sqrt{\epsilon}\ \overline{S_1},...,X_n+\sqrt{\epsilon}\ \overline{S_n})$. For $P\in B\langle X_1+\sqrt{\epsilon}\ \overline{S_1},...,X_n+\sqrt{\epsilon}\ \overline{S_n}\rangle.$ $\delta_i(P)=\frac{1}{\sqrt{\epsilon}}\tilde{\delta}_i(P)$
so that when we restrict $\delta_i$ to $B\langle X_1+\sqrt{\epsilon}\ \overline{S_1},...,X_n+\sqrt{\epsilon}\ \overline{S_n}\rangle,$ (let say we call it $\delta_i^{(\epsilon)}$) we get $\delta_i^{(\epsilon)*}(\Xi)=\frac{1}{\sqrt{\epsilon}}E_{M^{(\epsilon)}}(Y_i)\in L^{2}(M^{(\epsilon)}).$ Note also that $\delta_i^{(\epsilon)*}\delta_i^{(\epsilon)}(X_j+\sqrt{\epsilon}\ \overline{S_j})=E_{M^{(\epsilon)}}(\delta_i^*\delta_i(X_j))\in L^1$ (using the remark above on the adjoint of the extension $\delta_i$ to $\overline{M}$). 

We will still call $\delta=\oplus\delta_i^{(\epsilon)}:L^{2}(M^{(\epsilon)})\to \oH$. 
We can also consider $\tilde{\delta}=\oplus\frac{1}{\sqrt{\epsilon}}\tilde{\delta}_i:L^{2}(\overline{M})\to \oH$ as an extension. We will recall later a formula due to Voiculescu for adjoints of derivations, implying $\tilde{\delta}_i^*$ is densely defined, e.g. on $B\langle X_1,\overline{S_1},...,X_n,\overline{S_n}\rangle^{\o 2}$, of course we have then $\delta^*U=E_{L^{2}(M^{(\epsilon)})}\tilde{\delta}^{*}U$, for $U\in D(\tilde{\delta}^{*})$ so that $\delta^*$ is also densely defined as seen the previous domain. $\delta$ being a real closable densely defined derivation, we are thus (almost) in the previous context, especially $\Delta=\delta^*\delta$, $\Delta^1$ are again defined. 
So that we have turned back to our assumption $\delta^*\Xi_i\in L^{2}(M^{(\epsilon)}).$ Let us emphasize the situation is not exactly the same since as a bimodule $\oH$ is not produced by completion starting from $M^{(\epsilon)}$, what we call in two lines $\He$ (this will thus not be clear what we could mean as evolution with time of the integrand in stochastic integral of the SDE, before projection on this sub-bimodule). We again warn the reader that, when we are assuming $\delta^*\Xi_i\in L^2(M)$ (case 1), we replace in what follows $M^{(\epsilon)},\overline{M}$ by $M$ and get a more direct result  likewise in that case $\oH=\He=\H$. We don't write anything else (especially what follows includes a case of non-diagonal $\eta$ via case 1).

\begin{step}Definition of a non-symmetric Dirichlet form\end{step}
Consider $\Me=\Phi(M^{(\epsilon)},\eta\circ E_B)$,  the $M^{(\epsilon)}$-semicircular valued system of \cite{SAVal99}, likewise $\oM=\Phi(\overline{M},\eta\circ E_B)$. Especially  
$\hat{M}^{(\epsilon)}_0=W^*(M^{(\epsilon)},S_1...,S_m)$ and the $L^2$ completion of $M^{(\epsilon)}\o M^{(\epsilon)}\# S_1+...+M^{(\epsilon)}\o M^{(\epsilon)}\# S_m$ is isomorphic to a $\He=H(M^{(\epsilon)};\eta\circ E_B)$ subspace of $\oH$. Let us call as before  $\Xi_i$ the element of $\oH$ corresponding to $S_i$. We call $\hH=H(\Me;\eta\circ E_B)$ the natural extension of $\He$, subspace of $\ohH=H(\oM;\eta\circ E_B)$ the natural extension of $\oH$.


We can extend $\delta$ to a derivation on $\Me\to \ohH$ with $\delta(S_i)=0$ with the same value of $\delta^{*}\Xi_i$. Indeed, for a monomial $P$ in $S_i$'s and several $X_q\in D(\delta)\cap M^{(\epsilon)}$ (let say algebraically free and generating $M^{(\epsilon)}$ without loss of generality) a wick type  
formula enables to rewrite $\tau(P \delta^{*}\Xi_i)$ as a trace in $M$ of a product $P'$ of $X_q$'s with elements of $B$ obtained with several applications of $\eta\circ E_B$ corresponding to non crossing pairings of $S_i$'s. Since $\delta(b)=0, b\in B$ this is again rewritten as a sum in $\ohH$ of several $\langle\partial_{X_q:B}(P')\#\delta(X_q),\Xi_i\rangle_{\oH}$ ($\partial$ a free difference quotient). But now 
fixing the element $X_q$ of application of $\delta$ and summing all applications of $\eta$ around, 
we have in fact obtained $\langle(E_{\overline{M}}\o E_{\overline{M}})(\partial_{X_q:B}(P))\#\delta(X_q),\Xi_i\rangle_{\oH}$. By definition of $\hat{H}_2$  using $\eta\circ E_B=\eta\circ E_B\circ E_{\overline{M}}$, this is exactly 
$\langle(\partial_{X_q:B}(P))\#\delta(X_q),\Xi_i\rangle_{\ohH}$. Summing over $q$ this concludes. The details are left to the reader (in the case of diagonal $\eta$ this is really similar to what we did in step 1).

We can consider $\partial=(\partial_1,...,\partial_m)$ a free difference quotient of $S_i$ (vanishing on $M^{(\epsilon)}$, $\partial_j(S_i)=1_{i=j} \Xi_i$) valued in $\hH\subset \ohH$ as in theorem \ref{FDQ}. Note it is well known that $\partial^*\Xi_i=S_i$.

 We also want to consider two derivations valued in $L^{2}(\Me)$, the first is defined via $\delta\#S(S_i)=0$, $\delta\#S(x)=\delta(x)\#S$, $x\in M^{(\epsilon)}\cap D(\delta)$ which is well defined since $a\Xi_ib\rightarrow aS_ib=:a\Xi_ib\#S$ extends to an isometry of $\He$ to $L^{2}(\Me)$. The second is defined on $M^{(\epsilon)}\langle S_1,...,S_m\rangle$ via $\tilde{\partial}(m)=0$, $m\in M^{(\epsilon)}$ , $\tilde{\partial}(S_i)=\delta^{*}\Xi_i\in L^2(M^{(\epsilon)})$. It has obviously an extension to $\overline{M}\langle S_1,...,S_m\rangle$.

Consider $A_{\beta}=(\delta+\partial)^{*}(\delta+\partial)+\tilde{\partial}-\delta\#S+\beta\partial^{*}\partial$, actually defined via the associated form $\E_{(\beta)}(x,x)=||\delta+\partial(x)||_{2}^{2}+\beta ||\partial(x)||_{2}^{2}+\langle \tilde{\partial}(x)-\delta\#S(x),x\rangle, $ for any $x\in (M^{(\epsilon)}\cap D(\delta))\langle S_1,...,S_m\rangle.$

Let us recall we consider $\partial,\delta:L^2(\Me)\to \ohH$ as unbounded operators and want to compute their adjoints (on dense domains)
First let us note that using $\partial^*\Xi_i=S_i$
 (and using $\partial$ and $\delta$ are real derivations with obvious $*$ structure on their range (and using $M$-valued scalar products such as $\langle a\Xi_jb,c\Xi_id\rangle_{\oM}=b^*\eta_{ji}(E_B(a^*c))d$ (obviously this can be extended on $L^2$-sums) and e.g. $\langle \partial(.^*),\Xi.\rangle_{\oM}(a\Xi_ib):=\sum_j m\circ 1\o\eta_{ji}\circ E_B(\partial_j(a))b$
) we deduce from an analogue of Proposition 4.6 in \cite{Vo5} that
\begin{align*}&\partial^{*}(a\Xi_ib)=E_{L^2(\Me)}\left(aS_ib- \sum_j m\circ 1\o\eta_{ji}\circ E_B(\partial_j(a))b+a m\circ \eta_{ij}\circ E_B\o 1(\partial_j(b))\right)\\ & =E_{L^2(\Me)}\left(aS_ib- \langle \partial(.^*),\Xi.\rangle_{\oM}(a\Xi_ib) +\langle \Xi(.)^*,\partial(.)\rangle_{\oM}(a\Xi_ib)\right),\ \ \forall a,b\in \overline{M}\langle S_1,...,S_m\rangle\\ 
& 
\delta^{*}(a\Xi_ib)=E_{L^2(\Me)}\left(a(\tilde{\delta}^*\Xi_i)b- \langle \tilde{\delta}(a^*),\Xi_ib\rangle_{\oM} -\langle \Xi_ia^*,\tilde{\delta}(b)\rangle_{\oM}\right),\ \ \forall a,b\in (\overline{M}\cap D(\tilde{\delta})))\langle S_1,...,S_m\rangle\\ &=a(\delta^*\Xi_i)b- E_{L^2(\Me)}\left(\langle \delta(a^*),\Xi_ib\rangle_{\oM} -\langle \Xi_ia^*,\delta(b)\rangle_{\oM}\right),\ \ \forall a,b\in (M^{(\epsilon)}\cap D(\tilde{\delta})))\langle S_1,...,S_m\rangle.
\end{align*}

Thus, for any $x,y\in (M^{(\epsilon)}\cap D(\delta))\langle S_1,...,S_m\rangle$: \begin{align*}\langle \delta\#S(x),y\rangle&=\langle \partial^{*}\delta(x)+\langle \partial(.^*),\Xi.\rangle_{\oM}(\delta(x)) +\langle \Xi(.)^*,\partial(.)\rangle_{\oM}(\delta(x)),y\rangle\\ &=\langle \partial^{*}\delta(x)-\delta^{*}\partial(x)+\tilde{\partial}(x),y\rangle\\ \langle\delta\#S(x)-\tilde{\partial}(x),y\rangle&=\langle \delta(x),\partial(y)\rangle-\langle\partial(x),\delta(y)\rangle,\end{align*} where we used an obvious co-associativity for $\partial$ and $\delta$ in the second line. We have thus found the antisymmetric part of the form, this also immediately gives the domination via the symmetric form associated to $\partial\oplus\delta$ clearly equivalent to the symmetric part (with a constant depending on $\beta$). Thus 
 As any antisymmetric real derivation $\delta\#S-\tilde{\partial}$ is a trace preserving derivation.

As a consequence of Proposition 3.5 in \cite{MaR} $\E_{(\beta)}$ is a coercive closed form. The domain of the closure is clearly $D(\overline{\E_{(\beta)}})=D(\overline{\delta\oplus \partial})$. Let us now check it is actually a Dirichlet form using Theorem 2.8 in \cite{NSNC} (non-commutative adaptation of Proposition 4.7 in \cite{MaR} and we will actually use the variant of Proposition 4.10, where the condition can be checked only on a core of the form). For this, consider $\varphi_{\epsilon}$ a a Lipschitz increasing function in $C^{\infty}(\R,[-\epsilon,1+\epsilon])$ with $\varphi_{\epsilon}(t)=t$ on $[0,1]$, $\varphi_{\epsilon}(t)=- \epsilon$ on $(-\infty,-2\epsilon]$
and $\varphi_{\epsilon}(t)=1+\epsilon$ on $[1+2\epsilon,\infty)$. We state the result in the next~:

\begin{lemma}
For any self-adjoint $u\in (D(\delta)\cap M^{(\epsilon)})\langle S_1,...,S_m\rangle$ , any $\epsilon>0$, then $\varphi_{\epsilon}(u)$ given by functional calculus is in $D(\overline{\E_{(\beta)}})$ and $$\liminf_{\epsilon\rightarrow 0}\overline{\E_{(\beta)}}(\varphi_{\epsilon}(u),u-\varphi_{\epsilon}(u))\geq 0,$$ $$\liminf_{\epsilon\rightarrow 0}\overline{\E_{(\beta)}}(u-\varphi_{\epsilon}(u),\varphi_{\epsilon}(u))\geq 0.$$
As a consequence $\overline{\E^{(\beta)}}$ is a (non-symmetric) Dirichlet form for any $\beta$, we will write $\phi_{t}$, $\phi^{*}_{t}$ the corresponding sub-markovian semigroups.
\end{lemma}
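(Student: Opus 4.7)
The plan is to verify the Markovianity criterion of Proposition~4.10 of \cite{MaR} (in its non-commutative incarnation, Theorem~2.8 of \cite{NSNC}) directly on the core $(D(\delta)\cap M^{(\epsilon)})\langle S_1,\ldots,S_m\rangle$ of $\overline{\E_{(\beta)}}$. In fact I expect both inequalities to hold for every $\epsilon>0$ separately, so the $\liminf$ is automatic. Note also that by construction one can take $\varphi_\epsilon$ with $0\le\varphi_\epsilon'\le 1$ everywhere, which is the key quantitative property to be used below.

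First I would check that $\varphi_\epsilon(u)\in D(\overline{\E_{(\beta)}})=D(\overline{\delta\oplus\partial})$. Approximate $\varphi_\epsilon$ uniformly on $\mathrm{sp}(u)$ by polynomials $p_n$ with $p_n'\to\varphi_\epsilon'$ uniformly. Since $\delta$ and $\partial$ are derivations into $M$--$M$-bimodules, the Daletskii--Krein divided-difference formula gives
\[
\delta(p_n(u))=\widetilde{p_n}(u,u)\,\#\,\delta(u),\qquad \partial(p_n(u))=\widetilde{p_n}(u,u)\,\#\,\partial(u),
\]
with $\widetilde{p_n}(s,t)=(p_n(s)-p_n(t))/(s-t)$ acting through functional calculus of the commuting operators of left/right multiplication by $u$ on the bimodule. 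The uniform bound $\|\widetilde{p_n}\|_\infty\le\|p_n'\|_\infty$ and uniform convergence $\widetilde{p_n}\to\widetilde{\varphi_\epsilon}$ on $\mathrm{sp}(u)^2$ make $\delta(p_n(u))$ and $\partial(p_n(u))$ Cauchy in $\oH$, so by closedness $\varphi_\epsilon(u)\in D(\overline\delta)\cap D(\overline\partial)$ with $\overline\delta(\varphi_\epsilon(u))=\widetilde{\varphi_\epsilon}(u,u)\#\delta(u)$ and similarly for $\partial$. The same identity with kernel $1-\widetilde{\varphi_\epsilon}$ applies to $u-\varphi_\epsilon(u)$.

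Now split the form as $\E_{(\beta)}(x,y)=\widetilde\E_{(\beta)}(x,y)+\langle Dx,y\rangle$, with coercive symmetric part $\widetilde\E_{(\beta)}(x,y)=\langle(\delta+\partial)(x),(\delta+\partial)(y)\rangle+\beta\langle\partial(x),\partial(y)\rangle$ and $D=\widetilde\partial-\delta\#S$ the real trace-preserving derivation on $L^2(\Me)$ identified just above the lemma. I claim the antisymmetric contribution vanishes on our pair. For polynomials $P,Q$, Leibniz and traciality yield
\[
\tau(P(u)\,D(Q(u)))=\tau(P(u)\,Q'(u)\,D(u)),\qquad \tau(R'(u)D(u))=\tau(D(R(u)))=0,
\]
for any polynomial $R$. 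Approximating $\varphi_\epsilon$ and an antiderivative $\Psi$ of $\psi(t):=(t-\varphi_\epsilon(t))\varphi_\epsilon'(t)$ by polynomials in $C^1(\mathrm{sp}(u))$, using $L^2$-continuity of functional calculus together with $D(u)\in L^2(\Me)$, one passes to the limit and gets
\[
\tau((u-\varphi_\epsilon(u))\,D(\varphi_\epsilon(u)))=\tau(\psi(u)\,D(u))=\tau(D(\Psi(u)))=0,
\]
and symmetrically $\tau(\varphi_\epsilon(u)\,D(u-\varphi_\epsilon(u)))=0$.

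It remains to check $\widetilde\E_{(\beta)}(\varphi_\epsilon(u),u-\varphi_\epsilon(u))\ge 0$, which I expect to be the main technical point, though by now it is standard once the divided-difference calculus is in place (cf.\ \cite{CiS} for the symmetric prototype). Each of the three summands has the shape $\langle \widetilde{\varphi_\epsilon}(u,u)\#D(u),\ (1-\widetilde{\varphi_\epsilon})(u,u)\#D(u)\rangle$ for $D\in\{\delta,\partial,\delta+\partial\}$. Because $\varphi_\epsilon$ is non-decreasing with $\varphi_\epsilon'\le 1$, both $\widetilde{\varphi_\epsilon}$ and $1-\widetilde{\varphi_\epsilon}$ are non-negative bounded functions on $\R^2$; as functions of the commuting spectral data of $u$ on the left and right of the bimodule, their product is a positive operator commuting with both arguments, and the inner product equals $\int \widetilde{\varphi_\epsilon}(1-\widetilde{\varphi_\epsilon})\,d\mu_D\ge 0$ for the positive spectral measure $\mu_D$ associated with $D(u)$. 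Combining the three non-negative symmetric contributions with the vanishing antisymmetric one gives $\E_{(\beta)}(\varphi_\epsilon(u),u-\varphi_\epsilon(u))\ge 0$ for every $\epsilon>0$; the dual inequality follows identically. Invoking Proposition~4.10 of \cite{MaR} then shows that $\overline{\E_{(\beta)}}$ is a (non-symmetric) Dirichlet form, so that the sub-markovian semigroups $\phi_t$ and $\phi_t^*$ are well defined. The genuine obstacle is thus the rigorous divided-difference manipulation of $\delta,\partial$ on functional-calculus elements $\varphi_\epsilon(u)$; everything else reduces to bookkeeping with the antisymmetric identity already derived in the text.
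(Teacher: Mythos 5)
Your proof is correct and takes a route that genuinely differs from the paper's, most notably in how it handles the antisymmetric part. Both you and the paper set up the divided-difference calculus to write $D(\varphi_\epsilon(u))=\widetilde{\varphi_\epsilon}(u,u)\#D(u)$, and both arrive at the reduction
$\langle \tilde\delta\,\varphi_\epsilon(u),u-\varphi_\epsilon(u)\rangle = \tau\bigl(\psi(u)\,\tilde\delta(u)\bigr)$ with $\psi(t)=\varphi_\epsilon'(t)\,(t-\varphi_\epsilon(t))$. At that point the paper observes $\varphi_\epsilon'(u)^2(u-\varphi_\epsilon(u))^2\le 4\epsilon^2 1_{[-2\epsilon,0]\cup[1,1+2\epsilon]}(u)$ and uses dominated convergence to conclude $\liminf_\epsilon=0$, whereas you notice that $\tau(\psi(u)\,\tilde\delta(u))=0$ already for each fixed $\epsilon$, because $\tilde\delta$ is a trace-preserving derivation: $\tau(u^{n-1}\tilde\delta(u))=\tfrac1n\tau(\tilde\delta(u^n))=0$ for every $n$, and this extends from polynomials to continuous functions of $u$ by density in operator norm (using $\tilde\delta(u)\in L^2(\Me)$). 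This is cleaner and removes the need for the $\liminf$ on the antisymmetric contribution. For the symmetric part the paper simply cites the known $\liminf$-positivity for symmetric Dirichlet forms, while you re-derive an exact positivity $\widetilde\E_{(\beta)}(\varphi_\epsilon(u),u-\varphi_\epsilon(u))\ge0$ via $\widetilde{\varphi_\epsilon}\in[0,1]$ and the commuting positive operators $\widetilde{\varphi_\epsilon}(L_u,R_u)$, $(1-\widetilde{\varphi_\epsilon})(L_u,R_u)$ — essentially the argument behind that known result, specialized to derivation-type forms. What you gain is a self-contained, $\epsilon$-wise argument that needs no passage to the limit; what the paper's route offers is less reliance on the precise bimodule/divided-difference structure, since the DCT step only needs the reduction formula plus a crude bound. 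One small slip: the symmetric part $\widetilde\E_{(\beta)}=\|(\delta+\partial)\cdot\|^2+\beta\|\partial\cdot\|^2$ has two summands, not three, so only $D\in\{\delta+\partial,\partial\}$ is needed; this does not affect the argument.
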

\begin{proof}
Since $D(\overline{\E_{(\beta)}})=D(\overline{\delta\oplus \partial})$, the stability result follows from stability of the domain of a symmetric Dirichlet form by $C^{1}$- functional calculus.
Since $(D(\delta)\cap M^{(\epsilon)})\langle S_1,...,S_m\rangle$ is a core in $D(\overline{\E_{(\beta)}})$ by definition, the second statement about the Dirichlet form follows from the first via Proposition 4.10 in \cite{MaR} as we said.

 By sub-additivity of $\liminf$, and since $\liminf_{\epsilon\rightarrow 0}\langle\overline{(\delta+\partial)\oplus\sqrt{\beta} \partial}\varphi_{\epsilon}(u),\overline{(\delta+\partial)\oplus\sqrt{\beta} \partial}(u-\varphi_{\epsilon}(u))\rangle\geq 0$ by the standard result for a symmetric Dirichlet form, it suffices to show~:
$\lim_{\epsilon\rightarrow 0}\langle\delta\#S-\tilde{\partial}\varphi_{\epsilon}(u),u-\varphi_{\epsilon}(u)\rangle= 0.$
Of course, since $\tilde{\delta}:=\delta\#S-\tilde{\partial}$ is antisymmetric, the second case follows at once.
We first need only $u \in D(\overline{\E_{(\beta)}})$.
If one takes a sequence of polynomials $P_{n}$ converging in $C^{2}$ norm to $\varphi_{\epsilon}$ on the spectrum of $u$ from $\bar{\delta}(P_{n}(u))\rightarrow \bar{\delta}(\varphi_{\epsilon}(u))$ and the corresponding result for $\partial$ one deduces (from the weak sector condition), that $\langle \tilde{\delta}(P_{n}(u)), v\rangle\rightarrow \langle\tilde{\delta}(\varphi_{\epsilon}(u)),v\rangle$ for any $v\in D(\overline{\delta\oplus \partial})$.  
But $\tilde{\delta}(P_{n}(u))=L_{u}\otimes R_{u}(\partial'(P_{n}))(\tilde{\delta}(u))$ since $\tilde{\delta}$ is a derivation on $(D(\bar{\delta})\cap M^{(\epsilon)})\langle S_1,...,S_n\rangle$ ($\partial'$ the one variable difference quotient). But since $P_n$ and $\varphi_\epsilon$ are $C^{2}$ $L_{u}\otimes R_{u}(\partial'(P_{n}))(v)$ is clearly in  $D(\overline{\delta\oplus \partial})$ and as before converges in this space to $L_{u}\otimes R_{u}(\partial'(\varphi_\epsilon))(v)$. This implies as before $\langle \tilde{\delta}(P_{n}(u)), v\rangle
\rightarrow \langle L_{u}\otimes R_{u}(\partial'(\varphi_{\epsilon}))\#\tilde{\delta}(u),v\rangle$
As a consequence  we get (since we have identified the limit before)~:

$$\tilde{\delta}(\varphi_{\epsilon}(u))=L_{u}\otimes R_{u}(\partial(\varphi_{\epsilon}))\#\tilde{\delta}(u).$$

Now the expression we are interested in is equal to~: \begin{align*}\langle\tilde{\delta}\varphi_{\epsilon}(u),u-\varphi_{\epsilon}(u)\rangle&=\langle\tilde{\delta}(u),L_{u}\otimes R_{u}(\partial(\varphi_{\epsilon}))\#(u-\varphi_{\epsilon}(u))\rangle\\ &=\langle \tilde\delta(u),\varphi_{\epsilon}'(u)(u-\varphi_{\epsilon}(u))\rangle\end{align*}

The last line follows again after a check on polynomials and taking a limit. But now $\phi_{\epsilon}'(u)^{2}(u-\varphi_{\epsilon}(u))^{2}\leq 4\epsilon^{2}\ 1_{[-2\epsilon,0]\cup[1,1+2\epsilon]}(u)$ by functional calculus since 
$\phi_{\epsilon}'$ is 0 outside $[-2\epsilon,1+2\epsilon]$ and bounded by $1$, $(t-\varphi_{\epsilon}(t))^{2}$  is 0 on $[0,1]$ and bellow $t^2$ or $(t-1)^{2}$ on negative and positive numbers respectively. From the above inequality, one deduces by Dominated convergence Theorem that $(u-\varphi_{\epsilon}(u))\phi_{\epsilon}'(u)$ converges to $0$ in $L^{2}$. This concludes since for $u\in (D(\delta)\cap M^{(\epsilon)})\langle S_1,...,S_m\rangle$ we have $\tilde\delta(u)\in L^{2}$.\end{proof}

Let us remind the reader that $\{S_1,...,S_m\}$ and $M^{(\epsilon)}$ (or $\overline{M}$) are free with amalgamation over $B$ as in example 3.3 (c) of \cite{ShlyFreeAmalg00} using Theorem 2.3 of \cite{ShlyFreeAmalg98}. 
Thus any element $M^{(\epsilon)}\langle S_1,...,S_m\rangle$ can be written as sum of products of terms alternating in the two previous algebras and with zero conditional expectation onto $B$.
Let us write with a slight abuse of notation $(D(\Delta)\cap M^{(\epsilon)})\langle S_1,...,S_m\rangle$ for the subspace where terms coming from $M^{(\epsilon)}$ in the above decomposition are in $D(\Delta)$ (since $D(\Delta)\cap M^{(\epsilon)}$  is not a subalgebra, only $D(\Delta^1)\cap M^{(\epsilon)}$ is, this is not a subalgebra too).

Let us note incidentally that (almost) by definition, $(D(\Delta)\cap M^{(\epsilon)})\langle S_1,...,S_m\rangle$ 
 is a core for $\overline{\delta\oplus \partial}$.

 Moreover it is contained in the domain of $A_{\beta}$ (and of it self-adjoint part). Here we use to show it is contained in $D(\Delta_{\beta})$ not only $D(\Delta_{\beta}^1)$ that on terms with the above alternating pattern $Z=X_1Y_1X_2..., X_i\in M^\epsilon\cap D(\Delta)$, $Y_i\in B\langle S_1,...,S_m\rangle$, $E_B(X_i)=E_B(Y_i)=0$ we have $A_{\beta}(Z)=\sum X_1...Y_iA_{\beta}(X_{i+1})...+X_1...A_{\beta}(Y_i)X_{i+1}...+ X_1...\Gamma(X_i,...,Y_j)...+ X_1...\Gamma(Y_i,...,Y_j)...++ X_1...\Gamma(Y_i,...,X_j)...$ but no terms $\Gamma(X_i,...,X_j)=0$ since those terms  vanish by freeness with amalgamation, and since they were the only terms maybe in $L^1$ and not clearly in $L^2$, we got our claim. Thus the supplementary assumption on domains is satisfied too $D(A_{\beta})\cap D(\Delta_{\beta})\cap M^\epsilon$ is a core for $\overline{\delta\oplus \partial}$.
 
 We are thus in the situation of the previous part 2.

\begin{step} Getting the B-Brownian motion\end{step}
We thus define $ \tau_{\beta}$ on full path space above $\Me$. 
Intuitively we will see those traces correspond to the equation we want to solve coupled respectively with $S_{t,(\beta)}^i=S^i-\frac{1}{2} \int_{0}^{t} S_{s,(\beta)}^i(1+\beta)+2(\delta^*\Xi_i)_{s}ds  + \hat{S}_{t}^i+\sqrt{\beta}S_{t}^{i}{}'$ with $S_{t}'$ another $B$-free Brownian motion with same covariance $B$-free with the first and also adapted. Of course only $\hat{S}_{t,(\beta)}^i=\hat{S}_{t}^i+\sqrt{\beta}S_{t}^{i}{}'$ will appear. The equation is nothing but an Orstein-Ulhenbeck equation with a changed drift, special case of the previous part.

Of course we want to use Theorem \ref{FDQ}. We have $B\subset N=W^*(S_1,...,S_m)$, our derivation $(\delta+\partial)\oplus \sqrt{\beta}\partial$ acts like a free difference quotient when restricted to $N$. By definition $S_i\in D(A_\beta)$ and $A_\beta(S_i)=2\delta^*\Xi_i+S_i(1+\beta)\in L^2(\Me)$ by assumption.
$\Delta(S_i)=\delta^*\Xi_i+S_i(1+\beta)$ and $\tau((S_i)^3(A_\beta(S_i)-\Delta(S_i)))=\tau((S_i)^3(\delta^*\Xi_i))=0$ since $(\delta^*\Xi_i)\in L^2(M^{(\epsilon)})$ and we have a semicircular element (an even distribution).

We have thus checked all the assumptions and get $\hat{S}_{t,(\beta)}^{i}=S_{t}^{i}-S_{0}^{i}+\frac{1}{2} \int_{0}^{t} S_{s,(\beta)}^i(1+\beta)+2(\delta^*\Xi_i)_{s}ds$ is a $B$-free Brownian motion (of covariance $(1+\beta)\eta$).

We are now ready to send $\beta$ to zero and define the von Neumann algebra where the solution will live (maybe after letting $\epsilon$ goes to zero in case 2).
We consider the $C^*$-algebraic free product $A$ of an algebraic path space over $M^{(\epsilon)}$ with a full $C^*$-algebraic  path space  in formal variables $\hat{S}_t^i$ (of norm less than say $4t||\eta_{ii}(1)||$ at time t, i.e we take a universal free product of continuous functions of norm less than $4t||\eta_{ii}(1)||$ product over all times and indices $i$, free product defined as above via an inductive limit of finitely many times free products). We have thus a positive linear functional $\hat{\tau}_{\beta}$ corresponding to the previous $\tau_\beta$ when $\hat{S}_t^i$ is given by $\hat{S}_{t,(\beta)}^{i}$ (with the bound $4t||\eta_{ii}(1)||$ this is valid for small $\beta$'s). We consider $\hat{\tau}_{0}^{(\epsilon)}$ a corresponding weak limit point in the trace state space of $A$. 
 We define finally $\hMe$ the corresponding GNS construction of $A$ with respect to  $\hat{\tau}_{0}^{(\epsilon)}$. Again it contains $\widetilde{M^{(\epsilon)}}$ as a von Neumann subalgebra, thus in case (1) it contains $\widetilde{M}$.

In order to prove that $\hat{S}_t$ is a $B$-free Brownian motion (of covariance $\eta$) adapted to the induced filtration $\hMe_{t}$ from the canonical one on Path space $A$, one can use again Paul L\'{e}vy's theorem again. (1) is immediately true for the weak limit point, 
one can check that K we obtain in (2) can be taken independent of $\beta$. In our proof there is no $o(t-s)$ in (3) the left hand side converges easily with the weak limit, for the right hand side one checks the expression bellow $E_B$ converges weakly in $L^2(B)$ (this is a statement about a sequence of linear forms, B being invariant). Thus by Banach-Sachs Theorem a Cesaro mean converges strongly and writing two $E_B$, everything under $\tau|B$ a Cesaro mean converges thus we have equality at the limit. 

\begin{step}Checking the SDEs (concluding in case 1)\end{step}
It only remains to check the stochastic equation since of course by construction all $\E_{(\beta)}$ agree with $\langle\delta^{*}\delta .,.\rangle $ on $M_{0}$ (note $\delta\#S$ cancels the term coming from change of conjugate variable, that's why we put this term in $A_\beta$)  Thus the state agrees with the previously built one, giving $\tilde{M^{(\epsilon)}}$ (built from $M^{(\epsilon)}$) as a sub-algebra. For checking the SDE, we prove of course that the $L^{2}$-norm of the $\beta$ variant goes to zero. Take 
 $X\in D(\Delta^1)\cap M^{(\epsilon)},X'\in  D(\Delta)\cap M^{(\epsilon)} \Delta(X')=A_{\beta}(X')=A_{0}(X')\in M^{(\epsilon)}$. 

We have first to check that every terms in the equation makes sense and we can apply the previous idea to prove the equation.
First $\int_{0}^{t} (\Delta(X'))_{s}ds$ easily makes sense as an integral of a continuous function with value $L^2(\widetilde{M^{(\epsilon)}})$. This is especially a limit of Riemann sums.
Taking first $X'=\eta_{\alpha}(X)$ and applying DCT (using $\eta_{\alpha}^1\Delta^1(X)=\Delta\eta_{\alpha}(X)$ as proven by duality $\eta_{\alpha}^1$ the extension of the resolvent $\eta_{\alpha}=\alpha/(\alpha+\Delta)$ to $L^1$) to get pointwise convergence and domination),  $\int_{0}^{t} (\Delta^1(X))_{s}ds$ make sense as a Bochner integral in $L^{1}(\widetilde{M^{(\epsilon)}})$ and is the limit in $L^{1}(\widetilde{M^{(\epsilon)}})$ of $\int_{0}^{t} (\Delta\eta_\alpha(X))_{s}ds$.

Recall that (as in \cite{BS98}) $\int_{0}^{t}(E_{\He}(\delta(X)))_{s}\#d\hat{S}_{s}$ is defined via an isometry between $\B_2^a$ and $L^{2}(\hMe)$, where $\B_2^a$ is the closure in $L^{2}([0,\infty),H(\hMe,\eta\circ E_B))$ of the corresponding adapted step functions (Indeed we deduce easily the isometry property for the stochastic integral from the definition of B-free Brownian motion).
Of course we can take $\Theta_n \in M^{(\epsilon)}\o_{alg} M^{(\epsilon)}\#\Xi_1+...+M^{(\epsilon)}\o_{alg} M^{(\epsilon)}\#\Xi_m$ converging to $E_{\He}(\delta(X))$ in $\He$. It thus suffices to prove $s\rightarrow (\Theta_n)_s$ is in $\B_2^a$. Again this follows from continuity of $s\rightarrow (\Theta_n)_s$ in $H(\hMe,\eta\circ E_B))$ since  for instance for $t>s$, $||(a\Xi_i b)_s-(a\Xi_i b)_t||^2=2||(a\Xi_i b)||^2-2\Re \tau(b^*\eta_{ii}E_B(a^*\phi_{t-s}(a))\phi_{t-s}(b))$.
We also want to check $\int_{0}^{t}(\delta(\phi_{t-s}(X))))_{s}\#d\hat{S}_{s}$ is well defined in case 1 (thus no projection on $\He$ necessary). Obviously we can restrict first to $[0,t-\epsilon)$ $\delta(\phi_{t-s}(X))$ is continuous with value $\H$ and approximate it by a step function $f$, so it suffices to approximate $(f(s))_s$ for the integrand of the stochastic integral, and this again is as in the constant case.

Again everything applies at level $\beta$ and we can check the various bounds are uniform in $\beta$ for small $\beta$. Thus 
\begin{align*}||-X_{t}+&X_{0}-\frac{1}{2} \int_{0}^{t} (A_\beta(X'))_{s}ds + \int_{0}^{t}(E_{\He}(\delta(X)))_{s}\#d\hat{S}_{s,(\beta)}||_2^2\\ &\rightarrow||-X_{t}+X_{0}-\frac{1}{2} \int_{0}^{t} (A(X'))_{s}ds + \int_{0}^{t}(E_{\He}(\delta(X)))_{s}\#d\hat{S}_{s}||_2^2\end{align*}
 since we can express this in terms of scalar products replace uniformly in $\beta$ the integrals via (corresponding) sums and get the remaining part by the convergence of states. We have a corresponding result for the second stochastic integral expression in case 1. We have also a corresponding statement for any scalar product of terms of the kind involved in the above sum.

Since we proved $S_{s,(\beta)}$ is a free Brownian motion, we can use stochastic calculus to compute~:

\begin{align*}||\phi_t(X_{0})+ &\int_{0}^{t}(\delta\phi_{t-s}(X_s))_{s}\#d\hat{S}_{s,(\beta)}
||_{2}^{2}=||\phi_t(X_{0})||_2^2+ \int_{0}^{t}||\delta\phi_{t-s}(X_0))_{s}||_2^2(1+\beta)ds
\\ &=||\phi_t(X_{0})||_2^2+\frac{1}{2} \int_{0}^{t}\langle A\phi_{t-s}(X_0),\phi_{t-s}(X_0)\rangle+ \langle \phi_{t-s}(X_0),A\phi_{t-s}(X_0)\rangle(1+\beta)ds
\\ &=||\phi_t(X_{0})||_2^2+ (||X_0||_2^2-||\phi_t(X_{0})||_2^2)(1+\beta)
\end{align*}
Likewise,

$$||X_{0}-\frac{1}{2} \int_{0}^{t} AX'_{s}ds||_{2}^{2}= ||X_{0}||_2^2+\frac{1}{2}\int_{0}^{t}ds\int_{0}^{s}du \ \Re\langle \phi_{s-u}(AX'),AX'\rangle -2\Re \langle X_0'-\phi_t(X_0'),X_0\rangle$$
$$||\int_{0}^{t}(E_{\He}(\delta X))_{s}\#d\hat{S}_{s,(\beta)}||_{2}^{2}=(1+\beta)t||E_{\He}(\delta(X))||^2$$
Since by stationarity $||X_{t}||_{2}^{2}=||X||_{2}^{2}$, it remains to estimate scalar products~:

$$\langle X_{t},X_{0}\rangle=\langle \phi_t(X_{0}),X_{0}\rangle,$$
$$\langle X_{t},\frac{1}{2} \int_{0}^{t} AX_{s}'ds\rangle=\frac{1}{2}\int_{0}^{t}ds\langle \phi_{t-s}(X_{0}),A(X_{0}')\rangle 
$$
\begin{align*}&\langle \frac{1}{2} \int_{0}^{t} AX_{s}'ds,\int_{0}^{t}(E_{\He}(\delta X))_{s}\#d\hat{S}_{s(1+\beta)}\rangle=\frac{1}{2}\int_{0}^{t}ds\int_{0}^{s}du\ \langle \delta \phi_{s-u}(AX_{0}'),E_{\He}(\delta(X_{0}))\rangle\\ &=\frac{1}{2}\int_{0}^{t}ds\int_{0}^{s}du\ \langle \delta \phi_{s-u}(AX_{0}'),E_{\He}(\delta(X_{0}))-\delta(X_{0})\rangle+\frac{1}{2}\int_{0}^{t}ds\int_{0}^{s}du\ \langle \phi_{s-u}(AX_{0}'),\Delta^1X_{0}\rangle
\\ &=\frac{1}{2}\int_{0}^{t}ds\int_{0}^{s}du\ \langle \delta \phi_{s-u}(AX_{0}'),E_{\He}(\delta(X_{0})-\delta(X_{0}))\rangle
+\int_{0}^{t}ds\langle (1-\phi_{s})(AX_{0}'),X_{0}\rangle
\end{align*}
\begin{align*}\langle X_{t},\int_{0}^{t}(\delta(\phi_{t-s}( X))_{s}\#d\hat{S}_{s(1+\beta)}\rangle&=\int_{0}^{t}ds\ \langle \delta \phi_{t-s}(X_{0}),\delta \phi_{t-s}(X_{0})\rangle\\ &=||X_0||_2^2-||\phi_t(X_0||_2^2,\end{align*}
\begin{align*}\langle &X_{t},\int_{0}^{t}(E_{\He}(\delta X))_{s}\#d\hat{S}_{s(1+\beta)}\rangle=\int_{0}^{t}ds\ \langle \delta \phi_{t-s}(X_{0}),E_{\He}(\delta(X_{0}))\rangle\\ &=\int_{0}^{t}ds\ \langle \delta \phi_{t-s}(X_{0}),E_{\He}(\delta(X_{0}))-\delta(X_{0})\rangle
+\int_{0}^{t}ds\ \langle  \phi_{t-s}(X),\Delta^1(X_{0})\rangle\\ &=\int_{0}^{t}ds\ \langle \delta \phi_{t-s}(X_{0}),E_{\He}(\delta(X_{0}))-\delta(X_{0})\rangle+2\langle X_{0}-\phi_{t}(X_{0}),X_{0}\rangle
,\end{align*}

Let us justify the first line of the three last formulas, in proving the following general result (for any $X\in L^{2}(M)$ (e.g. $AX$ above with our special $X$ of interest), any piecewise continuous function with value $\H$:
\begin{align*}\langle X_{t},\int_{0}^{t}( F(s))_{s}\#d\hat{S}_{s(1+\beta)}\rangle&=\int_{0}^{t}ds\ \langle \delta \phi_{t-s}(X_{0}),F(s)\rangle.
\end{align*}

 From the way we proved existence of the stochastic integrals, it suffices to replace  $F(s)$ by $(Z\Xi_i T)$ (or a constant of that form times a characteristic function
 ).  By definition, the left hand side is a limit of Riemann sums with terms of the form
\begin{align*}\tau((\phi_{t-s-h}X)_{s+h}^{*}(Z\Xi_i T)_{s}&\#(\hat{S}_{s+h,(\beta)}-\hat{S}_{s,(\beta)}))\end{align*}

Since by translation invariance we have to consider 
terms of the form $(Z\Xi_i T)\#(\hat{S}_{h,(\beta)}-\hat{S}_{0,(\beta)}))$, $Z,T\in M_0^{(\epsilon)}$, we compute for $Y\in M_0^{(\epsilon)}$~:
\begin{align*}&\tau((Y)_{h}(Z\Xi_i T)\#(\hat{S}_{h,(\beta)}-\hat{S}_{0,(\beta)}))\\ &=\tau(Y_{h}^{*}Z_{0}{S^i}_{h,(\beta)}T_{0})+\frac{1}{2}\int_0^{h}du
\tau(\phi_{h-u}(Y^*)_u(Z_0((S^i_{0,(\beta)}(1+\beta)+2(\delta^*\Xi_i))_u)T_0)ds\\& =\tau(\phi_{h}(Y_{0}^*)Z_{0}\phi_h({S^i}_{0,(\beta)})T_0)+\int_{0}^{h}du\ \langle\delta(\phi_{h-u}(Y_{0})), \phi_{u}^*(Z_{0})(\delta+\partial)(\phi_{h-u}({S^i}_{0,(\beta)})\phi_{u}^*(T_0)\rangle\\ &+\frac{1}{2}\int_0^{h}du\tau((\phi_{h-u}(Y^*))_u(Z_0(A{S^i}_{0,(\beta)}))_u)T_0)ds\\ &
=\tau(T_0\phi_{h}(Y_{0}^*)Z_{0}{S^i}_{0,(\beta)})-\frac{1}{2}\int_{0}^{h}dh'\tau((T_0\phi_{h}(Y_{0}^*)Z_{0})\phi_{h'}(A{S^i}_{0,(\beta)}))\\ &+\frac{1}{2}\int_0^{h}du
\tau((\phi_{h-u}(Y^*))_u(Z_0(A{S^i}_{0,(\beta)}))_u)T_0)ds\\ &+\int_{0}^{h}du\ \langle\delta(\phi_{h-u}(Y_{0})), \phi_{u}^*(Z_{0})(\delta+\partial)(\phi_{h-u}({S^i}_{0,(\beta)})\phi_{u}^*(T_0)\rangle
\\ &=\frac{1}{2}\int_0^{h}du\int_{0}^{u}dv
\langle\delta(\phi_{h-v}(Y^*)),\phi_{v}^*(Z_{0})(\delta+\partial)(\phi_{u-v}(A{S}_{0,(\beta)})))\phi_{v}^*(T_0)\rangle ds\\ &+\int_{0}^{h}du\ \langle\delta(\phi_{h-u}(Y_{0})), \phi_{u}^*(Z_{0})(\delta+\partial)(\phi_{h-u}({S}_{0,(\beta)})\phi_{u}^*(T_0)\rangle
\\ &=\int_{0}^{h}du\ \langle\delta(\phi_{h-u}(Y_{0})), \phi_{u}^*(Z_{0})\partial({S}_{0,(\beta)}^i)\phi_{u}^*(T_0)\rangle
\end{align*}
In the first line we used our definition of $S^i_{s,(\beta)}$. In the second line we used the definition of the state with $\phi_t(Y)\in M^{(\epsilon)}$ so that this is in the kernel of $\partial$, and thus it remains only the written term form the carr\'{e} du Champ term, we also prefer using $A{S}_{0,(\beta)}$.
In the third line we use the differential equation for $\phi_h$ in the first term.
 In the fourth line, we noticed the first term in the third line was zero again by stability of $M^{(\epsilon)}$ via $\phi_h$ (note that if we managed this there is no stability by $\phi_h^*$). Then applying the formula for the state in the third term of the third line, we get a cancellation with the second term. The previous remark applies to the term with carr\'{e} du Champ. Finally, we use a Fubini Theorem and again the differential equation for $\phi$ to remove the integral in $u$, it is easy to put a $(\delta+\partial)^*$ to justify this using formulas above to see the right term is in the domain (at least of a corresponding $L^1$ valued adjoint, this may thus need proving an integral formula first with $S$ replaced by $G_{\alpha}(S)$, details are left to the reader). We also simultaneously simplify 
 two canceling terms.
 
 Now we can sum up all the terms in our Riemann sum (of constant mesh $h$) to get~:
 \begin{align*}&\sum_i\tau((\phi_{t-ih-h}X)_{ih+h}^{*}(Z\Xi_i T)_{ih}\#(\hat{S}_{ih+h,(\beta)}-\hat{S}_{ih,(\beta)}))=\\& \int_{0}^{t}ds\ \langle\delta(\phi_{t-s}(Y_{0})), (\phi_{u(s)}^*-id)(Z_{0})\partial({S}_{0,(\beta)}^i)\phi_{u(s)}^*(T_0)+(Z_{0})\partial({S}_{0,(\beta)}^i)(\phi_{u(s)}^*-id)(T_0)\rangle\\ &
 +\int_{0}^{t}ds\ \langle\delta(\phi_{t-s}(Y_{0})), Z_{0}\Xi_i T_0\rangle
 \end{align*}
 where $u(s)$ is increasing linearly from zero on any new interval of integration of the step function we took. Taking a fixed mesh $h$ for the partition, the first line easily goes to zero from continuity of $\phi^*$
 . This concludes our claim.

To sum up, we got in taking $X'=\eta_{\alpha}(X)$:
\begin{align*}&||-X_{t}+X_{0}-\frac{1}{2} \int_{0}^{t} (\Delta(\eta_\alpha(X)))_{s}ds + \int_{0}^{t}(\delta(X))_{s}\#d\hat{S}_{s,(\beta)}||_2^2\\ &=(1+\beta) t||E_{\He}(\delta(X))||^2-t||\delta(X)||^2-\Re\int_{0}^{t}ds\int_{0}^{s}du\ \langle \delta \phi_{s-u}(\Delta\eta_\alpha(X)),E_{\He}(\delta(X_{0}))-\delta(X)\rangle\\ &-2\Re\int_{0}^{t}ds\ \langle \delta \phi_{t-s}(X),E_{\He}(\delta(X))-\delta(X)\rangle+\Re\int_{0}^{t}ds\langle (1-\phi_{s})(\Delta^1 X),\eta_\alpha(X)-X\rangle\\ &-t\Re\langle \delta(\eta_\alpha(X)-X),\delta(X) \rangle+2\Re\langle (1-\phi_{s})(X),\eta_\alpha(X)-X\rangle
\\ &||-X_{t}+\phi_{t-s}(X_{0}) + \int_{0}^{t}(\delta(\phi_{t-s}(X)))_{s}\#d\hat{S}_{s,(\beta)}||_2^2=\beta(||X_0||_2^2-||\phi_t(X_0)||_2^2)\rightarrow_{\beta\rightarrow 0}0.\end{align*}
In case 1, this gives~:
\begin{align*}&||-X_{t}+X_{0}-\frac{1}{2} \int_{0}^{t} (\Delta(\eta_\alpha(X)))_{s}ds + \int_{0}^{t}(\delta(X))_{s}\#d\hat{S}_{s}||_2^2\\ &\leq 4||\eta_\alpha(X)-X||_2||X||_2+3t||\eta_\alpha(\Delta^{1/2}(X))-\Delta^{1/2}(X)||_2||\Delta^{1/2}(X)||_2.\end{align*}
This especially shows $\int_{0}^{t} (\Delta(\eta_\alpha(X)))_{s}ds$ is bounded in $\alpha$ in $L^2$ thus up to extraction converges weakly (necessarily to its $L^1$ limit $\int_{0}^{t} (\Delta^1(X))_{s}ds$ which is thus in $L^2$ and we get by a standard result~:\begin{align*}&||-X_{t}+X_{0}-\frac{1}{2} \int_{0}^{t} (\Delta^1(X))_{s}ds + \int_{0}^{t}(\delta(X))_{s}\#d\hat{S}_{s}||_2^2\\ &\leq\liminf_{\alpha\to\infty} 4||\eta_\alpha(X)-X||_2||X||_2+3t||\eta_\alpha(\Delta^{1/2}(X))-\Delta^{1/2}(X)||_2||\Delta^{1/2}(X)||_2=0.\end{align*}
(recall $D(\Delta^1)\subset D(\Delta^{1/2})$ by definition.)
 This concludes case 1.

For the end of the proof in case 2, let us note several results we proved at level $\epsilon$ (after taking $\beta\to 0$ as above), for any $X\in D(\Delta^1)$, $Y\in M$~:
$$\tau(\left(X_t-X_0+\frac{1}{2}\int_0^tds(\Delta\eta_\alpha(X))_s\right)Y_0)=\langle\phi_t(X)-X,Y-\eta_{\alpha}(Y)\rangle$$
Since we checked a norm convergence in $L^1$ of the integral above, we get:
$$\tau(\left(X_t-X_0+\frac{1}{2}\int_0^tds(\Delta^1(X))_s\right)Y_0)=0.$$

Likewise, for $X,Y\in D(\delta)$: \begin{align*}\tau(X_t7\int_0^T&(E_{\He}(\delta(P)))_s\# d\hat{S}_t)\\ &=\int_0^{t\wedge T}ds \langle \delta\phi_{t-s}(X^*),E_{\He}(\delta(P))-\delta(P)\rangle+2(\tau((X)_{t}P_{t\wedge T})-\tau((X)_{t}P))
\end{align*}

Finally, it is not hard to show by the same limits for any $X,Y\in D(\Delta^1)$ $\tau(\Delta^1(X)_sY_0)=\tau(X_s\Delta^1(Y)_0)$.

\begin{step}Conclusion in case 2\end{step}
We defined $\hMe$ the corresponding GNS construction of $A$ with respect to  $\hat{\tau}_{0}^{(\epsilon)}$. It contains $\tilde{M^{(\epsilon)}}$ as a von Neumann subalgebra, thus in case (1) it contains $\tilde{M}$.
Instead of taking a weak limit point for $\epsilon$ going to zero, we need a universal $C^*$-algebra $C$ where $M^{(\epsilon)}$ is replaced by a universal free product over $B$ in variables $X_1,...,X_n$ (free product defined as above via an inductive limit of finitely many times free products). We again call $\hat{\tau}_{0}^{(\epsilon)}$ the corresponding state on $C$.
 Moreover we consider the subset of tracial states $\M^c(C)$ such that for any non-commutative polynomial over $B$, say $P$, the map $t_1,...,t_n,t_1',...,t_q'\mapsto \tau(P(X_{t_1}^{i_1},..., X_{t_p}^{i_p},S_{t'_1}^{j_1},...,S_{t'_q}^{j_q}))$ is continuous on products of each $[0,T]$ (recall $X_t^i, S_t^j$ are the canonical generators in $C$), with 
  the topology of uniform convergence on compacts for those functions (this topology is obviously metrizable and finer than the weak-* topology induced by $C$ but we won't use this second fact). Of course  $\hat{\tau}_{0}^{(\epsilon)}\in\M^c(C)$ since $\tau(|X_{t}^i-X_{s}^i|^2)\leq 2|\Re\tau(X_i^2-X_i\phi_{t-s}(X_i))| \leq 2||X_i||_2(t-s)^{1/2}||\delta(X_i)||_2$ (recall we defined it in such a way $||\delta(X_i)||_2$ is independent of $\epsilon$. Likewise,  $\tau(|S_{t}^i-S_{s}^i|^2)\leq 2(t-s)||\tilde{\eta}(1)||$ since we proved $S_{t}^i$ is a B-free Brownian motion of covariance $\tilde{\eta}$. The continuity for all non-commutative polynomials follows using Cauchy-Schwarz (even an equicontinuity).  Actually $\hat{\tau}_{0}^{(\epsilon)}\in K_{2||X_i||_2(t-s)^{1/2}||\delta(X_i)||_2,2(t-s)||\tilde{\eta}(1)||}$ set satisfying the previous inequalities, which is a compact subset of $\M^c(C)$ by a variant of Arzela-Ascoli theorem (we use $B$ has separable predual to replace continuity on B-non-commutative polynomials by countably many B-non-commutative monomials).
We can now consider a weak limit point $\hat{\tau}_{0}$ (up to extraction we will consider a weak limit from now on). As before $\hat{S}_s$ gives a B-free Brownian motion of covariance $\eta$ at the limit. Of course the new state is again translation invariant for variables $X_1,...,X_n$ and at each time we know the state as being our original state on $M$ since the state we took on $M^{(\epsilon)}$ converges to this one.
We define finally $\hM$ the corresponding GNS construction of $C$ with respect to  $\hat{\tau}_{0}$. 

Again, we have to show several integrals make sense. For a B-non-commutative polynomial $P\in \hM_0=M$ in $X_{0}^1,...,X_{0}^n$, as before $\int_0^t(P)_sds$ makes sense as a Riemann integral of continuous functions with value $L^1$ or  $L^2$ (we use $\hat{\tau}_{0}\in \M^c(C)$ to prove continuity). By a dominated convergence theorem for Bochner integral with value $L^1(\hM)$ or $L^2(\hM)$, for any $X\in D(\Delta^1)\cap M$ or $X\in D(\Delta)\cap M$ respectively, $\int_0^t(\Delta^1(X))_sds$ also make sense and is a limit of previously considered integrals for P converging to $\Delta^1(X)$.

Likewise for stochastic integrals with $P\# \Xi_i$ P a non-commutative polynomial, and thus $\int_0^t\delta(X)_s\#dS_s$ or $\int_0^t\delta(\phi_{t-s}(X))_s\#dS_s$ make sense.

Let us remind the reader we considered in step 1, an extension $\delta_i$ on $\overline{M}$ of our original derivation with the same value on non-commutative polynomials than the one we ended up considering on $M^{(\epsilon)}$ (we called $\delta_i^{(\epsilon)}$ before removing extra indices) and of course those non-commutative polynomials are corresponding cores for those variants, thus when considered from $L^2(\overline{M})$ or $L^2(M^{(\epsilon)})$ to $\oH$ the adjoints (in both case known to be densely defined) satisfy, if $U\in D(\delta_i^*)$, then $U\in D(\delta_i^{(\epsilon)*})$ and $\delta_i^{(\epsilon)*}(U)=E_{M^{(\epsilon)}}\delta_i^*(U)$.

At the $\epsilon$ level (after taking a limit $\beta \to 0$), if we consider $U\in \oH$, $P$ a non-commutative polynomial in 
$X_i+\sqrt{\epsilon}\overline{S_i}$:

\noindent $\sum_i|\tau(P_tE_{M^{(\epsilon)}}(\delta_i^*(U_i)))|=|\sum_i\tau(\phi_t(P)\delta_i^*(U_i))|=|\sum_i\langle\delta_i^{(\epsilon)}\phi_t(P^*),U_i\rangle|\leq \sqrt{t}||P||_2||U||$.

Now, taking approximations by polynomials of $\delta_i^*(U)$, one can check we have necessarily the corresponding result at the limit~:
$\sum_i|\tau(P_t\delta_i^*(U_i)))|\leq \sqrt{t}||P||_2||U||$ and this extends beyond non-commutative polynomials by continuity so that for any $Y\in M$ $E_{\hM_0}\alpha_t(Y)\in D(\delta)$ (we call $\alpha_t$ the corresponding dilation endomorphism as before).

Starting again at level $\epsilon$, as we noticed earlier, for any non-commutative polynomials $P,Q$ we have $\tau((\Delta^1(P))_tQ_0)=\tau(P_t(\Delta^1(Q)_0))$. Again, even though in $L^1$ $\Delta^1(P),\Delta^1(Q)$ are easily computed when monomials, for instance $P=X_{i_1}...X_{i_p}$,  $\Delta^1(P)=\sum_jX_{i_1}...\Delta^1(X_{i_j}) ...X_{i_p}+\sum_{j<l}X_{i_1}..\Gamma(X_{i_j},...,X_{i_l}) ...X_{i_p}$, if one replaces $\Delta^1(X_{i_j})$ and $\delta(X_j)$ by polynomials $S_i$, and $ S_j'$ $(||S_j'||\leq ||\delta(X_j)||$, this gives a polynomial $R(X_i,S_i,S_j')$. Now at epsilon level, doing the same computation letting $X_i^{\epsilon}=X_i+\sqrt{\epsilon}\ \overline{S_i}$, one gets $\Delta^1(P(X_i^{\epsilon}))=\sum_jX_{i_1}^{\epsilon}...E_{M^{(\epsilon)}}(\Delta^1(X_{i_j})) ...X_{i_p}^{\epsilon}+\sum_{j<l}X_{i_1}^{\epsilon}...E_{M^{(\epsilon)}}(\Gamma(X_{i_j},X_{i_{j+1}}^{\epsilon}...,X_{i_l})) ...X_{i_p}^{\epsilon}.$

Reminding $||X_i||\leq 1$, $R=1+2 ||\tilde{\eta}(1)||$, for $\epsilon<1$, one gets \begin{align*}||\Delta^1&(P(X_i^{\epsilon}))-R(X_i^{\epsilon},S_i(X^{\epsilon}),S_j'(X^{\epsilon}))||_1
\leq \\ &pR^{p-1} \max_{i\in \{i_p\}}\left(||\Delta^1(X_i^{\epsilon})-S_i(X)||_1+||E_{M^{(\epsilon)}}(S_i(X))-S_i(X^{\epsilon})||_1\right)\\ &+p(p-1)R^{p-2}\left(2\max_{i\in \{i_p\}}(||\delta(X_i)||)+\max_{i\in \{i_p\}}(||\delta(X_i)-S_i(X)||+||S_i(X)-S_i'(X^{\epsilon})||)\right)\\& \times\left(\max_{i\in \{i_p\}}(||\delta(X_i)-S_i'(X)||+||S_i'(X)-S_i'(X^{\epsilon})||)\right)\end{align*}

and one can bound $||E_{M^{(\epsilon)}}(S_i(X))-S_i(X^{\epsilon})||_2\leq 2||S_i(X)-S_i(X^{\epsilon})||_2$.

Thus approximating by fixed polynomials and letting $\epsilon\to 0$ and then varying approximating polynomials, one gets $\tau((\Delta^1(P))_tQ_0)=\tau(P_t(\Delta^1(Q)_0))$ at the limit.


Also, from the end of step 4, we have at epsilon level:
$$\tau((P)_tQ_0)=\tau((P)_0Q_0)-\frac{1}{2}\int_0^tds\tau((AP)_sQ_0),$$

giving the same result at the limit, and with the previous results for any $Y\in M$
\begin{align*}\tau(Y_tQ_0)&=\tau(Y_0Q_0)-\frac{1}{2}\int_0^tds\tau(Y_sAQ_0)\\ &=\tau(Y_0Q_0)-\frac{1}{2}\int_0^tds\langle\delta E_{\hM_0}\alpha_s(Y^*),\delta(Q)\rangle),\end{align*}

and this extends from Q polynomial to any element of $D(\delta)$ by a core property, thus especially for $Q\in D(A)$. Now Taking as usual a Laplace transform one gets (using we have the same equation for the semigroup $\phi_t$ on M)

$\int_0^\infty dt  e^{-\lambda t}\tau((E_{\hM_0}\alpha_s(Y)-\phi_s(Y))(\lambda+A/2)Q)=0,$

since this applies for any $Q\in D(A)$ especially one can replace Q by $(\lambda+A/2)^{-1}(Q)$ and get $E_{\hM_0}\alpha_s(Y)=\phi_s(Y)$ after taking an inverse Laplace transform.

To prove as we want our second SDE, it remains to compute $\alpha_s(Y)-E_{\hM_0}\alpha_s(Y)$. 

From the result at the end of step 4, it is easy to deduce as before (for $Y\in M, Z\in D(\delta)$ after extension by continuity):
\begin{align*}\tau(\alpha_t(Y)\int_0^T(\delta(Z))_s\# d\hat{S}_s) &=2(\tau(Y_{t}A_{t\wedge T})-\tau(Y_{t}Z))\\ &=2(\tau(\phi_{t-{t\wedge T}}(Y)Z)-\tau(\phi_t(Y)Z))
\end{align*}

With this equation and $\tau(Y_tZ_s)=\tau(\phi_{t-s}(Y)Z)$ we have all the tools to compute as in step 4.



Let us compute \begin{align*}\tau(\int_0^t(\delta(\phi_{t-s}(Y)))_s\# d\hat{S}_t&\int_0^T(\delta(Z))_s\# d\hat{S}_t)=\int_0^{t\wedge T}ds \langle \delta(\phi_{t-s}(Y^*)),\delta(Z)\rangle\\ &=\int_0^{t\wedge T}ds\  \tau(\phi_{t-s}(Y)AZ)=2(\tau(\phi_{t-{t\wedge T}}(Y)Z)-\tau(\phi_t(Y)Z))\\ &=\tau(\alpha_t(Y)\int_0^T(\delta(Z))_s\# d\hat{S}_s)
\end{align*}
(we already checked the stochastic integral was well defined).
Extending this to step functions (of $\delta(\phi_{t-s}(Z))$) and taking limits in the definition of stochastic integrals one deduces~:
$$\tau(\alpha_t(Y)\int_0^T(\delta(\phi_{t-s}(Z)))_s\# d\hat{S}_s)=\tau(\int_0^t(\delta(\phi_{t-s}(Y)))_s\# d\hat{S}_t\int_0^T(\delta(\phi_{t-s}(Z)))_s\# d\hat{S}_s),$$

from which one deduces by immediate computation (taking $Y=Z\in M$):

$$||\alpha_t(Y)-\phi_{t}(Y)-\int_0^t(\delta(\phi_{t-s}(Y)))_s\# d\hat{S}_t||_2^2=0.$$
 We thus got the SDE we wanted to prove. Having obtained a scalar product for $\alpha_t(Y)$ with stochastic integrals for general $Y$ we can also compute as in step 4 to get the SDE for $X\in D(\Delta^1)$ (all terms with 
$\epsilon$ or $\beta$ being taken at value $0$).
 
 \begin{align*}&||-X_{t}+X_{0}-\frac{1}{2} \int_{0}^{t} (\Delta(\eta_\alpha(X)))_{s}ds + \int_{0}^{t}(\delta(X))_{s}\#d\hat{S}_{s,(\beta)}||_2^2\\&=\Re\int_{0}^{t}ds\langle (1-\phi_{s})(\Delta^1 X),\eta_\alpha(X)-X\rangle\\ &-t\Re\langle \delta(\eta_\alpha(X)-X),\delta(X) \rangle+2\Re\langle (1-\phi_{s})(X),\eta_\alpha(X)-X\rangle
\end{align*}
The end is a copy of the end in case 1.

\end{proof}
By the definition of stochastic integral, we have an isometry of 
$L^{2}(\hat{M}_{0})\oplus L^{2}_{ad}([0,\infty),H(\hat{M},\eta\circ E_B))$ into $L^{2}(\hat{M})$, but since every generator of $\hat{M}$ described above can be written by stochastic integrals, using Ito formula, we get that the above space is actually dense in the whole $L^{2}(\hat{M})$, as a consequence we deduce~:
\begin{proposition}
(All identifications as  $\hat{M}_{0}-\hat{M}_{0}$ bimodules, recall $\hat{M}_{0}=M$) For $\hat{M}$ given by the previous theorem  $L^{2}(\hat{M})=L^{2}(\hat{M}_{0})\oplus L^{2}_{ad}([0,\infty),H(\hat{M},\eta\circ E_B)\hat{M}\otimes_{B}\hat{M}))$ thus $L^{2}(\hat{M})\ominus L^{2}(\hat{M}_{0})\subset L^{2}([0,\infty),H(\hat{M},\eta\circ E_B))\simeq (H(\hat{M},\eta\circ E_B))\o\ell^{2}(\N)$ (using separability of $L^{2}(\hat{M})$). For instance when $\eta=E_B$,$B$ amenable, the above orthogonal is weakly embeddable in the coarse, when $B=\C$, included in a countable direct sum of coarse correspondences.
\end{proposition}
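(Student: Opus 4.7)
The plan is to prove the direct sum decomposition by constructing an explicit isometry from the right-hand side to $L^2(\hat{M})$ and then showing its range is dense using the SDEs of Theorem \ref{main2}. First I would observe that the usual free stochastic integration theory (as in \cite{BS98}, adapted to the $B$-valued setting via the isometry property of $B$-free Brownian motion stated before Paul Lévy's theorem above) yields a well-defined isometric embedding
\[
\Psi\colon L^{2}(\hat{M}_{0})\oplus L^{2}_{ad}\bigl([0,\infty),H(\hat{M},\eta\circ E_B)\bigr)\longrightarrow L^{2}(\hat{M}),\qquad \Psi(X,F)=X+\int_{0}^{\infty}F(s)\#d\hat{S}_{s},
\]
where the two summands are orthogonal because conditional expectation of a stochastic integral onto $\hat{M}_0$ vanishes (this last fact can itself be obtained from the computation $\tau(X_0 \int_0^T (F(s))\#d\hat{S}_s)=0$ done in the proof of Theorem \ref{main2}). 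The map $\Psi$ intertwines left and right $\hat{M}_0$-actions in the obvious way, so the only missing piece is density of the range.

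Next I would establish density by a generation argument. The von Neumann algebra $\hat{M}$ is generated by $\hat{M}_0=M$ together with $\{\hat{S}_t^i\}$, so a total set in $L^{2}(\hat{M})$ is given by alternating products of such generators. I would then proceed by induction on the length of such a product, iteratively applying Itô's formula. The base case is the second SDE of Theorem~\ref{main2},
\[
\alpha_t(Y)=\phi_t(Y)+\int_0^t(\delta\phi_{t-s}(Y))_s\#d\hat{S}_s,\qquad Y\in M,
\]
which exhibits any $Y_t\in\tilde M\subset\hat M$ in the range of $\Psi$. For longer products $Y_t\cdot\hat S_{t_2}\cdots$ one expands the outermost $\hat S$-increment $\hat S_{t_k}-\hat S_{t_{k-1}}$, which is itself in the range of $\Psi$ (with $F$ a simple step function valued in $1\otimes\Xi_i\otimes 1$) and combines with the rest by a discrete Itô product rule; the product of two stochastic integrals decomposes as the sum of two iterated stochastic integrals plus a $dt$-bracket term, both of which again lie in the range of $\Psi$ by closability of $\Psi$ and an inductive use of stochastic Fubini on adapted integrands. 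I expect this inductive expansion of mixed products into iterated stochastic integrals to be the main technical step; one must be careful to keep the integrands adapted and $H(\hat M,\eta\circ E_B)$-valued, which works precisely because the covariance of $\hat S$ is $\eta$.

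Once surjectivity of $\Psi$ is established, the decomposition $L^{2}(\hat M)=L^{2}(\hat M_{0})\oplus L^{2}_{ad}([0,\infty),H(\hat M,\eta\circ E_B))$ follows as bimodules, and the inclusion
\[
L^{2}(\hat M)\ominus L^{2}(\hat M_{0})\;\subset\;L^{2}\bigl([0,\infty),H(\hat M,\eta\circ E_B)\bigr)
\]
is immediate by dropping the adaptedness. Using separability of $L^{2}(\hat M)$ (which holds since we took $B$ with separable predual in the nontrivial case, and in any event $M$ has separable predual), we pick an orthonormal basis of $L^{2}[0,\infty)$ to get the unitary identification $L^{2}([0,\infty),H(\hat M,\eta\circ E_B))\simeq H(\hat M,\eta\circ E_B)\otimes\ell^{2}(\N)$ as $\hat M_0$-bimodules.

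Finally, for the two worked examples: when $\eta=E_B$ one has $H(\hat M,E_B)=L^{2}(\hat M)\otimes_{B}L^{2}(\hat M)$, and if $B$ is amenable then this bimodule is weakly contained in $L^{2}(\hat M)\otimes L^{2}(\hat M)$ (the coarse), so $L^{2}(\hat M)\ominus L^{2}(\hat M_{0})$ is weakly embeddable in a countable multiple of the coarse, hence in the coarse itself up to weak containment. When $B=\C$ the map $\eta$ is scalar-valued so $H(\hat M,\eta\circ E_B)$ is a finite direct sum (indexed by $1,\dots,m$) of copies of $L^{2}(\hat M)\otimes L^{2}(\hat M)$, and tensoring with $\ell^{2}(\N)$ gives exactly a countable direct sum of coarse correspondences, concluding the proof.
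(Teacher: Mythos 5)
Your proposal is correct and follows essentially the same route as the paper: constructing the isometry from $L^{2}(\hat{M}_{0})\oplus L^{2}_{ad}([0,\infty),H(\hat{M},\eta\circ E_B))$ via free stochastic integration, then establishing density of its range by showing that every generator (and, via Itô's formula, every alternating product of generators) of $\hat{M}$ lies in the range, with the special cases handled by identifying $H(\hat M,E_B)\simeq L^{2}(\hat M)\otimes_{B}L^{2}(\hat M)$ and diagonalizing $\eta$ when $B=\C$. The paper states this in a single sentence; your write-up simply makes explicit the inductive Itô expansion and the orthogonality of the two summands (zero conditional expectation of stochastic integrals onto $\hat M_0$), which is consistent with the intended argument.
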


By definition, recall $\alpha_{t}$ is our notation for our dilation of $\phi_{t}$ i.e $\phi_{t}(x)=E_{M}(\alpha_{t}(x)).$ 
Let us note a general transversality lemma (this could also been deduce from Popa's transversality lemma in \cite{P08} in using the automorphism variant of Theorem \ref{main} with the symmetry we proved   translated into a symmetry with respect to $t/-t$, we prefer giving an elementary direct proof in our context, since dilating a semigroup makes the deduction ``easier"):

\begin{proposition}\label{trans}
For any dilation $\alpha_{t}$  (by homomorphims) of a (necessarily contractive since completely positive) symmetric semigroup $\phi_{t}$, i.e. $E_{M}(\alpha_{t}(x))=\phi_{t}(x)$, we have a transversality relation 
$||\alpha_{2t}(x)-x||_{2}^{2}=2||\alpha_{t}(x)-E_{M}(\alpha_{t}(x))||_{2}^{2}$. Moreover we have an equiconvergence relation $2||x-\phi_{2t}(x)||_{2}^{2}\leq ||\alpha_{2t}(x)-x||_{2}^{2}\leq 4||x||_{2}||x-\phi_{t}(x)||_{2}.$
\end{proposition}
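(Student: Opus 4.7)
The plan is to reduce everything to a single clean identity by exploiting the three structural facts we have at our disposal: $\alpha_t$ is a trace-preserving $*$-homomorphism, $E_M\circ\alpha_t=\phi_t$, and $\phi_t$ is symmetric so that $\tau(\phi_s(a)b)=\tau(a\phi_s(b))$ together with the semigroup property $\phi_s\phi_u=\phi_{s+u}$. The common building block for both statements will be the computation of $\|\alpha_{2t}(x)-x\|_2^2$ in terms of $\|\phi_t(x)\|_2^2$.

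First I would establish the transversality identity. Expand
\[
\|\alpha_{2t}(x)-x\|_2^2=\|\alpha_{2t}(x)\|_2^2+\|x\|_2^2-2\Re\tau(\alpha_{2t}(x)^*x).
\]
Since $\alpha_{2t}$ is a trace-preserving $*$-homomorphism, the first term equals $\|x\|_2^2$. For the cross term, $\tau(\alpha_{2t}(x)^*x)=\tau(E_M(\alpha_{2t}(x))^*x)=\tau(\phi_{2t}(x)^*x)$, and then symmetry plus the semigroup property yields $\tau(\phi_{2t}(x)^*x)=\tau(\phi_t(x)^*\phi_t(x))=\|\phi_t(x)\|_2^2$. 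Hence
\[
\|\alpha_{2t}(x)-x\|_2^2=2\bigl(\|x\|_2^2-\|\phi_t(x)\|_2^2\bigr).
\]
On the other hand, by orthogonality of $E_M(\alpha_t(x))$ and $\alpha_t(x)-E_M(\alpha_t(x))$,
\[
\|\alpha_t(x)-E_M(\alpha_t(x))\|_2^2=\|\alpha_t(x)\|_2^2-\|\phi_t(x)\|_2^2=\|x\|_2^2-\|\phi_t(x)\|_2^2,
\]
again using trace preservation of $\alpha_t$. Comparing the two gives the desired identity.

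For the equiconvergence inequalities I would work from the formula $\|\alpha_{2t}(x)-x\|_2^2=2(\|x\|_2^2-\|\phi_t(x)\|_2^2)$. The lower bound reduces, after expanding $\|x-\phi_{2t}(x)\|_2^2=\|x\|_2^2+\|\phi_{2t}(x)\|_2^2-2\|\phi_t(x)\|_2^2$ (using symmetry and the semigroup law again), to the inequality $\|\phi_{2t}(x)\|_2\le\|\phi_t(x)\|_2$, which is just $L^2$-contractivity of $\phi_t$. For the upper bound, rewrite
\[
2\bigl(\|x\|_2^2-\|\phi_t(x)\|_2^2\bigr)=2\tau(x^*x)-2\tau(x^*\phi_{2t}(x))=2\Re\tau\bigl(x^*(x-\phi_{2t}(x))\bigr),
\]
apply Cauchy--Schwarz to get the bound $2\|x\|_2\|x-\phi_{2t}(x)\|_2$, and finish with the triangle inequality $\|x-\phi_{2t}(x)\|_2\le\|x-\phi_t(x)\|_2+\|\phi_t(x-\phi_t(x))\|_2\le 2\|x-\phi_t(x)\|_2$, again by contractivity.

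There is no genuine obstacle here; the only subtlety worth highlighting is that one must use the symmetry of $\phi_t$ in exactly one place (to convert the cross term $\tau(\phi_{2t}(x)^*x)$ into $\|\phi_t(x)\|_2^2$), so the statement would fail for non-symmetric dilations. Everything else is bookkeeping with trace preservation, the tower property $E_M\alpha_t=\phi_t$, Cauchy--Schwarz, and contractivity of $\phi_t$ on $L^2$.
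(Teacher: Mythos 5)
Your proof is correct and follows essentially the same route as the paper: Pythagoras and trace-preservation of $\alpha_t$ give $\|\alpha_t(x)-E_M(\alpha_t(x))\|_2^2 = \|x\|_2^2-\|\phi_t(x)\|_2^2$, symmetry and the semigroup law give $\|\alpha_{2t}(x)-x\|_2^2=2(\|x\|_2^2-\|\phi_t(x)\|_2^2)$, and the two inequalities follow from contractivity and Cauchy--Schwarz. The only cosmetic difference is in the upper bound, where you pass through $\|x-\phi_{2t}(x)\|_2\le 2\|x-\phi_t(x)\|_2$ rather than the paper's telescoping of $\|x\|_2^2-\|\phi_t(x)\|_2^2$ into two inner products; both are equally elementary.
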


\begin{proof}
First note that $||\alpha_{t}(x)-E_{M}(\alpha_{t}(x))||_{2}^{2}=||\alpha_{t}(x)||_{2}^{2}-||E_{M}(\alpha_{t}(x))||_{2}^{2}=||x||_{2}^{2}-||\phi_{t}(x)||_{2}^{2},$ by Pythagoras' Theorem.
But also $||x-\phi_{2t}(x)||_{2}^{2}=||x||_{2}^{2}+||\phi_{2t}(x)||_{2}^{2}- 2||\phi_{t}(x)||_{2}^{2}\leq ||x||_{2}^{2}-||\phi_{t}(x)||_{2}^{2}$ by symmetry and contractivity (with semigroup property for the inequality). 

Likewise $||\alpha_{2t}(x)-x||_{2}^{2}=2||x||_{2}^{2}- 2||\phi_{t}(x)||_{2}^{2}\geq 2||x-\phi_{2t}(x)||_{2}^{2},$
and $||x||_{2}^{2}- ||\phi_{t}(x)||_{2}^{2}=\langle x-\phi_{t}(x),x\rangle+\langle \phi_{t}(x),x-\phi_{t}(x)\rangle\leq 2||x||_{2}||x-\phi_{t}(x)||_{2}$
\end{proof}
\section{Deformations for Popa's Deformation/Rigidity techniques}
\subsection{Properties of stochastic deformations}

We emphasize properties useful for deformation rigidity techniques in the next result (see also the general proposition \ref{trans}).

Let us fix notations before. We call $\H$ the space of value of our derivation $\delta$.
Let $(\H_{s},\xi_{\phi_{s}})$ be the pointed correspondence associated to the completely positive map $\phi_{s}$.
One can get in a standard way a measurable field of Hilbert spaces over $\R_{+}$ in that way (assuming M with separable predual). 
Likewise for $\H_{s}\o_{M}\H\o_{M}\H_{s}$ .

 Let $\H_{\phi_{s}}=\xi_{\phi_{s}}\o \H \o \xi_{\phi_{s}}\subset \H_{s}\o_{M}\H\o_{M}\H_{s}$ be the (constant) sub-Hilbert field corresponding to $\H$. On the direct integral Hilbert space $\int^{\oplus}_{\small\R_{+}}\H_{s}\o_{M}\H\o_{M}\H_{s}d\lambda$ (with respect to Lebesgue measure). We have an $M-M$ bimodule structure (acting by diagonal operators). We can define $\H_{\infty}$ the sub-bimodule generated by $L^{2}(\R_{+},\H)\simeq \int^{\oplus}_{\small\R_{+}}\H_{\phi_{s}}d\lambda$.

\begin{theorem}\label{corresp}
Let $\phi_{t}$ a symmetric Markov semigroup associated to a symmetric Dirichlet form $\Delta=\delta^{*}\delta$ with derivation $\delta:L^{2}(M)\rightarrow \H$. Then the tracial state $\tau$ of theorem \ref{main}, giving by GNS-construction a von Neumann algebra $M\subset\widetilde{M}$, gives rise to a symmetric dilation $\alpha_{t}:M\rightarrow \widetilde{M}$ of $\phi_{t}$ (induced from $\alpha_{t}(X_{0})=X_{t}$ on Path space). Moreover $ (\alpha_{t}-\phi_{t})(M)\subset \hat{\H}_{\infty}\subset L^{2}(\widetilde{M})\ominus L^{2}(M)$, where $\hat{\H}_{\infty}$ is isomorphic to sub-bimodule of the Hilbert bimodule previously introduced $\H_\infty$. 
More precisely, $\alpha_{t}-\phi_{t}(x)$ is sent to $1_{[0,t]}\delta(\phi_{t-s}(x))$ in the canonical  $L^{2}(\R_{+},\H)$ generating $\H_{\infty}$, and all those images generate $\hat{\H}_{\infty}$.
\end{theorem}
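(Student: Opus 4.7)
The existence and basic properties of $\alpha_t$ come for free from Theorem \ref{main}: translation invariance of $\tau$ on the path space $\P_{\R,\max}(M)$ makes $\alpha_t:M\to \widetilde M$ a trace-preserving $*$-homomorphism, and the two-time identity $\tau(m^{[0]} x^{[t]}) = \tau(m\phi_t(x))$ gives $E_M(\alpha_t(x)) = \phi_t(x)$, so $\alpha_t-\phi_t$ lands in $L^2(\widetilde M)\ominus L^2(M)$. The heart of the proof is to construct an isometric $M$--$M$ bimodule map $U$ sending $a(\alpha_t(x)-\phi_t(x))b$ to the section $s\mapsto 1_{[0,t]}(s)\, a\xi_{\phi_s}\otimes \delta(\phi_{t-s}(x))\otimes \xi_{\phi_s} b$ of the Hilbert field defining $\H_\infty$; by definition $\hat\H_\infty$ is the closure of the image, and the ``generating'' assertion is then tautological.

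The starting estimate is the norm identity
\[
\|\alpha_t(x)-\phi_t(x)\|_2^2 = \|x\|_2^2-\|\phi_t(x)\|_2^2 = \int_0^t \|\delta(\phi_{t-s}(x))\|_{\H}^2\,ds,
\]
the first equality being Pythagoras from the orthogonality to $L^2(M)$, the second coming from $\frac{d}{ds}\|\phi_{t-s}(x)\|_2^2 = \langle \delta\phi_{t-s}(x),\delta\phi_{t-s}(x)\rangle$. This already gives $U$ on single vectors; the issue is to promote it to an $M$--$M$ bimodule isometry.

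The tool for this is the level-$2$ two-time formula encoded in Theorem \ref{main}, namely
\[
\Phi_{0,t}(x^{[t]}c^{[0]} y^{[t]}) = f^{(2)}_{0,t}(x,c,y) + \phi_t(x)c\phi_t(y),\qquad f^{(2)}_{0,t}(x,c,y) = \int_0^t \phi_s\bigl(\Gamma(\phi_{t-s}(x),c,\phi_{t-s}(y))\bigr)\,ds,
\]
together with $\Gamma(a,c,b) = \langle\delta(a^*),c\delta(b)\rangle_{L^1(M,\tau)}$. Expanding $\bigl\langle\sum_i a_i(\alpha_t(x_i)-\phi_t(x_i))b_i,\sum_j a_j(\alpha_t(x_j)-\phi_t(x_j))b_j\bigr\rangle_{L^2(\widetilde M)}$, using $\alpha_t(x)^* = \alpha_t(x^*)$ and $E_M\alpha_t = \phi_t$, the three cross terms involving at least one $\phi_t$ collapse to $\sum_{i,j}\tau(a_j^*a_i\phi_t(x_i)b_ib_j^*\phi_t(x_j^*))$, and the pure $\alpha_t$ term produces the same piece plus $\sum_{i,j}\tau(a_j^*a_i f^{(2)}_{0,t}(x_i,b_ib_j^*,x_j^*))$. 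Only the $f^{(2)}$ contribution survives, which using $\tau(\langle\xi,\xi'\rangle_{L^1}a) = \langle\xi,\xi' a\rangle_\H$ and symmetry of $\phi_s$ rewrites as $\sum_{i,j}\int_0^t \langle \delta(\phi_{t-s}(x_i^*)), b_ib_j^*\,\delta(\phi_{t-s}(x_j^*))\,\phi_s(a_j^* a_i)\rangle_\H\,ds$. On the $\H_\infty$ side, the inner product in $\H_s\otimes_M \H\otimes_M \H_s$ of the analogous vectors evaluates, via the right $M$-valued form $\langle a_j\xi_{\phi_s}, a_i\xi_{\phi_s}\rangle = \phi_s(a_j^*a_i)$ and the left form $\langle \xi_{\phi_s}b_i,\xi_{\phi_s}b_j\rangle = \phi_s(b_ib_j^*)$, to the integral of $\langle\delta(\phi_{t-s}(x_i)),\phi_s(a_j^*a_i)\,\delta(\phi_{t-s}(x_j))\,\phi_s(b_ib_j^*)\rangle_\H$.

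The main obstacle is to match these two expressions: the Carr\'e-du-Champ side naturally places $b_ib_j^*$ \emph{between} the two $\delta$'s, whereas the tensor side places it on the \emph{outside} as $\phi_s(b_ib_j^*)$. The reconciliation uses two ingredients: first, the reality relation $\langle\delta(x),y\delta(z)\rangle = \langle \delta(z^*)y^*,\delta(x^*)\rangle$ from section 2.1, applied to replace $\delta(\phi_{t-s}(x_i^*))$ by $\delta(\phi_{t-s}(x_i))$ and simultaneously move $b_ib_j^*$ past one of the $\delta$'s; second, bimodularity of the $L^1$-valued inner product and the defining relation $\tau(c\Gamma(a,1,b)d) = \tau(\Gamma(a,d\cdot,b))c$-type identities from section 2.1 to absorb the $\phi_s(a_j^*a_i)$ factor on the opposite side. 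Once equality of these sesquilinear forms on $M\otimes M\otimes M$ is established, $U$ is well-defined and isometric, its closure by definition is $\hat\H_\infty$, and all three assertions of the theorem -- inclusion, isomorphism type, and the explicit formula $\alpha_t(x)-\phi_t(x)\mapsto 1_{[0,t]}\delta(\phi_{t-s}(x))$ -- follow simultaneously.
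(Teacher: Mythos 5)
Your plan takes essentially the same route as the paper: expand the $L^2(\widetilde M)$ inner product of $\sum_i a_i(\alpha_t-\phi_t)(x_i)b_i$ against itself via the path-space state and the level-two boolean cumulant $f^{(2)}_{0,t}$, cancel the $\phi$-contributions, and match the surviving $\Gamma$-integral against the $\H_\infty$ inner product. The paper does this in two lines (for possibly distinct times $t>s$), and your sketch fills in exactly the kind of bookkeeping they suppress.

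One slip worth flagging: when you quote $f^{(2)}_{0,t}(x_i,b_ib_j^*,x_j^*)$ as producing $\langle\delta(\phi_{t-s}(x_i^*)),\,b_ib_j^*\,\delta(\phi_{t-s}(x_j^*))\,\phi_s(a_j^*a_i)\rangle_\H$, you have dropped the $\phi_s$ that the affiliation formula applies to the middle argument: in Definition 8 the middle slot is $H^{(1)}_{0,u}(X_2^{[u]})=\phi_u^*(X_2)=\phi_u(X_2)$ in the symmetric case, so the correct Carr\'e-du-Champ expression already carries $\phi_s(b_ib_j^*)$ (this is visible in the paper's own display, where \emph{both} $\phi_u(XU^*)$ and $\phi_u(WZ^*)$ appear). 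Your ``main obstacle'' (untransported $b_ib_j^*$ inside vs.\ transported $\phi_s(b_jb_i^*)$ outside) is therefore partly an artifact of this omission. After correcting it, what remains to reconcile is only the left/right placement and the stars, and for that the tools you cite --- reality of $\delta$ plus $M$-bimodularity of $\langle\cdot,\cdot\rangle_\H$ --- are indeed what is needed and do close the argument. You should also run the computation for two distinct times $t\neq s$ (as the paper does) rather than a single $t$, since the isometry $U$ must be checked across all pairs of generators of $\hat\H_\infty$; your two-time formula then goes through the filtration via $E_{\widetilde M_s}(\alpha_t(Y))=\alpha_s(\phi_{t-s}(Y))$ before invoking $f^{(2)}_{0,s}$.
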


\begin{proof}
The only new result is about the range bimodule. From the definition of the state we see that (for say $t> s$): \begin{align*}\langle X(\alpha_{t})(Y)Z,U(\alpha_{s})(V)W\rangle& = \langle X\phi_{t}(Y)Z,U\phi_{s}(V)W\rangle\\ &+ \int_{0}^{s}du \ \tau( \delta(\phi_{t-u}(Y^{*}))\phi_{u}(X^{*}U)\delta(\phi_{s-u}(V))\phi_{u}(WZ^{*})),\end{align*}
 thus 
$$\langle X(\alpha_{t}-\phi_{t})(Y)Z,U(\alpha_{s}-\phi_{s})(V)W\rangle= \int_{0}^{s}du\ \langle \phi_{u}(XU^{*})\delta(\phi_{t-u}(Y)),\delta(\phi_{s-u}(V))\phi_{u}(WZ^{*})\rangle.$$

We see that we can identify $X(\alpha_{t}-\phi_{t})(Y)Z$ to $u\mapsto 1_{u\in[0,t]}X\xi_{\phi_{u}}\o\delta\phi_{t-u}(Y)\o\xi_{\phi_{u}}Z$ in $\H_{\infty}$, which proves the result.
\end{proof}

\subsection{Deformation/rigidity reminder and applications}

In \cite{{popamal1}, {popa2001}}, Popa introduced a powerful tool to prove the unitary conjugacy of two von Neumann subalgebras of a tracial von Neumann algebra $(M, \tau)$. 
If $A, B \subset (M, \tau)$ are (possibly non-unital) von Neumann subalgebras, denote by $1_A$ (resp. $1_B$) the unit of $A$ (resp. $B$).

\begin{theorem}[Popa, \cite{{popamal1}, {popa2001}}]\label{inter1}
Let $(M, \tau)$ be a finite von Neumann algebra. Let $A, B \subset M$ be possibly non-unital von Neumann subalgebras. The following are equivalent:
\begin{enumerate}
\item There exist $n \geq 1$, a possibly non-unital $\ast$-homomorphism $\psi~: A \to \mathbf{M}_n(\C) \otimes B$ and a non-zero partial isometry $v \in \mathbf{M}_{1, n}(\C) \otimes 1_AM1_B$ such that $x v = v \psi(x)$, for any $x \in A$.

\item The bimodule $\vphantom{}_AL^2(1_AM1_B)_B$ contains a non-zero sub-bimodule $\vphantom{}_AH_B$ which is finitely generated as a right $B$-module. 

\item There is no sequence of unitaries $(u_k)$ in $A$ such that 
\begin{equation*}
\lim_{k \to \infty} \|E_B(a^* u_k b)\|_2 = 0, \forall a, b \in 1_A M 1_B.
\end{equation*}
\end{enumerate}
\end{theorem}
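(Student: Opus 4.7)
The plan is to prove the cyclic chain of implications $(1) \Rightarrow (2) \Rightarrow (3) \Rightarrow (1)$, treating $(1) \Leftrightarrow (2)$ as essentially a restatement in the language of Hilbert $B$-modules, $(2) \Rightarrow (3)$ as a routine Pimsner--Popa basis computation, and reserving the genuine work for $(3) \Rightarrow (1)$ via Popa's convexity/center-of-mass argument.

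First I would handle $(1) \Rightarrow (2)$: writing the partial isometry as a row $v = (v_1,\dots,v_n)$ with $v_j \in 1_A M 1_B$, define $H_0 = \sum_j v_j B \subset 1_A M 1_B$ and let $H$ be its $\|\cdot\|_2$-closure in $L^2(1_A M 1_B)$. The right $B$-module structure is obvious and finitely generated by the $v_j$; left $A$-invariance follows from $x v_j = \sum_i v_i \psi_{ij}(x)$. For $(2) \Rightarrow (1)$ I would invoke the Murray--von Neumann classification of finitely generated right Hilbert $B$-modules: any such $H$ is of the form $p(\C^n \otimes L^2(B))$ for a projection $p \in \mathbf{M}_n(\C) \otimes B$, the left $A$-action becomes a $\ast$-homomorphism $\psi\colon A \to p(\mathbf{M}_n(\C) \otimes B)p$, and unpacking the inclusion $H \hookrightarrow L^2(1_A M 1_B)$ as a right $B$-linear isometry produces the intertwiner $v$ with $xv = v\psi(x)$.

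For $(2) \Rightarrow (3)$, let $\{\eta_i\}_{i=1}^n \subset H$ be a Pimsner--Popa basis with projections $q_i \in B$ so that $P_H(\xi) = \sum_i \eta_i E_B(\eta_i^* \xi)$ for all $\xi \in L^2(1_A M 1_B)$. For any unitary $u \in A$ and any $a,b \in 1_A M 1_B$, since $H$ is left $A$-invariant the orthogonal projection $P_H$ commutes with left multiplication by $u$; combining this with the bound $\|P_H(a u b)\|_2^2 = \sum_i \|E_B(\eta_i^* a u b)\|_2^2$ shows that if the $E_B$-sequence in $(3)$ went to zero for every $a,b$, then $P_H$ would annihilate $au$ for all $a$, contradicting $H \neq 0$ and $P_H(a) = a$ for suitable $a \in H \cap M$.

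The main obstacle is $(3) \Rightarrow (1)$, Popa's original convexity argument, which I would execute as follows. Assume $(3)$ holds, so there is $\delta > 0$ and a finite set $F \subset 1_A M 1_B$ such that $\sum_{a,b \in F}\|E_B(a^* u b)\|_2^2 \geq \delta$ for every $u \in \mathcal{U}(A)$. Work inside Jones' basic construction $\langle M, e_B\rangle$ with its semifinite trace $\operatorname{Tr}$ for which $\operatorname{Tr}(x e_B y) = \tau(xy)$, and form the positive element $T = \sum_{a,b \in F} b e_B b^* \in \langle M,e_B\rangle_+$ (after symmetrization) so that $\tau$-computations translate the assumption into $\operatorname{Tr}(u^* T u \cdot a e_B a^*) \geq \delta$ uniformly in $u \in \mathcal{U}(A)$. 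Consider the weakly closed convex hull $K = \overline{\mathrm{conv}}^{w}\{u T u^* : u \in \mathcal{U}(A)\}$ inside $\langle M, e_B\rangle$; the uniform lower bound guarantees this is a bounded convex set of positive operators not containing $0$, so it has a unique element $T_0$ of minimal $\|\cdot\|_{2,\operatorname{Tr}}$-norm. Uniqueness forces $T_0$ to be $A$-central, i.e.\ $T_0 \in A' \cap \langle M, e_B\rangle$, nonzero, positive, and with $\operatorname{Tr}(T_0) < \infty$ (inherited from $\operatorname{Tr}(T) < \infty$). Taking a spectral projection $p$ of $T_0$ of finite $\operatorname{Tr}$-trace yields a nonzero projection in $A' \cap \langle M, e_B\rangle$ with $\operatorname{Tr}(p) < \infty$; translating this through the standard correspondence between such projections and finitely generated $A$-$B$ sub-bimodules of $L^2(1_A M 1_B)$ (the projection $p$ defines a range that is finitely generated as a right $B$-module exactly because $\operatorname{Tr}(p) < \infty$) recovers $(2)$, hence $(1)$ by the first paragraph. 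The delicate point here, and the true content of the argument, is the extraction of a nonzero $A$-central minimizer from a bounded convex set in a semifinite von Neumann algebra, which relies on weak-compactness together with strict convexity of the $\|\cdot\|_{2,\operatorname{Tr}}$-norm on the bounded part of $\langle M, e_B\rangle_+$.
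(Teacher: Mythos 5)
The paper does not prove this theorem; it is quoted verbatim (a standard form of Popa's intertwining-by-bimodules theorem) and attributed to \cite{popamal1,popa2001}. Your proposal is therefore being measured against the literature proof rather than anything internal to the paper, and what you have written is indeed a faithful sketch of Popa's original convexity argument, with the correct skeleton: pass to the basic construction $\langle M,e_B\rangle$, form the convex hull of the $\mathcal{U}(A)$-orbit of a finite-trace positive element, extract the $\|\cdot\|_{2,\operatorname{Tr}}$-minimizer, note that it is $A$-central and of finite trace, and cut by a spectral projection to land in $A'\cap\langle M,e_B\rangle$.

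There is, however, one genuine gap, and it sits exactly where the theorem is non-trivial: the passage from a finitely generated right $B$-submodule $H\subset L^2(1_AM1_B)$ to a partial isometry $v$ with entries in $M$. Your $(2)\Rightarrow(1)$ step says that ``unpacking the inclusion $H\hookrightarrow L^2(1_AM1_B)$ as a right $B$-linear isometry produces the intertwiner $v$,'' but the images of a $B$-basis under this isometry are a priori only vectors in $L^2(1_AM1_B)$, not elements of $1_AM1_B$. Statement (1) demands $v\in\mathbf{M}_{1,n}(\C)\otimes 1_AM1_B$ (bounded entries), and a Pimsner--Popa basis of $H$ need not consist of bounded vectors. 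The standard repair is to form the positive operator $D=(E_B(\xi_j^*\xi_i))_{ij}\in\mathbf{M}_n(\C)\otimes B$ built from the $L^2$-basis, cut by a spectral projection $1_{[\varepsilon,\infty)}(D)$ to pass to a nonzero sub-bimodule on which the renormalized basis $\eta_j=\sum_i\xi_i(D^{-1/2})_{ij}$ is bounded, and only then take the polar-decomposition partial isometry $v$. Without this step the intertwiner lives in $L^2$ and (1) does not follow. A related, smaller imprecision: in your $(2)\Rightarrow(3)$ the identity $\|P_H(a u b)\|_2^2=\sum_i\|E_B(\eta_i^*aub)\|_2^2$ and the conclusion ``$P_H(a)=a$ for suitable $a\in H\cap M$'' both implicitly assume a bounded basis, so the same cutting is needed there. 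Finally, the element you call $T=\sum_{a,b\in F}be_Bb^*$ has a spurious sum over $a$; you mean $T=\sum_{b\in F}be_Bb^*$, with the uniform lower bound then read off from $\sum_{a\in F}\operatorname{Tr}(u^*Tu\cdot ae_Ba^*)\geq\delta$. These are repairable but must be said; as written the proof of $(2)\Rightarrow(1)$ does not close.
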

If one of the previous equivalent conditions is satisfied, we shall say that $A$ {\it embeds into} $B$ {\it inside} $M$ and denote $A \preceq_M B$. When M is a $II_{1}$ factor and $A,B \subset M$ are Cartan subalgebras, then $A \preceq_M B$ if and only if there exists a unitary $u \in U(M)$ such that $A=uBu^{*}$, see \cite{popa2001} Theorem A.1 (see also \cite{Va06}, Theorem C.3).

We will use this notion in conjunction with a variant of a lemma of Jesse Peterson (theorem 2.5 in \cite{P10}).
Our proof gives a special case of this result when we can prove existence of dilations of the above type.
Recall the following~:

\begin{definition}
Let $N$ be a finite von Neumann $B,A \subset N$  von Neumann subalgebras and $\H$ an $N$-$N$ Hilbert bimodule.  $\H$ is said to be 
{\bf compact relative to $B\subset N$ as an $A-A$ bimodule} if given any sequence $x_n \in (A)_1$ such that $\| E_B(y x_n z) \|_2 \rightarrow 0$, for all $y,z \in N$ then $\langle x_n \xi 
y_n, \xi \rangle \rightarrow 0$, for any sequence $y_n \in (A)_1$ and $\xi \in \H$. 
\end{definition}

The standard example, as explained in \cite{P10} example 2.3, is a multiple of $L^{2}(N)\o_{B}L^{2}(N)$ (here we can take $A=N$).
\begin{theorem}\label{L2rigNorm}
Let $N$ be a finite von Neumann algebra, $B,A \subset N\subset M$ von Neumann subalgebras, assume $\H\subset L^{2}(M)\ominus L^{2}(N)$ is an Hilbert $N$-$N$ bimodule which is compact relative to $B\subset N$ as an $A-A$ bimodule and 
 assume given a family of $*$-homomorphism $\alpha_{t}:N\rightarrow M$ dilating a symmetric semigroup $\phi_{t}$ on $N$ and with $Range (\alpha_{t}-\phi_{t})\subset \H$.  If $A$ does not embed into $B$ inside $N$, and the associated deformation 
$\alpha_{t}$ converges uniformly in $\| \cdot \|_2$ to the identity on $(A)_1$ then $\alpha_{t}$ converges uniformly in $\| \cdot \|_2$ to 
the identity on the unit ball of the von Neumann algebra generated by its normalizer $(\mathcal{N}_N(A)'')_1$.
\end{theorem}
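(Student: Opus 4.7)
The plan is to carry out the standard deformation/rigidity argument of Peterson (cf.\ Theorem 2.5 of \cite{P10}), exploiting the transversality relation of Proposition \ref{trans} together with the compactness hypothesis on $\H$.

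First I argue by contradiction: suppose $\alpha_t$ does not converge uniformly on $(\mathcal{N}_N(A)'')_1$. By a Kaplansky-density argument and linearity, one may extract $\epsilon > 0$, a sequence $t_n \to 0$, and unitaries $u_n \in \mathcal{N}_N(A)$ with $\|\alpha_{t_n}(u_n) - u_n\|_2 \geq \epsilon$. Because $\alpha_t$ dilates the symmetric semigroup $\phi_t$, the transversality relation of Proposition \ref{trans} gives
\[
\|\alpha_{t_n}(u_n) - u_n\|_2^2 \;=\; 2\,\|\alpha_{t_n/2}(u_n) - \phi_{t_n/2}(u_n)\|_2^2,
\]
so setting $s_n = t_n/2$ and $\xi_n := \alpha_{s_n}(u_n) - \phi_{s_n}(u_n)$, the hypothesis on $\mathrm{Range}(\alpha_t - \phi_t)$ places $\xi_n$ in $\H$, with $\|\xi_n\|_2 \geq \epsilon/\sqrt{2}$.

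Next I exploit that $u_n$ normalizes $A$: for any $b \in (A)_1$ one has $u_n b u_n^* \in (A)_1$, and applying the homomorphism $\alpha_{s_n}$ to the identity $u_n\cdot b = (u_n b u_n^*)\cdot u_n$ yields
\[
\alpha_{s_n}(u_n)\,b - (u_n b u_n^*)\,\alpha_{s_n}(u_n)
= \alpha_{s_n}(u_n)\bigl(b-\alpha_{s_n}(b)\bigr) + \bigl(\alpha_{s_n}(u_nbu_n^*)-u_nbu_n^*\bigr)\alpha_{s_n}(u_n).
\]
Both right-hand summands have $\|\cdot\|_2$ tending to $0$ uniformly in $u_n \in \mathcal{N}_N(A)$ and $b \in (A)_1$, by the assumed uniform convergence of $\alpha_{s_n}$ to the identity on $(A)_1$. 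Now decompose $\alpha_{s_n}(u_n) = \phi_{s_n}(u_n) + \xi_n$; since $N$ acts on both $L^2(N)$ and $\H$, and these are orthogonal by assumption, the two components of the above difference split and each has norm bounded by the total. In particular $\|\xi_n b - (u_n b u_n^*)\xi_n\|_2 \to 0$, and multiplying on the left by $u_n^*$ gives, with $\eta_n := u_n^*\xi_n \in \H$,
\[
\|\eta_n b - b\,\eta_n\|_2 \longrightarrow 0 \quad \text{uniformly in } b \in (A)_1, \qquad \|\eta_n\|_2 \geq \epsilon/\sqrt{2}.
\]

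The main obstacle, and the place where the hypotheses $A \not\preceq_N B$ and compactness of $\H$ relative to $B$ are combined, is to convert the approximately $A$-central sequence $(\eta_n) \subset \H$ into a genuine contradiction. The cleanest route is via an ultrapower: the sequence $(\eta_n)$ defines a vector $\hat\eta \in \H^\omega$ of norm $\geq \epsilon/\sqrt{2}$ that is honestly $A$-central, because $[\hat\eta, a] = 0$ for every $a \in A \subset N \subset N^\omega$. The compactness of $\H$ relative to $B$ as an $A$-$A$ bimodule passes to $\H^\omega$ (relative to $B^\omega$), and a standard consequence of this compactness, combined with the non-embedding hypothesis $A \not\preceq_N B$ (which via Theorem \ref{inter1} furnishes unitaries $x_k \in \mathcal{U}(A)$ with $\|E_B(y x_k z)\|_2 \to 0$), is that any $A$-central vector in $\H^\omega$ must vanish: indeed the compactness applied to a fixed $\xi \in \H$ with $y_k = x_k^*$ gives $\langle x_k \xi x_k^*,\xi\rangle \to 0$, whereas centrality combined with unitarity of $x_k$ would force $\langle x_k \hat\eta x_k^*,\hat\eta\rangle = \|\hat\eta\|_2^2 > 0$. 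This contradicts $\|\hat\eta\|_2 \geq \epsilon/\sqrt 2$ and concludes the argument.
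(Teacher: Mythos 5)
Your preliminary steps are correct and close in spirit to the paper's proof: transversality produces $\xi_n=\alpha_{s_n}(u_n)-\phi_{s_n}(u_n)\in\H$ of norm bounded below, the orthogonal splitting $L^2(M)=L^2(N)\oplus L^2(N)^\perp$ combined with the normalizer identity $u_nb=(u_nbu_n^*)u_n$ shows that $\eta_n:=u_n^*\xi_n\in\H$ satisfies $\sup_{b\in(A)_1}\|\eta_n b-b\eta_n\|_2\to 0$, and you aim to contradict compactness. The one genuine gap is the final ultrapower passage: the assertion that compactness of $\H$ relative to $B$ passes to $\H^\omega$ relative to $B^\omega$ is stated but not justified, and it is not automatic. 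The compactness hypothesis is a statement about each \emph{fixed} $\xi\in\H$ (for each admissible sequence $x_k$, $\langle x_k\xi y_k,\xi\rangle\to_k 0$), with no uniformity over varying $\xi$; for the ultrapower you would need $\lim_k\lim_{n\to\omega}\langle x_k\eta_n x_k^*,\eta_n\rangle=0$, i.e.\ an exchange of limits that the pointwise definition does not furnish, and you would also have to check $\|E_{B^\omega}(y x_k z)\|_2\to 0$ for all $y,z\in N^\omega$, again a uniformity in $n$ that $A\npreceq_N B$ does not give.

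Happily the ultrapower is unnecessary and you already have all the ingredients: fix a single $n$ with $\sup_{b\in(A)_1}\|\eta_n b-b\eta_n\|_2<\epsilon^2/8$; then $\eta_n$ is one fixed vector of $\H$, so compactness applied to the sequence $x_k\in\mathcal{U}(A)$ furnished by $A\npreceq_N B$ (with $y_k=x_k^*$) gives $\langle x_k\eta_n x_k^*,\eta_n\rangle\to_k 0$, while the approximate centrality yields $\langle x_k\eta_n x_k^*,\eta_n\rangle\geq\|\eta_n\|_2^2-2\|\eta_n\|_2\sup_b\|\eta_n b-b\eta_n\|_2>\epsilon^2/4$ for every $k$ — a contradiction. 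For comparison, the paper argues directly rather than by contradiction: it fixes $v\in\mathcal{N}_N(A)$ and $t<t_0$, applies compactness to the single vector $\xi=\phi_t(v)-\alpha_t(v)$ with the sequence $u_n$ coming from $A\npreceq_N B$, uses the identities $u_n\alpha_t(v)v^*u_n^*v-\alpha_t(v)=(u_n-\alpha_t(u_n))\alpha_t(v)v^*u_n^*v+\alpha_t(u_nv)(v^*u_n^*v-\alpha_t(v^*u_n^*v))$ together with the $\phi_t$ analogue (estimated by a standard completely-positive bound) to get $\sqrt{2}\|\xi\|_2=\lim_n\|u_n\xi v^*u_n^*v-\xi\|_2<\varepsilon$, and then invokes transversality — so it never needs to consider a varying family $\eta_n$.
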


\begin{proof}
It is well known that uniform convergence on the unit ball is equivalent to uniform convergence on unitaries in our context.
  Let $1 \geq \varepsilon > 0$,  get by the assumption and lemma \ref{trans} 
a $t_0 > 0$ such that $\forall t< t_0$, $x \in B_1$  
$y \in N_1$ we have 
$\| \alpha_t(x) - x \|_2 < \varepsilon / 8$, and $\| \phi_t(x) - x \|_2^{1/2} < \varepsilon / 8$.
Since $A\npreceq_N B $, there exists a $u_{n}$ sequence of unitaries in $A$ such that $||E_{B}(xu_{n}y)||_2\rightarrow 0$ for any $x,y\in N$ and thus since  we assumed $\H$ compact relative to $B\subset N$ as an $A-A$ bimodule, for any $\xi\in\H$ like $\xi=\phi_{t}(v)-\alpha_{t}(v)$, if $v \in \mathcal{N}_N(A)$ so that $vu_{n}v^{*}\in A$, $\langle u_n \xi 
vu_nv^{*}, \xi \rangle \rightarrow 0$.
Hence we have, using the relations $$u_n(\alpha_t( v )) v^*u_n^*v -\alpha_t( v ) =(u_n-\alpha_t(u_n))\alpha_t( v )v^*u_n^*v+ \alpha_t(u_n v )(v^*u_n^*v-\alpha_t(v^*u_n^*v))$$
$$u_n(\phi_t( v )) v^*u_n^*v -\phi_t( v ) =(u_n\phi_t( v )-\phi_t(u_n v ))v^*u_n^*v+ (\phi_t(u_n v )v^*u_n^*v-\phi_t(u_n vv^*u_n^*v))$$

and using also a standard bound on completely positive maps (e.g. Corollary 1.1.2 in \cite{popa2001} $||\phi_{t}(uv)-\phi_{t}(u)v||_{2}\leq 3 ||\phi_{t}(v)-v||_{2}^{1/2}$), we get~: \begin{align*}
&\sqrt{2}\| \phi_t(v) - \alpha_t(v) \|_2 
= \lim_{n \rightarrow \infty} \| u_n(\phi_t(v) - \alpha_t( v )) v^*u_n^*v
			-  (\phi_t(v) - \alpha_t(v)) \|_2
\\ &\leq \sup_{n} ( 3\| \phi_t(u_n) -  u_n \|_2^{1/2}+3\| \phi_t(v^*u_n^*v) -  v^*u_n^*v  \|_2^{1/2} +\| \alpha_t(u_n) -  u_n \|_2+\| \alpha_t(v^*u_n^*v) -  v^*u_n^*v  \|_2)\\ &
	< \varepsilon,
\end{align*}
Using now the transversality part of lemma \ref{trans} this is nothing but $\| v - \alpha_{2t}(v) \|_2 < \varepsilon$. A standard argument concludes.\end{proof}

Let $\Gamma$ be a discrete group which acts on a finite von Neumann algebra (with separable predual) $B$ by preserving a distinguished trace $\tau$ , let $M = B \rtimes \Gamma$ be the crossed product construction and 
suppose that $\pi: \Gamma \rightarrow \U(\K)$ is a $C_0$ representation.  Then the associated $M$-$M$ bimodule given by $\H_\pi = \K \overline 
\otimes L^2M$ with actions satisfying 
$$
b_1u_{\gamma_1} (\xi \otimes \eta) b_2u_{\gamma_2} = (\pi(\gamma_1) \xi ) \otimes (b_1 u_{\gamma_1} \eta
b_2 u_{\gamma_2} ),
$$
for each $\xi \in \K, \eta \in L^2M, \gamma_1, \gamma_2 \in \Gamma, b_1, b_2 \in B$.
 Note (as in example 2.4 of \cite{P10}) $\H_\pi$ is compact relative to $B\subset M$ as an M-M bimodule.
Let $b:\Gamma \rightarrow \K$ be a cocycle ($b(\gamma_1 \gamma_2) = b(\gamma_1) + \pi(\gamma_1)b(\gamma_2)$ for 
all $\gamma_1, \gamma_2 \in \Gamma$) then the derivation $\delta_b$ from $B \Gamma \subset B \rtimes \Gamma$ into the $(B \rtimes \Gamma)$-$(B \rtimes \Gamma)$ 
bimodule $\H_\pi$ which satisfies $\delta_b(xu_\gamma) = b(\gamma) \otimes xu_\gamma$ for all $\gamma \in \Gamma$, $x \in B$ is a closable real 
derivation.

The next theorem can be viewed as a variant of 
Theorems $4.9$ in \cite{ozawapopa},
 A in \cite{ozawapopaII} and (especially close to) $3.5$ in \cite{houdayer9}.

As usual, as always in this part, $\Lim$ denote a state on $\ell^{\infty}(I)$ for $I$ a directed set, which  extends ordinary limit.

\begin{theorem}\label{step}
Let $M=B \rtimes \Gamma$ as above, $M\subset \widetilde{M}$ von Neumann subalgebras, assume $\K$  is a non-amenable representation, $\H_{\infty}\subset L^{2}(\widetilde{M})\ominus L^{2}(M)$ produced as in Theorem \ref{corresp} from $\H$ coming from $\K$ as above. Take $P \subset M$ is a regular weakly compact subalgebra. 
Assume given a family of $*$-homomorphism $\alpha_{t}:M\rightarrow \widetilde{M}$ dilating a symmetric $\| \cdot \|_2$-strongly continuous semigroup $\phi_{t}$ on $M$ and with $Range (\alpha_{t}-\phi_{t})\subset \H_{\infty}$ in the way of Theorem \ref{corresp}. Also assume $\alpha_{t}$ is symmetric (at least so that $\tau(\alpha_{t}(X)Y\alpha_{t}(Z)T)=\tau(X\alpha_{t}(Y)Z\alpha_{t}(T))$.
Then $\alpha_{t}$ (or $\phi_{t}$) converges uniformly in $\| \cdot \|_2$ to the identity on $(P)_1$.
\end{theorem}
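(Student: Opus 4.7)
The plan is to argue by contradiction in the spirit of the Ozawa--Popa--Peterson scheme (compare \cite{ozawapopa,ozawapopaII,P10,houdayer9}). Suppose $\alpha_t$ does not converge uniformly to the identity on $(P)_1$. Then Proposition~\ref{trans} applied to $\alpha_t$ produces $t_0 > 0$, $c > 0$ and a sequence of unitaries $u_n \in P$ with $\|\alpha_{t_0}(u_n) - \phi_{t_0}(u_n)\|_2 \geq c$ for every $n$. By Theorem~\ref{corresp}, the bounded vectors $\xi_n := \alpha_{t_0}(u_n) - \phi_{t_0}(u_n)$ live in $\H_\infty$, which embeds as a sub-bimodule of a direct integral of copies of $\H_\pi = \K \overline{\otimes} L^2(M)$; the lower bound $c$ forces $\xi_n$ to have a uniformly non-trivial component along the $\K$-factor.

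Next, I would exploit the weak compactness of $P$ in $M$: by definition this supplies a state $\omega$ on $\B(L^2(M))$ whose restriction to $M$ is $\tau$ and which is invariant under conjugation by every unitary of $P$. Coupling $\omega$ to the sequence $(\xi_n)$ via the formula $\Phi_n(T) = \omega(\langle T\xi_n,\xi_n\rangle_M)$ (where $\langle\cdot,\cdot\rangle_M$ is the $M$-valued scalar product on $\H_\infty$ coming from its bimodule structure), and taking an ultralimit $\Phi = \Lim_n \Phi_n/\|\xi_n\|^2$, I get a state on the $C^*$-algebra $\M = C^*\langle M, JMJ\rangle$ acting on $\H_\infty$. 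The symmetry hypothesis on $\alpha_t$ yields the key identity
\begin{equation*}
\langle v\xi_n v^*, v\xi_n v^*\rangle_M = v\,\langle \alpha_{t_0}(vu_nv^*)-\phi_{t_0}(vu_nv^*), \alpha_{t_0}(vu_nv^*)-\phi_{t_0}(vu_nv^*)\rangle_M\,v^*
\end{equation*}
modulo terms controlled by $\|\alpha_{t_0}(v)-v\|_2$ and $\|\phi_{t_0}(v)-v\|_2$ (which stay bounded but are absorbed after applying $\omega$ by a standard Cauchy--Schwarz manipulation using $\omega\circ \mathrm{Ad}(u) = \omega$ for $u \in P$). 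This makes $\Phi$ invariant under conjugation by the normalizer $\mathcal{N}_M(P)$, and gives $\Phi|_M = \tau$.

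The last step is to turn this invariance into a contradiction. Regularity $\mathcal{N}_M(P)'' = M$ upgrades $\mathcal{N}_M(P)$-invariance to full $M$-centrality of $\Phi$, so $\Phi$ witnesses that the trivial $M$-$M$ bimodule is weakly contained in $\H_\infty$. Restricting to the $B$-$B$ piece and using the explicit form $\H_\pi = \K \overline{\otimes} L^2(M)$ with action $bu_\gamma(\eta\otimes\zeta) = \pi(\gamma)\eta \otimes bu_\gamma\zeta$, together with the identification of Theorem~\ref{corresp} that locates the component of $\xi_n$ inside the $\K$-factor, the trivial bimodule pops out of the $L^2(M)$-factor and leaves an invariant mean on $\pi(\Gamma)''$, i.e. an almost invariant unit vector sequence in $\K$; this exactly says that the representation $\pi$ on $\K$ is amenable in the sense of Bekka. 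The hard part will be this last non-concentration argument, namely showing that $\Phi$ does not entirely live on the trivial sub-bimodule already present in $L^2(M) \subset L^2(\widetilde M)$ but genuinely extracts the $\K$-component; this is where Theorem~\ref{corresp}'s explicit description of the image of $\alpha_t-\phi_t$ and the fact that $\delta_b$ factors through $\pi$ are essential. The conclusion contradicts non-amenability of $\K$ and completes the proof.
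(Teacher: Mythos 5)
Your overall shape — contradiction, extract almost-invariant data from weak compactness, couple it to the bimodule $\H_\infty$, end up with a $\pi(\Gamma)$-invariant state on $B(\K)$ contradicting non-amenability — is indeed the right template, but the execution diverges from the paper's proof at several load-bearing points, and two of them are genuine gaps.

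First, the weak compactness hypothesis is being used in a form it does not actually take. Weak compactness of $P\subset M$ (in the Ozawa--Popa sense) gives a net $(\eta_n)$ of unit vectors in $L^2(P\bar\otimes\bar P)_+$ with the three conditions (almost $\mathcal{U}(P)\otimes\bar{\mathcal{U}(P)}$-invariance, almost $\mathrm{Ad}(\mathcal{N}_M(P))$-invariance, and the $\tau$-tracial compatibility); it does \emph{not} directly hand you a state $\omega$ on $B(L^2(M))$ with $\omega|_M=\tau$ and $\omega\circ\mathrm{Ad}(u)=\omega$ for all $u\in\mathcal{U}(P)$ — that is essentially amenability and strictly stronger. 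The paper's proof applies $\alpha_{t_k}\otimes 1$ to the net $\eta_n$, decomposes the result along $e_M\oplus e_M^\perp$ into $\xi_n^k+\zeta_n^k$, and builds the eventual state out of the explicit integral description of $\zeta_n^k$ furnished by Theorem \ref{corresp}. Your $\Phi_n(T)=\omega(\langle T\xi_n,\xi_n\rangle_M)$ is not a replacement: it uses the nonexistent $\omega$ and, separately, does not interact with the normalizer in the way needed below.

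Second, fixing a single $t_0$ is fatal to the approximate centrality step. The $\Gamma$-centrality in the paper (Claim 2) is proved via an estimate $|\varphi^k(\phi(u)^*x\phi(u))-\varphi^k(x)|\lesssim \|u-\alpha_{t_k}(u)\|_2/\delta$, and then $\Lim_k$ kills the right-hand side because $t_k\to 0$ and $\alpha_t\to\mathrm{id}$ pointwise on $M$. For a fixed $t_0$ the error $\|v-\alpha_{t_0}(v)\|_2$ is bounded but not small for a normalizing unitary $v$, and there is no mechanism to absorb it; the Cauchy--Schwarz manipulation you invoke only produces terms of exactly this size. Relatedly, the "upgrade" from $\mathcal{N}_M(P)$-invariance of $\Phi$ to full $M$-centrality via regularity $\mathcal{N}_M(P)''=M$ is not automatic either: $\mathrm{Ad}(u)$-invariance of a state on a $C^*$-algebra does not pass to $\sigma$-strong limits of unitaries, and the paper never needs (nor claims) $M$-centrality — it works directly with $\Gamma$-centrality of the restriction to $B(\K)\subset B(\H)\cap\rho(M^{\mathrm{op}})'$.

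Finally, the "non-concentration" issue you flag as "the hard part" is precisely what the paper's Claim 1 ($\Lim_n\|\zeta_n^k\|_2\geq\delta$) resolves, using the vectors $u_k$ with $\|E_M\alpha_{t_k}(u_k)\|_2\leq\sqrt{1-c^2}$ in conjunction with the weak compactness net $\eta_n$ via property (1) of that net. It is not a step to be deferred: without a lower bound on the component of the test vectors living in $(L^2(\widetilde M)\ominus L^2(M))\bar\otimes L^2(\bar M)$ there is no well-defined normalized state at all, since it appears in the denominator. Your observation that $\xi_n=\alpha_{t_0}(u_n)-\phi_{t_0}(u_n)$ already lies in $\H_\infty$ addresses a different and easier point; the concentration problem that actually needs solving concerns what happens after one couples to the weak compactness data, and your proposal does not reach that coupling in the form the argument requires.
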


\begin{proof}

Assume for contradiction $\alpha_{t}$ does not converge uniformly in $\| \cdot \|_2$ to the identity on $(P)_1$.

Since by assumption $P$ is weakly compact inside $M$, there exists a net $(\eta_n)$ of vectors in $L^2(P \bar{\otimes} \bar{P})_+$ such that
\begin{enumerate}
\item $\lim_n \|\eta_n - (v \otimes \bar{v})\eta_n\|_2 = 0$, $\forall v \in \mathcal{U}(P)$; 
\item $\lim_n \|\eta_n - \Ad(u \otimes \bar{u})\eta_n\|_2 = 0$, $\forall u \in \mathcal{N}_M(P)$;
\item $\langle (a \otimes 1)\eta_n, \eta_n\rangle = \tau(a) = \langle \eta_n, (1 \otimes \bar{a}) \eta_n \rangle$, $\forall a \in M, \forall n$.
\end{enumerate}
We consider $\eta_n \in L^2(M \bar{\otimes} \bar{M})_+$, and note that $(J \otimes \bar{J}) \eta_n = \eta_n$, where $J$ denotes the canonical anti-unitary on $L^2(M)$. We shall simply denote $\mathcal{N}_M(P)$ by $\mathcal{G}$.

Since any self-adjoint element $x \in (P)_1$ can be written
\begin{equation*}
x = \frac12 \|x\|_\infty (u + u^*)
\end{equation*}
where $u \in \mathcal{U}(P)$, it follows that $(\alpha_t)$ does not converge uniformly on $\mathcal{U}(P)$ either. Combining this with lemma \ref{trans}, we get that there exist $0 < c < 1$, a sequence of positive reals $(t_k)$ and a sequence of unitaries $(u_k)$ in $\mathcal{U}(P)$ such that $\lim_{k} t_k = 0$ and $\| \alpha_{t_k}(u_k ) - (E_M \circ \alpha_{t_k})(u_k ) \|_2 \geq c $, $\forall k \in \N$. Since $\|\alpha_{t_k}(u_k )\|_2 = 1$, by Pythagoras's theorem, we obtain
\begin{equation}\label{key}
\|(E_M \circ \alpha_{t_k})(u_k )\|_2 \leq \sqrt{1 - c^2} , \forall k \in \N.
\end{equation}
Set $\delta = \frac{1 - \sqrt{1 - c^2}}{3} $. 

Define for any $n$ and any $k \geq k_0$,
\begin{eqnarray*}
\eta_n^k & = & (\alpha_{t_k} \otimes 1)(\eta_n) \in L^2(\widetilde{M}) \bar{\otimes} L^2(\bar{M}) \\
\xi_n^k & = & (e_M\alpha_{t_k} \otimes 1)(\eta_n) \in L^2(M) \bar{\otimes} L^2(\bar{M}) \\
\zeta_n^k & = & (e_M^\perp\alpha_{t_k} \otimes 1)(\eta_n) \in (L^2(\widetilde{M}) \ominus L^2(M)) \bar{\otimes} L^2(\bar{M}).
\end{eqnarray*}
We observe that by symmetry for all $x \in M$
\begin{equation}\label{norm22}
\|((\alpha_{t_k}(x)-x) \otimes 1) \eta_n^k\|_2^2 =\|((\alpha_{t_k}(x)-x) \otimes 1) \eta_n\|_2^2= \tau(E_{M}((\alpha_{t_k}(x)-x)^*(\alpha_{t_k}(x)-x))) = \|\alpha_{t_k}(x)-x\|_2^2.
\end{equation}

As in the proof of Theorem $4.9$ in \cite{ozawapopa}, noticing that $L^2(M) \bar{\otimes} L^2(\bar{M})$ is an $M \bar{\otimes} \bar{M}$-module and since $\eta_n^k = \xi_n^k + \zeta_n^k$, equation \eqref{norm22} gives that for any $u \in \mathcal{G}$, and for any $k \geq k_0$,
\begin{align}\label{crucial}
\mathop{\Lim}_n \|[u \otimes \bar{u}, \zeta^k_n]\|_2 \nonumber
& \leq  \mathop{\Lim}_n \|[u \otimes \bar{u}, \eta_n^k]\|_2 \\ 
& \leq  \mathop{\Lim}_n \|(\alpha_{t_k} \otimes 1)([u \otimes \bar{u}, \eta_n])\|_2 + 2 \|u - \alpha_{t_k}(u) \|_2 \\
& =  2 \|u - \alpha_{t_k}(u)\|_2. \nonumber
\end{align}
Moreover, for any $x \in M$,
\begin{align}\label{normal}
\nonumber\| (x \otimes 1) \zeta^k_n \|_2 & =  \|(x \otimes 1) (e_M^\perp \otimes 1) \eta_n^k\|_2 \\
& =  \|(e_M^\perp \otimes 1) (x \otimes 1) \eta_n^k\|_2 \\
\nonumber& \leq  \| (x \otimes 1) \eta_n^k\|_2  = \|x\|_2.
\end{align}
\begin{claim}\label{claim1}
For any $k \geq k_0$, 
\begin{equation}\label{crucial2}
\mathop{\Lim}_n \|\zeta_n^k\|_2 \geq \delta.
\end{equation}
\end{claim}

\begin{proof}[Proof of Claim $\ref{claim1}$]
We prove the claim by contradiction. Exactly as in the proof of Theorem 4.9 in \cite{ozawapopa}, 
 we  have 
\begin{align*}
\mathop{\Lim}_n \|\eta_n^k - (e_M \alpha_{t_k}(u_k)  \otimes \bar{u}_k)\xi_n^k\|_2  
\leq & \mathop{\Lim}_n \|(e_M \o 1)\eta_n^k - (e_M \alpha_{t_k}(u_k)  \otimes \bar{u}_k)\eta_n^k\|_2 + 2\mathop{\Lim}_n \| \zeta_n^k\|_2 \\
 \leq & \mathop{\Lim}_n \|(\alpha_{t_k} \otimes 1)(\eta_n - (u_k \otimes \bar{u}_k)\eta_n)\|_2 + 2\delta = 2 \delta.
\end{align*}
Thus, we would get
\begin{align*}
\|(E_M \circ \alpha_{t_k})(u_k )\|_2  
&=  \mathop{\Lim}_n \|((E_M \circ \alpha_{t_k})(u_k) \otimes \bar{u}_k)\eta_n^k\|_2  \\
&\geq  \mathop{\Lim}_n \|(e_M \otimes 1) ((E_M \circ \alpha_{t_k})(u_k)  \otimes \bar{u}_k)\eta_n^k\|_2 
\\ &=  \mathop{\Lim}_n \|(e_M \alpha_{t_k}(u_k) \otimes \bar{u}_k) \xi_n^k\|_2 
\\ &\geq  \mathop{\Lim}_n \|\eta_n^k\|_2 -  2\delta \\
& =  1 -  2\delta > \sqrt{1 - c^2} ,
\end{align*}
which is a contradiction according to $(\ref{key})$. 
\end{proof}

We now use the techniques of the proof of Theorem A in \cite{ozawapopaII}. Define a state $\varphi^{ k}$ on $L^{\infty}([0,\infty),\mathbf{B}(\H) \cap \rho(M^{\op})')$, where $\rho(M^{\op})$ is the right $M$-action on $\H$, by
\begin{equation*}
\varphi^{ k}(x) = \mathop{\Lim}_n \frac{1}{\|\zeta_n^{ k}\|_2^2}\int_{0}^{t_k} \langle (x_{s} \otimes 1) \delta\phi_{t_k-s}\o1(\eta_n),\delta\phi_{t_k-s}\o1(\eta_n)\rangle,
\end{equation*}

Since by assumption and theorem \ref{corresp}, $||\alpha_{t_k}-E_{M}\alpha_{t_k}(x)||_2^2=\int_{0}^{t_k} \langle  \delta\phi_{t_k-s}(x),\delta\phi_{t_k-s}(x)\rangle$, we indeed get one on $x=id$.

\begin{claim}\label{claim2}
Let $a \in \Gamma$. Then one has 
\begin{equation*}
\mathop{\Lim}_k | \varphi^{k} (a x - x a) | = 0,
\end{equation*}
uniformly for $x \in \mathbf{B}(\mathcal{H}) \cap \rho(M^{\op})'$ (seen as constant functions in $L^{\infty}([0,\infty),\mathbf{B}(\H) \cap \rho(M^{\op})')$) with $\|x\|_\infty \leq 1$.
\end{claim}

\begin{proof}[Proof of Claim $\ref{claim2}$]
For every $x \in (\mathbf{B}(\mathcal{H}) \cap \rho(M^{\op})')_{+}$ and denote $\phi(u)$ the bounded function $\phi_{s}(u)$, one has
\begin{align*}
&\varphi^{ k}(\phi(u)^* x \phi(u))\geq \mathop{\Lim}_n \frac{1}{\|\zeta_n^{ k}\|_2^2} \\ &\int_{0}^{t_k} ds   \langle(x \otimes 1)(\phi_{s}(u) \otimes \bar{u}) \delta\phi_{t-s}\o1(\eta_n) ( \phi_{s}(u)\otimes \bar{u})^*, (\phi_{s}(u) \otimes \bar{u}) \delta\phi_{t-s}\o1(\eta_n) ( \phi_{s}(u)\otimes \bar{u})^*\rangle
\end{align*}

so that,
\begin{align*}
\varphi^{ k}(&\phi(u)^* x \phi(u))\geq \varphi^{ k}(x)-  \\ &-\mathop{\Lim}_n\frac{2}{\|\zeta_n^{ k}\|_2} \|x\|_\infty  \left(\int_{0}^{t_k} ds ||(\phi_s(u) \otimes \bar{u}) \delta\phi_{t-s}\o1(\eta_n) (\phi_s(u) \otimes \bar{u})^*- \delta\phi_{t-s}\o1(\eta_n) ||_2^2\right)^{1/2}
\end{align*}
Now, note \begin{align*}||(\phi_s(u) \otimes \bar{u})& \delta\phi_{t-s}\o1(\eta_n) (\phi_s(u) \otimes \bar{u})^*- \delta\phi_{t-s}\o1(\eta_n) ||_2^2\leq 
2||\delta\phi_{t-s}\o1(\eta_n) ||_2^2\\& -2\Re \langle(\phi_s(u) \otimes \bar{u}) \delta\phi_{t-s}\o1(\eta_n) (\phi_s(u)\otimes \bar{u})^*,\delta\phi_{t-s}\o1(\eta_n)\rangle 
\end{align*}

and thus,  using again the explicit structure of theorem \ref{corresp} (and (\ref{crucial}) for the last inequality)
\begin{align*}
\int_{0}^{t_k} ds &||(\phi_s(u) \otimes \bar{u}) \delta\phi_{t-s}\o1(\eta_n) (\phi_s(u) \otimes \bar{u})^*- \delta\phi_{t-s}\o1(\eta_n) ||_2^2 \\ &\leq 2\|\zeta_n^{ k}\|_2^2
-2\Re \langle(u \otimes \bar{u})\zeta_n^{ k}  (u \otimes \bar{u})^*,\zeta_n^{ k}\rangle \\& =
||(u \otimes \bar{u})\zeta_n^{ k}  (u \otimes \bar{u})^*-\zeta_n^{ k}||_2^2 
\\&\leq 
4||u-\alpha_{t_{k}}(u)||_{2}^{2}
\end{align*}
so that with $(\ref{crucial2})$ and using lemma 3.6 in \cite{ozawapopaII}, we finally get (for any $x\in \mathbf{B}(\mathcal{H}) \cap \rho(M^{\op})'$)\begin{equation*}
|\varphi^{ k}(\phi(u)^* x \phi(u)) - \varphi^{ k}(x)| \leq 
\frac{8}{\delta} \|x\|_\infty  ||u-\alpha_{t_{k}}(u)||_{2}
\end{equation*}
Since taking $x=id$ gives $\mathop{\Lim}_k |\varphi^{ k}(\phi(u)^* \phi(u) - 1 )| = 0$, this implies that 
\begin{equation}\label{intphi}
\mathop{\Lim}_k |\varphi^{ k}(\phi(a) x - x \phi(a))| = 0,
\end{equation}
for each $a \in \mbox{span }\mathcal{G}$ and uniformly for $x \in \mathbf{B}(\mathcal{H}) \cap \rho(M^{\op})'$ with $\|x\|_\infty \leq 1$. However, for any $a \in M$ (using again the explicit structure of theorem \ref{corresp} and complete positivity of $\phi_s$, and then again (\ref{normal}) and (\ref{crucial2})),
\begin{eqnarray*}
|\varphi^{ k}(x \phi(a))| & \leq & \mathop{\Lim}_n \frac{1}{\|\zeta_n^{ k}\|_2}  \|x\|_\infty| ||(a \otimes 1) \zeta_n^{ k}|| \\
& \leq & \frac{1}{\delta} \|x\|_\infty \| a\|_2 \end{eqnarray*}
and likewise for $|\varphi^{k}(\phi(a) x)|$. An application of Kaplansky's density theorem proves \ref{intphi} for $a\in M$.
Since for any $u\in Q\Gamma$ (set of finite linear combination with coefficients in Q), $\sup_{s\in[0,t_k]}||\phi_{s}(u)-u||\rightarrow 0$, we 
get a bit more than the result.\end{proof}

Thus if we define a state $\varphi$ by $\varphi(x)=\Lim \varphi^{ k}(x)$, $\varphi$ is a $\Gamma$-central state on $\mathbf{B}(\K)\subset\mathbf{B}(\H) \cap \rho(M^{\op})'$ 
. This gives a contradiction with being a non-amenable representation 

\end{proof}

We can deduce from this 
 several results:

\begin{corollary}\label{GeneDefRig}
Let $M=B \rtimes \Gamma$ as above, $M\subset \widetilde{M}$ von Neumann subalgebras, assume $\K$  is an non-amenable representation, $\H_{\infty}\subset L^{2}(\widetilde{M})\ominus L^{2}(M)$ produced as in Theorem \ref{corresp} from $\H$ coming from $\K$ as above. Take $P \subset M$ is a regular weakly compact subalgebra.  Assume $\H$ is a Hilbert $M$-$M$ bimodule which is compact relative to $B\subset M$ as an $P-P$ bimodule. 
Assume given a family of $*$-homomorphism $\alpha_{t}:M\rightarrow \widetilde{M}$ dilating a symmetric $\| \cdot \|_2$-strongly continuous semigroup $\phi_{t}$ on $M$ not converging uniformly on $(M)_1$ and with $ (\alpha_{t}-\phi_{t})(M)\subset \H_{\infty}$ in the way of Theorem \ref{corresp}. Also assume $\alpha_{t}$ is symmetric (at least so that $\tau(\alpha_{t}(X)Y\alpha_{t}(Z)T)=\tau(X\alpha_{t}(Y)Z\alpha_{t}(T))$.
 Then $P\preceq_M B$
\end{corollary}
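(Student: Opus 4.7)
The proof is a standard two-step argument combining the two theorems just established. Let me sketch the strategy.

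The plan is to apply Theorem \ref{step} to $P$ to obtain uniform $\|\cdot\|_2$-convergence of $\alpha_t$ on $(P)_1$, and then use Theorem \ref{L2rigNorm} with $A=P$, $N=M$, $\widetilde{M}$ in the role of the ambient algebra. More precisely, first I would feed the hypotheses directly into Theorem \ref{step}: the subalgebra $P$ is regular and weakly compact in $M$, the representation $\K$ is non-amenable, and $\alpha_t$ is a symmetric dilation of $\phi_t$ whose excess $\alpha_t-\phi_t$ lands in $\H_\infty$ as prescribed. The continuity hypothesis on $\phi_t$ needed by Theorem \ref{step} is automatic since $\phi_t$ is the symmetric Markov semigroup of a Dirichlet form. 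This step already yields $\lim_{t\to 0}\sup_{x\in (P)_1}\|\alpha_t(x)-x\|_2=0$, or equivalently via Proposition \ref{trans} that $\phi_t$ converges uniformly on $(P)_1$.

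Next I would argue by contradiction: assume $P\not\preceq_M B$. Since $P$ is regular in $M$, one has $\mathcal{N}_M(P)''=M$, so Theorem \ref{L2rigNorm} applied with $N=M$, $A=P$ and Hilbert bimodule $\H_\infty\subset L^2(\widetilde{M})\ominus L^2(M)$ would upgrade the uniform convergence on $(P)_1$ to uniform convergence on $(M)_1$, contradicting the standing assumption that $\phi_t$ does not converge uniformly on $(M)_1$. To invoke Theorem \ref{L2rigNorm} I must check its compactness hypothesis for the bimodule $\H_\infty$, namely that it is compact relative to $B\subset M$ viewed as a $P$-$P$-bimodule. The hypothesis of the corollary only gives this property for $\H$ itself, so this transfer is the point where a small argument is required.

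The main (only real) obstacle is thus verifying that compactness of $\H$ as a $P$-$P$-bimodule relative to $B$ is inherited by $\H_\infty$. This should follow directly from the description of $\H_\infty$ in Theorem \ref{corresp}: as an $M$-$M$-bimodule, $\H_\infty$ embeds isometrically in $\int^\oplus_{\R_+}\H_{\phi_s}\otimes_M\H\otimes_M\H_{\phi_s}\, d\lambda$, and the pointed correspondences $\H_{\phi_s}$ amplify $\H$ only by tensor products over $M$ with $L^2(M)$-type factors that transport compactness. Concretely, given sequences $(x_n),(y_n)\in (P)_1$ with $\|E_B(ax_nb)\|_2\to 0$ for all $a,b\in M$, one reduces the matrix coefficient $\langle x_n\xi y_n,\xi\rangle$ for $\xi\in \H_\infty$ to a countable sum of analogous matrix coefficients for vectors in $\H$ (using separability of the predual to split the direct integral into a countable basis), at which point the hypothesis applies termwise and a uniform $L^2$ bound plus dominated convergence closes it out. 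Once this is verified, concatenating Theorem \ref{step} and Theorem \ref{L2rigNorm} yields the contradiction and hence $P\preceq_M B$.
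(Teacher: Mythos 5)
Your proposal follows the paper's own route: first apply Theorem \ref{step} to obtain uniform convergence of $\alpha_t$ on $(P)_1$, then feed this into Theorem \ref{L2rigNorm} with $A=P$, $N=M$ (using regularity of $P$ so that $\mathcal{N}_M(P)''=M$) to get uniform convergence on $(M)_1$, contradicting the hypothesis — and correctly identify that the single nontrivial point is transferring compactness-relative-to-$B$ from $\H$ to $\H_\infty$. The paper disposes of this last point in one line by citing the remark after Example 2.4 of \cite{P10} and observing that $\phi_s$ is $B$-bimodular; your own sketch of that transfer (reducing matrix coefficients in $\H_\infty$ to ones in $\H$ via the direct integral decomposition) is morally the same, though you should note explicitly that the $B$-bimodularity of $\phi_s$ is what makes the hypothesis $\|E_B(a\,\phi_u(x_n)\,b)\|_2\to 0$ follow from $\|E_B(a\,x_n\,b)\|_2\to 0$ when the $P$-elements appear under the correspondences $\H_{\phi_u}$, since the reduction lands on matrix coefficients in $\phi_u(x_n)$ rather than $x_n$.
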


\begin{proof}
From the previous results, the only remark necessary to get the corollary is to see that $\H_{\infty}$ satisfy the same compactness property as $\H$, and this is the remark after example 2.4 in \cite{P10} since $\phi_{s}$ is B-bimodular. 

\end{proof}

\begin{corollary}\label{SpecDefRig}
Let $M=B \rtimes \Gamma$  as above with $\Gamma$ a countable discrete group with $\beta_{1}^{(2)}(\Gamma)>0$ and such that M has c.m.a.p..
 Then any amenable regular subalgebra $P\preceq_M B$.
Especially $L(\Gamma)$ has no Cartan subalgebra if $\Gamma$ has c.m.a.p. and $\beta_{1}^{(2)}(\Gamma)>0$. Likewise, for the same $\Gamma$, and a profinite free p.m.p. ergodic action of $\Gamma$ on an standard probability space $X$, $M=L^{\infty}(X) \rtimes \Gamma$ has a unique Cartan subalgebra (up to unitary conjugacy).
\end{corollary}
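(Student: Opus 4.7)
The plan is to verify the hypotheses of Corollary \ref{GeneDefRig} with a derivation coming from an unbounded $1$-cocycle, and then to read off the stated consequences.

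First I would use $\beta_{1}^{(2)}(\Gamma)>0$ to produce (in the standard way via reduced $\ell^{2}$-cohomology, Peterson's and Bekka--Valette picture) an unbounded real $1$-cocycle $b:\Gamma\to\K$, where $\K$ is a multiple of the left regular representation $\lambda_{\Gamma}$ (up to tensoring with a trivial Hilbert space). Since $\K$ is contained in a multiple of $\lambda_{\Gamma}$, the associated $M$-$M$ bimodule $\H_{\pi}=\K\overline{\otimes}L^{2}(M)$ is weakly contained in the coarse $L^{2}(M)\overline{\otimes}L^{2}(M)$; in particular it is non-amenable, and by example~2.4 of \cite{P10} it is compact relative to $B\subset M$ as an $M$-$M$ bimodule, hence also as a $P$-$P$ bimodule for every $P\subset M$. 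I then form the closable real derivation $\delta_{b}:B\Gamma\to\H_{\pi}$ with $\delta_{b}(bu_{\gamma})=b(\gamma)\otimes bu_{\gamma}$, and apply Theorem \ref{main} to obtain a dilation $\alpha_{t}:M\to\widetilde{M}$ of the symmetric conservative semigroup $\phi_{t}=e^{-t\Delta/2}$, $\Delta=\delta_{b}^{*}\delta_{b}$. Theorem \ref{corresp} ensures that $(\alpha_{t}-\phi_{t})(M)\subset\H_{\infty}$ in the form required by Corollary \ref{GeneDefRig}, and the symmetry condition on $\alpha_{t}$ is part of Theorem \ref{main}.

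Next I need the two remaining assumptions of Corollary \ref{GeneDefRig}. Non-uniform convergence of $\phi_{t}$ on $(M)_{1}$ is automatic: on unitaries one has $\phi_{t}(u_{\gamma})=e^{-t\|b(\gamma)\|^{2}/2}u_{\gamma}$, so uniform convergence on $\mathcal{U}(M)$ would force $\gamma\mapsto\|b(\gamma)\|^{2}$ to be bounded, contradicting the unboundedness of $b$ (equivalently, the non-innerness of $\delta_{b}$ on $\Gamma$, which is what $\beta_{1}^{(2)}(\Gamma)>0$ gives). For an amenable regular subalgebra $P\subset M$, the c.m.a.p.\ hypothesis on $M$ together with the Ozawa--Popa theorem (Theorem~4.9 and \S3 of \cite{ozawapopa}) yields that $P$ is weakly compact in $M$. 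Corollary \ref{GeneDefRig} then applies and gives $P\preceq_{M}B$.

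The consequences follow formally. If $L(\Gamma)$ (the case $B=\C$) had a Cartan subalgebra $A$, then $A$ would be diffuse, amenable and regular, so the above would give $A\preceq_{L(\Gamma)}\C$, contradicting diffuseness. For $M=L^{\infty}(X)\rtimes\Gamma$ with a profinite free p.m.p.\ ergodic action, c.m.a.p.\ of $\Gamma$ passes to $M$ (profinite actions preserve c.m.a.p.\ of the crossed product), so the hypotheses are still satisfied with $B=L^{\infty}(X)$; any Cartan subalgebra $A\subset M$ is amenable, diffuse and regular, hence $A\preceq_{M}L^{\infty}(X)$, and since both $A$ and $L^{\infty}(X)$ are Cartan, Theorem~A.1 of \cite{popa2001} (recalled after Theorem \ref{inter1}) upgrades this to the existence of $u\in\mathcal{U}(M)$ with $A=uL^{\infty}(X)u^{*}$. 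The main obstacle is really just bookkeeping: checking that the construction of the cocycle and the stability of c.m.a.p.\ under profinite crossed products are available, and that Ozawa--Popa's weak-compactness conclusion applies in the stated generality; all the serious analytic work has already been absorbed into Theorem \ref{main}, Theorem \ref{corresp} and Corollary \ref{GeneDefRig}.
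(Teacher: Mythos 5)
Your proof is correct and fills in exactly the deduction from Corollary~\ref{GeneDefRig} that the paper leaves implicit (no explicit proof of Corollary~\ref{SpecDefRig} is given in the text). The main chain — extract an unbounded real $1$-cocycle $b:\Gamma\to\K$ with $\K$ a multiple of $\lambda_{\Gamma}$ from $\beta_1^{(2)}(\Gamma)>0$; note $\K$ is non-amenable and $C_0$ so $\H_\pi$ is compact relative to $B$; form $\delta_b$, get $\alpha_t$ from Theorem~\ref{main} and the bimodule structure from Theorem~\ref{corresp}; deduce non-uniform convergence from unboundedness of $b$; get weak compactness of amenable $P$ from c.m.a.p.\ via Ozawa--Popa; and finally invoke Popa's Cartan intertwining to upgrade $A\preceq_M L^\infty(X)$ to unitary conjugacy — is the intended argument. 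Two small points worth phrasing more carefully: the ``non-amenability'' you need is of the group representation $\K$ (which holds because $\beta_1^{(2)}(\Gamma)>0$ forces $\Gamma$ non-amenable, so any multiple of $\lambda_\Gamma$ fails to weakly contain the trivial representation), not a property of the bimodule; and for the non-uniform convergence step, the clean statement is that for each $t>0$, $\sup_\gamma\|\phi_t(u_\gamma)-u_\gamma\|_2 = \sup_\gamma(1-e^{-t\|b(\gamma)\|^2/2}) = 1$ since $\|b(\gamma)\|$ is unbounded, so $\phi_t$ does not converge uniformly on $\mathcal{U}(L(\Gamma))\subset (M)_1$. The stability of c.m.a.p.\ under profinite crossed products is indeed the Ozawa--Popa input needed for the last clause.
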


\begin{remark}
In \cite{ozawapopaII}, analogous results were obtained assuming moreover given a proper cocycle in a non-amenable representation. If we can replace, as seen in the previous corollary, the unbounded cocycle valued in the regular representation by one valued in a non-amenable mixing representation, we have to assume sligthly more than them on the representation. Basically the mixingness (compactness) of the representation replaces the properness of the cocycle.
\end{remark}

\end{document}